\theoremstyle{plain}
\newtheorem{thm}{Theorem}[section]
\newtheorem{lemma}[thm]{Lemma}
\newtheorem{cor}[thm]{Corollary}
\newtheorem{prop}[thm]{Proposition}
\newtheorem*{claim}{Claim}
\newtheorem*{thmA1}{Theorem A.1}
\newtheorem*{thmB1}{Theorem B.1}
\newtheorem*{thmA2}{Theorem A.2}
\newtheorem*{thmB2}{Theorem B.2}
\theoremstyle{definition}
\newtheorem{defn}[thm]{Definition}
\newtheorem{rmk}[thm]{Remark}
\newtheorem{example}[thm]{Example}
\renewcommand{\Re}{\operatorname{Re}}
\renewcommand{\Im}{\operatorname{Im}}
\newcommand{\CC}{\mathbb{C}}
\newcommand{\PP}{\mathbb{P}}
\newcommand{\QQ}{\mathbb{Q}}
\newcommand{\RR}{\mathbb{R}}
\newcommand{\ZZ}{\mathbb{Z}}
\newcommand{\cH}{\mathcal{H}}
\newcommand{\cO}{\mathcal{O}}
\newcommand{\cX}{\mathcal{X}}
\newcommand{\cY}{\mathcal{Y}}
\newcommand{\Aut}{\operatorname{Aut}}
\newcommand{\ch}{\operatorname{ch}}
\newcommand{\id}{\operatorname{id}}
\newcommand{\oH}{\operatorname{H}}
\newcommand{\Hilb}{\operatorname{Hilb}}
\newcommand{\Hom}{\operatorname{Hom}}
\newcommand{\Mon}{\operatorname{Mon}}
\newcommand{\Or}{\operatorname{O}}
\newcommand{\Pic}{\operatorname{Pic}}
\newcommand{\rk}{\operatorname{rk}}
\newcommand{\sfW}{\mathsf{W}}
\date{}
\title[Locally trivial monodromy of moduli spaces of sheaves]{Locally trivial monodromy of moduli spaces of sheaves on K3 surfaces}
\author{Claudio Onorati}
\address{Dipartimento di Matematica "F. Enriques", Universit\`a degli Studi di Milano, via C. Saldini 50, 20133 Milano, Italia.}
\email{claudio.onorati@unimi.it}
\author{Arvid Perego}
\address{Dipartimento di Matematica, Universit\`a di Genova, Via Dodecaneso 35, 16146 Genova, Italia.}
\email{perego@dima.unige.it}
\author{Antonio Rapagnetta}
\address{Dipartimento di Matematica, Universit\`a di Roma Tor Vergata, via della ricerca scientifica 1, 00133 Roma, Italia.}
\email{rapagnet@mat.uniroma2.it}
\begin{document}

\begin{abstract}
In this paper we study monodromy operators on moduli spaces $M_v(S,H)$ of sheaves on K3 surfaces with non-primitive Mukai vectors $v$. If we write $v=mw$, with $m>1$ and $w$ primitive, then our main result is that the inclusion $M_w(S,H)\to M_v(S,H)$ as the most singular locus induces an isomorphism between the monodromy groups of these symplectic varieties, allowing us to extend to the non-primitive case a result of Markman.
\end{abstract}

\maketitle
\tableofcontents

%%%%%%%%%%%%%%%%%%%%%%%%%%%%%%%%%%
%%%%%%%%%%%%%%%%%%%%%%%%%%%%%%%%%%
\section*{Introduction}

Singular symplectic varieties (Section~\ref{section:singular symplectic}) have gained much interest lately, especially after the  proof of a global Torelli Theorem in \cite{BakkerLehn:PrimitiveSymplecticVarieties}. The outcome of these results can be summarised by saying that their geometry behaves very much like the geometry of irreducible holomorphic symplectic manifolds. %More precisely
Roughly, %both for smooth and singular symplectic varieties $X$
if $X$ is either a smooth or singular irreducible symplectic variety, most of the geometry of $X$ is controlled by the %(torsion free part of the) 
second integral cohomology group $\oH^2(X,\ZZ)$ together with its pure weight two Hodge structure and the Beauville--Bogomolov--Fujiki lattice structure. We recall that the Beauville--Bogomolov--Fujiki lattice $\oH^2(X,\ZZ)$ has always signature $(3,b_2(X)-3)$, where the positive three-space is generated by a K\"ahler class and the real and imaginary part of the symplectic form. The bimeromorphic classification of irreducible symplectic varieties in the same locally trivial deformation class is then encoded in the (locally trivial) \emph{monodromy group}, which is a finite index subgroup $\Mon^2_{\operatorname{lt}}(X)$ of the group $\Or(\oH^2(X,\ZZ))$ of isometries of the lattice $\oH^2(X,\ZZ)$.

In this paper we consider a special class of irreducible symplectic varieties, namely those that are locally trivially deformation equivalent (Definition~\ref{def:lt}) to a moduli space $M_v(S,H)$ of sheaves on a projective K3 surface $S$. Here $v$ is a non-primitive Mukai vector and $H$ is $v$-generic. The notion of $v$-genericity is technical and will be recalled in Definition~\ref{defn:v-generic}. By \cite{PR:SingularVarieties}, it is known that, under these assumptions, $M_v(S,H)$ is indeed an irreducible symplectic variety.

Let us remark that when the Mukai vector $v$ is primitive, the moduli space $M_v(S,H)$ is smooth and it is an irreducible holomorphic symplectic manifold, i.e.\ a simply connected K\"ahler manifold with a unique (up to scalar) holomorphic symplectic form. If $X$ is any manifold deformation equivalent to a smooth moduli space $M_v(S,H)$ as above, then the group $\Mon^2(X)$ of the monodromy operators in $\oH^2(X,\ZZ)$ is known by a result of Markman (\cite{Markman:IntegralConstraints}): it is the group of orientation preserving isometries that act as $\pm\id$ on the discriminant group (see Section~\ref{section:pto} for the notion of orientation). Recall that if $\Lambda$ is a lattice, then the discriminant group is the finite group $\Lambda^*/\Lambda$, where $\Lambda^*=\Hom(\Lambda,\ZZ)$ is the dual $\ZZ$-module. 

Following Markman's notation, if $\Lambda$ is any even lattice of signature $(3,n)$, then we denote by $\mathsf{W}(\Lambda)\subset\Or(\Lambda)$ the subgroup of orientation preserving isometries acting as $\pm\id$ on the discriminant group.

Our first result is the following, which extends \cite[Theorem~1.1]{Markman:IntegralConstraints} to the singular setting.

\begin{thmA1}[Corollary~\ref{cor:ogni X}]
    Let $X$ be an irreducible symplectic variety that is locally trivially deformation equivalent to a moduli space $M_v(S,H)$, where $S$ is a projective K3 surface, $v=mw$ with $m>1$ and $w$ primitive, and $H$ a $v$-generic polarisation. Then 
    \[ \Mon^2_{\operatorname{lt}}(X)=\mathsf{W}(\oH^2(X,\ZZ))\subset\Or(\oH^2(X,\ZZ)). \]
\end{thmA1}

We recall the definition of the group $\mathsf{W}(\oH^2(X,\ZZ))$ at the beginning of Section~\ref{section:W in m k}. We point out that our result does not subside Markman's one, inasmuch as we heavily use \cite[Theorem~1.1]{Markman:IntegralConstraints} in our proof. 

To the best of our knowledge, this is the first time an explicit description of the monodromy group of a class of singular symplectic varieties is exhibited. We recall that monodromy groups have been computed in all the known deformation classes of smooth irreducible holomorphic symplectic manifolds, see \cite{Markman:Monodromy,Markman:Kummer,Mon,MonRap,Onorati:Monodromy}.

As a corollary one can explicitly compute the index of $\Mon^2_{\operatorname{lt}}(X)$ in the isometry group $\Or(\oH^2(X,\ZZ))$ (see Corollary~\ref{cor:index}).
As an example, if $X$ is deformation equivalent to a moduli space $M_v(S,H)$, where $v=mw$ and $w^2=2$, then the group $\Mon^2_{\operatorname{lt}}(M_v(S,H))$ is the whole group $\Or^+(\oH^2(M_v(S,H),\ZZ))$ of orientation preserving isometries (notice that this does not depend on $m$, this feature is shared by an infinite class of examples in any dimension $2m^2+2$).
\medskip

Our second result is an equivalent reformulation of Theorem~A.1, in which the relation between the group $\Mon^2_{\operatorname{lt}}(X)$ and the monodromy group of an irreducible holomorphic symplectic manifold deformation equivalent to a smooth moduli space of sheaves is explained. First of all, if $X$ is locally trivially deformation equivalent to a moduli space $M_v(S,H)$ as before, then the most singular locus $Y$ of $X$ (cf.\ Proposition~\ref{prop:stratification}) is an irreducible holomorphic symplectic manifold deformation equivalent to the moduli space $M_w(S,H)$ (here $w$ is the primitive Mukai vector such that $v=mw$). Let us denote by $i_{Y,X}\colon Y\to X$ the closed embedding. 

The embedding $i_{Y,X}$ induces an homomorphism 
\[ i_{Y,X}^\sharp\colon\Mon^2_{\operatorname{lt}}(X)\longrightarrow\Mon^2(Y). \]
We will define this morphism carefully in Section~\ref{section:main result} (see also Section~\ref{section:moduli spaces} for the case $X=M_v(S,H)$), but we can intuitively describe it as follows. It sends a monodromy operator along a loop $\gamma$ in a family $p\colon\mathcal{X}\to T$ of deformations of $X$, to the monodromy operator along the same loop $\gamma$ on the family of deformations $q\colon\mathcal{Y}\to T$ of $Y$ obtained by restriction from $p$, i.e.\ $\mathcal{Y}\subset\mathcal{X}$ is the relative closed embedding of the most singular locus.

\begin{thmB1}[Corollary~\ref{cor:i sharp is iso gen}]
    Let $X$ be an irreducible symplectic variety that is locally trivially deformation equivalent to a moduli space $M_v(S,H)$ as above. Let $Y\subset X$ be the most singular locus and $i_{Y,X}\colon Y\to X$ the closed embedding. Then 
    \[ i_{Y,X}^\sharp\colon\Mon^2_{\operatorname{lt}}(X)\stackrel{\sim}{\longrightarrow}\Mon^2(Y) \]
    is an isomorphism.
\end{thmB1}

\subsection*{Outline of the proof}
Since the locally trivial monodromy group is invariant along locally trivial families of primitive symplectic varieties, we can reduce the proof of the the two main results to statements about moduli spaces of sheaves. Therefore from now on we will work with $X=M_v(S,H)$, where $S$ is a projective K3 surface, $v$ a Mukai vector of the form $v=mw$, with $m>1$ and $w$ primitive, and $H$ a $v$-generic polarisation.

Let us recall that $v$ belongs to the so-called \emph{Mukai lattice} and so we can consider the ortoghonal complement $v^\perp$, which is an even lattice of signature $(3,20)$. We will recall in Section~\ref{section:moduli spaces} the definitions and constructions. By \cite{PR:v perp}, there is an isometry 
\[ \lambda_{(S,v,H)}\colon v^\perp\to\oH^2(M_v(S,H),\ZZ). \] 
When the Mukai vector $v$ is primitive, the same result is due to O'Grady (see \cite{OGrady:WeightTwo,Yoshioka:ModuliAbelian}).

Theorem~A.1 is equivalent to the following.
\begin{thmA2}[Theorem~\ref{thm:Mon 2 lt}]
    Let $S$ be a projective K3 surface, $v$ a Mukai vector and $H$ a $v$-generic polarisation. Then 
    \[ \Mon^2_{\operatorname{lt}}(M_v(S,H))=\mathsf{W}(\oH^2(M_v(S,H),\ZZ))\cong\mathsf{W}(v^\perp), \]
    where the last isomorphism is induced by the isometry $\lambda_{(S,v,H)}$.
\end{thmA2}
We point out that the special case when $m=2=w^2$ already appeared in \cite[Theorem~6.1]{Onorati:Monodromy} with a different proof.

The proof follows two steps. In the first one we construct monodromy operators: this is performed in Section~\ref{section:W in m k} and the main result is Theorem~\ref{thm:W in m k}, where it is proved that $\mathsf{W}(v^\perp)\subset\Mon^2_{\operatorname{lt}}(M_v(S,H))$. This section parallels Markman's construction of monodromy operators in \cite{Markman:Monodromy}. In fact we point out that it also works for primitive Mukai vectors: our only improvement with respect to Markman's work is that we only work with polarised families of K3 surfaces (see Proposition~\ref{prop:Mon S in m k} and the remark soon after).

In the second step, performed in details in Section~\ref{subsection:i sharp}, we put a constraint on the monodromy group by using the inclusion in $M_v(S,H)$ of its most singular locus. This step is where we crucially use that $v=mw$ is not primitive. In fact in this case the most singular locus of $M_v(S,H)$ can be naturally identified with the smooth moduli space $M_w(S,H)$; let us denote by $i_{w,m}\colon M_w(S,H)\to M_v(S,H)$ the closed embedding. Then by Corollary~\ref{prop:first inclusion} the map $i_{w,m}$ induces an injective homomorphism
\[ i_{w,m}^\sharp\colon\Mon^2_{\operatorname{lt}}(M_v(S,H))\longrightarrow\Mon^2(M_w(S,H) \]
giving the desired constraint. 

The conclusion of the proof is now a straightforward combination of the two steps before and \cite[Theorem~1.1]{Markman:IntegralConstraints}.
\medskip

Working again in the case when $X=M_v(S,H)$, the isomorphism in Theorem~B.1 becomes very natural in terms of the isomorphisms
\[ \lambda_{(S,v,H)}^\sharp\colon\mathsf{W}(v^\perp)\longrightarrow\Mon^2_{\operatorname{lt}}(M_v(S,H)) \]
and
\[ \lambda_{(S,w,H)}^\sharp\colon\mathsf{W}(w^\perp)\longrightarrow\Mon^2(M_w(S,H)) \]
induced by conjugation from the isometries $\lambda_{(S,v,H)}$ and $\lambda_{(S,w,H)}$.
Let us remark that since $v=mw$, the lattices $v^\perp$ and $w^\perp$ are the same sub-lattice of the Mukai lattice of $S$; in particular there is an equality $\mathsf{W}(v^\perp)=\mathsf{W}(w^\perp)$.

\begin{thmB2}[Theorem~\ref{thm:i sharp is iso}]
Let $M_v(S,H)$ be the moduli space of shaves on a K3 surface $S$ with Mukai vector $v$. Assume that $v=mw$, with $m>1$ and $w$ primitive, and that $H$ is $v$-generic. Then the closed embedding $i_{w,m}\colon M_w(S,H)\to M_v(S,H)$ induces an isomorphism
\[ i_{w,m}^\sharp\colon\Mon^2_{\operatorname{lt}}(M_v(S,H))\stackrel{\sim}{\longrightarrow}\Mon^2(M_w(S,H)), \]
and the composition 
\[ (\lambda_{(S,w,H)}^\sharp)^{-1}\circ i_{w,m}^\sharp\circ\lambda_{(S,v,H)}^\sharp\colon\mathsf{W}(v^\perp)\longrightarrow\mathsf{W}(w^\perp) \]
is the identity.
\end{thmB2}

\subsection*{Plan of the paper}
In Section~\ref{section:preliminaries} we recall the basic facts about singular symplectic varieties and moduli spaces of sheaves. The results in this section are all known to experts but we reproduce here some proofs whenever a precise reference was missing in the literature.

Section~\ref{section:groupoid} recalls and extends Markman's construction of a groupoid representation that will be useful to construct monodromy operators and to prove surjectivity of the morphism $i_{w,m}^\sharp$. 

In Section~\ref{section:mon S pol} we provide a proof that the monodromy group of a K3 surface is generated by polarised families. This result is surely well known to experts, but we could not find any reference in the literature; its purpose is to allow us to lift monodromy operators from the K3 surface to the moduli space without deforming to a non-projective K3 surfaces, i.e.\ without the need to consider families of moduli spaces of sheaves on non-projective K3 surfaces.

Section~\ref{section:main} is the main and last section of the paper, where we prove Theorem~\ref{thm:Mon 2 lt} and Theorem~\ref{thm:i sharp is iso}.

\subsection*{Acknowledgements} 
Claudio Onorati was supported by the grant ERC-2017-CoG771507-StabCondEn with principal investigator P. Stellari. Arvid Perego and Antonio Rapagnetta were partially supported by the Research Project PRIN 2020 - CuRVI, CUP J37G21000000001. Claudio Onorati and Antonio Rapagnetta wish to thank the MIUR Excellence Department Project awarded to the Department of Mathematics, University of Rome Tor Vergata, CUP E83C1800010000. Arvid Perego wishes to thank the MIUR Excellence Department Project awarded to the Department of Mathematics, University of Genoa, CUP D33C23001110001. Claudio Onorati and Arvid Perego gratefully acknowledge support from the Simons Center for Geometry and Physics, Stony Brook University, and the Japanese--European Symposium on Symplectic Varieties and Moduli Spaces at which some of the research for this paper was performed. The authors are member of the INDAM-GNSAGA.

%%%%%%%%%%%%%%%%%%%%%%%%%%%%%%%%%%%%%%%%%%%%
%%%%%%%%%%%%%%%%%%%%%%%%%%%%%%%%%%%%%%%%%%%%
%%%%%%%%%%%%%%%%%%%%%%%%%%%%%%%%%%%%%%%%%%%%
\section{Preliminaries}\label{section:preliminaries}

This section is dedicated to review some basic material we will need. More precisely, in the first two subsections we collect some fundamental results and definitions about primitive (resp.\ irreducible) symplectic varieties and their monodromy groups. The third subsection will be devoted to review the main results about moduli spaces of sheaves on K3 surfaces.

%%%%%%%%%%%%%%%%%%%%%%%%%%%%%%%%%%%
%%%%%%%%%%%%%%%%%%%%%%%%%%%%%%%%%%%

\subsection{Singular symplectic varieties}\label{section:singular symplectic}

We first start by recalling the notion of a symplectic variety. Let $X$ be a normal complex analytic variety, and denote by $X_{\operatorname{reg}}$ its smooth locus; notice that the complement of $X_{\operatorname{reg}}$ has codimension at least $2$. If $j\colon X_{\operatorname{reg}}\to X$ is the corresponding open embedding, then for every integer $0\leq p\leq\dim(X)$ we let $$\Omega_{X}^{[p]}:=j_{*}\Omega^{p}_{X_{\operatorname{reg}}}=\big(\wedge^{p}\Omega_{X}\big)^{**},$$whose global sections are called \textit{reflexive $p$-forms} on $X$. A reflexive $p$-form on $X$ is then a holomorphic $p$-form on $X_{\operatorname{reg}}$. 

There is a notion of \emph{singular K\"ahler form} due to Grauert and recalled in \cite[Section~2.3]{BakkerLehn:PrimitiveSymplecticVarieties}. We will not recall the definition of singular K\"ahler forms, but we point out some of their properties. 
Every singular K\"ahler form $\omega$ gives a class $[\omega]\in\oH^2(X,\RR)$; the set of classes in $\oH^2(X,\RR)$ obtained in this way form an open cone that lies in $\oH^{1,1}(S,\RR)$,  where the latter can be interpreted classically as
\[ \oH^{1,1}(X,\RR)=F^1\oH^2(X,\CC)\cap\oH^2(X,\RR), \]
where we are using the mixed Hodge structure on $\oH^2(X,\CC)$ (see \cite[Definition~2.5, Remark~2.6.(1), Proposition~2.8]{BakkerLehn:PrimitiveSymplecticVarieties}). In particular, in the cases of interest to us, the group $\oH^2(X,\CC)$ will have a pure Hodge structure and $\oH^{1,1}(X,\RR)=\oH^1(X,\Omega^{[1]}_X)\cap\oH^2(X,\RR)$.

A normal complex analytic variety admitting a K\"ahler form will be called a \emph{K\"ahler space}. By \cite[II, 1.2.1~Proposition]{Varouchas} a smooth K\"ahler space is a K\"ahler manifold in the classical sense. Moreover, if $X$ is reduced, then there exists a resolution of singularities $\widetilde{X}\to X$ such that $\widetilde{X}$ is a K\"ahler manifold.
A subspace of a K\"ahler space is again a K\"ahler space (see for example \cite[II, 1.3.1(i)~Proposition]{Varouchas}).

We now recall the definitions of a symplectic form and a symplectic variety (cf.\ \cite{Bea00}).

\begin{defn}
{\rm Let $X$ be a normal compact K\"ahler space.
\begin{enumerate}
	\item A \textit{symplectic form} on $X$ is a closed reflexive 2-form $\sigma$ on $X$ which is non-degenerate at each point of $X_{\operatorname{reg}}$.
	\item If $\sigma$ is a symplectic form on $X$, the pair $(X,\sigma)$ is a \textit{symplectic variety} if for every (K\"ahler) resolution $f\colon\widetilde{X}\to X$ of the singularities of $X$, the holomorphic symplectic form $\sigma_{\operatorname{reg}}:=\sigma_{|X_{\operatorname{reg}}}$ extends to a holomorphic 2-form on $\widetilde{X}$. By a slight abuse of notation, in this case we will say that $X$ is a symplectic variety.
\end{enumerate}}
\end{defn}

In what follows, we will be interested in two types of symplectic varieties, namely primitive symplectic varieties and irreducible symplectic varieties. We recall their definitions following \cite{BakkerLehn:PrimitiveSymplecticVarieties} and \cite{GKP}. Before doing this, we recall that if $X$ and $Y$ are two irreducible normal compact complex analytic varieties, a \textit{finite quasi-étale morphism} $f\colon Y\to X$ is a finite morphism which is étale in codimension one.

\begin{defn}
\label{defn:irrvar}
{\rm Let $X$ be a symplectic variety and $\sigma$ a symplectic form on $X$.
\begin{enumerate}
	\item The variety $X$ is a \textit{primitive symplectic variety} if $\oH^{1}(X,\mathcal{O}_{X})=0$ and $\oH^{0}(X,\Omega_{X}^{[2]})=\mathbb{C}\sigma$.
	\item The variety $X$ is an \textit{irreducible symplectic variety} if for every finite quasi-étale morphism $f\colon Y\to X$ the exterior algebra of reflexive forms on $Y$ is spanned by $f^{[*]}\sigma$.
\end{enumerate}}
\end{defn}

An irreducible symplectic variety is primitive symplectic (so in particular it is a symplectic variety), but there are examples of primitive symplectic varieties that are not irreducible symplectic. 

By \cite[Corollary~13.3]{GGK:Klt}, an irreducible symplectic variety $X$ is simply connected. In particular, the $\mathbb{Z}$-module $\oH^{2}(X,\mathbb{Z})$ is free. Moreover, the fact that irreducible symplectic varieties are simply connected together with the Bogomolov Decomposition Theorem imply that smooth irreducible symplectic varieties are irreducible holomorphic symplectic manifolds.

We conclude this section by recalling a result originally due to Kaledin about the stratification of the singularities of a symplectic variety, that together with Remark \ref{rmk:strat} will play an important role in what follows (see \cite[Theorem~2.3]{Kaledin:Poisson} or \cite[Theorem~3.4.(2)]{BakkerLehn:PrimitiveSymplecticVarieties}).

\begin{prop}%[\cite{Kaledin:Poisson}]
\label{prop:stratification}
Let $X$ be a symplectic variety, and consider the finite stratification by closed subvarieties
\[ X=X_0\supset X_1\supset\cdots\supset X_m, \]
where $X_{i+1}$ is the singular locus with reduced structure $(X_{i}^{\operatorname{sing}})_{\operatorname{red}}$  of $X_{i}$. 

Then for every $i=0,\cdots,m$, the normalisation of each irreducible component of $X_i$ is a symplectic variety.
\end{prop}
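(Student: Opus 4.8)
The plan is to identify the given stratification with the stratification of $X$ by symplectic leaves of a canonical Poisson structure, following Kaledin. First I would observe that the symplectic form $\sigma$, being non-degenerate on $X_{\operatorname{reg}}$, inverts to a holomorphic Poisson bivector there; since $X$ is normal and $X\setminus X_{\operatorname{reg}}$ has codimension at least $2$, the induced Poisson bracket on regular functions extends across the singular locus by Hartogs, making the whole of $X$ a Poisson variety whose open symplectic leaf is exactly $X_{\operatorname{reg}}$. The symplectic leaves of this Poisson structure are the geometric objects whose closures I want to match with the $X_i$.

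The technical heart, and the step I expect to be the main obstacle, is the local normal form. For each $x\in X$ one wants an analytic (or at least formal) splitting
\[ \widehat{\cO}_{X,x}\cong\widehat{\cO}_{S,x}\,\widehat{\otimes}\,\widehat{\cO}_{Y,0}, \]
where $S$ is the symplectic leaf through $x$ and $(Y,0)$ is a transverse symplectic variety whose only leaf through $0$ is the point itself. This is proved by adapting Weinstein's splitting theorem to the singular setting: one constructs coordinates along the leaf directions and pairs them off via the Poisson bracket to produce a formal Darboux chart, identifying the complementary transverse slice as a symplectic cone. The two delicate points are that the transverse slice $Y$ is again a \emph{symplectic} variety, so that a dimensional recursion closes up, and that the splitting is compatible with the analytic and not merely formal structure; this is precisely the content of \cite[Theorem~2.3]{Kaledin:Poisson} (see also \cite[Theorem~3.4]{BakkerLehn:PrimitiveSymplecticVarieties}).

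Granting the local model, the analytic isomorphism type of $X$ near $x$ depends only on the transverse slice $Y$, hence is constant along $S$; consequently each symplectic leaf is contained in a single difference $X_i\setminus X_{i+1}$, and conversely each such difference is a union of leaves. It follows that every $X_i$ is, set-theoretically, a union of closures of leaves, and that the open dense subset $Z\cap(X_i\setminus X_{i+1})$ of any irreducible component $Z$ of $X_i$ is a union of symplectic leaves, each an honest holomorphic symplectic manifold carrying the reduced symplectic form. Fix such a leaf $S_Z$ dense in $Z$.

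Finally, to upgrade the leaf form $\sigma_{S_Z}$ to a symplectic structure on the normalisation $\nu\colon\widetilde{Z}\to Z$, I would argue by induction on $\dim X$. The form $\sigma_{S_Z}$ is a closed, non-degenerate holomorphic $2$-form on $S_Z\cong\nu^{-1}(S_Z)$, hence defines a reflexive $2$-form on $\widetilde{Z}$; it remains to check non-degeneracy on all of $\widetilde{Z}_{\operatorname{reg}}$ and extension to a resolution. Near a generic point $x'$ of a deeper stratum the local splitting identifies the pair $(X,Z)$ with $(S'\times Y',\,S'\times W)$, where $S'$ is the leaf through $x'$, the transverse slice $Y'$ is a symplectic variety with $\dim Y'<\dim X$, and $W\subset Y'$ is a closure of one of its strata. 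The inductive hypothesis applied to $Y'$ shows that the symplectic form on $W$ extends to a resolution of its normalisation; taking the product with the smooth symplectic factor $S'$ yields the extension across $x'$, and non-degeneracy follows from the product form of the symplectic structure. Since $Z$ is closed in the compact Kähler space $X$, its normalisation $\widetilde{Z}$ is again a compact Kähler space by the results of Varouchas recalled above, so $\widetilde{Z}$ is a symplectic variety, completing the induction.
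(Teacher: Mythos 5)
The paper offers no proof of this proposition at all: it is stated as a known result, deferred entirely to \cite[Theorem~2.3]{Kaledin:Poisson} and \cite[Theorem~3.4.(2)]{BakkerLehn:PrimitiveSymplecticVarieties}. Your proposal rests on exactly the same source --- the local splitting you call the ``technical heart'' is part of Kaledin's Theorem~2.3, which in fact already contains the full stratification statement, including that the normalisations of the components of the strata are symplectic --- so your sketch (Poisson bivector, symplectic leaves, Weinstein-type splitting, dimension induction) is an exposition of the cited proof rather than a different route, and it matches the paper's approach.
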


We notice in particular that each stratum of the stratification of the singularities is even dimensional. 

%%%%%%%%%%%%%%%%%%%%%%%%%%%%%%%%%%%%%
%%%%%%%%%%%%%%%%%%%%%%%%%%%%%%%%%%%%%
\subsection{Locally trivial deformations of symplectic varieties}

\begin{defn}\label{def:lt}
\begin{enumerate}
\item A \textit{locally trivial family} is a proper morphism $f\colon\cX\to T$ of complex analytic spaces such that $T$ is connected and, for every point $x\in\cX$, there exist open neighborhoods $V_x\subset \cX$ and $V_{f(x)}\subset T$, and an open subset $U_x\subset f^{-1}(f(x))$ such that 
$$V_x\cong U_x\times V_{f(x)},$$
where the isomorphism is an isomorphism of analytic spaces commuting with the projections over $T$.
A \textit{locally trivial deformation} of a complex analytic variety $X$ is a locally trivial family $f\colon\mathcal{X}\to T$ for which there is $t\in T$ such that $f^{-1}(t)\simeq X$. 

\item A \emph{locally trivial family of primitive (resp.\ irreducible) symplectic varieties} is a locally trivial family whose fibres are all primitive (resp.\ irreducible) symplectic. 

\item Two primitive symplectic varieties are said to be \emph{locally trivially deformation equivalent} if they are members of a locally trivial family of primitive sympelctic varieties.
\end{enumerate}
\end{defn} 

\begin{rmk}
The fibres of a locally trivial family of primitive or irreducible symplectic varieties are K\"ahler spaces by definition.
\end{rmk}

As usual, when we say that any small deformation of $X$ enjoys a property, we mean that there exists an analytic open neighborhood $U$ of the base of a versal deformation of $X$ such that every fiber over $U$ enjoys the property.

The behaviour of primitive symplectic varieties under small locally trivial deformations is known thanks to the following result. 

\begin{prop}[\protect{\cite[Corollary~4.11]{BakkerLehn:PrimitiveSymplecticVarieties}}]\label{prop:small is primitive}
Let $X$ be a primitive symplectic variety. Then any small locally trivial deformation of $X$ is again a primitive symplectic variety.
\end{prop}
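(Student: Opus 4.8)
The plan is to verify, for a small locally trivial deformation $f\colon\cX\to T$ of $X=X_0$, that a nearby fibre $X_t$ satisfies the three defining conditions of a primitive symplectic variety: that it is a symplectic variety, that $\oH^1(X_t,\cO_{X_t})=0$, and that $\oH^0(X_t,\Omega^{[2]}_{X_t})$ is one–dimensional and spanned by a symplectic form.

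First I would exploit local triviality to transport the local analytic structure: by Definition~\ref{def:lt} every point of $X_t$ has a neighbourhood analytically isomorphic to one in $X$, so $X_t$ is normal and its singularities are analytically of the same type as those of $X$. In particular $X_t$ has rational Gorenstein (indeed symplectic) singularities, and consequently every reflexive $p$–form on $X_t$ extends to any resolution by the extension theorem for reflexive forms on varieties with rational/klt singularities. Thus the only conditions requiring genuinely global input are the K\"ahler property and the existence and uniqueness (up to scalar) of the symplectic form. For the former I would use that being K\"ahler is stable under small deformations, so after shrinking $T$ we may assume each $X_t$ is a K\"ahler space.

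For the symplectic form I would invoke the deformation–invariance of the reflexive Hodge numbers. The key point is that the sheaves $R^qf_*\Omega^{[p]}_{\cX/T}$ of relative reflexive differentials are locally free and their formation commutes with base change; granting this, the numbers $h^{p,q}(X_t):=\dim\oH^q(X_t,\Omega^{[p]}_{X_t})$ are locally constant in $t$. Constancy at $(p,q)=(1,0)$ yields $\oH^1(X_t,\cO_{X_t})=0$, and at $(p,q)=(0,2)$ yields $\dim\oH^0(X_t,\Omega^{[2]}_{X_t})=1$. A generator $\sigma_t$ of this line is a reflexive $2$–form; non–degeneracy on $X_{t,\operatorname{reg}}$ is an open condition (equivalently, the reflexive top power $\sigma_t^{\wedge n}$ trivialises $\omega_{X_t}$, which remains trivial along the family), so for $t$ near $0$ the form $\sigma_t$ is symplectic on the smooth locus, and its extension to a resolution of $X_t$ is a holomorphic $2$–form on a compact K\"ahler manifold, hence closed; therefore $\sigma_t$ itself is closed. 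This simultaneously shows that $(X_t,\sigma_t)$ is a symplectic variety and establishes the two cohomological conditions.

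The main obstacle is precisely the local freeness and base–change property of $R^qf_*\Omega^{[p]}_{\cX/T}$, equivalently the degeneration at $E_1$ of a reflexive Hodge–to–de Rham spectral sequence together with the local constancy of relative reflexive de Rham cohomology along a locally trivial family. This is the technical heart of the statement and is exactly what Bakker--Lehn supply; once this invariance is available, the three conditions follow as above, using only that they already hold for $X$.
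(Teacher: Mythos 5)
The paper offers no proof of this proposition: it is imported verbatim from \cite[Corollary~4.11]{BakkerLehn:PrimitiveSymplecticVarieties}, so the only meaningful benchmark for your proposal is Bakker--Lehn's own argument. Measured against that, your outline is a faithful reconstruction of their strategy: local triviality transports the local analytic (hence singularity) structure to nearby fibres; the genuinely global input is the local freeness and base-change property of $R^qf_*\Omega^{[p]}_{\cX/T}$ in low degrees (degeneration of the reflexive Hodge--de Rham spectral sequence), which gives constancy of $h^{0,1}$ and $h^{[2],0}$; and one then produces a holomorphically varying $\sigma_t$, checks nondegeneracy, and gets closedness by extending to a K\"ahler resolution. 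Deferring that technical core back to Bakker--Lehn is exactly what the paper itself does, so on this front there is nothing to object to.

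The genuine gap is the step you wave through as routine: ``being K\"ahler is stable under small deformations.'' That is Kodaira--Spencer for compact \emph{manifolds}; for singular compact complex spaces it is not a general fact, and it is precisely one of the delicate points of the Bakker--Lehn proof. They handle it via a criterion of Bingener for openness of the K\"ahler property, whose cohomological hypothesis (surjectivity of $\oH^2(X,\RR)\to\oH^2(X,\cO_X)$) must be verified for primitive symplectic varieties --- this is where the (1-)rationality of the singularities and the purity of the weight-two Hodge structure on $\oH^2(X,\ZZ)$ enter. So the K\"ahler property deserves the same status in your sketch as the Hodge-number invariance: a specific citation with a hypothesis to check, not a generality; ironically, it is the global input you dismissed, rather than the one you deferred, that is unjustified as written. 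A secondary soft spot: ``nondegeneracy is open because $\omega_{X_t}$ remains trivial along the family'' also needs an argument, since triviality of the canonical sheaf is not obviously deformation-invariant. The standard fix is to observe that the zero locus of $\sigma_t^{\wedge n}$ is an effective divisor whose cohomology class is flat in the family, hence zero, and a nonzero effective divisor with vanishing class cannot exist on a compact K\"ahler space --- note that this, too, presupposes that $X_t$ has already been shown to be K\"ahler.
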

 
The same result for irreducible symplectic varieties seems to be unknown. We provide here two arguments that applies to two subclasses of irreducible symplectic varieties, which will be mostly interesting for us. 

\begin{prop}\label{prop:small when simply connected}
Let $X$ be an irreducible symplectic variety with simply connected regular locus.  
Then any small locally trivial deformation $X'$ of $X$ is an irreducible symplectic variety with simply connected regular locus.
\end{prop}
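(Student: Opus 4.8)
The plan is to bootstrap from the primitive case. Since an irreducible symplectic variety is in particular primitive symplectic (Definition~\ref{defn:irrvar}), Proposition~\ref{prop:small is primitive} already guarantees that every small locally trivial deformation $X'$ of $X$ is a primitive symplectic variety; it remains to upgrade ``primitive'' to ``irreducible'' and to control the topology of the regular locus. For the latter I would first observe that local triviality is a purely local-analytic condition that preserves the stratification by singularities: shrinking the base of a versal deformation to a contractible neighbourhood $U$, the family $f\colon\cX\to U$ becomes a topologically trivial fibre bundle whose trivialisation respects the open stratum, since a point is smooth in its fibre if and only if the corresponding point of the local model $U_x\times V_{f(x)}$ is. Hence $X'_{\operatorname{reg}}$ is homeomorphic to $X_{\operatorname{reg}}$, and in particular $\pi_1(X'_{\operatorname{reg}})=\pi_1(X_{\operatorname{reg}})=1$. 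This disposes of the ``simply connected regular locus'' part of the statement.

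Next I would reduce the irreducibility of $X'$ to a statement about reflexive forms on $X'$ alone. By definition, $X'$ is irreducible symplectic once the reflexive exterior algebra of every finite quasi-\'etale cover $Y\to X'$ is generated by the pulled-back symplectic form. Since $X'$ is normal, restriction to $X'_{\operatorname{reg}}$ identifies connected finite quasi-\'etale covers of $X'$ with finite \'etale covers of the smooth variety $X'_{\operatorname{reg}}$ (purity of the branch locus, followed by Riemann extension and normalisation). As $X'_{\operatorname{reg}}$ is simply connected, there are no nontrivial such covers, so the only cover to test is $X'$ itself. Thus it suffices to prove that $\bigoplus_{p}\oH^0(X',\Omega^{[p]}_{X'})$ is spanned by the symplectic form $\sigma'$, equivalently that $\dim\oH^0(X',\Omega^{[p]}_{X'})$ equals $1$ for $p$ even and $0$ for $p$ odd.

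Finally I would obtain these dimensions by deformation invariance. Because the family is locally trivial, the relative reflexive sheaves $\Omega^{[p]}_{\cX/U}$ are compatible with the local product structure, so the pushforwards $f_*\Omega^{[p]}_{\cX/U}$ are locally free and commute with base change; consequently the reflexive Hodge numbers $h^{p,0}(\cX_t)=\dim\oH^0(\cX_t,\Omega^{[p]}_{\cX_t})$ are constant on $U$. Since $X$ is irreducible symplectic with simply connected regular locus, its reflexive algebra is already generated by $\sigma$, so $h^{p,0}(X)$ is $1$ for even $p$ and $0$ for odd $p$; by constancy the same holds for $X'$, and combined with the previous paragraph this shows that $X'$ is irreducible symplectic. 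The main obstacle is precisely this last invariance: one must ensure that the reflexive differentials behave well in locally trivial (rather than arbitrary) families, which is the technical input extending the $p=1,2$ computations underlying Proposition~\ref{prop:small is primitive}. As an alternative to the Hodge-number argument, one could instead invoke the singular Beauville--Bogomolov decomposition for klt varieties with trivial canonical class: the simple connectivity of $X'_{\operatorname{reg}}$ forces $X'$ to coincide with its decomposition cover, and the vanishing $\oH^1(X',\cO_{X'})=0$ together with $\dim\oH^0(X',\Omega^{[2]}_{X'})=1$ and the non-degeneracy of $\sigma'$ leaves exactly one factor, necessarily an irreducible symplectic one.
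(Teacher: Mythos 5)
Your proof follows the same skeleton as the paper's: primitivity of small locally trivial deformations via Proposition~\ref{prop:small is primitive}, topological triviality of the family of regular loci (the paper justifies this step by Thom's First Isotopy Lemma, \cite[Theorem~6.5]{Dimca}, which is exactly the citation your trivialisation argument needs, since gluing the local analytic product structures into a global topological trivialisation is not automatic), and the conclusion that $\pi_1(X'_{\operatorname{reg}})=1$, so $X'$ admits no non-trivial finite quasi-\'etale covers. The genuine difference is what happens next: the paper's proof stops at this point, whereas you correctly observe that the definition of irreducible symplectic variety also constrains the \emph{trivial} cover, i.e.\ one must still check that $\oH^0(X',\Omega^{[p]}_{X'})$ is spanned by $(\sigma')^{p/2}$ for all $p$, which primitivity controls only for $p\le 2$. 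Your extra step is thus a real completion of the argument rather than a redundancy; the paper carries out precisely this kind of verification only later, in the proof of Proposition~\ref{prop:small when terminal}, via a relative resolution (Namikawa) together with the equality $h^{[p],0}=h^{p,0}$ of a resolution from \cite[Theorem~2.4]{GKP}, and that argument applies verbatim here. The weakest link in your write-up is the justification of the constancy of $h^{[p],0}$: asserting that $f_*\Omega^{[p]}_{\cX/U}$ is locally free and commutes with base change is close to circular (Grauert's criterion would require the constancy you are trying to establish). Two clean repairs: (i) use local triviality only to identify $\Omega^{[p]}_{\cX/U}\big|_{X_t}$ with $\Omega^{[p]}_{X_t}$ and to get flatness, so that semicontinuity gives $h^{[p],0}(X_t)\le h^{[p],0}(X)$ for $t$ near $0$, while the non-vanishing classes $(\sigma_t)^{p/2}$ give the reverse bound for $p$ even; or (ii) invoke the relative-resolution argument just mentioned. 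Your alternative route via the Beauville--Bogomolov decomposition is also sound in principle, but since $X'$ is a priori only a K\"ahler space, it requires the decomposition theorem for klt K\"ahler spaces with trivial canonical class, a much heavier input than anything used in the paper.
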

\begin{proof}
Consider a locally trivial deformation $\alpha\colon\mathcal{X}\to T$ of $X$, and let $0\in T$ be such that the fiber $X_{0}$ of $\alpha$ over $0$ is isomorphic to $X$. By Proposition~\ref{prop:small is primitive} we know that, up to shrinking $T$, for every $t\in T$ we have that $X_{t}$ is a (K\"ahler) primitive symplectic variety. Since $\alpha\colon\mathcal{X}\to T$ is locally trivial, the regular loci of each fibre $X_t$ fits together to form a flat fibration $\alpha_{\operatorname{reg}}\colon\mathcal{X}_{\operatorname{reg}}\to T$ that locally on $T$ is topologically trivial, by Thom's First Isotopy Lemma (\cite[Theorem~6.5]{Dimca}). In particular, for any $t,t'\in T$, the fundamental groups $\pi_1(X_{t,\operatorname{reg}})$ and $\pi_1(X_{t',\operatorname{reg}})$ are isomorphic and trivial: hence there are no non-trivial finite quasi-étale covers of $X_t$ and the proof is completed.
\end{proof}

We can drop the strong hypothesis on the fundamental group of the smooth locus, provided that the singularities are mild enough.

\begin{prop}\label{prop:small when terminal}
    Let $X$ be an irreducible symplectic variety with at most terminal singularities. Then any small locally trivial deformation of $X$ is an irreducible symplectic variety with at most terminal singularities.
\end{prop}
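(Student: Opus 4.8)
The plan is to mimic the proof of Proposition~\ref{prop:small when simply connected}, replacing the topological triviality argument (which forced simple connectivity of the smooth locus) by a deformation-invariance statement for terminal singularities. First I would take a small locally trivial deformation $\alpha\colon\mathcal{X}\to T$ of $X$ with central fibre $X_0\cong X$, and apply Proposition~\ref{prop:small is primitive} to conclude that, after shrinking $T$, every fibre $X_t$ is a primitive symplectic variety. The point is then to upgrade ``primitive'' to ``irreducible'' for every $t$, and the bridge between the two notions is controlled by the fundamental group of the smooth locus together with the behaviour of reflexive forms under quasi-\'etale covers.

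The key step is to show that terminality is preserved in a locally trivial family. Here I would use that local triviality means the family is, analytically locally on the total space, a product $U_x\times V_{f(x)}$; in particular every fibre has, at corresponding points, analytically isomorphic singularity germs. Since terminality of a symplectic (hence canonical, Gorenstein) singularity is a property of the local analytic germ, local triviality transports it verbatim from $X_0$ to every nearby fibre $X_t$. This is the conceptual heart: local triviality is exactly the hypothesis that makes the singularity type constant across the family, so no degeneration or improvement of singularities can occur.

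With terminality in hand for every fibre, I would invoke the characterisation that for a symplectic variety $X_t$ with terminal singularities, being an irreducible symplectic variety is detected by reflexive forms on quasi-\'etale covers, and that terminal symplectic varieties have singular locus of codimension at least $4$. The plan is to combine this codimension bound with the deformation invariance of the relevant cohomological/holonomy data: a terminal primitive symplectic variety that is irreducible stays irreducible under small locally trivial deformation because the defining condition (the exterior algebra of reflexive forms on every finite quasi-\'etale cover being generated by the pulled-back symplectic form) is governed by $\oH^0(X_t,\Omega^{[2]}_{X_t})=\CC\sigma_t$ and the triviality of the algebraic fundamental group, both of which are locally trivially deformation invariant by the same germ-constancy argument applied to covers.

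The main obstacle I anticipate is controlling finite quasi-\'etale covers \emph{uniformly in the family}: irreducibility is not a condition on $X_t$ alone but on all its quasi-\'etale covers $Y_t\to X_t$, and one must ensure that such covers also fit into a locally trivial family over $T$ so that the reflexive-form computation can be propagated from $t=0$. I would address this by using that terminal (indeed any quotient-type) singularities have a well-behaved \'etale-in-codimension-one local fundamental group, so that the covers are themselves locally trivial deformations of the central cover; the delicate part is checking that the finiteness and connectedness of these covers is maintained, which is where the codimension $\geq 4$ estimate for the singular locus and purity of the branch locus are used. Once the covers are organised into locally trivial families, the invariance of $f^{[*]}\sigma$-generation follows by the same argument as in the simply connected case.
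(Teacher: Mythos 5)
Your skeleton matches the paper's strategy --- reduce to primitivity via Proposition~\ref{prop:small is primitive}, observe that terminality is constant in a locally trivial family because the analytic singularity germs are constant, and identify the real issue as fitting the quasi-\'etale covers into families over $T$ --- but the two steps you leave vague are exactly where the content lies, and your proposed substitutes would not close them. First, the construction of the family of covers. Constancy of $\pi_1(X_{t,\operatorname{reg}})$ (Thom's isotopy lemma) gives a family of \'etale covers $f'\colon\mathcal{Y}'\to\mathcal{X}_{\operatorname{reg}}$ of the relative \emph{smooth} locus, but irreducibility is a condition on covers of the compact fibres $X_t$, and to compare reflexive forms across $t$ you need a total space $\mathcal{Y}\to T$, locally trivial with normal fibres, extending that \'etale family across the singular locus. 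Appealing to ``well-behaved local fundamental groups of terminal singularities'' and ``purity of the branch locus'' does not produce such a total space (and terminal symplectic singularities need not be of quotient type). The paper builds it concretely as $\mathcal{Y}=\underline{\operatorname{Spec}}_{\mathcal{X}}(j_*f'_*\mathcal{O}_{\mathcal{Y}'})$, and the coherence of $j_*f'_*\mathcal{O}_{\mathcal{Y}'}$ is precisely where terminality is used: it forces the singular locus to have codimension at least $3$, which is the hypothesis of Siu's analytic extension theorem. The remark following the paper's proof stresses that this codimension bound is the \emph{only} role of the terminality hypothesis; your germ-constancy observation, while correct, is not where the hypothesis earns its keep. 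One must then still check by hand that this relative $\operatorname{Spec}$ is locally trivial over $T$ and pass to the relative normalisation.

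Second, your concluding step --- that ``the invariance of $f^{[*]}\sigma$-generation follows by the same argument as in the simply connected case'' --- refers to an argument that does not exist: in Proposition~\ref{prop:small when simply connected} the conclusion is vacuous, since there are no nontrivial quasi-\'etale covers at all. Here covers genuinely occur, and the paper concludes by a Hodge-theoretic argument you omit: take a relative resolution $\widetilde{\mathcal{Y}}\to\mathcal{Y}$ with K\"ahler fibres (possible after shrinking $T$, by \cite[Proposition~5]{Namikawa:ext}), use constancy of $h^{p,0}(\widetilde{Y}_t)$ in the smooth family, and transfer it to the reflexive Hodge numbers via the equality $h^{[p],0}(Y_t)=h^{p,0}(\widetilde{Y}_t)$ of \cite[Theorem~2.4]{GKP}; then $h^{[p],0}(Y_t)=h^{[p],0}(Y_0)$, which equals $1$ for $p$ even and $0$ for $p$ odd because $X_0$ is irreducible symplectic. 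Relatedly, your claim that irreducibility is ``governed by $\oH^0(X_t,\Omega^{[2]}_{X_t})=\CC\sigma_t$ and the triviality of the algebraic fundamental group'' misstates the definition: an irreducible symplectic variety may well have nontrivial $\pi_1$ of its regular locus; what must be verified is the reflexive-form condition on each cover, and that is exactly what the resolution-plus-GKP argument accomplishes.
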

\begin{proof}
    Consider again a locally trivial deformation $\alpha\colon\mathcal{X}\to T$ of $X$, and let $0\in T$ be such that the fiber $X_{0}$ of $\alpha$ over $0$ is isomorphic to $X$. Up to shrinking $T$, again by Proposition~\ref{prop:small is primitive} we can suppose that any fibre $X_t$ is a primitive symplectic variety.  We need to prove that if $f_{t}\colon Y_{t}\to X_{t}$ is a finite quasi-étale covering, then the dimension $h^{[p],0}(Y_{t})$ of $\oH^{0}(Y_{t},\Omega_{Y_{t}}^{[p]})$ is 0 if $p$ is odd, and 1 if $p$ is even. 
    
    First of all, as we saw in the proof of Proposition~\ref{prop:small when simply connected}, the fundamental groups of the smooth loci of the fibres of $\alpha$ are all isomorphic. Moreover, as the isomorphism classes of finite quasi-étale coverings of $X_{t}$ are prescribed by the subgroups of $\pi_{1}(X^{\operatorname{reg}}_{t})$, it is enough to prove the result when $f_t$ arises from a deformation of a finite quasi-étale covering (we recall that by definition the total space of a quasi-étale covering is normal).

    We make the following claim.
    \begin{claim}
       Let $f_0\colon Y_0\to X_0$ be a finite quasi-étale covering of $X_0$; then there exist a locally trivial family $\beta\colon\mathcal{Y}\to T$ and a proper morphism $f\colon\mathcal{Y}\to\mathcal{X}$ such that $Y_t=\beta^{-1}(t)$ is normal and $f_t\colon Y_t\to X_t$ is a finite quasi-étale covering, for every $t\in T$. 
    \end{claim}
        
    Let us first see how the claim would finish the proof.
    First of all, since $\beta$ is a locally trivial family, up to shrinking $T$ we can take a relative resolution of singularities $\pi\colon\widetilde{\mathcal{Y}}\to\mathcal{Y}$ such that $\widetilde{Y}_{t}$ is K\"ahler for all $t\in T$ (by \cite[Proposition~5]{Namikawa:ext}), and hence the function mapping $t\in T$ to $h^{p,0}(\widetilde{Y}_{t})$ is constant. By \cite[Theorem~2.4]{GKP} there is an equality $h^{[p],0}(Y_{t})=h^{p,0}(\widetilde{Y}_{t})$, so that $h^{[p],0}(Y_{t})$ is also constant along $T$. 
    It follows that 
    \[ \dim \oH^{0}(Y_{t},\Omega_{Y_{t}}^{[p]})=\dim \oH^{0}(Y,\Omega_{Y}^{[p]})=\left\{
    \begin{array}{ll} 
    1, & p\in 2\mathbb{N}\\ 0, & p\notin 2\mathbb{N}
    \end{array}\right. \]
    where the last equality comes from the fact that $X_0=X$ is an irreducible symplectic variety and $Y_0=Y$ is a finite quasi-étale covering of $X_0$.
    
    \proof[Proof of the Claim.] First of all, since the family $\alpha\colon\mathcal{X}\to T$ is locally trivial and the central fibre $X_0$ is terminal by hypothesis, all the other fibres $X_t$ are terminal. In particular, if $\alpha_{\operatorname{reg}}\colon\mathcal{X}_{\operatorname{reg}}\to T$ is the smooth family of the smooth loci, then $\mathcal{X}\setminus\mathcal{X}_{\operatorname{reg}}$ has codimension at least $3$ (in fact Namikawa proves in \cite[Corollary~1]{Namikawa:Terminal} that the codimension of the singular locus is always at least $4$, but we will not need this fact). Now, since the family $\alpha_{\operatorname{reg}}\colon\mathcal{X}_{\operatorname{reg}}\to T$ is smooth (and the fundamental group of the fibres is constant), there exists another smooth family $\beta'\colon\mathcal{Y}'\to T$ and a proper étale cover $f'\colon\mathcal{Y}'\to\mathcal{X}_{\operatorname{reg}}$.

    The sheaf $f'_*\mathcal{O}_{\mathcal{Y}'}$ is a locally free sheaf of algebras on $\mathcal{X}_{\operatorname{reg}}$ and, if we denote by $j\colon\mathcal{X}_{\operatorname{reg}}\to\mathcal{X}$ the open immersion, by \cite[Theorem~2]{Siu} the sheaf $j_*f'_*\cO_{\mathcal{Y}'}$ is a coherent sheaf of $\cO_{\mathcal{X}}$-algebras. Let us define
    \[ f\colon\mathcal{Y}=\underline{\operatorname{Spec}}_{\mathcal{X}}(j_*f'_*\cO_{\mathcal{Y}'})\to\mathcal{X} \]
    and set $\beta:=\alpha\circ f$. 
    For every $t\in T$ the morphism $f_t\colon Y_t \to X_t$ is a finite morphism étale in codimension $1$: the morphism $f_t$ is not necessary quasi-étale because $Y_t$ can be non-normal in general.
    We need to show that $\beta\colon\mathcal{Y}\to T$ is locally trivial.

    Let $y\in\mathcal{Y}$ be a point and consider $x=f(y)\in\mathcal{X}$. Since $\alpha\colon\mathcal{X}\to T$ is locally trivial, there exist three open subsets $V_x\subset\mathcal{X}$, $V_{\alpha(x)}\subset T$ and $U_x\subset F=\alpha^{-1}(\alpha(x))$ such that $V_x\cong V_{\alpha(x)}\times U_x$. If we put $V^0_x=(V_x\cap\mathcal{X}_{\operatorname{reg}})$, then $V_x^0=V_{\alpha(x)}\times U_x^0$, where $U_x^0=(U_x\cap\mathcal{X}_{\operatorname{reg}})$. Moreover putting $\widetilde{V}_{x}^{0}:=(f')^{-1}(V_{x}^{0})$ and $\widetilde{U}_{x}^{0}:=(f')^{-1}(U_{x}^{0})$, we have the following commutative diagram:

    \begin{equation}\label{eqn:non ce la faccio piu'} 
    \xymatrix{
    V_{\alpha(x)}\times\widetilde{U}^0_x=\widetilde{V}^0_x\ar@{->}[r]\ar@{->}[d]^-{f'_x} &  R=\underline{\operatorname{Spec}}(j_{x,*}f'_{x,*}\cO_{\widetilde{V}^0_x})\ar@{->}[d]  \\
    V_{\alpha(x)}\times U_x^0=V_x^0\ar@{->}[r]^-{j_x}\ar@{->}[d]^-{p_2^0} & V_x=V_{\alpha(x)}\times U_x\ar@{->}[d]^-{p_2}  \\
    U_x^0\ar@{->}[r]^-{j_x^0} & U_x 
    } 
    \end{equation}
    
where the horizontal morphisms are the natural inclusions, and each square is cartesian.

Since $\widetilde{V}^0_x\cong V_{\alpha(x)}\times\widetilde{U}^0_x$, we have that 
    \begin{equation}\label{eqn:urca} f'_{x,*}\cO_{\widetilde{V}^0_x}\cong(p_2^0)^*(f'_{U_x^0,*}\cO_{\widetilde{U}^0_x}), 
    \end{equation}
    where $f'_{U_x^0}\colon \widetilde{U}^0_x\to U_x^0$ is the restriction of $f'_x$ to the second factor.

    We will now prove that there exists a sheaf of algebras $G$ on $U_x$ such that $j_{x,*}f'_{x,*}\cO_{\widetilde{V}^0_x}\cong p_2^* G$. Since $R=\underline{\operatorname{Spec}}(j_{x,*}f'_{x,*}\cO_{\widetilde{V}^0_x})$, this implies that $R=V_{\alpha(x)}\times \widetilde{U}_x$, where $V_{\alpha(x)}=V_{\beta(y)}\subset\beta^{-1}(\beta(y))$ is open. By using the equality (\ref{eqn:urca}), since the bottom square of the diagram (\ref{eqn:non ce la faccio piu'}) is cartesian we have the following chain of equalities,
    \begin{align*}
        j_{x,*}f'_{x,*}\cO_{\widetilde{V}^0_x} & \cong j_{x,*}(p_2^0)^*(f'_{U_x^0,*}\cO_{\widetilde{U}^0_x}) \\
        & \cong p_2^*(j^0_{x,*}f'_{U_x^0,*}\cO_{\widetilde{U}^0_x}).
    \end{align*}
    Therefore, putting $G=j^0_{x,*}f'_{U_x^0,*}\cO_{\widetilde{U}^0_x}$ concludes the proof that $\beta\colon\mathcal{Y}\to T$ is locally trivial.

    Finally, to conclude the proof of the claim, let us notice that by the local triviality of the family $\beta$, if we take the relative normalisation $\nu\colon\mathcal{Y}^{\operatorname{nor}}\to \mathcal{Y}$, then the family $\beta^{\operatorname{nor}}=\beta\circ\nu\colon\mathcal{Y}^{\operatorname{nor}}\to T$ remains locally trivial and the composition $\mathcal{Y}^{\operatorname{nor}}\to\mathcal{X}$ is still a finite quasi-étale covering; the same holds also for the fibers.
    \end{proof}

We wish to remark that singularities are supposed to be terminal in Proposition~\ref{prop:small when terminal} only in order to deduce that the singular locus has codimension at least $3$: this allows us to apply Siu's theorem about the coherency of the pushforward in the analytic category and get that $j_*f'_*\mathcal{O}_{\mathcal{Y}'}$ is coherent. In the algebraic category the analog of Siu's theorem holds assuming that the singular locus has codimension at least $2$: it follows that in the projective setting the statement of Proposition~\ref{prop:small when terminal} holds in the general case of canonical singularities. 

\begin{prop}
    Let $X$ be a projective irreducible symplectic variety. Let $f\colon\mathcal{X}\to T$ be a locally trivial family of primitive symplectic varieties as in Definition~\ref{def:lt} such that $f$ is projective and $T$ is quasi-projective, and let $\bar{t}\in T$ a point such that $\mathcal{X}_{\bar{t}}=X$. Then there exists an analytic open neighborhood $U\subset T$ of $\bar{t}$ such that, for every $t\in U$, the fibre $\mathcal{X}_t$ of $f$ is a projective irreducible symplectic variety.
\end{prop}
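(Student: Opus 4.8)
The plan is to treat this as the projective, canonical-singularity version of Proposition~\ref{prop:small when terminal}, applied directly to the concrete family $f$. Two things must be checked for $t$ near $\bar t$: that $\mathcal{X}_t$ is projective, and that it is irreducible symplectic. The first is immediate: since $f$ is a projective morphism, a relative very ample line bundle restricts to a very ample line bundle on each fibre, so every $\mathcal{X}_t$ is projective (in fact for all $t$, not only near $\bar t$). Moreover, by hypothesis $f$ is a family of primitive symplectic varieties, so each $\mathcal{X}_t$ is already primitive symplectic; the entire content is therefore to upgrade ``primitive'' to ``irreducible'' on a neighbourhood of $\bar t$.

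For this I would re-run the proof of Proposition~\ref{prop:small when terminal}, whose only use of the terminality hypothesis was to force the singular locus to have codimension at least $3$, so that Siu's analytic coherence theorem applies. Here $X$ is an irreducible symplectic variety, hence a symplectic variety, hence has canonical (indeed symplectic) singularities; in particular $X$ is normal, so its singular locus---and, by local triviality, that of every fibre $\mathcal{X}_t$---has codimension at least $2$. Since $f$ is projective and $T$ is quasi-projective, I work in the algebraic category, where the analogue of Siu's theorem holds for complements of codimension at least $2$ (equivalently, pushforward along the open immersion $j$ of the fibrewise regular locus preserves coherence). This is exactly the replacement announced in the remark following Proposition~\ref{prop:small when terminal}.

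With this substitution the argument goes through essentially verbatim. As in Proposition~\ref{prop:small when simply connected}, Thom's First Isotopy Lemma gives that the fundamental group of the regular locus is constant along $f$, so it suffices to treat finite quasi-étale covers arising from a deformation of a cover $f_0\colon Y_0\to X$. I would form the relative étale cover $f'\colon\mathcal{Y}'\to\mathcal{X}_{\operatorname{reg}}$ of the fibrewise-smooth locus, set $\mathcal{Y}=\underline{\operatorname{Spec}}_{\mathcal{X}}(j_*f'_*\mathcal{O}_{\mathcal{Y}'})$ with $j\colon\mathcal{X}_{\operatorname{reg}}\to\mathcal{X}$ the open immersion, and prove local triviality of $\beta\colon\mathcal{Y}\to T$ by the same cartesian-diagram computation as in~\eqref{eqn:non ce la faccio piu'}; passing to the relative normalisation keeps the family locally trivial and the fibres quasi-étale covers of the $\mathcal{X}_t$. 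A relative resolution $\widetilde{\mathcal{Y}}\to\mathcal{Y}$ has projective (hence Kähler) fibres in this setting, so the Hodge numbers $h^{p,0}(\widetilde{Y}_t)$ are constant, and by \cite[Theorem~2.4]{GKP} the reflexive Hodge numbers $h^{[p],0}(Y_t)=h^{p,0}(\widetilde{Y}_t)$ are constant too. Specialising at $\bar t$, where $X$ is irreducible symplectic, pins these numbers to $1$ for $p$ even and $0$ for $p$ odd for every such cover, which is precisely irreducibility of $\mathcal{X}_t$. Shrinking $T$ to an analytic neighbourhood $U$ of $\bar t$ and combining with projectivity completes the argument.

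The main obstacle is the coherence step in the projective setting with only codimension $2$: I must justify carefully that $j_*f'_*\mathcal{O}_{\mathcal{Y}'}$ is a coherent sheaf of $\mathcal{O}_{\mathcal{X}}$-algebras and that forming its relative $\underline{\operatorname{Spec}}$ and normalisation preserves local triviality fibrewise---this is precisely where terminality was genuinely used in the analytic proof and must be replaced by the algebraic extension theorem, using that symplectic singularities are canonical and hence normal. A secondary point to verify is that the relative resolution can be chosen with projective fibres, so that deformation-invariance of $h^{p,0}$ is available; this holds because we are over a quasi-projective base with projective total space, but it should be stated explicitly rather than imported from the analytic Namikawa reference used in Proposition~\ref{prop:small when terminal}.
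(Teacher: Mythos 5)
Your proposal is correct and coincides with the paper's intended argument: the paper states this proposition without a separate proof, immediately after the remark observing that terminality in Proposition~\ref{prop:small when terminal} was used only to force a codimension-$3$ singular locus for Siu's analytic extension theorem, and that in the projective/algebraic setting the analogous coherence statement holds in codimension $2$, so the same proof applies to canonical (in particular symplectic) singularities. Your write-up simply fleshes out exactly this substitution, together with the immediate projectivity of the fibres, which is what the paper intends.
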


\begin{rmk}\label{rmk:strat}
We remark that if $p\colon \cX\to T$ is a locally trivial family of symplectic varieties, there is a natural relative stratification 
\[ \cX=\cX_0\supset \cX_1\supset\cdots\supset \cX_m, \]
where for every $t\in T$
\[ \cX_t\supset \cX_{1,t}\supset\cdots\supset \cX_{m,t} \]
is the stratification in Proposition~\ref{prop:stratification} for the variety $\cX_t$, and for every $i=0,\cdots,m$, the restriction $p_i\colon\cX_i\to T$ is a locally trivial family. By construction, the stratum $X_m$ is smooth and, if $\mathcal{X}_{m,t}$ is irreducible and of strictly positive dimension, then $p_m\colon \cX_m\to T$ is a smooth family of irreducible holomorphic symplectic manifolds.
\end{rmk}

%%%%%%%%%%%%%%%%%%%%%%%%%%%%%%%%%%%%
%%%%%%%%%%%%%%%%%%%%%%%%%%%%%%%%%%%%
\subsection{Locally trivial monodromy operators}\label{section:pto}

We will now define the monodromy group of a primitive symplectic variety, following \cite{BakkerLehn:PrimitiveSymplecticVarieties}.  

First of all we recall that if $X$ is a primitive symplectic variety, the torsion free part $\oH^2(X,\ZZ)_{\operatorname{tf}}$ of the second integral cohomology group of $X$ is endowed with a nondegenerate bilinear form $q_X$ of signature $(3,b_2(X)-3)$ (see \cite[Section~5.1, Lemma~5.7]{BakkerLehn:PrimitiveSymplecticVarieties}), that will be called \textit{Beauville--Bogomolov--Fujiki form} of $X$. The pair $(\oH^2(X,\ZZ)_{\operatorname{tf}},q_X)$ will be called the \emph{Beauville--Bogomolov--Fujiki lattice} of $X$. In particular, if $X$ is irreducible symplectic, then $\oH^{2}(X,\mathbb{Z})$ is torsion free and therefore endowed with the Beauville--Bogomolov--Fujiki form.

The Beauville--Bogomolov--Fujiki form is a locally trivial deformation invariant of a primitive symplectic variety (see \cite[ Lemma~5.7]{BakkerLehn:PrimitiveSymplecticVarieties}). In particular, this implies that if $X_{1}$ and $X_{2}$ are two locally trivial deformation equivalent primitive symplectic varieties, there is an isometry between $\oH^{2}(X_{1},\mathbb{Z})_{\operatorname{tf}}$ and $\oH^{2}(X_{2},\mathbb{Z})_{\operatorname{tf}}$. We will use the notation $\Or(\oH^{2}(X_{1},\mathbb{Z})_{\operatorname{tf}},\oH^{2}(X_{2},\mathbb{Z})_{\operatorname{tf}})$ for the set of isometries from the Beauville--Bogomolov--Fujiki lattice of $X_1$ to the one of $X_2$; similarly we use the notation $\Or(\oH^2(X,\ZZ)_{\operatorname{tf}})$ for the group of isometries from the Beauville--Bogomolov--Fujiki lattice of $X$ to itself.

The isometries arising from locally trivial families will be called locally trivial parallel transport operators. More precisely, we recall the following definition.

\begin{defn}\label{defn:pto}
Let $X$, $X_1$ and $X_2$ be primitive symplectic varieties.
\begin{enumerate}
\item An isometry $g\in\Or(\oH^{2}(X_{1},\mathbb{Z})_{\operatorname{tf}},\oH^{2}(X_{2},\mathbb{Z})_{\operatorname{tf}})$ is a \emph{locally trivial parallel transport operator from $X_{1}$ to $X_{2}$} if there exists a locally trivial family $p\colon\cX\to T$ of primitive symplectic varieties and two points $t_1,t_2\in T$, with $\cX_{t_i}:=p^{-1}(t_i)=X_i$, such that $g$ is the parallel transport along a path from $t_1$ to $t_2$ in the local system $R^2p_*\ZZ$.
\item An isometry $g\in\Or(\oH^2(X,\ZZ)_{\operatorname{tf}})$ is a \emph{locally trivial monodromy operator} if it is a locally trivial parallel transport operator from $X$ to itself.
\end{enumerate}
\end{defn}

If $X_1$ and $X_2$ are two primitive symplectic varieties, we will let 
\[ \mathsf{PT}^{2}_{\operatorname{lt}}(X_1,X_2)\subset\Or(\oH^{2}(X_1,\mathbb{Z})_{\operatorname{tf}},\oH^{2}(X_2,\mathbb{Z})_{\operatorname{tf}}) \]
be the set of locally trivial parallel transport operators from $X_1$ to $X_2$.

If $X_1=X_2=X$, we put 
\[ \Mon^{2}_{\operatorname{lt}}(X):=\operatorname{PT}^{2}_{\operatorname{lt}}(X,X), \] which is then the subset of $\Or(\oH^{2}(X,\mathbb{Z})_{\operatorname{tf}})$ given by all locally trivial monodromy operators. 

\begin{lemma}\label{lemma:Mon is group}
Let $X$ be a primitive symplectic variety. The set $\Mon^2_{\operatorname{lt}}(X)$ of locally trivial monodromy operators is a subgroup of $\Or(\oH^2(X,\ZZ)_{\operatorname{tf}})$.
\end{lemma}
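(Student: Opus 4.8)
The plan is to verify the three subgroup axioms for $\Mon^2_{\operatorname{lt}}(X)=\mathsf{PT}^2_{\operatorname{lt}}(X,X)$: that it contains the identity, is closed under taking inverses, and is closed under composition. The conceptual point is that the collection of all locally trivial parallel transport operators between the primitive symplectic varieties lying in the locally trivial deformation class of $X$ forms a groupoid (this is the viewpoint that will be systematised later via Markman's groupoid representation); once one knows that composing two parallel transport operators yields another one and that inverses exist, the set $\mathsf{PT}^2_{\operatorname{lt}}(X,X)$ of operators from $X$ to itself is automatically the isotropy group at $X$, hence a subgroup of $\Or(\oH^2(X,\ZZ)_{\operatorname{tf}})$.

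The identity is a locally trivial monodromy operator: taking the constant family $\cX=X\times T$ over any connected complex analytic base $T$ (for instance a point) and the constant path, parallel transport in the resulting trivial local system is $\id$. Closure under inverses is equally direct. If $g\in\Mon^2_{\operatorname{lt}}(X)$ is realised by a family $p\colon\cX\to T$ and a path $\gamma$ from $t_1$ to $t_2$ with $\cX_{t_1}=\cX_{t_2}=X$, then the reversed path $\bar\gamma$ lives in the same family, has the same two endpoint fibres identified with $X$, and its parallel transport is $g^{-1}$; hence $g^{-1}\in\Mon^2_{\operatorname{lt}}(X)$.

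The crux is closure under composition. Let $g_1,g_2\in\Mon^2_{\operatorname{lt}}(X)$ be realised by families $p_i\colon\cX_i\to T_i$ ($i=1,2$) together with paths $\gamma_1$ from $a$ to $b$ in $T_1$ and $\gamma_2$ from $c$ to $d$ in $T_2$, where all four marked fibres are identified with $X$. I would glue the two bases at the point $b=c$, forming the wedge $T:=T_1\vee_{b=c}T_2$, which is again a connected complex analytic space, and correspondingly glue the total spaces along the common fibre $\cX_{1,b}=X=\cX_{2,c}$ to obtain a proper morphism $p\colon\cX:=\cX_1\vee_X\cX_2\to T$. Every fibre of $p$ is a fibre of $p_1$ or of $p_2$, hence a primitive symplectic variety, and the concatenation $\gamma_1*\gamma_2$ is a path in $T$ from $a$ to $d$ joining two fibres identified with $X$, along which parallel transport equals $g_2\circ g_1$. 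Thus $g_2g_1\in\Mon^2_{\operatorname{lt}}(X)$, provided $p$ is a locally trivial family.

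Verifying local triviality of the glued family at the points lying over the wedge point $b=c$ is the main obstacle; away from that fibre local triviality is inherited directly from $p_1$ and $p_2$. At a point $x$ of the fibre over $b=c$, I would take the local product trivialisations $V_x^{(i)}\cong U_x^{(i)}\times V^{(i)}$ supplied by the local triviality of $p_i$ near the relevant endpoint, with $V^{(1)}$ a neighbourhood of $b$ in $T_1$ and $V^{(2)}$ a neighbourhood of $c$ in $T_2$. After shrinking so that $U_x^{(1)}=U_x^{(2)}=U_x\subset X$, the two trivialisations agree on the common slice $U_x\times\{b=c\}$ and therefore glue to an isomorphism $V_x\cong U_x\times(V^{(1)}\vee V^{(2)})$ over the neighbourhood $V^{(1)}\vee V^{(2)}$ of $b=c$ in $T$. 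This exhibits the required local product structure, so $p$ is a locally trivial family of primitive symplectic varieties in the sense of Definition~\ref{def:lt}, completing the verification that composition stays inside $\Mon^2_{\operatorname{lt}}(X)$ and hence that $\Mon^2_{\operatorname{lt}}(X)$ is a subgroup of $\Or(\oH^2(X,\ZZ)_{\operatorname{tf}})$.
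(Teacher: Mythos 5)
Your proof is correct and is essentially the paper's own approach: the paper disposes of this lemma by citing the smooth-case argument (Markman's survey, footnote~3), which is precisely your glue-two-families-along-the-common-fibre construction together with reversed paths for inverses and the constant family for the identity. The only point you gloss is that the two local trivialisations need not literally agree on the common slice $U_x\times\{b=c\}$ after shrinking; one first normalises each trivialisation (composing with $\psi^{-1}\times\id$, where $\psi$ is its restriction to the central fibre) so that it restricts to the identity on the fibre over the wedge point, after which they do glue as you claim.
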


\begin{proof}
This follows as in the smooth case, see \cite[footnote~3]{Markman:Survey}.
\end{proof}

The group $\Mon^2_{\operatorname{lt}}(X)$ is called the \emph{locally trivial monodromy group} of $X$ and, by construction, it is a locally trivial deformation invariant. If $X$ is smooth, we will simply write $\Mon^{2}(X)$ for the monodromy group, as all smooth deformations of $X$ are locally trivial in this case.

The group $\Or(\oH^2(X,\ZZ)_{\operatorname{tf}})$ contains the subgroup $\Or^+(\oH^2(X,\ZZ)_{\operatorname{tf}})$ of \emph{orientation preserving isometries}. We refer to \cite[Section~4]{Markman:Survey} and \cite{MirandaMorrison} for a general account on orientations, but let us quickly recall the definition we use.

If $\widetilde{\mathcal{C}}_X\subset\oH^2(X,\RR)$ is the set of classes $\alpha$ such that $q_X(\alpha)>0$, then by \cite[Lemma~4.1]{Markman:Survey} we have $\oH^2(\widetilde{\mathcal{C}}_X,\ZZ)=\ZZ$, and $\Or^+(\oH^2(X,\ZZ)_{\operatorname{tf}})$ is by definition the subgroup of $\Or(\oH^{2}(X,\mathbb{Z})_{\operatorname{tf}})$ given by those isometries acting as the identity on $\oH^2(\widetilde{\mathcal{C}}_X,\ZZ)$. In general we will refer to a generator of $\oH^2(\widetilde{\mathcal{C}}_X,\ZZ)$ as an \emph{orientation}: since for any positive three-dimensional subspace $W$ of $\oH^2(X,\RR)$ the space $W\setminus\{0\}$ is deformation retract of $\widetilde{\mathcal{C}}_X$, it follows that an orientation is nothing else than an orientation on $W$.

Among the orientation preserving isometries, we find all the locally trivial monodromy operators, as the following shows.

\begin{lemma}\label{lemma:Mon is O+}
Let $X$ be a primitive symplectic variety. Then we have an inclusion $\Mon^2_{\operatorname{lt}}(X)\subset\Or^+(\oH^2(X,\ZZ)_{\operatorname{tf}})$.
\end{lemma}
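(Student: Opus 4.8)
The plan is to produce a \emph{canonical} orientation of the positive three-space that is defined on every fibre of a locally trivial family, and then to observe that parallel transport is forced to preserve it. So let $g\in\Mon^2_{\operatorname{lt}}(X)$ be the parallel transport along a loop $\gamma$ based at a point $t_0$ of a locally trivial family $p\colon\cX\to T$ of primitive symplectic varieties with $\cX_{t_0}=X$, in the local system $R^2p_*\ZZ$. By the definition of $\Or^+$ recalled above, it suffices to show that $g$ acts as the identity on $\oH^2(\widetilde{\cC}_X,\ZZ)\cong\ZZ$. I will use the fact, recorded in \cite[Lemma~4.1]{Markman:Survey}, that for any positive three-space $W\subset\oH^2(X,\RR)$ the inclusion $W\setminus\{0\}\hookrightarrow\widetilde{\cC}_X$ is a homotopy equivalence, so that an orientation of $W$ determines a generator of $\oH^2(\widetilde{\cC}_X,\ZZ)$; moreover this assignment is continuous in the oriented three-space, hence locally constant as $(W,\text{orientation})$ varies in a connected family of oriented positive three-spaces.

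Next I would fix the orientation. For each $t\in T$ let $\sigma_t$ be a symplectic form on $\cX_t$ and $\kappa_t$ a Kähler class, and set $W_t=\langle\Re(\sigma_t),\Im(\sigma_t),\kappa_t\rangle$, oriented by the ordered frame $(\Re(\sigma_t),\Im(\sigma_t),\kappa_t)$. Since $\sigma_t$ is of type $(2,0)$ while $\kappa_t$ lies in $\oH^{1,1}(\cX_t,\RR)$, one has $q_{\cX_t}(\sigma_t,\kappa_t)=0$, and together with $q_{\cX_t}(\Re\sigma_t,\Re\sigma_t)=q_{\cX_t}(\Im\sigma_t,\Im\sigma_t)>0$ and $q_{\cX_t}(\Re\sigma_t,\Im\sigma_t)=0$ this shows $W_t$ is a genuine positive three-space and the frame above is positive. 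This orientation is intrinsic: replacing $\sigma_t$ by $c\sigma_t$ with $c\in\CC^*$ acts on the plane $\langle\Re\sigma_t,\Im\sigma_t\rangle$ as multiplication by $c$, of positive determinant $|c|^2$, and as $\kappa_t$ ranges over the Kähler cone, which is connected, the resulting oriented frames move continuously through positive frames; by the previous paragraph they induce one and the same generator of $\oH^2(\widetilde{\cC}_{\cX_t},\ZZ)$.

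Finally I would upgrade this to a statement about the family. The line $F^2\oH^2(\cX_t,\CC)=\oH^0(\cX_t,\Omega^{[2]}_{\cX_t})$ is spanned by $\sigma_t$ and varies holomorphically in $t$ by the variation of Hodge structure carried by $R^2p_*\ZZ$, while a relative Kähler class on the (Kähler, locally trivial) family $p$ provides a continuous choice of $\kappa_t$. Thus $(\Re\sigma_t,\Im\sigma_t,\kappa_t)$ is a continuous oriented frame, and through the retraction of \cite[Lemma~4.1]{Markman:Survey} it determines a continuous, hence flat, global section of the orientation local system with fibre $\oH^2(\widetilde{\cC}_{\cX_t},\ZZ)$. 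Transporting this section around $\gamma$ returns it to itself, because it is globally defined on $T$ and the transport on the orientation local system is induced from that on $R^2p_*\ZZ$; therefore $g$ fixes the chosen generator of $\oH^2(\widetilde{\cC}_X,\ZZ)$, i.e.\ $g\in\Or^+(\oH^2(X,\ZZ)_{\operatorname{tf}})$.

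I expect the only delicate point to be the continuity and well-definedness of the orientation in the singular setting: one must justify that $\sigma_t$ spans a holomorphically varying line and that $\kappa_t$ can be chosen to vary continuously along a general locally trivial family of primitive symplectic varieties, and that $W_t$ remains a positive three-space throughout. Once these are granted, the invariance of $g$ is formal.
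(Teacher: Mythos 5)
Your proof is correct and follows essentially the same route as the paper's: both define the canonical orientation via the positive three-space spanned by a K\"ahler class and $\Re(\sigma)$, $\Im(\sigma)$, check it is independent of the choices, extend it to a flat section of the orientation data along any locally trivial family, and conclude that monodromy fixes the generator of $\oH^2(\widetilde{\mathcal{C}}_X,\ZZ)$. Your write-up is in fact more detailed than the paper's sketch, and the delicate point you flag at the end (continuous choice of K\"ahler classes along the family) is exactly the step the paper also passes over by asserting that $\omega$ and $\sigma$ extend to sections of the respective local systems.
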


\begin{proof}
The proof follows as in the classical case, and we sketch it here for the reader's convenience. 

As in the smooth case (see \cite[Corollary~8]{NamProj}) 
if $\omega\in\oH^1(\Omega^{[1]}_X)$ is a K\"ahler class and $\sigma\in\oH^0(\Omega^{[2]}_X)$ is the symplectic form, then the three-space $W$ spanned by $\omega$, $\Re(\sigma)$ and $\Im(\sigma)$ is positive definite. Notice that the choice of this basis (in this order) determines an orientation, i.e.\ a generator $\mathsf{a}$ of $\oH^2(W\setminus\{0\},\ZZ)=\oH^2(\widetilde{\mathcal{C}}_X,\ZZ)$, which does not change if we replace $\omega$ by another K\"ahler class $\omega'$ and $\sigma$ by another symplectic form $\sigma'$. 

Now if $p\colon\mathcal{X}\to T$ is any locally trivial family of primitive symplectic varieties, then $\omega$ and $\sigma$ extend to sections of the respective local systems. This gives rise to a section $t\mapsto\mathsf{a}_t\in\oH^2(\mathcal{C}_{X_t},\ZZ)$, where $\mathsf{a}_t$ is the generator determined by $\omega_t$, $\Re(\sigma_t)$ and $\Im(\sigma_t)$.

 In particular the induced action on $\oH^2(\widetilde{\mathcal{C}}_X,\ZZ)$ is the identity and we are done.
\end{proof}

Among locally trivial parallel transport operators one finds the pull-back by a birational morphism between projective primitive symplectic varieties. This result is well-known for irreducible holomorphic symplectic manifolds, and follows from Huybrechts' results (see \cite{Huybrechts:KahlerCone} and \cite[Section~3.1]{Markman:Survey}).  

\begin{prop}[\protect{\cite[Theorem~6.16]{BakkerLehn:PrimitiveSymplecticVarieties}}]
\label{prop:iso is mon}
Let $X$ and $Y$ be two projective primitive symplectic varieties and $f\colon X\dashrightarrow Y$ a birational morphism. Suppose that $f$ is defined in codimension $1$ and that $f^*\colon\Pic(Y)\otimes\mathbb{Q}\to\Pic(X)\otimes\mathbb{Q}$ is an isomorphism. Then the pullback 
\[ f^*\colon\oH^2(Y,\ZZ)_{\operatorname{tf}}\longrightarrow\oH^2(X,\ZZ)_{\operatorname{tf}} \]
is well defined and a locally trivial parallel transport operator.
\end{prop}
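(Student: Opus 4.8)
The plan is to adapt Huybrechts' argument from the smooth case (\cite{Huybrechts:KahlerCone}, \cite[Section~3.1]{Markman:Survey}) to the locally trivial category, which is what \cite{BakkerLehn:PrimitiveSymplecticVarieties} carries out. Two things must be settled: that $f^*$ is a well-defined isometry of Beauville--Bogomolov--Fujiki lattices, and that it lies in $\mathsf{PT}^2_{\operatorname{lt}}(Y,X)$. The first is formal; the weight of the statement is in the second.

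For well-definedness, I would use that $f$ is an isomorphism in codimension $1$: there are open subsets $U\subset X$ and $V\subset Y$ with complements of codimension at least $2$ such that $f$ restricts to an isomorphism $f|_U\colon U\xrightarrow{\sim}V$. Since restriction to a big open subset induces an isomorphism on $\oH^2(-,\ZZ)_{\operatorname{tf}}$ for primitive symplectic varieties (part of the foundational theory of \cite{BakkerLehn:PrimitiveSymplecticVarieties}), the maps $j_X^*\colon\oH^2(X,\ZZ)_{\operatorname{tf}}\to\oH^2(U,\ZZ)_{\operatorname{tf}}$ and $j_Y^*\colon\oH^2(Y,\ZZ)_{\operatorname{tf}}\to\oH^2(V,\ZZ)_{\operatorname{tf}}$ are isomorphisms, and one sets $f^*:=(j_X^*)^{-1}\circ f|_U^*\circ j_Y^*$. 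That $f^*$ is an isometry then follows from the Fujiki relation: the Beauville--Bogomolov--Fujiki form is determined, up to the Fujiki constant, by the top self-intersection $\alpha\mapsto\int\alpha^{2n}$, which is computed on the smooth locus and hence preserved by an isomorphism in codimension $1$.

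The substance is that $f^*$ is a locally trivial parallel transport operator. Here I would construct, over a disc $\Delta$, two locally trivial deformations $\pi\colon\cX\to\Delta$ and $\rho\colon\cY\to\Delta$ of primitive symplectic varieties with $\cX_0=X$ and $\cY_0=Y$, together with a birational map $F\colon\cX\dashrightarrow\cY$ over $\Delta$ restricting to $f$ on the central fibre and to a biregular isomorphism $F_t\colon\cX_t\xrightarrow{\sim}\cY_t$ for every $t\neq 0$. Granting such a family, the conclusion is formal: pullback under a biregular isomorphism is itself a parallel transport operator, so $F_t^*\in\mathsf{PT}^2_{\operatorname{lt}}(\cY_t,\cX_t)$; since parallel transport operators are closed under composition (as in the proof of Lemma~\ref{lemma:Mon is group}), the operator $P^{\cX}_{t\to 0}\circ F_t^*\circ P^{\cY}_{0\to t}$, where $P^{\cX}$ and $P^{\cY}$ denote parallel transport in the two families, lies in $\mathsf{PT}^2_{\operatorname{lt}}(Y,X)$. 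Finally, the closure of the graph of $F$ is a correspondence over $\Delta$ whose action on cohomology varies continuously and specialises the graph of $F_t$ to that of $f$ (the extra components of the central fibre lie in codimension $\geq 2$ and do not affect $\oH^2$), so this operator coincides with $f^*$ by a standard specialisation argument.

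The main obstacle is the construction of the family $F$. I would deform $X$ and $Y$ keeping the Picard (equivalently Néron--Severi) lattice fixed---this is exactly where the hypothesis that $f^*$ identifies $\Pic(Y)\otimes\QQ$ with $\Pic(X)\otimes\QQ$ is used---and invoke surjectivity of the locally trivial period map together with the global Torelli theorem of \cite{BakkerLehn:PrimitiveSymplecticVarieties}. For very general $t$ the algebraic classes are exactly the fixed lattice, the flopping and movable-cone walls separating the two birational models disappear, and the birational map $F_t$, being an isomorphism in codimension $1$ between primitive symplectic varieties with no intervening walls, is forced to be biregular. Controlling the Kähler (positive and movable) cones under locally trivial deformation, so as to guarantee this wall-freeness, is the delicate point; once it is in place, the rest of the argument is formal.
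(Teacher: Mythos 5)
The paper gives no argument here at all: the proposition is quoted as implicit in \cite[Theorem~6.16]{BakkerLehn:PrimitiveSymplecticVarieties}, with \cite[Section~3.1]{Markman:Survey} cited for a rigorous treatment, so your sketch has to be measured against the Huybrechts--Bakker--Lehn deformation argument that those references carry out. Your overall architecture is indeed that argument (define $f^*$ through codimension-$\geq 2$ complements, build families $\cX,\cY\to\Delta$ with a birational map $F$ over $\Delta$ that is biregular off the central fibre, conjugate $F_t^*$ by the two parallel transports, and identify the result with $f^*$ by specialising graphs). But the pivotal step fails as you describe it. You propose to deform $X$ and $Y$ \emph{keeping the N\'eron--Severi lattice fixed} and claim that for very general $t$ the flopping walls ``disappear''. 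The opposite happens: in a family with constant $\NS$, the wall classes remain algebraic, and being a wall class is invariant under deformations that keep the class of type $(1,1)$ (the theory of MBM classes/wall divisors of Amerik--Verbitsky and Mongardi in the smooth case), so the chamber decomposition of the positive cone is locally constant and $F_t$ remains a genuine flop for every $t$ whenever $f$ is one. Already a strict Mukai flop between projective hyperk\"ahler fourfolds --- a special case of this proposition --- defeats your construction: no fibre of a fixed-$\NS$ family makes $F_t$ biregular. The correct move, and the one the cited proofs make, is to deform in a \emph{generic} direction so that the very general nearby fibres have trivial N\'eron--Severi group; those fibres are then non-projective K\"ahler primitive symplectic varieties (which is precisely why this parallel transport cannot be realised through projective families), and only for such fibres is a birational map defined in codimension one forced to be biregular. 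Relatedly, the hypothesis that $f^*\colon\Pic(Y)\otimes\QQ\to\Pic(X)\otimes\QQ$ is an isomorphism is not there to let you ``fix $\NS$'': its role is to ensure $f^*$ is a well-defined isometry of the full Beauville--Bogomolov--Fujiki lattices (excluding small-resolution-type maps, for which the second Betti numbers differ), so that the two locally trivial Kuranishi spaces can be identified by local Torelli at all.

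A secondary soft spot: your well-definedness step rests on the claim that restriction to a big open subset induces an isomorphism on $\oH^2(-,\ZZ)_{\operatorname{tf}}$ for primitive symplectic varieties. This is false in general: if $Y$ is the contraction of a plane $\PP^2$ inside a hyperk\"ahler fourfold $X$, then $\oH^2(Y,\QQ)\to\oH^2(Y_{\operatorname{reg}},\QQ)=\oH^2(X,\QQ)$ is injective but not surjective. Under the stated hypotheses on $f$ the two images inside $\oH^2$ of the common big open do agree, but that is a lemma requiring proof (and is part of what \cite{BakkerLehn:PrimitiveSymplecticVarieties} establishes), not a formal consequence of normality.
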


\begin{proof}
This statement is implicit in \cite[Theorem~6.16]{BakkerLehn:PrimitiveSymplecticVarieties}, see \cite[Section~3.1]{Markman:Survey} for a rigorous proof.
\end{proof}

We conclude this section by recalling the notion of polarised parallel transport operators. Recall that a polarised variety is a pair $(X,H)$ where $X$ is a projective variety and $H$ is a polarisation, i.e.\ an ample line bundle on $X$. If $X$ is a projective symplectic variety (resp.\ primitive or irreducible), then the pair $(X,H)$ is a polarised symplectic variety (resp.\ primitive or irreducible).

\begin{defn}\label{defn:ppto}
    Let $(X,H)$, $(X_1,H_1)$ and $(X_2,H_2)$ be polarised primitive symplectic varieties.
    \begin{enumerate}
        \item An isometry $g\in\Or(\oH^2(X_1,\ZZ)_{\operatorname{tf}},\oH^2(X_2,\ZZ)_{\operatorname{tf}})$ is a \emph{polarised locally trivial parallel transport operator} from $(X_{1},H_{1})$ to $(X_{2},H_{2})$ if it is a parallel transport operator arising from a family $p\colon\mathcal{X}\to T$ as in Definition~\ref{defn:pto} such that there exists a relatively ample line bundle $\mathcal{H}$ on $\mathcal{X}$ such that $\mathcal{H}_{t_i}=H_i$ on $\mathcal{X}_{t_i}=X_i$, for $t_1,t_2\in T$.

        \item An isometry $g\in\Or(\oH^2(X,\ZZ)_{\operatorname{tf}})$ is a \emph{polarised locally trivial monodromy operator} if it is a polarised parallel transport operator from $(X,H)$ to itself.
    \end{enumerate}
\end{defn}
As before we denote by 
\[ \mathsf{PT}^2_{\operatorname{lt}}((X_1,H_1),(X_2,H_2)) \]
the set of polarised locally trivial parallel transport operators from $(X_1,H_1)$ to $(X_2,H_2)$ and by 
\[ \Mon^2_{\operatorname{lt}}(X,H)=\mathsf{PT}^2_{\operatorname{lt}}((X,H),(X,H)) \]
the group of polarised locally trivial monodromy operators of $(X,H)$.

Notice that by definition we have
\[ \mathsf{PT}^2_{\operatorname{lt}}((X_1,H_1),(X_2,H_2))\subset\mathsf{PT}^2_{\operatorname{lt}}(X_1,X_2) \]
and
\[ \Mon^2_{\operatorname{lt}}(X,H)\subset\Mon^2_{\operatorname{lt}}(X). \]

\begin{rmk}\label{rmk:fino ad hol}
    Definition~\ref{defn:ppto} also appears in literature in a different form. Let $p\colon\mathcal{X}\to T$ be a locally trivial family as in Definition~\ref{defn:pto}: instead of asking for the existence of a relatively ample line bundle $\mathcal{H}$ on $\mathcal{X}$, one can ask for the existence of a flat section $h$ of $R^2p_*\ZZ$ such that $h_t$ is the class of an ample line bundle on $\mathcal{X}_t$ for every $t\in T$ (see for example \cite[Definition~1.1.(4)]{Markman:Survey}).

    Of course, if $\mathcal{H}$ is a relatively ample line bundle on $\mathcal{X}$, then the section $c_1(\mathcal{H})$ satisfies the last condition. The vice versa is not true in general.

    Nevertheless, we wish to point out that the parallel transport operators arising from these two definition are in fact the same, providing that the base is at least normal and quasi-projective. 

    Let us start with a locally trivial family $f\colon\mathcal{X}\to T$, where $T$ is normal, irreducible and quasi-projective; suppose that there is a flat section $h$ of $R^2p_*\ZZ$ such that $h_t$ is the class of an ample line bundle for every $t\in T$. By the Lefschetz hyperplane section Theorem (see for example \cite[Theorem~1.1.(B)]{FL:Connectivity}), for a generic complete intersection curve $C\subset T$ the morphism
    \[ \pi_1(C)\longrightarrow\pi_1(T) \]
    is surjective:  hence every parallel  transport operator induced by $f$ between fibers over  $C$ is also induced by the restriction of $f$ over $C$. Without loss of generality, we can suppose that $C$ is irreducible and smooth. From now on we  work with the restricted family $f\colon\mathcal{X}\to C$ and show the existence of a holomorphic line bundle on its total space inducing the section $h$ over $C$. 

    By applying the functor $f_*$ to the exponential sequence
    \[ 0\to\ZZ\to\mathcal{O}_{\mathcal{X}}\to\mathcal{O}^*_{\mathcal{X}}\to 0 \]
    we obtain
    \[ 0=R^1f_*\cO_{\mathcal{X}}\to R^1f_*\cO^*_{\mathcal{X}}\to R^2f_*\ZZ\to R^2f_*\cO_{\mathcal{X}}, \]
    where in the first equality we used that all the fibres are primitive symplectic varieties. 
    Taking global sections we get
    \[ 0\to\oH^0(R^1f_*\cO^*_{\mathcal{X}})\to\oH^0(R^2f_*\ZZ)\to\oH^0(R^2f_*\cO_{\mathcal{X}}). \]
    Since the Hodge structure on $\oH^2(\mathcal{X}_t,\ZZ)$ is pure for every $t$, the map on the right hand side is the relative projection on the $(2,0)$-part.
    In particular the section $h\in\oH^0(R^2f_*\ZZ)$ is mapped to $0$ and it therefore comes from a section $\tilde{h}\in\oH^0(R^1f_*\cO^*_{\mathcal{X}})$.
        
    From the Leray spectral sequence we get 
    \[ \oH^1(\cO_{\mathcal{X}}^*)\to\oH^0(R^1f_*\cO^*_{\mathcal{X}})\to\oH^2(f_*\cO^*_{\mathcal{X}})=\oH^2(\cO^*_C)=0, \]
    where the last vanishing comes from the fact that $C$ is a curve. In particular this means that there exists a holomorphic line bundle $\mathcal{H}$ on $\mathcal{X}$ lifting the section $\tilde{h}$.
    Finally, since the composition
    \[ \oH^1(\mathcal{X},\cO_{\mathcal{X}}^*)\to\oH^0(R^1f_*\cO^*_{\mathcal{X}})\to\oH^0(R^2f_*\ZZ) \]
    is the morphism mapping a line bundle $\mathcal{L}$ on $\mathcal{X}$ to the section $\{c_1(\mathcal{L}_t)\}_t\in\oH^0(R^2f_*\ZZ)$ by construction, it follows that 
    $c_1(\mathcal{H}_t)=h_t$ for every $t\in T$. 
\end{rmk}

\begin{rmk}\label{rmk:da hol a alg}
    In Remark~\ref{rmk:fino ad hol} we 
 constructed a relatively ample  holomorphic line bundle $\mathcal{H}$ on the total space $\mathcal{X}$ of the family $f\colon \mathcal{X}\rightarrow C$ of irreducible symplectic varieties over the smooth irreducible curve $C$  lifting the section $h$ of $R^2p_*\ZZ$. We wish to point out though that if the family is smooth and algebraic, then $\mathcal{H}$ can be chosen  algebraic as well.

 If $\mathcal{X}$ and $C$ are projective this follows from GAGA  principle (\cite{GAGA}),  otherwise, as $C$ is quasi-projective, it is affine and 
  there exist  smooth and projective compactifications $\overline{\mathcal{X}}$ of $\mathcal{X}$ and $\overline{C}$ of $C$ and a projective morphism $\overline{f}\colon \overline{\mathcal{X}}\rightarrow \overline{C}$.
 
 Since for $t_0\in C$ the class  $h_{t_0}\in \oH^2(X_{t_0},\ZZ)$ is a monodromy invariant class, by the global Invariant Cycle Theorem (see for example \cite[Theorem~4.24]{ICT}), 
 there exists a class $\hat{h}\in\oH^2(\overline{\mathcal{X}},\QQ)$ extending $h_{t_0}$. Moreover, as $h_{t_0}\in \oH^{1,1}(X_{t_0},\ZZ)$ and $\hat{h}_t=\ell h_t$ (for some $\ell\in\QQ$) by construction 
 we may suppose that $\hat{h}$ is of Hodge type $(1,1)$ too. Moreover, by clearing the denominator, we find a class $\hat{h}'\in\oH^{1,1}(\overline{\mathcal{X}},\ZZ)$ such that $h'_{t_{0}}=nh_{t_{0}}$ for some $n\in\mathbb{N}$. This implies that there exists an algebraic line bundle $\mathcal{L}$ on the projective variety $\overline{\mathcal{X}}$ such that $c_{1}(\mathcal{L})=\hat{h}'$, and hence the restriction $c_{1}(\mathcal{L})_{t_{0}}$ of $c_{1}(\mathcal{L})$ to the fiber $\mathcal{X}_{t_{0}}$ is an integral multiple of $h_{t_0}$.
 
 Now let $\mathcal{L}_{C}$ be the restriction of $\mathcal{L}$ to $\mathcal{X}$. 
 Since the fibers of $f$ are simply connected for every $t\in C$ there is an isomorphism $\mathcal{L}_t\simeq \mathcal{H}^{\otimes n}_t$ between the fibres of $\mathcal{L}_C$ and $\mathcal{H}^{\otimes n}$.

 It follows that there exists a holomorphic line bundle $\mathcal{N}$ on $C$ and an isomorphism of holomorphic line bundles
 $$\mathcal{H}^{\otimes m} \simeq \mathcal{L}_{C}\otimes f^*\mathcal{N}.$$
 By \cite[Theorem~30.4]{For}, every holomorphic line bundle on an affine curve
is trivial. As a consequence the holomorphic line bundle $\mathcal{H}^{\otimes m}$ is algebraic, and this implies that $\mathcal{H}$ is algebraic as well.   
\end{rmk}

\begin{rmk}
    In the notation of Definition~\ref{defn:ppto}, the family $\{\mathcal{H}_t\}_{t\in T}$ is a continuous family of ample line bundles on the fibres of $p\colon\mathcal{X}\to T$. In particular any parallel transport operator must be constant on it, i.e.\ if $g\in\mathsf{PT}^2_{\operatorname{lt}}((X_1,H_1),(X_2,H_2))$ and $h_i=c_1(H_i)$, then 
    $g(h_1)=h_2$. In particular it follows that 
    \[ \Mon^2_{\operatorname{lt}}(X,H)\subset\Or^+(\oH^2(X,\ZZ)_{\operatorname{tf}})_h, \]
    where $h=c_1(H)$ and the latter is the group of orientation preserving isometries $g$ such that $g(h)=h$.
    More precisely $\Mon^2_{\operatorname{lt}}(X,H)\subset\Mon^2_{\operatorname{lt}}(X)_h$, where again the last group is the subgroup of monodromy operators fixing the polarisation. Arguing as in the proof of \cite[Corollary~7.4]{Markman:Survey}, one can further prove that 
    \[ \Mon^2_{\operatorname{lt}}(X,H)=\Mon^2_{\operatorname{lt}}(X)_h. \]
\end{rmk}

%%%%%%%%%%%%%%%%%%%%%%%%%%%%%%%%%%%%
%%%%%%%%%%%%%%%%%%%%%%%%%%%%%%%%%%%%
%%%%%%%%%%%%%%%%%%%%%%%%%%%%%%%%%%%%

\subsection{Moduli spaces of sheaves on K3 surfaces}\label{section:moduli spaces}

We conclude this first section by recalling the basic facts we will need about moduli spaces of sheaves on K3 surfaces, and refer the reader to \cite{PR:SingularVarieties} and \cite{PR:v perp} for a more detailed exposition about this.

Let $S$ be a projective K3 surface. We denote by $\widetilde{\oH}(S,\ZZ)$ the \textit{Mukai lattice} of $S$: as a $\mathbb{Z}$-module it is $\oH^{\operatorname{even}}(S,\ZZ)$, and the (nondegenerate) integral quadratic form on it is given by 
\[ (r,\xi,a)^2=\xi^2-2ra, \]
where $r\in\oH^{0}(S,\ZZ)$, $\xi\in\oH^{2}(S,\ZZ)$ and $a\in\oH^{4}(S,\ZZ)$. The Mukai lattice of $S$ inherits a pure weight two Hodge structure from the one on $S$ by declaring $\widetilde{\oH}^{2,0}(S):=\oH^{2,0}(S)$. 

An element $v\in\widetilde{\oH}(S,\ZZ)$ is called a \textit{Mukai vector} if $v=(r,\xi,a)$ is such that $r\geq 0$ and $\xi\in \operatorname{NS}(S)$, and in the case when $r=0$ we have that either $\xi$ is the first Chern class of a strictly effective divisor, or $\xi=0$ and $a>0$. If $v$ is a Mukai vector on $S$, then there is a coherent sheaf $\mathcal{F}$ on $S$ such that $\ch(\mathcal{F})\cdot\sqrt{td(S)}=v$: we will then say that $v$ is the \textit{Mukai vector} of $\mathcal{F}$. We notice that a Mukai vector $v$  is of type $(1,1)$ with respect to the Hodge decomposition of the Mukai lattice.

\begin{defn}[\protect{\cite[Section~2.1.2]{PR:SingularVarieties}}]\label{defn:v-generic}
Given a %positive 
Mukai vector $v\in\widetilde{\oH}(S,\ZZ)$ of the form $v=(r,\xi,a)$, an ample %vector 
line bundle $H$ on $S$ is \emph{v-generic} if it verifies one of the following two conditions:
\begin{enumerate}
	\item If $r>0$, then for every $\mu_{H}$-semistable sheaf $E$ such that $v(E)=v$ and every $0\neq F\subseteq E$, we have that if $\mu_{H}(E)=\mu_{H}(F)$ then $c_{1}(F)/\rk(F)=c_{1}(E)/\rk(E)$. 
	\item If $r=0$, then for every $H$-semistable sheaf $E$ such that $v(E)=v$ and every $0\neq F\subseteq E$, if $\chi(E)/(c_{1}(E)\cdot H)=\chi(F)/(c_{1}(F)\cdot H)$ then $v(F)\in\mathbb{Q}v$. 
\end{enumerate}
\end{defn}

Given a Mukai vector $v\in\widetilde{\oH}(S,\ZZ)$ and a $v$-generic ample line bundle $H$ on $S$, we denote by $M_v(S,H)$ (or simply $M_v$ if the pair $(S,H)$ is clear from the context) the moduli space of Gieseker $H$-semistable sheaves $F$ on $S$ such that $v(F)=v$. 

\begin{rmk}\label{rmk:wall and chamber}
    Let $S$ be a projective K3 surface with Picard rank is at least 2, and let $v=(r,\xi,a)$ be a Mukai vector on $S$. If $r\neq 0$, or if $r=0$ and $a\neq 0$, 
    the ample cone of $S$ has a decomposition in $v$-walls and $v$-chambers (see \cite[Section~2.1.1]{PR:SingularVarieties}). As the $v$-chambers are subcones of the ample cone, it follows that if $H'=tH$ with $t\in\ZZ$, then $H'$ belongs to the same $v$-chamber as $H$. 
    
    By \cite[Lemma~2.9]{PR:SingularVarieties} the primitive polarisations lying in a $v$-chamber are all $v$-generic according to Definition \ref{defn:v-generic}. Moreover, by \cite[Lemma~2.9]{PR:SingularVarieties} if $H_1$ and $H_2$ are two $v$-generic polarisations (according to Definition~\ref{defn:v-generic}) that belong to the closure of the same $v$-chamber, then there is an identification of the moduli spaces
    $M_v(S,H_1)=M_v(S,H_2)$, meaning that a coherent sheaf $F$ of Mukai vector $v$ on $S$ is $H_{1}$-(semi)stable if and only if it is $H_{2}$-(semi)stable. 
\end{rmk}

From now on, we will always suppose that $v^{2}>0$: under this hypothesis, the $v$-genericity of $H$ implies that $M_v(S,H)\neq\emptyset$, and that it is an irreducible normal projective variety of dimension $v^2+2$ (\cite[Theorem~4.1]{KaledinLehnSorger}), whose smooth locus admits a holomorphic symplectic form (\cite{Mukai}).

For simplicity, we make now the following definition.

\begin{defn}
Given two strictly positive integers $m,k\in\mathbb{N}$, a triple $(S,v,H)$ will be called $(m,k)$-\textit{triple} if $S$ is a projective K3 surface, $v$ is a Mukai vector on $S$ such that if $v=mw$, with $w$ primitive, then $w^{2}=2k$, and $H$ is a $v$-generic polarisation.
\end{defn}

\begin{rmk}
\label{rmk:gen}
If $(S,v,H)$ is an $(m,k)$-triple and $v=mw$, then $(S,w,H)$ is a $(1,k)$-triple. The reason for this is that if $H$ is $v$-generic, then it is $w$-generic. Indeed, if $w=(r,\xi,a)$ with $r>0$, $E$ is a $\mu_{H}$-semistable sheaf with Mukai vector $w$ and $F\subseteq E$ is a proper coherent subsheaf with $\mu_{H}(F)=\mu_{H}(E)$, then $E^{\oplus m}$ is a $\mu_{H}$-semistable sheaf with Mukai vector $v$ and $F^{\oplus m}$ is a proper subsheaf of $E^{\oplus m}$ such that $\mu_{H}(F^{\oplus m})=\mu_{H}(E^{\oplus m})$. But since $H$ is $v$-generic it follows that $c_{1}(E^{\oplus m})/\rk(E^{\oplus m})=c_{1}(F^{\oplus m})/\rk(F^{\oplus m})$, and hence $c_{1}(E)/\rk(E)=c_{1}(F)/\rk(F)$. A similar proof works for $r=0$. 
\end{rmk}

For our purposes it is useful to introduce an equivalence relation that identifies $(m,k)$-triples whose associated moduli spaces represent the same sheaves.

\begin{defn}
\label{defn:congruent}
Two $(m,k)$-triples $(S_{1},v_{1},H_{1})$ and $(S_{2},v_{2},H_{2})$ are called \textit{congruent} if $S_{1}=S_{2}=S$, $v_{1}=v_{2}=v$ and a coherent sheaf $F$ of Mukai vector $v$ on $S$ is $H_{1}$-(semi)stable if and only if it is $H_{2}$-(semi)stable. In particular there is an equality $M_v(S,H_1)=M_v(S,H_2)$ and we denote by 
\[ \chi_{H_1,H_2}\colon M_v(S,H_1)\longrightarrow M_v(S,H_2),\qquad F\mapsto F \]
the identity morphism.
\end{defn}

\begin{rmk}
\label{rmk:triple congruenti}
If $H$ and $H'$ are two $v$-generic polarisations that lie in the closure of the same $v$-chamber, then $(S,v,H)$ and $(S,v,H')$ are congruent by Remark~\ref{rmk:wall and chamber}.
\end{rmk}

The following result is the starting point of our paper.

\begin{thm}[\protect{\cite[Theorem~1.10]{PR:SingularVarieties}}]
\label{thm:pr}
Let $(S,v,H)$ be an $(m,k)$-triple. Then $M_v(S,H)$ is an irreducible symplectic variety of dimension $2km^2+2$.
\end{thm}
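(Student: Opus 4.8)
The plan is to check the two conditions of Definition~\ref{defn:irrvar} separately, after recording the dimension, and I expect the control of all reflexive forms (on $M_v(S,H)$ and on its quasi-étale covers) to be the genuine difficulty.

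Since $v=mw$ with $w$ primitive and $w^2=2k$, bilinearity of the Mukai pairing gives $v^2=m^2w^2=2km^2>0$. Thus the results recalled above apply: $M_v(S,H)$ is an irreducible normal projective variety of dimension $v^2+2=2km^2+2$, and by \cite{Mukai} its smooth locus carries a holomorphic symplectic form $\sigma$. To see that $M_v(S,H)$ is a symplectic variety I must check that $\sigma$ extends holomorphically to one (hence every) resolution, i.e.\ that the singularities are symplectic. I would deduce this from the local structure of the moduli space: by Kuranishi theory the analytic germ at a point $[F]$ represented by a polystable sheaf $F=\bigoplus_i F_i^{\oplus a_i}$ is isomorphic to the germ at the origin of a quotient $\mu^{-1}(0)\sslash G$ of a moment-map fibre, and such quotients have symplectic singularities; this is the theorem of Kaledin--Lehn--Sorger \cite{KaledinLehnSorger}, and it yields that $M_v(S,H)$ is a symplectic variety.

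For irreducibility I must show that for every finite quasi-étale cover $f\colon Y\to M_v(S,H)$ the reflexive exterior algebra $\bigoplus_p\oH^0(Y,\Omega_Y^{[p]})$ is generated by $f^{[*]}\sigma$. A symplectic variety has canonical, Gorenstein, rational singularities, so for a Kähler resolution $\pi\colon\widetilde{M}\to M_v(S,H)$ one has $\oH^1(M_v,\cO)=\oH^1(\widetilde{M},\cO)$, while the extension theorem for reflexive forms (as in \cite[Theorem~2.4]{GKP}) gives $\dim\oH^0(M_v,\Omega^{[p]})=h^{p,0}(\widetilde{M})$, and the same applies to every quasi-étale cover. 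The decisive simplification is to prove that the regular locus $M_v^{\operatorname{reg}}$ is simply connected: by purity of the branch locus every finite quasi-étale cover of the normal variety $M_v(S,H)$ corresponds to a finite-index subgroup of $\pi_1(M_v^{\operatorname{reg}})$, so simple connectedness forces all such covers to be trivial and collapses the condition to the single statement $\oH^0(M_v,\Omega^{[p]})=\CC\,\sigma^{p/2}$ for $p$ even and $0$ for $p$ odd.

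To secure these two inputs I would exploit the Kaledin stratification of Proposition~\ref{prop:stratification}, whose deepest stratum is the smooth, simply connected irreducible holomorphic symplectic manifold $M_w(S,H)$, together with the transverse-slice description furnished by the Kuranishi models; this is what controls $\pi_1(M_v^{\operatorname{reg}})$ and lets me propagate the vanishing of odd forms across the strata. The main obstacle is precisely the remaining Hodge-theoretic input: computing $h^{p,0}(\widetilde{M})$ for a resolution that in general is not symplectic, and showing uniformly in $(m,k)$ that it equals $1$ for even $p$ and $0$ for odd $p$. This is where the fine geometry of $M_v(S,H)$---beyond its being merely a symplectic variety---must be used, and it is the technical heart of the argument.
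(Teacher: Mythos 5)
Theorem~\ref{thm:pr} is not proved in this paper at all: it is quoted from \cite[Theorem~1.10]{PR:SingularVarieties}, so your proposal can only be measured against that cited proof. Your first steps are correct and coincide with the standard input used there: $v^2=m^2w^2=2km^2>0$, so by \cite{KaledinLehnSorger} the space $M_v(S,H)$ is a nonempty normal irreducible projective variety of dimension $v^2+2=2km^2+2$, Mukai's construction gives a symplectic form on the regular locus, and the quiver-type local models of \cite{KaledinLehnSorger} show that the singularities are symplectic. Up to that point there is nothing to object to.

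The genuine gap is your ``decisive simplification''. The claim that $M_v(S,H)_{\operatorname{reg}}$ is simply connected is false as stated, and it fails precisely inside the range of the theorem: for $(m,k)=(2,1)$, i.e.\ $v=2w$ with $w^2=2$, the ten-dimensional O'Grady space. In that case $M_v(S,H)$ admits O'Grady's symplectic resolution $\pi\colon\widetilde{M}\to M_v(S,H)$, which is an isomorphism over the regular locus and whose exceptional divisor $\widetilde{\Sigma}$ is irreducible with class divisible by two in integral cohomology, $[\widetilde{\Sigma}]=2\widetilde{\sigma}$ with $\widetilde{\sigma}$ primitive of square $-6$ (see \cite{PR:Deformations,Onorati:Monodromy}; the analogous fact for the sixfold is in \cite{MonRap}). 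The square root $\mathcal{O}_{\widetilde{M}}(\widetilde{\sigma})$ of $\mathcal{O}_{\widetilde{M}}(\widetilde{\Sigma})$ defines a connected double cover of $\widetilde{M}$ branched exactly along $\widetilde{\Sigma}$; restricting it over $\widetilde{M}\setminus\widetilde{\Sigma}\cong M_v(S,H)_{\operatorname{reg}}$ and normalising $M_v(S,H)$ in the resulting quadratic field extension yields a connected, nontrivial quasi-étale double cover $Y\to M_v(S,H)$. Hence $\pi_1(M_v(S,H)_{\operatorname{reg}})\neq 1$, your reduction of Definition~\ref{defn:irrvar} to reflexive forms on $M_v(S,H)$ alone collapses, and one is forced to verify the reflexive-form condition on the cover $Y$ directly --- which is exactly one of the delicate points the proof in \cite{PR:SingularVarieties} must (and does) confront. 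In addition, even in the cases $(m,k)\neq(2,1)$ where simple connectedness of the regular locus can be expected, the remaining step --- proving $h^{p,0}$ of a (non-symplectic) resolution equals $1$ for $p$ even and $0$ for $p$ odd, uniformly in $(m,k)$ --- is precisely what you defer as ``the technical heart''. So what you have is an outline whose two load-bearing steps are missing, and one of them is provably wrong in the O'Grady case.
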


\begin{rmk}\label{rmk:strat M}
In the setting of Theorem~\ref{thm:pr}, one may prove that the smallest stratum of the stratification of the singularities of $M_{v}(S,H)$ is isomorphic to $M_{w}(S,H)$. In particular we get a natural closed embedding
\[ i_{w,m}\colon M_w(S,H)\longrightarrow M_v(S,H). \]
This has been done in the proof of \cite[Theorem~4.4]{KaledinLehnSorger}, but let us recall the main idea.

The singular locus of $M_{v}(S,H)$ coincides with the locus of strictly semi-stable sheaves; moreover, any strictly semistable sheaf $F$ is S-equivalent to a sheaf of the form $F_1\oplus F_2$, where $F_i\in M_{m_iw}(S,H)$, with $m_{1}+m_{2}=m$. In particular it belongs to the image of the morphism 
\[ M_{m_1w}(S,H)\times M_{m_2w}(S,H)\to M_v(S,H), \quad (F_1,F_2)\mapsto [F_1\oplus F_2] \]
which is an irreducible component of the strictly semistable locus. The intersection of all these components is then the locus 
\[ Y=\{ E^{\oplus m}\in M_v(S,H)\mid E\in M_w(S,H)\}\cong M_w(S,H). \]
\end{rmk}

As we recalled in the previous section, since $M_v(S,H)$ is an irreducible symplectic variety, the cohomology group $\oH^2(M_v(S,H),\ZZ)$ is a free $\mathbb{Z}$-module with both a pure weight two Hodge structure and a lattice structure. These structures have been made explicit in \cite{PR:v perp}.

\begin{thm}[\protect{\cite[Theorem~1.6]{PR:v perp}}]\label{thm:PR v perp}
Let $(S,v,H)$ be an $(m,k)$-triple. 
Then there exists a Hodge isometry
\[ \lambda_{(S,v,H)}\colon v^\perp\longrightarrow\oH^2(M_v(S,H),\ZZ), \]
where $v^\perp$ inherits the Hodge and lattice structures from those of $\widetilde{\oH}(S,\ZZ)$, and $\oH^2(M_v(S,H),\ZZ)$ is endowed with the Beauville--Bogomolov--Fujiki form.
\end{thm}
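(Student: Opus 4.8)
The plan is to realise $\lambda_{(S,v,H)}$ as a Mukai-type cohomological correspondence and then to verify the Hodge, isometry and isomorphism properties in turn, reducing the first to standard correspondence machinery, isolating the analytic heart in a Fujiki--Grothendieck--Riemann--Roch computation on the singular space, and closing with a discriminant count anchored to the primitive case of O'Grady and Yoshioka.

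\emph{Construction.} Write $M=M_v(S,H)$ and let $M^{\mathrm{s}}\subset M$ be the stable locus; it coincides with the smooth locus, and its complement (the strictly semistable sheaves) has codimension at least $2$. Over $S\times M^{\mathrm{s}}$ there is a quasi-universal family $\mathcal{E}$ of some similitude $\rho$ in the sense of Mukai \cite{Mukai}, and I would set
\[ \lambda_{(S,v,H)}(\alpha)=\frac{1}{\rho}\left[\pi_{M,*}\Big(\ch(\mathcal{E})\,\sqrt{\td(S)}\cdot\pi_S^*(\alpha^\vee)\Big)\right]_{2}, \]
where $\pi_S,\pi_M$ are the two projections, $\alpha^\vee$ is the Mukai dual of $\alpha$, and $[\,\cdot\,]_{2}$ denotes the degree-$2$ component. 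Two quasi-universal families differ by the pullback of a class from $M$, which dies after pairing against $\alpha\in v^\perp$, so the expression is independent of the choice of $\mathcal{E}$; since $M$ is normal and the unstable locus has codimension at least $2$, the class extends uniquely across $M$. Integrality (that it lands in $\oH^2(M,\ZZ)$) and the fact that $\lambda_{(S,v,H)}$ is a morphism of weight-two Hodge structures then follow, exactly as in the smooth case, from the Hodge type $(1,1)$ of $v$ and the general theory of correspondences.

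\emph{Isometry.} As $M$ is an irreducible symplectic variety by Theorem~\ref{thm:pr}, its Beauville--Bogomolov--Fujiki form $q$ is defined on $\oH^2(M,\ZZ)$ and satisfies a Fujiki relation $\int_M\beta^{2n}=c_M\,q(\beta)^n$ with $2n=\dim M=2km^2+2$ (see \cite{BakkerLehn:PrimitiveSymplecticVarieties}). I would compute the left-hand side for $\beta=\lambda_{(S,v,H)}(\alpha)$ by Grothendieck--Riemann--Roch applied to the correspondence $\mathcal{E}$: this expresses $\int_M\lambda(\alpha)^{2n}$ as an explicit polynomial in the Mukai self-pairing $(\alpha,\alpha)$, and comparison with the Fujiki relation forces $q(\lambda(\alpha))=c\,(\alpha,\alpha)$ for a universal constant $c$, which one pins to $c=1$ by evaluating on a single convenient class; polarising then shows $\lambda$ intertwines the bilinear forms. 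I expect this to be the main obstacle, because the Fujiki relation and the Riemann--Roch computation have to be carried out on the \emph{singular} variety $M$ --- passing to a resolution, or controlling directly the contribution of the strictly semistable locus --- and it is here that the non-primitivity $v=mw$ genuinely enters.

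\emph{Isomorphism.} The bilinear isometry property forces $\ker\lambda_{(S,v,H)}=0$, since the Mukai form on $v^\perp$ is non-degenerate, so $\lambda$ is an isometric embedding; as $v^\perp=w^\perp$ and $\oH^2(M,\ZZ)$ are both even of rank $23$ and signature $(3,20)$, the image has finite index. To see the index is $1$ I would compare discriminants: $\mathrm{disc}(w^\perp)=w^2=2k$ because $w$ is primitive in the unimodular Mukai lattice, while $\mathrm{disc}\,\oH^2(M,\ZZ)=2k$ as well; by deformation invariance of the Mukai morphism along families of moduli spaces it suffices to verify this last equality on a single $(m,k)$-triple, the primitive case $m=1$ being exactly the computation of O'Grady and Yoshioka \cite{OGrady:WeightTwo,Yoshioka:ModuliAbelian} that the present statement extends. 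Since an isometric embedding of finite index satisfies $\mathrm{disc}(v^\perp)=[\oH^2(M,\ZZ):\lambda(v^\perp)]^2\,\mathrm{disc}\,\oH^2(M,\ZZ)$, equal discriminants give index $1$, so $\lambda_{(S,v,H)}$ is an isomorphism and hence the desired Hodge isometry.
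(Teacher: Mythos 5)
First, a point of framing: the paper never proves this statement --- it is imported verbatim from \cite[Theorem~1.6]{PR:v perp} (as the theorem header indicates), so your proposal has to be measured against the proof in that reference. Your construction step is indeed the right one and agrees with the definition used there: the Mukai morphism is built from a quasi-universal family on the stable locus and extended over $M_v(S,H)$ using that the strictly semistable locus has codimension at least $2$ (in \cite{PR:v perp} this extension is the isomorphism $\oH^2(M_v,\ZZ)\cong\oH^2(M_v^{\mathrm{s}},\ZZ)$, which itself requires proof). The first genuine gap is your isometry step. Grothendieck--Riemann--Roch computes pushforwards of $\ch(\mathcal{E})$, but $\int_M\lambda(\alpha)^{2n}$ is the integral of a $2n$-fold \emph{product} of such pushforwards; unwinding it requires the integrals over $M$ of products of K\"unneth components of $\ch(\mathcal{E})$, i.e.\ characteristic numbers of the (singular) moduli space, which no formal GRR manipulation supplies. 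This is not merely a technical issue with singularities: even in the smooth primitive case the isometry is not, and cannot be, proved this way; O'Grady and Yoshioka establish it by deforming to special moduli spaces (elliptic K3 surfaces, Hilbert schemes) where the cohomology is computable. You yourself flag this step as ``the main obstacle'' and leave it unresolved, so at its central point the proposal is a plan rather than a proof.

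The second gap is that your index-one argument is circular. Deformation invariance of the Mukai morphism only moves you within a single locally trivial deformation class, i.e.\ among $(m,k)$-triples with the \emph{same} $m$; when $m>1$ the moduli space has dimension $2km^2+2\neq 2k+2$, so the primitive case of O'Grady--Yoshioka can never serve as the ``single convenient triple'' on which you verify $\operatorname{discr}\oH^2(M_v,\ZZ)=2k$. The base case you actually need --- the computation of $\oH^2(M_v(S,H),\ZZ)$, or equivalently of its discriminant, for some non-primitive $v=mw$ --- is precisely the content of the theorem, and the proposal never supplies it. This is exactly where \cite{PR:v perp} does its real work: using the connectivity of $(m,k)$-triples under polarised deformations and Fourier--Mukai equivalences (the same mechanism recalled in Remark~\ref{rmk:non empty} of the present paper, quoting \cite{PR:SingularVarieties}), both the isometry and the surjectivity of $\lambda_{(S,v,H)}$ are transported to explicit non-primitive moduli spaces on elliptic K3 surfaces, where the second cohomology is computed directly from the geometry. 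In short: construction, fine; isometry, unproved; surjectivity, reduced to a base case that does not exist in the deformation class you can reach.
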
 

It is implicit in \cite{PR:v perp} that the isometries $\lambda_{(S,v,H)}$ behave well under deformations of moduli spaces induced by deformations of K3 surfaces. We will expand this comment more precisely in Remark~\ref{rmk:lambda in famiglie}, after we will have carefully defined deformations of $(m,k)$-triples.

For future reference, let us notice that the isometry $\lambda_{(S,v,H)}$ induces an isomorphism between the orthogonal groups by conjugation, 
\begin{align}\label{eqn:lambda sharp}
	\lambda_{(S,v,H)}^{\sharp}\colon \Or(v^\perp) & \longrightarrow \Or(\oH^{2}(M_{v}(S,H),\mathbb{Z})) \\
	g & \longmapsto \lambda_{(S,v,H)}\circ 
        g\circ(\lambda_{(S,v,H)})^{-1}. \nonumber
\end{align}
\medskip

By Remark~\ref{rmk:strat M} there is a closed embedding 
\[ i_{w,m}\colon M_{w}(S,H)\longrightarrow M_{v}(S,H), \] and we get a morphism
\[ i_{w,m}^*\colon\oH^2(M_{v}(S,H),\ZZ)\longrightarrow\oH^2(M_w(S,H),\ZZ). \]

Thanks to this morphism, we may now describe the relation between $\lambda_{(S,w,H)}$ and $\lambda_{(S,v,H)}$.

\begin{prop}\label{prop:i come lambda}
Let $(S,v,H)$ be an $(m,k)$-triple and write $v=mw$. Then $i^{*}_{w,m}\circ\lambda_{(S,v,H)}=m\lambda_{(S,w,H)}$.
\end{prop}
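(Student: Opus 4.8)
The plan is to unwind the definition of the isometries $\lambda_{(S,v,H)}$ and $\lambda_{(S,w,H)}$ through the Mukai morphism and to track the factor $m$ via a comparison of quasi-universal families. Recall from \cite{PR:v perp} (following O'Grady and Yoshioka) that $\lambda_{(S,v,H)}$ is built from a quasi-universal family $\cE$ on $S\times M_v(S,H)$ of some similitude $\rho$: writing $p_S,p_M$ for the two projections, up to the usual normalisation one has
\[ \lambda_{(S,v,H)}(\alpha)=\frac{1}{\rho}\,\Big[p_{M*}\big(\ch(\cE)\cdot p_S^*(\sqrt{\td_S}\cdot\alpha^\vee)\big)\Big]_{2}\in\oH^2(M_v(S,H),\QQ) \]
for $\alpha\in v^\perp$, where $[\,\cdot\,]_2$ denotes the degree-two Künneth component. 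The two facts I would isolate at the outset are that this class is additive in $\ch(\cE)$, and that dividing by the similitude makes it independent of the chosen quasi-universal family; the very same formula, with the same $\alpha$ and $\sqrt{\td_S}$, computes $\lambda_{(S,w,H)}$ from any quasi-universal family on $S\times M_w(S,H)$.

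The geometric heart of the argument is the behaviour of $\cE$ under the closed embedding $\id_S\times i_{w,m}\colon S\times M_w(S,H)\to S\times M_v(S,H)$. First I would observe that, since $\cE$ is flat over $M_v(S,H)$ and $S\times M_w(S,H)=(S\times M_v(S,H))\times_{M_v(S,H)}M_w(S,H)$, the higher Tor sheaves of $\cE$ against $\cO_{S\times M_w(S,H)}$ vanish, so the derived restriction agrees with the naive one $\cE':=(\id_S\times i_{w,m})^*\cE$. By Remark~\ref{rmk:strat M} the point $i_{w,m}([E])$ is $[E^{\oplus m}]$, so the fibre of $\cE'$ over $[E]\in M_w(S,H)$ is $(E^{\oplus m})^{\oplus\rho}=E^{\oplus m\rho}$; being flat over $M_w(S,H)$ with these fibres, $\cE'$ is a quasi-universal family for $M_w(S,H)$ of similitude $m\rho$. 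Here one uses crucially that $v=mw$ is non-primitive, so that the most singular stratum consists exactly of the polystable sheaves $E^{\oplus m}$.

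Finally I would combine the two ingredients. Because the relevant square is a product, $p_M$ is fibre integration over $S$ and commutes with $i_{w,m}^*$; hence
\[ i_{w,m}^*\lambda_{(S,v,H)}(\alpha)=\frac{1}{\rho}\,\Big[p_{M*}\big(\ch(\cE')\cdot p_S^*(\sqrt{\td_S}\cdot\alpha^\vee)\big)\Big]_2 . \]
Computing $\lambda_{(S,w,H)}(\alpha)$ from the very same family $\cE'$, whose similitude is $m\rho$, the right-hand bracket equals $m\rho\,\lambda_{(S,w,H)}(\alpha)$, and the two prefactors collapse to $i_{w,m}^*\lambda_{(S,v,H)}(\alpha)=m\,\lambda_{(S,w,H)}(\alpha)$. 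The factor $m$ is thus nothing but the ratio of similitudes $m\rho/\rho$. The main obstacle I anticipate is bookkeeping at this last point rather than a conceptual difficulty: one must ensure the Tor-independence genuinely holds (so that the underived restriction is legitimate) and that the standard independence of the Mukai morphism from the quasi-universal family is invoked with the correct similitude $m\rho$. A genuine universal family need not exist in the non-primitive case, which is precisely why the quasi-universal formalism and the similitude divisor must be carried along throughout.
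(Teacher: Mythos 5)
Your proposal has a genuine gap at its foundation. You assume that $\lambda_{(S,v,H)}$ is computed by a Mukai-morphism formula attached to a quasi-universal family $\cE$ on all of $S\times M_v(S,H)$. For $v=mw$ with $m>1$ no such family exists on the whole moduli space: $M_v(S,H)$ is a coarse moduli space of S-equivalence classes, and over the strictly semistable locus there is no (quasi-)universal family at all, since a point $[F_1\oplus F_2]$ identifies non-isomorphic sheaves with the same Jordan--H\"older factors and no flat family can single out well-defined fibres there. This is exactly why in \cite{PR:v perp} (cf.\ Remark~\ref{rmk:lambda in famiglie}) the isometry $\lambda_{(S,v,H)}$ is defined via a quasi-universal family on the stable locus $M_v^s$ only, and then transported to $\oH^2(M_v(S,H),\ZZ)$ through the restriction isomorphism $\oH^2(M_v(S,H),\ZZ)\to\oH^2(M_v^s,\ZZ)$. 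Your crucial step --- restricting $\cE$ along $\id_S\times i_{w,m}$ --- is therefore vacuous: by Remark~\ref{rmk:strat M} the image of $i_{w,m}$ is the most singular stratum, which lies entirely inside the strictly semistable locus and is disjoint from $M_v^s$, i.e.\ precisely where the family fails to exist. The Tor-independence and flatness issues you flag as the ``main obstacle'' are beside the point; the object being restricted is not there in the first place. (Even granting existence, the claim that the fibre of $\cE'$ over $[E]$ is $(E^{\oplus m})^{\oplus\rho}$ would need an argument, since at a polystable point the S-equivalence class does not determine the fibre of a family.)

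The real content of the proposition is the bridge between $\lambda_{(S,v,H)}(\alpha)$ --- which is characterized by its restriction to the stable locus --- and its restriction to the deepest stratum, and this is what the paper's proof supplies. It uses the direct-sum morphisms $g_p\colon M_{pw}\times M_w\to M_{(p+1)w}$, $(F,G)\mapsto F\oplus G$, which are honest morphisms of coarse moduli spaces; the compatibility $g_d^*\circ\lambda_{(d+1)w}=(q_{1,d}^*+q_{2,d}^*)\circ(\lambda_{dw},\lambda_w)$ is quoted from \cite[Proposition~4.11(2)]{PR:v perp}, an induction yields $f_m^*\circ\lambda_{(S,v,H)}=\bigl(\sum_{i=1}^m\pi_{i,m}^*\bigr)\circ(\lambda_{(S,w,H)},\cdots,\lambda_{(S,w,H)})$ for $f_m\colon M_w^m\to M_v$, and the factor $m$ then appears by pulling back along the diagonal $\delta\colon M_w\to M_w^m$, since $i_{w,m}=f_m\circ\delta$. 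Your heuristic that the factor $m$ is a ratio of similitudes $m\rho/\rho$ is a reasonable intuition for why the statement should hold, and it could plausibly be made rigorous via Le Potier-type determinant line bundles, which do descend to the whole of $M_v(S,H)$; but as written your argument rests on a family that does not exist, and repairing it would require either that descent machinery or a reduction to the direct-sum morphisms as in the paper.
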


\begin{proof}
 In this proof, for every $p>0$ we use the shortened notation $M_{pw}$ for the moduli space $M_{pw}(S,H)$  and $\lambda_{pw}$ for the morphism $\lambda_{(S,pw,H)}$.
 
 By \cite[Section~4.2]{PR:v perp}, for every $p>0$, we have a morphism $$g_{p}\colon M_{pw}\times M_{w}\longrightarrow M_{(p+1)w},\,\,\,\,\,\,\,\,g_{p}(F,G):=F\oplus G.$$We let $f_{2}:=g_{1}\colon M_{w}\times M_{w}\to M_{2w}$, and for every $p\geq 3$ we let $$f_{p}:=g_{p-1}\circ(f_{p-1}\times \id_{M_{w}})\colon M_{w}^{p}\longrightarrow M_{pw},$$so that $f_{p}(F_{1},\cdots,F_{p}):=F_{1}\oplus\cdots\oplus F_{p}$. In particular, for $p=m$ we get a morphism $f_{m}\colon M_{w}^{m}\longrightarrow M_{v}$ such that $f_{m}(F_{1},\cdots,F_{m}):=F_{1}\oplus\cdots\oplus F_{m}$.

We claim that for every $p>0$ the following diagram
\begin{equation}
\label{eq:commperm}
\begin{CD}
	(pw)^{\perp}=w^{\perp}                                                  @>{\lambda_{pw}}>>             \oH^{2}(M_{pw},\ZZ)\\
	@V{(\lambda_{w},\cdots,\lambda_{w})}VV                                               @VV{f_{p}^{*}}V\\
	\oplus_{i=1}^{p}\oH^{2}(M_{w},\ZZ)  @>>{\sum_{i=1}^{p}\pi_{i,p}^{*}}>  \oH^{2}(M_{w}^{p},\ZZ)
\end{CD}
\end{equation}
is commutative, where $\pi_{i,p}\colon M_{w}^{p}\to M_{w}$ is the projection onto the $i$-th factor. We prove it by induction on $p$.

First of all we remark that by \cite[Proposition 4.11(2)]{PR:v perp} for every $d>0$ we have a commutative diagram
\begin{equation}
\label{eq:commgp}
\begin{CD}
	((d+1)w)^{\perp}=w^{\perp}=(dw)^{\perp}                                                       @>{\lambda_{(d+1)w}}>>             \oH^{2}(M_{(d+1)w},\ZZ)\\
	@V{(\lambda_{dw},\lambda_{w})}VV                                                @VV{g_{d}^{*}}V\\
	\oH^{2}(M_{dw},\ZZ)\oplus \oH^{2}(M_{w},\ZZ)  @>>{q_{1,d}^{*}+q_{2,d}^{*}}>  \oH^{2}(M_{dw}\times M_{w},\ZZ)
\end{CD}
\end{equation}
where $q_{1,d}\colon M_{dw}\times M_{w}\to M_{dw}$ and $q_{2,d}\colon M_{dw}\times M_{w}\to M_{w}$ are the two projections. For $d=1$ we then get a commutative diagram

$$\begin{CD}
	w^{\perp}                                                  @>{\lambda_{2w}}>>             \oH^{2}(M_{2w},\ZZ)\\
	@V{(\lambda_{w},\lambda_{w})}VV                                               @VV{f_{2}^{*}}V\\
	\oH^{2}(M_{w},\ZZ)\oplus \oH^{2}(M_{w},\ZZ)  @>>{\pi_{1,1}^{*}+\pi_{2,1}^{*}}>  \oH^{2}(M_{w}\times M_{w},\ZZ)
\end{CD}$$
that proves the commutativity of diagram (\ref{eq:commperm}) for $p=2$, i.e.\ for the initial step of the induction.

Let us now consider any $p\geq 2$, and notice that we have a commutative diagram
\begin{equation}
\label{eq:comm2}
	\begin{CD}
		\oH^{2}(M_{(p-1)w},\ZZ)\oplus \oH^{2}(M_{w},\ZZ)                                                       @>{q^{*}_{1,p-1}+q^{*}_{2,p-1}}>>             \oH^{2}(M_{(p-1)w,\ZZ}\times M_{w},\ZZ)\\
		@V{f_{p-1}^{*}\times \id_{\oH^{2}(M_{w},\ZZ)}}VV                                                @VV{(f_{p-1}\times \id_{M_{w}})^{*}}V\\
		\oH^{2}(M_{w}^{p-1},\ZZ)\oplus \oH^{2}(M_{w},\ZZ)  @>>{\operatorname{pr}_{1,p}^{*}+\operatorname{pr}_{2,p}^{*}}>  \oH^{2}(M^{p}_{w},\ZZ)
	\end{CD}
\end{equation}
where $\operatorname{pr}_{1,p}\colon M_{w}^{p}=M_{w}^{p-1}\times M_{w}\to M_{w}^{p-1}$ and $\operatorname{pr}_{2,p}\colon M_{w}^{p}=M_{w}^{p-1}\times M_{w}\to M_{w}$ are the two projections.

Putting diagram (\ref{eq:commgp}) (for $d=p-1$) and diagram (\ref{eq:comm2}) together, we get a commutative diagram 
\begin{equation}
	\label{eq:comm3}
	\begin{CD}
		(pw)^{\perp}=w^{\perp}                                                       @>{\lambda_{pw}}>>             \oH^{2}(M_{pw},\ZZ)\\
		@V{(f_{p-1}^{*}\circ\lambda_{(p-1)w},\lambda_{w})}VV                                                @VV{f_{p}^{*}}V\\
		\oH^{2}(M_{w}^{p-1},\ZZ)\oplus \oH^{2}(M_{w},\ZZ)  @>>{\operatorname{pr}_{1,p}^{*}+\operatorname{pr}_{2,p}^{*}}>  \oH^{2}(M^{p}_{w},\ZZ)
	\end{CD}
\end{equation}
By induction, the commutativity of diagram (\ref{eq:commperm}) for $p-1$ reads as $$f_{p-1}^{*}\circ\lambda_{(p-1)w}=\bigg(\sum_{i=1}^{p-1}\pi_{i,p-1}^{*}\bigg)\circ(\lambda_{w},\cdots,\lambda_{w}),$$so we get a commutative diagram
\begin{equation}
	\label{eq:comm4}
	\begin{CD}
		(pw)^{\perp}=w^{\perp}                                                       @>{\lambda_{pw}}>>             \oH^{2}(M_{pw},\ZZ)\\
		@V{(\lambda_{w},\cdots,\lambda_{w})}VV                                                @VV{f_{p}^{*}}V\\
		\oplus_{i=1}^{p}\oH^{2}(M_{w},\ZZ)  @>>{(\operatorname{pr}_{1,p}^{*}+\operatorname{pr}_{2,p}^{*})\circ((\sum_{i=1}^{p-1}\pi_{i,p-1}^{*})\times \id)}>  \oH^{2}(M^{p}_{w},\ZZ)
	\end{CD}
\end{equation}
Notice that $$(\operatorname{pr}_{1,p}^{*}+\operatorname{pr}_{2,p}^{*})\circ\bigg(\bigg(\sum_{i=1}^{p-1}\pi_{i,p-1}^{*}\bigg)\times \id_{\oH^{2}(M_{w})}\bigg)=\sum_{i=1}^{p}\pi_{i,p}^{*},$$hence diagram (\ref{eq:commperm}) is commutative for $p$, proving the claim.

In particular, diagram (\ref{eq:commperm}) is commutative for $p=m$. As a consequence, for every $a\in v^{\perp}=w^{\perp}$ we have 
$$f_{m}^{*}(\lambda_{v}(a))=\sum_{i=1}^{m}\pi_{i,m}^{*}(\lambda_{w}(a)).$$

Let us now consider $\delta\colon M_{w}\to M_{w}^{m}$ the diagonal morphism, mapping $[F]$ to $([F],\cdots,[F])$. We then have $i_{w,m}:=f_{m}\circ\delta\colon M_{w}\to M_{v}$.

Now, let $\alpha\in \oH^{2}(M_{v},\mathbb{Z})$, so that there is a unique $a\in v^{\perp}$ such that $\alpha=\lambda_{v}(a)$. Then we have 
$$i_{w,m}^{*}(\alpha)=\delta^{*}f_{m}^{*}(\lambda_{v}(a))=\delta^{*}\bigg(\sum_{i=1}^{m}\pi_{i,m}^{*}(\lambda_{w}(a))\bigg)=\sum_{i=1}^{m}\lambda_{w}(a)=m\lambda_{w}(a).$$
This concludes the proof.
\end{proof}

In the following corollary we use the notation $q_v$ (resp.\ $q_w$) for the Beauville--Bogomolov--Fujiki form on $\oH^2(M_v(S,H),\ZZ)$ (resp.\ $\oH^2(M_w(S,H),\ZZ)$).

\begin{cor}\label{cor:w and mw}
Let $(S,v,H)$ be an $(m,k)$-triple and write $v=mw$. Then the restriction morphism 
\[ i_{w,m}^*\colon\oH^2(M_{v}(S,H),\ZZ)\longrightarrow\oH^2(M_w(S,H),\ZZ) \]
is a similitude of lattices, i.e., more explicitly, for every $\alpha,\beta\in \oH^{2}(M_{v}(S,H),\mathbb{Z})$ we have 
\[ q_w(i^*_{w,m}(\alpha),i^*_{w,m}(\beta))=m^{2}\, q_v(\alpha,\beta). \]
\end{cor}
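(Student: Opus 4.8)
The plan is to deduce this corollary directly from Proposition~\ref{prop:i come lambda} together with the fact that $\lambda_{(S,v,H)}$ and $\lambda_{(S,w,H)}$ are isometries (Theorem~\ref{thm:PR v perp}). The key observation is that $v^\perp$ and $w^\perp$ are literally the same sublattice of the Mukai lattice, since $v=mw$ implies $v^\perp = w^\perp$ as subspaces (and as lattices with the induced form). Thus I may compare the two isometries on a common domain.

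First I would fix $\alpha,\beta \in \oH^2(M_v(S,H),\ZZ)$ and, since $\lambda_{(S,v,H)}$ is an isomorphism, write $\alpha = \lambda_{(S,v,H)}(a)$ and $\beta = \lambda_{(S,v,H)}(b)$ for unique $a,b \in v^\perp = w^\perp$. By Proposition~\ref{prop:i come lambda} we have $i_{w,m}^*(\alpha) = i_{w,m}^*(\lambda_{(S,v,H)}(a)) = m\,\lambda_{(S,w,H)}(a)$, and similarly $i_{w,m}^*(\beta) = m\,\lambda_{(S,w,H)}(b)$. Now I compute
\begin{align*}
q_w\big(i_{w,m}^*(\alpha), i_{w,m}^*(\beta)\big)
&= q_w\big(m\,\lambda_{(S,w,H)}(a),\, m\,\lambda_{(S,w,H)}(b)\big) \\
&= m^2\, q_w\big(\lambda_{(S,w,H)}(a),\, \lambda_{(S,w,H)}(b)\big) \\
&= m^2\, (a,b),
\end{align*}
where the last equality uses that $\lambda_{(S,w,H)}$ is an isometry onto $\oH^2(M_w(S,H),\ZZ)$, so it carries the form on $w^\perp$ (the restriction of the Mukai pairing, denoted $(\,\cdot\,,\cdot\,)$) to $q_w$. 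On the other hand, since $\lambda_{(S,v,H)}$ is likewise an isometry, $q_v(\alpha,\beta) = q_v\big(\lambda_{(S,v,H)}(a),\lambda_{(S,v,H)}(b)\big) = (a,b)$. Combining these two identities gives $q_w(i_{w,m}^*(\alpha),i_{w,m}^*(\beta)) = m^2\, q_v(\alpha,\beta)$, as desired.

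I expect there to be no genuine obstacle here: the corollary is a formal consequence of the previously established proposition. The only point requiring a moment's care is the identification $v^\perp = w^\perp$ as lattices, so that the \emph{same} element $a$ can be fed into both $\lambda_{(S,v,H)}$ and $\lambda_{(S,w,H)}$ and so that the induced bilinear form $(\,\cdot\,,\cdot\,)$ used to compute both $q_v(\alpha,\beta)$ and $q_w(\lambda_{(S,w,H)}(a),\lambda_{(S,w,H)}(b))$ is one and the same. This is immediate from $v=mw$ together with the $\QQ$-bilinearity of the Mukai pairing. The factor $m^2$ thus arises purely from pulling the scalar $m$ out of each argument of the quadratic form.
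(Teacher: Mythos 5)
Your proof is correct and is essentially the paper's own argument: the paper states Corollary~\ref{cor:w and mw} without proof as an immediate consequence of Proposition~\ref{prop:i come lambda} and Theorem~\ref{thm:PR v perp}, and your computation (using $v^\perp=w^\perp$ as sublattices of the Mukai lattice, writing $\alpha,\beta$ as images under the isometry $\lambda_{(S,v,H)}$, and extracting the scalar $m$ from each slot of $q_w$) is exactly the intended deduction.
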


Proposition~\ref{prop:i come lambda} and Corollary~\ref{cor:w and mw} allow us to get an isomorphism between the ortoghonal groups $\Or(\oH^2(M_v(S,H),\ZZ))$ and $\Or(\oH^2(M_w(S,H),\ZZ))$.
To this purpose let us denote by 
    \[ i_{w,m,\QQ}^*\colon\oH^2(M_v(S,H),\QQ) \longrightarrow\oH^2(M_w(S,H),\QQ) \]
    the $\QQ$-linear extension of $i^*_{w,m}$ and  by 
    \begin{align*}
    i_{w,m,\QQ}^{\sharp}\colon\Or(\oH^2(M_v(S,H),\QQ)) & \longrightarrow\Or(\oH^2(M_w(S,H),\QQ)) \\
    g & \longmapsto i^*_{w,m,\QQ}\circ g \circ (i^*_{w,m,\QQ})^{-1}. 
    \end{align*}
    the induced isomorphism.

\begin{lemma}\label{lemma:i sharp}
    Let $(S,v,H)$ be an $(m,k)$-triple and write $v=mw$. Then:
    \begin{enumerate}
        \item $i_{w,m,\QQ}^{\sharp}$ sends integral isometries to integral isometries bijectively, i.e.\ it restricts to an isomorphism 
        \[ i_{w,m}^{\sharp}\colon\Or(\oH^2(M_v(S,H),\ZZ))\longrightarrow\Or(\oH^2(M_w(S,H),\ZZ)); \]
        \item using the equality $v^\perp=w^\perp$, we have that 
        \[i_{w,m}^\sharp =\lambda_{(S,w,H)}^\sharp\circ (\lambda_{(S,v,H)}^\sharp)^{-1}\colon \Or(\oH^2(M_v(S,H),\ZZ))\longrightarrow \Or(\oH^2(M_w(S,H),\ZZ)), 
        \]
        where $\lambda_{(S,v,H)}^\sharp$ and $\lambda_{(S,w,H)}^\sharp$ are the isomorphisms defined in the formula (\ref{eqn:lambda sharp}).
    \end{enumerate}
\end{lemma}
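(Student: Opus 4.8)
The plan is to reduce the entire statement to the single relation furnished by Proposition~\ref{prop:i come lambda}, namely $i^*_{w,m}\circ\lambda_{(S,v,H)}=m\lambda_{(S,w,H)}$, together with the observation that $v^\perp=w^\perp$. Since $\lambda_{(S,v,H)}$ is an isomorphism onto $\oH^2(M_v(S,H),\ZZ)$, this relation rewrites as a factorisation $i^*_{w,m}=m\,\lambda_{(S,w,H)}\circ\lambda_{(S,v,H)}^{-1}$, and after extending scalars, $i^*_{w,m,\QQ}=m\,\lambda_{(S,w,H),\QQ}\circ\lambda_{(S,v,H),\QQ}^{-1}$. The whole lemma will then drop out of substituting this expression into the definition of $i^\sharp_{w,m,\QQ}$ and watching a scalar cancel.

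First I would compute the conjugation explicitly. Writing $\lambda_v,\lambda_w$ for the two isometries (and their $\QQ$-extensions), for $g\in\Or(\oH^2(M_v(S,H),\QQ))$ one has
\[ i^\sharp_{w,m,\QQ}(g)=(m\,\lambda_w\circ\lambda_v^{-1})\circ g\circ(m\,\lambda_w\circ\lambda_v^{-1})^{-1}=\lambda_w\circ(\lambda_v^{-1}\circ g\circ\lambda_v)\circ\lambda_w^{-1}, \]
where the factor $m$ and its inverse cancel. The inner factor $\lambda_v^{-1}\circ g\circ\lambda_v$ is exactly $(\lambda_{(S,v,H)}^\sharp)^{-1}(g)\in\Or(v^\perp_\QQ)$, and the outer conjugation is $\lambda_{(S,w,H)}^\sharp$ applied to it. Hence, as maps on the rational orthogonal groups, $i^\sharp_{w,m,\QQ}=\lambda_{(S,w,H)}^\sharp\circ(\lambda_{(S,v,H)}^\sharp)^{-1}$.

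With this identity in hand both assertions are immediate. Because $\lambda_{(S,v,H)}$ and $\lambda_{(S,w,H)}$ are \emph{integral} isometries of lattices, the conjugation maps of formula~(\ref{eqn:lambda sharp}) are honest isomorphisms $\lambda_{(S,v,H)}^\sharp\colon\Or(v^\perp)\to\Or(\oH^2(M_v(S,H),\ZZ))$ and $\lambda_{(S,w,H)}^\sharp\colon\Or(w^\perp)\to\Or(\oH^2(M_w(S,H),\ZZ))$ between the integral orthogonal groups. Using $v^\perp=w^\perp$, their composite $\lambda_{(S,w,H)}^\sharp\circ(\lambda_{(S,v,H)}^\sharp)^{-1}$ is therefore an isomorphism of integral orthogonal groups; since it coincides with $i^\sharp_{w,m,\QQ}$ by the identity above, $i^\sharp_{w,m,\QQ}$ carries integral isometries bijectively to integral isometries. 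This is part~(1), and the resulting integral restriction $i^\sharp_{w,m}$ equals $\lambda_{(S,w,H)}^\sharp\circ(\lambda_{(S,v,H)}^\sharp)^{-1}$, which is part~(2).

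The argument is essentially formal once Proposition~\ref{prop:i come lambda} is available, so I do not expect a genuine obstacle. The only points demanding care are bookkeeping ones: keeping the integral maps cleanly separated from their rational extensions, and using that $v^\perp$ and $w^\perp$ are literally the same sublattice of $\widetilde{\oH}(S,\ZZ)$, which is precisely what makes the composition $\lambda_{(S,w,H)}^\sharp\circ(\lambda_{(S,v,H)}^\sharp)^{-1}$ well defined. (The well-definedness of $i^\sharp_{w,m,\QQ}$ on the rational orthogonal groups itself rests on the similitude property of Corollary~\ref{cor:w and mw}, which is already recorded in the paragraph preceding the statement.)
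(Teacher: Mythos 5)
Your proposal is correct and follows essentially the same route as the paper's own proof: both substitute the relation $i^*_{w,m,\QQ}=m\,\lambda_{(S,w,H),\QQ}\circ\lambda_{(S,v,H),\QQ}^{-1}$ from Proposition~\ref{prop:i come lambda} into the defining conjugation, cancel the scalar $m$ using $\QQ$-linearity of $g$, and read off the identity $i^\sharp_{w,m,\QQ}=\lambda_{(S,w,H),\QQ}^\sharp\circ(\lambda_{(S,v,H),\QQ}^\sharp)^{-1}$, from which both parts follow by restricting to the integral orthogonal groups via $v^\perp=w^\perp$.
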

\begin{proof}
    For any $(l,k)$-triple $(S,u,H)$,  let $$\lambda_{(S,u,H),\QQ}\colon u^\perp\otimes\QQ\to \oH^2(M_u(S,H),\QQ)$$
    be the $\QQ$-linear extension of $\lambda_{(S,u,H)}$ and let $$\lambda_{(S,u,H),\QQ}^\sharp\colon \Or(v^\perp\otimes\QQ)\longrightarrow\Or(\oH^2(M_u(S,H),\QQ))$$ be the group ismorphism sending a $\QQ$-linear isometry $g\in \Or(v^\perp\otimes\QQ)$  to $\lambda_{(S,u,H),\QQ}\circ g\circ (\lambda_{(S,u,H),\QQ})^{-1}$ so that $\lambda_{(S,u,H)}^{
    \sharp}$
    is the restriction of $\lambda_{(S,v,H),\QQ}^\sharp$ to $\Or(v^\perp)$.
    
    We claim that 
    \[  i_{w,m,\QQ}^\sharp =\lambda_{(S,w,H),\QQ}\circ (\lambda_{(S,v,H),\QQ}^\sharp)^{-1}%\colon\Or(v^\perp\otimes\QQ)\to\Or(w^\perp\otimes\QQ). 
    \]
    Since $v^\perp=w^\perp$ and  the isomorphism $\lambda_{(S,v,H)}^\sharp$ and $\lambda_{(S,w,H)}^\sharp$ are the restrictions of $\lambda_{(S,v,H),\QQ}^\sharp$ and $\lambda_{(S,w,H),\QQ}^\sharp$, 
    our claim implies that  $i_{w,m,\QQ}^{\sharp}$ sends $\Or(\oH^2(M_v(S,H),\ZZ))$ onto $\Or(\oH^2(M_w(S,H),\ZZ))$ and its restriction 
    \[ i_{w,m}^{\sharp}\colon\Or(\oH^2(M_v(S,H),\ZZ))\longrightarrow\Or(\oH^2(M_w(S,H),\ZZ)) \]
  is an isomorphism and satisfies   
  \[ i_{w,m}^{\sharp}=\lambda_{(S,w,H)}^\sharp\circ(\lambda_{(S,v,H)}^\sharp)^{-1}. \]

    Let us then prove the claim. By Proposition~\ref{prop:i come lambda} we know that $i_{w,m,\QQ}^*=m\lambda_{(S,w,H),\QQ}\circ\lambda_{(S,v,H),\QQ}^{-1}$; it follows that $(i_{w,m,\QQ}^*)^{-1}=\frac{1}{m}\lambda_{(S,v,H),\QQ}\circ\lambda_{(S,w,H),\QQ}^{-1}$. Therefore, by definition, we have that for every $g\in\Or(\oH^2(M_v(S,H),\QQ))$
    \begin{align*}
     i_{w,m,\QQ}^\sharp(g) & = i^*_{w,m,\QQ}\circ g \circ (i^*_{w,m,\QQ})^{-1} \\
       & =(m\lambda_{(S,w,H),\QQ}(\circ\lambda_{(S,v,H),\QQ}^{-1})\circ g \circ (\frac{1}{m}\lambda_{(S,v,H),\QQ}\circ\lambda_{(S,w,H),\QQ}^{-1}) \\
       & = \lambda_{(S,w,H),\QQ}\circ(\lambda_{(S,v,H),\QQ}^{-1}\circ g \circ \lambda_{(S,v,H),\QQ})\circ\lambda_{(S,w,H),\QQ}^{-1} \\
       %& =(\lambda_{(S,w,H),\QQ}\circ\lambda_{(S,v,H),\QQ}^{-1})^\sharp(g) \\
       & = \lambda_{(S,w,H),\QQ}^\sharp\circ(\lambda_{(S,v,H),\QQ}^\sharp)^{-1}(g),
    \end{align*}
    where in the third equality we used that $g$ is $\QQ$-linear. The claim follows and the proof is concluded.
\end{proof}

For later use, let us extend the last results to the case in which we have two $(m,k)$-triples $(S_1,v_1,H_1)$ and $(S_2,v_2,H_2)$. As usual, let us write $v_i=mw_i$, so that $(S_1,w_1,H_1)$ and $(S_2,w_2,H_2)$ are $(1,k)$-triples. For sake of simplicity, in the following we use the notation $M_{v_1}$ for $M_{v_1}(S_1,H_1)$ and $M_{v_2}$ for $M_{v_2}(S_2,H_2)$; similarly we write $M_{w_1}$ for $M_{w_1}(S_1,H_1)$ and $M_{w_2}$ for $M_{w_2}(S_2,H_2)$.

Let us consider the following bijective map of sets
\begin{align*}%\label{eqn:i sharpgen}\[\]
	i^{\sharp}_{w_1,w_2,m,\QQ}\colon \Or(\oH^{2} 
        (M_{v_1},\mathbb{Q}),\oH^{2}(M_{v_2},\mathbb{Q})) & \longrightarrow \Or(\oH^{2}(M_{w_1},\mathbb{Q}),\oH^{2}(M_{w_2},\mathbb{Q})) \\
	g & \longmapsto i^{*}_{w_2,m,\QQ}\circ 
        g\circ(i^{*}_{w_1,m,\QQ})^{-1}. \nonumber
    \end{align*}

\begin{lemma}\label{lemma:i sharp gen} The bijection
$i_{w_1,w_2,m,\QQ}^{\sharp}$ sends integral isometries to integral isometries bijectively, i.e.\ it restricts to a bijection
        \[ i_{w_1,w_2,,m}^{\sharp}\colon \Or(\oH^{2} (M_{v_1},\mathbb{Z}),\oH^{2}(M_{v_2},\mathbb{Z})) \longrightarrow \Or(\oH^{2}(M_{w_1},\mathbb{Z}),\oH^{2}(M_{w_2},\mathbb{Z}))  
 \]
        
    More esplicity, we have
    \[ i^{\sharp}_{w_1,w_2,m}(g)=(\lambda_{(S_2,w_2,H_2)}\circ\lambda_{(S_2,v_2,H_2)})\circ g\circ(\lambda_{(S_1,w_1,H_1)}\circ\lambda_{(S_1,v_1,H_1)})^{-1} \]
    for every $g\in\Or(\oH^{2} 
        (M_{v},\mathbb{Z}),\oH^{2}(M_{v'},\mathbb{Z}))$.
\end{lemma}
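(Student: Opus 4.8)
The plan is to run the argument of Lemma~\ref{lemma:i sharp} in its relative form, now bookkeeping two triples rather than one. The mechanism is the same: the correspondence $i^\sharp_{w_1,w_2,m,\QQ}$ is, after the scaling factors $m$ cancel, simply conjugation by the integral isometries attached to the maps $\lambda$ of Theorem~\ref{thm:PR v perp}.

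First I would fix, for $i=1,2$, the $\QQ$-linear extensions of the Hodge isometries $\lambda_{(S_i,v_i,H_i)}$ and $\lambda_{(S_i,w_i,H_i)}$, and recall that $v_i=mw_i$ forces the equality of sublattices $v_i^\perp=w_i^\perp$ (with the same Mukai form). I then set
\[ \Phi_i:=\lambda_{(S_i,w_i,H_i)}\circ\lambda_{(S_i,v_i,H_i)}^{-1}\colon\oH^2(M_{v_i},\ZZ)\longrightarrow\oH^2(M_{w_i},\ZZ). \]
Being a composition of integral isometries through the common lattice $v_i^\perp=w_i^\perp$, each $\Phi_i$ is an integral isometry from $(\oH^2(M_{v_i},\ZZ),q_{v_i})$ to $(\oH^2(M_{w_i},\ZZ),q_{w_i})$; this is consistent with Corollary~\ref{cor:w and mw}, since Proposition~\ref{prop:i come lambda} gives $i^*_{w_i,m,\QQ}=m\,\Phi_{i,\QQ}$ and hence $(i^*_{w_i,m,\QQ})^{-1}=\tfrac1m\,\Phi_{i,\QQ}^{-1}$.

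The heart of the computation is then the cancellation of the factors $m$ and $\tfrac1m$, exactly as in the third displayed equality in the proof of Lemma~\ref{lemma:i sharp}: substituting the two identities above into the definition of the map gives, for every $g\in\Or(\oH^2(M_{v_1},\QQ),\oH^2(M_{v_2},\QQ))$,
\[ i^\sharp_{w_1,w_2,m,\QQ}(g)=i^*_{w_2,m,\QQ}\circ g\circ(i^*_{w_1,m,\QQ})^{-1}=\Phi_{2,\QQ}\circ g\circ\Phi_{1,\QQ}^{-1}. \]
Since $\Phi_1$ and $\Phi_2$ are integral isometries, the right-hand side is again integral and is an isometry whenever $g$ is, so $i^\sharp_{w_1,w_2,m,\QQ}$ carries $\Or(\oH^2(M_{v_1},\ZZ),\oH^2(M_{v_2},\ZZ))$ into $\Or(\oH^2(M_{w_1},\ZZ),\oH^2(M_{w_2},\ZZ))$; the assignment $h\mapsto\Phi_2^{-1}\circ h\circ\Phi_1$ is an explicit two-sided inverse, giving the asserted bijection. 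Unwinding $\Phi_1^{-1}=\lambda_{(S_1,v_1,H_1)}\circ\lambda_{(S_1,w_1,H_1)}^{-1}$ then yields the explicit formula of the statement.

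I do not expect a genuine obstacle here: once Proposition~\ref{prop:i come lambda} is available the argument is entirely formal, and the only care required is to track the source and target lattice of each arrow and to verify that the two scaling factors $m$ introduced by the restriction maps $i^*_{w_1,m}$ and $i^*_{w_2,m}$ cancel against one another. The single point worth stating cleanly is that each $\Phi_i$ really is an isometry, and not merely a $\ZZ$-module isomorphism, since this is what forces the image to land among isometries; this is immediate from $v_i^\perp=w_i^\perp$ together with Theorem~\ref{thm:PR v perp}.
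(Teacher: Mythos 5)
Your proposal is correct and follows essentially the same route as the paper's proof: both invoke Proposition~\ref{prop:i come lambda} together with the $\QQ$-linearity of $g_\QQ$ so that the factors $m$ and $\tfrac{1}{m}$ cancel, exhibiting $i^{\sharp}_{w_1,w_2,m,\QQ}$ as conjugation by the integral isometries $\lambda_{(S_i,w_i,H_i)}\circ\lambda_{(S_i,v_i,H_i)}^{-1}$, whence integrality and bijectivity follow. As a minor point in your favour, your formula $\Phi_2\circ g\circ\Phi_1^{-1}$ with $\Phi_i=\lambda_{(S_i,w_i,H_i)}\circ\lambda_{(S_i,v_i,H_i)}^{-1}$ places the inverse where it must be for the compositions to type-check; the displayed formula in the lemma and in the paper's proof omits this inverse, which is evidently a typo.
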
    
\begin{proof}
    Let $g\in\Or(\oH^{2}(M_{v},\mathbb{Z}),\oH^{2}(M_{v'},\mathbb{Z}))$ be an isometry and $g_\QQ$ its rational extension. As in the proof of Lemma~\ref{lemma:i sharp}, by using Proposition~\ref{prop:i come lambda} and the $\QQ$-linearity of $g_\QQ$, we can see that 
    \[ i^{\sharp}_{w_1,w_2,m,\QQ}(g_\QQ)=(\lambda_{(S_2,w_2,H_2),\QQ}\circ\lambda_{(S_2,v_2,H_2),\QQ})\circ g_\QQ\circ(\lambda_{(S_1,w_1,H_1),\QQ}\circ\lambda_{(S_1,v_1,H_1),\QQ})^{-1}. \]
    Since all the isometries in the right hand side of the equality are rational extensions of integral isometries, we get that also $i^{\sharp}_{w_1,w_2,m,\QQ}(g_\QQ)$ is a rational extension of an integral isometry. By construction this isometry is 
    \[ i^{\sharp}_{w_1,w_2,m}(g):=(\lambda_{(S_2,w_2,H_2)}\circ\lambda_{(S_2,v_2,H_2)})\circ g\circ(\lambda_{(S_1,w_1,H_1)}\circ\lambda_{(S_1,v_1,H_1)})^{-1}, \]
    from which the claim follows.
\end{proof}

%%%%%%%%%%%%%%%%%%%%%%%%%%%%%%%%%%%
%%%%%%%%%%%%%%%%%%%%%%%%%%%%%%%%%%%

\section{A groupoid representation}\label{section:groupoid}

Aim of this section is to define a groupoid $\mathcal{G}^{m,k}$ of $(m,k)$-triples and a $\mathcal{G}^{m,k}$-representation with values in a %the 
groupoid of free $\ZZ$-modules. 

The groupoid $\mathcal{G}^{m,k}$ will be defined starting from two groupoids, $\mathcal{G}^{m,k}_{\operatorname{def}}$ and $\mathcal{G}^{m,k}_{\operatorname{FM}}$, whose constructions will be explained in Sections~\ref{section:families of K3} and~\ref{section:FM}, respectively. These two groupoids will have the same objects; moreover, the morphisms in $\mathcal{G}^{m,k}_{\operatorname{def}}$ come from deformations of $(m,k)$-triples, while the morphisms in $\mathcal{G}^{m,k}_{\operatorname{FM}}$ come from Fourier-Mukai transforms.

We start by quickly recalling the main definitions and notation about groupoids we will use.

\subsection{Groupoids}\label{section:groupoids}
We refer to the lecture notes \cite{Higgins} for the definitions and constructions about groupoids.
First of all a groupoid $\mathcal{G}$ is a (small) category whose morphisms are all isomorphisms, i.e.\ for any two objects $x,y\in\mathcal{G}$, any $f\in\Hom_{\mathcal{G}}(x,y)$ is an isomorphism.
If $\mathcal{G}$ is a groupoid and $x\in\mathcal{G}$ is an object, we let
\[ \Aut_{\mathcal{G}}(x):=\Hom_{\mathcal{G}}(x,x), \]
often called the isotropy group of the object $x$ in $\mathcal{G}$.
If the groupoid $\mathcal{G}$ is clear from the context, then we will simply write $\Aut(x)$. 

Moreover, if $\mathcal{G}$ and $\mathcal{H}$ are two groupoids, and $F\colon\mathcal{G}\to\mathcal{H}$ is a functor, for every object $x$ of $\mathcal{G}$ we let $$F_{x}\colon\Aut_{\mathcal{G}}(x)\longrightarrow \Aut_{\mathcal{H}}(F(x))$$
be the group morphism mapping an automorphism of $x$ in $\mathcal{G}$ to its image under the functor $F$.

If $\mathcal{G}$ is a groupoid, by a \emph{representation} of $\mathcal{G}$ we mean a functor $F\colon\mathcal{G}\to\mathcal{A}$ where $\mathcal{A}$ is a suitable groupoid of $\ZZ$-modules (or vector spaces).

Finally, let us recall the notion of free product of groupoids. Let $\mathcal{G}$ and $\mathcal{H}$ be two groupoids, and $Y$ a set of common objects of $\mathcal{G}$ and $\mathcal{H}$. In all the situations of interest for us $\mathcal{G}$ and $\mathcal{H}$ will have the same set of objects.
\begin{defn}\label{defn:free product of groupoids}
    The \emph{free product} of $\mathcal{G}$ and $\mathcal{H}$ along $Y$ is the groupoid $\mathcal{G}\ast_Y\mathcal{H}$ such that:
    \begin{itemize}
        \item the objects of $\mathcal{G}\ast_Y\mathcal{H}$ are the elements of $Y$;
        \item if $x,y\in Y$, then a morphism $f\in\Hom_{\mathcal{G}\ast_Y\mathcal{H}}(x,y)$ is a formal combination (with usual cancellation properties)
        \[ f=g_1\circ\cdots\circ g_k \]
        where each $g_i$ is a morphism from $x_i\in Y$ to $x_{i+1}\in Y$ in either $\mathcal{G}$ or $\mathcal{H}$ and such that $x_1=x$ and $x_{k+1}=y$.
    \end{itemize}
    If $\mathcal{G}$ and $\mathcal{H}$ have the same sets of objects, then we simply refer to $\mathcal{G}\ast\mathcal{H}$ as the free product of $\mathcal{G}$ and $\mathcal{H}$, assuming that $Y$ is the whole set of objects.
\end{defn}
The fact that free products exist is the content of \cite[Proposition~21]{Higgins}. In literature the free product is also defined as the pushout, in the category of small categories, of $\mathcal{G}$ and $\mathcal{H}$ along a common set $Y$ of objects of $\mathcal{G}$ and $\mathcal{H}$.

\subsection{Deformations of $(m,k)$-triples and their groupoid}\label{section:families of K3}

We start by recalling the construction of a deformation of a moduli space of sheaves on a K3 surface induced by the deformation of the surface itself, following \cite{PR:Deformations,PR:SingularVarieties}. 

Let $(S,v,H)$ be an $(m,k)$-triple, and write $v=(r,\xi,a)$. We let $L$ be a line bundle on $S$ such that $c_1(L)=\xi$. We will moreover use the following notation: if $T$ is a smooth, connected algebraic variety, $f\colon Y\to T$ is a morphism and $\mathcal{L}\in \Pic(Y)$, for every $t\in T$ we let $Y_{t}:=f^{-1}(t)$ and $\mathcal{L}_{t}:=\mathcal{L}_{|Y_{t}}$.

\begin{defn}\label{defn:def of m k triple}
	Let $(S,v,H)$ be an $(m,k)$-triple, and $T$ a smooth, connected algebraic variety. A \textit{deformation of $(S,v,H)$ along $T$} is a triple $(f\colon \mathcal{S}\to T,\mathcal{L},\mathcal{H})$, where:
	\begin{enumerate}
		\item $f\colon \mathcal{S}\to T$ is a smooth, projective deformation of $S$, and we let $0\in T$ be such that $\mathcal{S}_{0}\simeq S$;
		\item $\mathcal{L}$ is a line bundle on $\mathcal{S}$ such that $\mathcal{L}_{0}\simeq L$.
		\item $\mathcal{H}$ is a line bundle on $\mathcal{S}$ such that $\mathcal{H}_{t}$ is a $v_{t}$-generic polarisation on $\mathcal{S}_{t}$ for every $t\in T$ and such that $\mathcal{H}_{0}\simeq H$
	\end{enumerate}
	where for every $t\in T$ we let $v_{t}:=(r,c_{1}(\mathcal{L}_{t}),a)$.
\end{defn}

\begin{rmk}
\label{rmk:defomk}
Notice that if $(f\colon \mathcal{S}\to T,\mathcal{L}^{\otimes m},\mathcal{H})$ is a deformation of an $(m,k)$-triple $(S,v,H)$ along $T$, and if we let $v=mw$, then $(f\colon \mathcal{S}\to T,\mathcal{L},\mathcal{H})$ is a deformation of $(S,w,H)$ along $T$. 

Conversely, if $(f\colon \mathcal{S}\to T,\mathcal{L},\mathcal{H})$ is a deformation of a $(1,k)$-triple $(S,w,H)$ along $T$, and if we let $v=mw$, then by \cite{PR:SingularVarieties} there is a Zariski-closed subset $Z$ of $T$ for which $(f\colon \mathcal{S}_{|T'}\to T',\mathcal{L}_{|T'}^{\otimes m},\mathcal{H}_{|T'})$ is a deformation of $(S,v,H)$ along $T':=T\setminus Z$ (here $Z$ is the subset of $T$ of those $t\in T$ for which $H_{t}$ is not $v_{t}$-generic). Notice that it may happen that $Z=T$.
\end{rmk}

If $(S,v,H)$ is an $(m,k)$-triple and $(f\colon\mathcal{S}\to T,\mathcal{L},\mathcal{H})$ is a deformation of $(S,v,H)$ along a smooth, connected algebraic variety $T$, we let $p_{v}\colon\mathcal{M}_{v}\to T$ be the relative moduli space of semistable sheaves so that for every $t\in T$ we have $\mathcal{M}_{v,t}=M_{v_{t}}(\mathcal{S}_{t},\mathcal{H}_{t})$. As shown in \cite[Lemma~2.21]{PR:SingularVarieties}, the family $p_{v}\colon\mathcal{M}_{v}\to T$ is a locally trivial deformation of $M_{v}(S,H)$ along $T$.

\begin{rmk}\label{rmk:lambda in famiglie}
    The isometry $\lambda_{(S,v,H)}$ in Theorem~\ref{thm:PR v perp} behaves well in families of $(m,k)$-triples, i.e.\ it extends to an isometry of local systems as we will now explain. 
    
    Let $(f\colon \mathcal{S}\to T,\mathcal{L},\mathcal{H})$ be a deformation of $(m,k)$-triples and $p_{v}\colon\mathcal{M}_{v}\to T$ the associated locally trivial family of moduli spaces.
   
    Let $\mathcal{M}_v^s$ be the smooth locus of $\mathcal{M}_v$ and and let $p_v^s:\mathcal{M}_v^s\rightarrow T$  be the restriction of $p_v$. By \cite[Proposition 3.5(2)]{PR:v perp}, the inclusion
    $\mathcal{M}_v^s\subset \mathcal{M}_v$ induces an isomorphism $\iota\colon R^2p_{v*}^s\ZZ
    \rightarrow  R^2p_{v*}\ZZ $
    of local systems. By
    \cite[Proposition 4.4(2)]{PR:v perp},  a relative quasi-universal family for $\mathcal{M}_v^s$ induces an isomorphism $\lambda^s\colon \mathsf{v}^\perp\rightarrow R^2p^{s}_{v*}\ZZ$  such that the morphism over a point $t\in T$ of the composition 
    \[ \mathsf{\lambda}:=\iota\circ \lambda^s\colon \mathsf{v}^\perp\longrightarrow R^2p_{v*}\ZZ \] is just $\lambda_{(S_t,v_t,H_t)}$.
\end{rmk}

\begin{defn}
Let $(S_{1},v_{1},H_{1})$ and $(S_{2},v_{2},H_{2})$ be two $(m,k)$-triples. A \textit{deformation path} from $(S_{1},v_{1},H_{1})$ to $(S_{2},v_{2},H_{2})$ is a $6$-tuple $$\alpha:=(f\colon\mathcal{S}\to T,\mathcal{L},\mathcal{H},t_{1},t_{2},\gamma)$$where: \begin{itemize}
    \item the triple $(f\colon\mathcal{S}\to  T,\mathcal{L},\mathcal{H})$ is a deformation of both the $(m,k)$-triples $(S_{1},v_{1},H_{1})$ and $(S_{2},v_{2},H_{2})$;
    \item for $i=1,2$ the point $t_{i}\in T$ is such that $(\mathcal{S}_{t_{i}},v_{t_{i}},H_{t_{i}})=(S_{i},v_{i},H_{i})$;
    \item we have that $\gamma$ is a continuous path in $T$ such that $\gamma(0)=t_{1}$ and $\gamma(1)=t_{2}$.
\end{itemize}
\end{defn}

Given two $(m,k)$-triples $(S_{1},v_{1},H_{1})$ and $(S_{2},v_{2},H_{2})$ and $\alpha=(f\colon\mathcal{S}\to T,\mathcal{L},\mathcal{H},t_{1},t_{2},\gamma)$ a deformation path from $(S_{1},v_{1},H_{1})$ to $(S_{2},v_{2},H_{2})$, let $p_{v}\colon\mathcal{M}_{v}\to T$ be the relative moduli space associated to the deformation $(f\colon\mathcal{S}\to T,\mathcal{L},\mathcal{H})$. There are two natural locally trivial parallel transport operators that can be defined starting from $\alpha$.
\begin{enumerate}
    \item The first one is the parallel transport operator $$p_{\alpha}\colon\widetilde{\oH}(S_{1},\mathbb{Z})\longrightarrow\widetilde{\oH}(S_{2},\mathbb{Z})$$along $\gamma$ inside the local system $R^{\bullet}f_{*}\mathbb{Z}$.
    \item The second one is the locally trivial parallel transport operator $$g_{\alpha}\colon\oH^{2}(M_{v_{1}}(S_{1},H_{1}),\mathbb{Z})\longrightarrow \oH^{2}(M_{v_{2}}(S_{2},H_{2}),\mathbb{Z})$$along $\gamma$ inside the local system $R^{2}p_{v*}\mathbb{Z}$.
\end{enumerate}

\begin{defn}\label{defn:equivalent}
Let $(S_{1},v_{1},H_{1})$ and $(S_{2},v_{2},H_{2})$ be two $(m,k)$-triples. Two deformations paths $\alpha$ and $\alpha'$ from $(S_{1},v_{1},H_{1})$ to $(S_{2},v_{2},H_{2})$ are \textit{equivalent} if $p_{\alpha}=p_{\alpha'}$ and $g_{\alpha}=g_{\alpha'}$. The equivalence class of $\alpha$ will be denoted $\overline{\alpha}$.
\end{defn}

\begin{rmk}\label{rmk:solo p alpha}
    In Definition~\ref{defn:equivalent} we used both the parallel transport operators $p_{\alpha}$ in the local system $R^\bullet f_*\ZZ$ and the parallel transport operators $g_{\alpha}$ in the local system $R^2p_{v*}\ZZ$. In fact, only $p_{\alpha}$ is needed. This is because the local system $R^\bullet f_*\ZZ$ comes with a flat section $\mathsf{v}$ corresponding to the Mukai vectors on the fibres. We can then consider the sub-local system 
    \[ \mathsf{v}^\perp\subset R^\bullet f_*\ZZ. \]
    The parallel transport operator $p_{\alpha}$ is constant along $\mathsf{v}$ by definition. The restriction $p_{\alpha}|_{v^\perp}$ can then be seen as the parallel transport operator inside the local system $\mathsf{v}^\perp$. Since by Remark~\ref{rmk:lambda in famiglie} there is an isomorphism of local systems 
    \[ \lambda\colon \mathsf{v}^\perp\longrightarrow R^2p_{v*}\ZZ, \] and $g_{\alpha}$ and $p_{\alpha}|_{v^\perp}$ are parallel transport operators over the same path $\gamma$, the morphism $g_{\alpha}$ is uniquely determined by $p_{\alpha}$ and vice versa.
\end{rmk}

Consider three $(m,k)$-triples $(S_{1},v_{1},H_{1})$, $(S_{2},v_{2},H_{2})$ and $(S_{3},v_{3},H_{3})$, and let $\alpha=(f\colon\mathcal{S}\to T,\mathcal{L},\mathcal{H},t_{1},t_{2},\gamma)$ be a deformation path from $(S_{1},v_{1},H_{1})$ to $(S_{2},v_{2},H_{2})$ and $\alpha'=(f'\colon\mathcal{S}'\to T',\mathcal{L}',\mathcal{H}',t'_{1},t'_{2},\gamma')$ a deformation path from $(S_{2},v_{2},H_{2})$ to $(S_{3},v_{3},H_{3})$.

\begin{defn}
The \textit{concatenation} of $\alpha$ with $\alpha'$ is the $6$-tuple $$\alpha'\star\alpha:=(f''\colon\mathcal{S}''\to T'',\mathcal{L}'',\mathcal{H}'',t''_{1},t''_{2},\gamma'')$$where
\begin{itemize}
    \item $T''$ is obtained by glueing $T$ and $T'$ along $t_{2}$ and $t'_{1}$
    \item $\mathcal{S}''$ is obtained by glueing $\mathcal{S}$ and $\mathcal{S}'$ along $\mathcal{S}_{t_{2}}$ and $\mathcal{S}'_{t'_{1}}$
    \item $f''$ is obtained by glueing $f$ and $f'$ along $\mathcal{S}_{t_{2}}$ and $\mathcal{S}'_{t'_{1}}$
    \item $\mathcal{L}''$ is obtained by glueing $\mathcal{L}$ and $\mathcal{L}'$ along $\mathcal{S}_{t_{2}}$ and $\mathcal{S}'_{t'_{1}}$
    \item $\mathcal{H}''$ is obtained by glueing $\mathcal{H}$ and $\mathcal{H}'$ along $\mathcal{S}_{t_{2}}$ and $\mathcal{S}'_{t'_{1}}$
    \item $t''_{1}$ is the image of $t_{1}$ in $T''$
    \item $t''_{2}$ is the image of $t'_{2}$ in $T''$
    \item $\gamma''$ is the concatenation of the image of the path $\gamma$ in $T''$ with the image of the path $\gamma'$ in $T''$.
\end{itemize}
\end{defn}

It is immediate to notice that if $\alpha$ is a deformation path from $(S_{1},v_{1},H_{1})$ to $(S_{2},v_{2},H_{2})$ and $\alpha'$ is a deformation path from $(S_{2},v_{2},H_{2})$ to $(S_{3},v_{3},H_{3})$, then $\alpha\star\alpha'$ is a deformation path from $(S_{1},v_{1},H_{1})$ to $(S_{3},v_{3},H_{3})$.

\begin{rmk}
\label{rmk:starok}
Notice that $$p_{\alpha\star\alpha'}=p_{\alpha'}\circ p_{\alpha},\,\,\,\,\,\,\,\,g_{\alpha\star\alpha'}=g_{\alpha'}\circ g_{\alpha},$$so if $\alpha$ is equivalent to $\beta$ and $\alpha'$ is equivalent to $\beta'$, then $\alpha\star\alpha'$ is equivalent to $\beta\star\beta'$.
\end{rmk}

The previous notions allow us to define the groupoid $\mathcal{G}^{m,k}_{\operatorname{def}}$ of deformations of $(m,k)$-triples. Before doing this, we need to introduce two groupoids $\widetilde{\mathcal{G}}^{m,k}_{\operatorname{def}}$ and $\mathcal{P}^{m,k}$ that we will use to define $\mathcal{G}^{m,k}_{\operatorname{def}}$.

We start with the groupoid $\widetilde{\mathcal{G}}^{m,k}_{\operatorname{def}}$.

\begin{defn}\label{defn:G tilde m k def}
Given two strictly positive integers $m$ and $k$, the groupoid $\widetilde{\mathcal{G}}^{m,k}_{\operatorname{def}}$ is defined as follows:
\begin{itemize}
    \item the objects of $\widetilde{\mathcal{G}}^{m,k}_{\operatorname{def}}$ are the $(m,k)$-triples;
    \item for every two $(m,k)$-triples $(S_{1},v_{1},H_{1}),(S_{2},v_{2},H_{2})$, a morphism from $(S_{1},v_{1},H_{1})$ to $(S_{2},v_{2},H_{2})$ in $\widetilde{\mathcal{G}}^{m,k}_{\operatorname{def}}$ is an equivalence class of deformation paths from $(S_{1},v_{1},H_{1})$ to $(S_{2},v_{2},H_{2})$;
    \item for every $(m,k)$-triple $(S,v,H)$, where $v=(r,c_{1}(L),a)$, the identity of $(S,v,H)$ is the equivalence class of the deformation path $(S\to\{p\},L,H,p,p,\kappa_{p})$ where $\kappa_{p}$ is the constant path;
    \item for every three $(m,k)$-triples $(S_{1},v_{1},H_{1})$, $(S_{2},v_{2},H_{2})$ and $(S_{3},v_{3},H_{3})$ and every pair of morphisms 
    \[ \overline{\alpha}\colon(S_{1},v_{1},H_{1})\to (S_{2},v_{2},H_{2})\quad\mbox{and}\quad\overline{\alpha'}\colon(S_{2},v_{2},H_{2})\to(S_{3},v_{3},H_{3}), \] 
    the composition of $\alpha$ with $\alpha'$ is $\overline{\alpha'}\circ\overline{\alpha}:=\overline{\alpha\star\alpha'}$.
\end{itemize}
\end{defn}

\begin{rmk}
The fact that the composition is well defined follows from Remark \ref{rmk:starok}, and the fact that the identity is the prescribed one is immediate. Moreover, the fact that $\widetilde{\mathcal{G}}^{m,k}_{\operatorname{def}}$ is a groupoid may be proved as follows: given two $(m,k)$-triples $(S_{1},v_{1},H_{1})$ and $(S_{2},v_{2},H_{2})$ and a morphism $\overline{\alpha}$ between them, where $\alpha=(f\colon\mathcal{S}\to T,\mathcal{L},\mathcal{H},t_{1},t_{2},\gamma)$, then $\overline{\alpha}^{-1}$ is $\overline{\alpha^{-1}}$, where $$\alpha^{-1}:=(f\colon\mathcal{S}\to T,\mathcal{L},\mathcal{H},t_{2},t_{1},\gamma^{-1}).$$Indeed $p_{\alpha^{-1}}=p_{\alpha}^{-1}$ and $g_{\alpha^{-1}}=g^{-1}_{\alpha}$, so $\overline{\alpha^{-1}}=\overline{\alpha}^{-1}$ by Remark \ref{rmk:starok}. Finally, the associativity property also holds since the operation of concatenating paths is associative. 
\end{rmk}

We now define the groupoid $\mathcal{P}^{m,k}$. Recall that two $(m,k)$-triples $(S_{1},v_{1},H_1)$ and $(S_{2},v_{2},H_2)$ are congruent (see Definition \ref{defn:congruent}) when $S_{1}=S_{2}=S$, $v_{1}=v_{2}=v$ and  a coherent sheaf $F$ on $S$ with Mukai vector $v$ is $H_1$-(semi)stable sheaf if and only if it is $H_2$-(semi)stable. In this case we denote by 
\[ \chi_{H_1,H_2}\colon M_v(S,H_1)\longrightarrow M_v(S,H_2), \qquad F\mapsto F \]
the identity map.

\begin{defn}
\label{defn:P m k}
Given two strictly positive integers $m$ and $k$, the groupoid $\mathcal{P}^{m,k}$ is defined as follows:
\begin{itemize}
    \item the objects of $\mathcal{P}^{m,k}$ are the $(m,k)$-triples;
    \item for every two $(m,k)$-triples $(S_{1},v_{1},H_{1})$, $(S_{2},v_{2},H_{2})$ we set $$\Hom_{\mathcal{P}^{m,k}}((S_{1},v_{1},H_{1}),(S_{2},v_{2},H_{2})):=\{\chi_{H_{1},H_{2}}\}$$if $(S,v,H_{1})$ and $(S_{2},v_{2},H_{2})$ are congruent, and otherwise $$\Hom_{\mathcal{P}^{m,k}}((S_{1},v_{1},H_{1}),(S_{2},v_{2},H_{2})):=\emptyset.$$
\end{itemize}
\end{defn}

We are now finally in the position to define the groupoid $\mathcal{G}^{m,k}_{\operatorname{def}}$.

\begin{defn}\label{defn:G m k def}
Given two strictly positive integers $m$ and $k$, the groupoid 
\[ \mathcal{G}^{m,k}_{\operatorname{def}}:=\widetilde{\mathcal{G}}^{m,k}_{\operatorname{def}}\ast\mathcal{P}^{m,k}  \]
is the free product of $\widetilde{\mathcal{G}}^{m,k}_{\operatorname{def}}$ and $\mathcal{P}^{m,k}$ (see Definition~\ref{defn:free product of groupoids}).
\end{defn}

\subsection{Fourier--Mukai equivalences and their groupoid}\label{section:FM}

The purpose of this section is to define the groupoid $\mathcal{G}^{m,k}_{\operatorname{FM}}$. To do so, we need to recall some isomorphisms of moduli spaces of sheaves induced by Fourier--Mukai transforms.

Let us start with the tensorization with a line bundle. Let $S$ be a projective K3 surface and $L\in\Pic(S)$ a line bundle. Consider the derived equivalence
\begin{align}\label{eqn:L}
\mathsf{L}\colon\operatorname{D}^b(S) & \longrightarrow\operatorname{D}^b(S) \\
F^\bullet & \longmapsto F^\bullet\otimes L. \nonumber
\end{align}
If a sheaf $F$ has Mukai vector $v(F)=v$, then $v(\mathsf{L}(F))=v\cdot\operatorname{ch}(L)=:v_L$. With an abuse of notation we keep denoting by
\[ \mathsf{L}\colon M_v(S,H)\longrightarrow M_{v_L}(S,H) \]
the induced morphism between moduli spaces of sheaves. This morphism is known to be an isomorphism in some cases, as the following states.

\begin{lemma}[\protect{\cite[Lemma~2.24]{PR:SingularVarieties}}]\label{lemma:2.20}
Let $S$ be a projective K3 surface, $v=(r,\xi,a)$ a Mukai vector and $H$ an ample line bundle.
\begin{enumerate}
\item For any $d\in\mathbb{Z}$, the morphism $\mathsf{dH}\colon M_v(S,H)\to M_{v_{dH}}(S,H)$ is an isomorphism.
\item If $r>0$ and $H$ is $v$-generic, the morphism $\mathsf{L}\colon M_v(S,H)\to M_{v_{L}}(S,H)$ is an isomorphism for any $L\in\Pic(S)$.
\end{enumerate}
\end{lemma}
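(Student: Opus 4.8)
The plan is to deduce everything from the fact that $\mathsf{L}=(-)\otimes L$ is an auto-equivalence of $\operatorname{D}^b(S)$ with inverse $(-)\otimes L^{-1}$. Since tensoring by a line bundle is exact and invertible, it induces a bijection between coherent sheaves of Mukai vector $v$ and those of Mukai vector $v_L$; moreover, applying it fibrewise to a flat family of sheaves over a base $B$ (i.e.\ tensoring by the pullback of $L$ to $S\times B$) again yields a flat family. Hence, once I check that $\mathsf{L}$ and its inverse send $H$-semistable sheaves to $H$-semistable sheaves and respect $S$-equivalence, the universal property of the moduli spaces will promote the set-theoretic bijection to an isomorphism of varieties $\mathsf{L}\colon M_v(S,H)\to M_{v_L}(S,H)$ with inverse induced by $(-)\otimes L^{-1}$. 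So the whole content is the preservation of (semi)stability, which I would treat separately for the two statements.

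For part (1) I would argue that no genericity hypothesis is needed, because twisting by $\mathcal{O}(dH)$ shifts the Hilbert polynomial uniformly: for every coherent sheaf $F$ one has $P_H(F(dH),n)=\chi(F((n+d)H))=P_H(F,n+d)$, and since the leading coefficient is unchanged this gives $p_H(F(dH),n)=p_H(F,n+d)$ for the reduced Hilbert polynomials. As the order used in Gieseker (semi)stability compares reduced Hilbert polynomials for $n\gg0$, replacing $n$ by $n+d$ preserves all inequalities; thus $F$ is $H$-(semi)stable if and only if $F(dH)$ is, and the same applies to the inverse twist by $\mathcal{O}(-dH)$.

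For part (2) the difficulty is that a general $L$ does not shift the Hilbert polynomial uniformly, only the slope. I would first record two preliminary reductions. Tensoring by $L$ changes the slope by the constant $c_1(L)\cdot H$, independent of the sheaf, so $\mathsf{L}$ preserves $\mu_H$-(semi)stability together with its inverse; and from this one checks directly that $H$ is $v_L$-generic precisely when it is $v$-generic (if $F'\subseteq E'=E\otimes L$ realises equality of slopes, then $F=F'\otimes L^{-1}\subseteq E$ does too, and Definition~\ref{defn:v-generic} for $v$ transports to $v_L$ after adding $c_1(L)$). The heart of the matter is then a comparison of reduced Hilbert polynomials: writing $\ell=c_1(L)$ and taking $E$ Gieseker $H$-semistable of Mukai vector $v$ and any $0\neq F\subseteq E$, a Riemann--Roch computation on the K3 surface gives
\begin{equation*}
p_H(F\otimes L,n)-p_H(E\otimes L,n)=\big(p_H(F,n)-p_H(E,n)\big)+\frac{2\,\ell\cdot\big(c_1(F)/\rk(F)-c_1(E)/\rk(E)\big)}{H^2},
\end{equation*}
where the last summand is a constant in $n$. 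If $\mu_H(F)<\mu_H(E)$ the degree-one coefficient of $p_H(F)-p_H(E)$ is already negative and the added constant cannot change the sign of the leading nonzero term, so the left-hand side is negative; if $\mu_H(F)=\mu_H(E)$, then $E$ being $\mu_H$-semistable of Mukai vector $v$ lets me invoke $v$-genericity to conclude $c_1(F)/\rk(F)=c_1(E)/\rk(E)$, whence the correction term vanishes and $p_H(F\otimes L)-p_H(E\otimes L)=p_H(F)-p_H(E)\le0$. Since every subsheaf of $E\otimes L$ is of the form $F\otimes L$, this shows $E\otimes L$ is $H$-semistable, and the symmetric argument with $L^{-1}$ (using $v_L$-genericity) gives the converse.

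Finally I would note that $\mathsf{L}$ sends stable sheaves to stable sheaves (the same computation with strict inequalities) and commutes with the formation of Jordan--Hölder filtrations, hence preserves $S$-equivalence; combined with the first paragraph this yields the desired isomorphism of moduli spaces. The main obstacle is precisely the equal-slope case of part (2): this is the only place where Gieseker stability is genuinely finer than slope stability, and it is resolved exactly by the $v$-genericity hypothesis, which forces the constant discrepancy term to vanish. A secondary technical point is the upgrade from a bijection of $S$-equivalence classes to an isomorphism of the coarse moduli spaces, which I would handle through the behaviour of $\mathsf{L}$ on flat families as indicated above.
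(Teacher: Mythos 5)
The paper itself does not prove this lemma: it is imported verbatim from \cite[Lemma~2.24]{PR:SingularVarieties}, so there is no internal proof to compare yours against. Your argument is correct and is essentially the standard one behind the cited result: part (1) follows from the shift $P_H(F(dH),n)=P_H(F,n+d)$ of Hilbert polynomials, which needs no genericity, and part (2) hinges exactly where you say it does — the Riemann--Roch computation on the K3 showing that twisting by $L$ changes the difference of reduced Hilbert polynomials by the constant $2\,\ell\cdot\bigl(c_1(F)/\rk(F)-c_1(E)/\rk(E)\bigr)/H^2$, which $v$-genericity forces to vanish in the only problematic (equal-slope) case, Gieseker semistability already settling the strict-slope case. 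Your two auxiliary observations are also the right ones and are correctly argued: $H$ is $v_L$-generic if and only if it is $v$-generic (so the inverse twist by $L^{-1}$ can be treated symmetrically), and the same genericity statement applied to the Jordan--H\"older factors shows that $\mathsf{L}$ preserves $S$-equivalence, which is what upgrades the bijection on semistable sheaves to an isomorphism of the coarse moduli spaces via its action on flat families.
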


We now consider the Fourier--Mukai transform whose kernel is the ideal of the diagonal. Let $S$ be a projective K3 surface and $\Delta\subset S\times S$ be the diagonal. If $I_\Delta\in\operatorname{Coh}(S\times S)$ denotes the ideal sheaf of $\Delta$, then we consider the Fourier-Mukai equivalence
$$\operatorname{FM}_\Delta\colon\operatorname{D}^b(S) \longrightarrow\operatorname{D}^b(S)\,\,\,\,\,\,\,\,\,\operatorname{FM}_{\Delta}(F^\bullet):=R\pi_{2*}(I_\Delta\stackrel{L}{\otimes}\pi_1^*F^\bullet),$$
where $\pi_1$ and $\pi_2$ are the two projections from $S\times S$ to $S$, and the composite equivalence
$$\operatorname{FM}^\vee_\Delta\colon\operatorname{D}^b(S) \longrightarrow\operatorname{D}^b(S)\,\,\,\,\,\,\,\,\,\operatorname{FM}_{\Delta}(F^\bullet):=\left(R\pi_{2*}(I_\Delta\stackrel{L}{\otimes}\pi_1^*F^\bullet)\right)^\vee,$$

If $F$ is a sheaf on $S$ with Mukai vector $v(F)=(r,\xi,a)=v$, then by a direct check one can see that 
\[ v(\operatorname{FM}_\Delta(F))=(a,-\xi,r)=:\tilde{v}\quad\mbox{ and }\quad v(\operatorname{FM}^\vee_\Delta(F))=(a,\xi,r)=:\hat{v} \]

\begin{lemma}[\cite{Yoshioka:FM,PR:SingularVarieties}]\label{lemma:Y-PR}
Let $S$ be a projective K3 surface, $H$ an ample line bundle and $n,a\in\mathbb{Z}$.
\begin{enumerate}
\item Suppose that $\Pic(S)=\ZZ\,H$. Put $v=(r,nH,a)$, with $r>0$. Then there exists an integer $n_0\gg0$ such that for every $n> n_0$ the Fourier--Mukai transform $\operatorname{FM}_{\Delta}$ induces an isomorphism 
\[ \operatorname{FM}_{\Delta,v}\colon M_v(S,H)\longrightarrow M_{\tilde{v}}(S,H). \]
\item Put $v=(0,\xi,a)$ and suppose that $H$ is both $v$ and $\tilde{v}$-generic. Then there exists an integer $a_0\gg0$ such that for every $a>a_0$ the Fourier--Mukai transform $\operatorname{FM}_{\Delta}$ induces an isomorphism 
\[ \operatorname{FM}_{\Delta,v}\colon M_v(S,H)\longrightarrow M_{\tilde{v}}(S,H). \]
\item In any of the two cases above, the dual Fourier--Mukai transform $\operatorname{FM}^\vee_{\Delta}$ induces an isomorphism
\[ \operatorname{FM}^\vee_{\Delta,v}\colon M_v(S,H)\longrightarrow M_{\hat{v}}(S,H). \]
\end{enumerate}
\end{lemma}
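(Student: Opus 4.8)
The plan is to realise $\operatorname{FM}_\Delta$ as a shift of the spherical twist around $\cO_S$, to use the large-twist hypotheses to guarantee that every semistable sheaf with Mukai vector $v$ is transformed into a genuine sheaf of Mukai vector $\tilde v$, to check that this sheaf is again $H$-semistable, and finally to run the whole construction in families so as to obtain an isomorphism of moduli spaces. The underlying mechanism comes from the short exact sequence of kernels
\[ 0\longrightarrow I_\Delta\longrightarrow\cO_{S\times S}\longrightarrow\cO_\Delta\longrightarrow 0. \]
Since $\operatorname{FM}_{\cO_\Delta}=\id$ and $\operatorname{FM}_{\cO_{S\times S}}(F)=R\Gamma(S,F)\otimes\cO_S$, applying the triangulated functor $R\pi_{2*}(\pi_1^*(-)\stackrel{L}{\otimes}(\cdot))$ in the kernel variable produces, for every coherent sheaf $F$, a distinguished triangle
\[ \operatorname{FM}_\Delta(F)\longrightarrow R\Gamma(S,F)\otimes\cO_S\stackrel{\operatorname{ev}}{\longrightarrow}F\longrightarrow\operatorname{FM}_\Delta(F)[1]. \]
In particular $\operatorname{FM}_\Delta$ is, up to a shift, the spherical twist around $\cO_S$, hence an autoequivalence of $\operatorname{D}^b(S)$; its action on $\widetilde{\oH}(S,\ZZ)$ is, up to sign, the reflection in $v(\cO_S)=(1,0,1)$, which reproduces the vector $\tilde v$ recorded before the statement.

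The first step is to show that, in the prescribed regime, every $H$-semistable $F$ with $v(F)=v$ is concentrated in a single degree under $\operatorname{FM}_\Delta$ (the WIT property), the transform being then a sheaf $\widehat F$ with $v(\widehat F)=\tilde v$. This is exactly the cohomological input provided by Yoshioka (\cite{Yoshioka:FM}): the condition $n>n_0$ in case (1) (resp.\ $a>a_0$ in case (2)) forces the slope $\mu_H(F)=n(H^2)/\rk(F)$ (resp.\ the Euler characteristic $\chi(F)=a$) to be large enough that the relevant higher cohomology vanishes uniformly over the bounded family of such sheaves; in the triangle above this leaves a single surviving cohomology of $\operatorname{ev}$, so that $\operatorname{FM}_\Delta(F)=\widehat F$ is an honest sheaf, of rank $a$ in both cases.

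The heart of the argument, and the step I expect to be the main obstacle, is that $\widehat F$ is again $H$-semistable. Here I would again invoke Yoshioka's preservation-of-stability results (\cite{Yoshioka:FM}): because $\operatorname{FM}_\Delta$ is an equivalence, any destabilising subsheaf of $\widehat F$ corresponds to a subobject of $F$, and a comparison of reduced Hilbert polynomials — legitimate precisely in the large-twist regime, where the sheaves in play are WIT and $H$ is both $v$- and $\tilde v$-generic — excludes such a subsheaf. It is exactly this preservation that can fail for small twists and that dictates the thresholds $n_0$ and $a_0$; everything else in the argument is formal once it is in hand.

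Finally I would upgrade the induced bijection on S-equivalence classes to an isomorphism of moduli spaces. Applying $\operatorname{FM}_\Delta$ to a (quasi-)universal family on $M_v(S,H)\times S$ yields a family of $H$-semistable sheaves of Mukai vector $\tilde v$, flat over $M_v(S,H)$, and hence by the universal property a morphism $\operatorname{FM}_{\Delta,v}\colon M_v(S,H)\to M_{\tilde v}(S,H)$; the inverse equivalence, which is again a shift of a twist around $\cO_S$ and enjoys the analogous WIT property in the same regime, gives a morphism in the other direction, and the two are mutually inverse because $\operatorname{FM}_\Delta$ is an equivalence. This proves (1) and (2). For (3) it then suffices to post-compose with the derived dualisation $E\mapsto E^\vee$, which induces the standard isomorphism of moduli spaces and sends $\tilde v=(a,-\xi,r)$ to $(a,\xi,r)=\hat v$; thus $\operatorname{FM}^\vee_{\Delta,v}=(-)^\vee\circ\operatorname{FM}_{\Delta,v}$ is the required isomorphism $M_v(S,H)\to M_{\hat v}(S,H)$.
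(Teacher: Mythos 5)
Your realisation of $\operatorname{FM}_\Delta$ as a shift of the spherical twist around $\cO_S$, the WIT step, and the appeal to Yoshioka for preservation of semistability all match the mechanism behind the references that the paper itself relies on (its proof of this lemma is essentially a citation to \cite[Propositions~2.29 and 2.33]{PR:SingularVarieties} and \cite[Theorem~3.18]{Yoshioka:FM}). However, there are two genuine gaps. First, your construction of the morphism of moduli spaces by ``applying $\operatorname{FM}_\Delta$ to a (quasi-)universal family on $M_v(S,H)\times S$'' breaks down in exactly the case this paper needs: when $v=mw$ with $m>1$ the moduli space contains strictly semistable sheaves and is only a coarse moduli space of S-equivalence classes, carrying no universal (nor quasi-universal) family. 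One must instead apply the transform to the universal quotient over the relevant open subset of the Quot scheme, check that the transform is compatible with S-equivalence (Jordan--H\"older factors go to Jordan--H\"older factors), and then descend through the GIT quotient. Since the whole point of the paper is the non-primitive case, this is not a cosmetic omission.

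Second, part (3). The dual transform $\operatorname{FM}^\vee_\Delta$ is defined as the \emph{derived} dual of $\operatorname{FM}_\Delta(F)$, and for $(-)^\vee\circ\operatorname{FM}_{\Delta,v}$ to define a morphism to $M_{\hat v}(S,H)$ you need to know that $\operatorname{FM}_\Delta(F)$ is locally free: otherwise its derived dual is an honest complex rather than a sheaf, and even the underived dual of a non-locally-free sheaf neither has Mukai vector $\hat v$ nor need be semistable, so there is no ``standard isomorphism of moduli spaces'' induced by dualisation in general. This local freeness is precisely the extra input the paper isolates (\cite[Lemma~2.28]{PR:SingularVarieties}); once it is in hand, a locally free sheaf is (semi)stable if and only if its dual is, and (3) follows from (1) and (2) as you intend. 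Your sketch omits this point entirely.
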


\begin{proof}
    The proof of the first two items is \cite[Proposition~2.29, Proposition~2.33]{PR:SingularVarieties} (but see also \cite[Theorem~3.18]{Yoshioka:FM} for a more general statement). Moreover in \cite{PR:SingularVarieties} it is further shown that the sheaf $\operatorname{FM}_\Delta(F)$ is locally free (see \cite[Lemma~2.28]{PR:SingularVarieties}). The proof of the third item now follows from the first two just by noticing that a locally free sheaf $F$ is (semi)stable if and only if its dual $F^\vee$ is (semi)stable. 
\end{proof}

The last Fourier--Mukai transform we consider is associated to elliptic K3 surfaces and it has been studied in \cite{Bridgeland}; the result we need is contained in \cite{Yoshioka:ModuliAbelian}.

Let $p\colon S\to\PP^1$ be an elliptic K3 surface. We assume that there is a section $s\colon\PP^1\to S$ and we denote by $\ell\in\Pic(S)$ its cohomology class. If we put $f=p^*\cO(1)$, then the lattice spanned by $\ell$ and $f$ is the unimodular hyperbolic plane $U$; more precisely, $\ell^2=-2$, $(\ell,f)=1$ and $f^2=0$. Let us assume that $\Pic(S)$ coincides with this hyperbolic plane (i.e.\ that $S$ is generic with this property). It is known that the class $H=\ell+tf$ is ample for $t\gg0$ and we fix once and for all such an ample class.

Consider the moduli space $M_{(0,f,0)}(S,H)$. For $t\gg0$, the polarisation $H$ is generic with respect to $(0,f,0)$ and $M_{(0,f,0)}(S,H)\cong S$ (see \cite[Section~4]{Bridgeland}). Let $\mathcal{P}$ be the relative Poincar\'e bundle that we see as a coherent sheaf over the product $S\times S$. The Fourier--Mukai transform
\[ \operatorname{FM}_{\mathcal{P}}\colon\operatorname{D}^b(S) \longrightarrow\operatorname{D}^b(S),\,\,\,\,\,\,\,\,\,\operatorname{FM}_{\mathcal{P}}(F^\bullet):=(R\pi_{2*}(\mathcal{P}\stackrel{L}{\otimes}\pi_1^*F^\bullet))[1] \]
is proved to be an equivalence in \cite[Theorem~1.2]{Bridgeland}.

\begin{lemma}[\protect{\cite[Theorem~3.15]{Yoshioka:ModuliAbelian}}]\label{prop:Poincare}
Let $H=\ell+tf$ with $t\gg0$. Then $\operatorname{FM}_{\mathcal{P}}$ induces an isomorphism
\[ \operatorname{FM}_{\mathcal{P}}\colon M_{(m,0,-mk)}(S,H)\longrightarrow M_{(0,m(\ell+(k+1)f),m)}(S,H) \]
for every $m,k>0$.
\end{lemma}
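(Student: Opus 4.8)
The plan is to reduce the statement to two independent checks: a cohomological (lattice) computation showing that $\operatorname{FM}_{\mathcal{P}}$ carries the Mukai vector $(m,0,-mk)$ to $(0,m(\ell+(k+1)f),m)$, and a (semi)stability statement showing that $\operatorname{FM}_{\mathcal{P}}$ takes $H$-semistable sheaves with the first Mukai vector to $H$-semistable sheaves with the second, compatibly in families. For the first check I would compute the induced cohomological transform on $\widetilde{\oH}(S,\ZZ)$ (equivalently, apply Grothendieck--Riemann--Roch to the kernel $\mathcal{P}$): writing $c_1=a\ell+bf$ so that the fibre degree is $c_1\cdot f=a$, the transform exchanges the rank and the fibre degree up to sign, just as the classical Fourier--Mukai transform does on each smooth fibre. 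Since $(m,0,-mk)$ has rank $m$ and fibre degree $0$, while $(0,m(\ell+(k+1)f),m)$ has rank $0$ and fibre degree $m$, the two are exchanged as predicted; as a consistency check both vectors have square $2m^2k$ (using $\ell^2=-2$, $\ell\cdot f=1$, $f^2=0$ one gets $(m\ell+m(k+1)f)^2=-2m^2+2m^2(k+1)=2m^2k=-2\cdot m\cdot(-mk)$), confirming that the computed image is exactly $(0,m(\ell+(k+1)f),m)$ and that $k>0$ is precisely the hypothesis making the Mukai square positive.

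Next I would establish that the $H$-semistable sheaves in question are sent to genuine sheaves. Because the definition of $\operatorname{FM}_{\mathcal{P}}$ includes a shift by $[1]$, it suffices to prove that every $H$-semistable $F$ with $v(F)=(m,0,-mk)$ satisfies $\mathrm{WIT}_1$, i.e.\ that $R^i\pi_{2*}(\mathcal{P}\stackrel{L}{\otimes}\pi_1^*F)$ vanishes for $i\neq1$. Here the hypothesis $t\gg0$ is essential: for $H=\ell+tf$ with $t$ large, $\mu_H$-semistability forces the restriction of $F$ to a general fibre to be semistable, and a fibre-degree-zero semistable bundle on an elliptic curve is $\mathrm{WIT}_1$ under the classical transform. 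Using the fibrewise criteria of Bridgeland--Maciocia (absence of subsheaves and quotients with the wrong fibre behaviour, guaranteed by the suitable polarisation), I would upgrade this to the global $\mathrm{WIT}_1$ statement, so that $\widehat{F}:=\operatorname{FM}_{\mathcal{P}}(F)$ is an honest sheaf; by the first paragraph it is a rank-zero sheaf supported on a multisection (spectral cover) of class $m\ell+m(k+1)f$.

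Finally I would prove that $\operatorname{FM}_{\mathcal{P}}$ preserves (semi)stability and induces the stated isomorphism. The key point, and the main obstacle, is that a derived equivalence preserves $\mu$-stability only generically and need not preserve Gieseker semistability; one must exploit that for $t\gg0$ the reduced Hilbert polynomial is dominated by the fibre-degree term, so that Gieseker (semi)stability with respect to the suitable polarisation $H$ is controlled by exactly the fibrewise and numerical data that the transform respects. Granting this, $\operatorname{FM}_{\mathcal{P}}$ sends $H$-(semi)stable sheaves of vector $(m,0,-mk)$ to $H$-(semi)stable sheaves of vector $(0,m(\ell+(k+1)f),m)$ and, being exact on the relevant hearts, carries Jordan--Hölder filtrations to Jordan--Hölder filtrations, hence respects S-equivalence. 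Since Bridgeland's equivalence has a quasi-inverse of the same type (the transform with kernel $\mathcal{P}^\vee$, up to shift), running the same argument backwards produces an inverse morphism on moduli spaces; as both constructions work in families (the transform commutes with base change and preserves flatness once $\mathrm{WIT}_1$ holds), they define mutually inverse morphisms of the moduli schemes, giving the isomorphism. The delicate step throughout is the preservation of Gieseker semistability at the boundary value of fibre degree zero, which is exactly what the asymptotic choice $t\gg0$ is designed to control.
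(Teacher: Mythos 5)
The paper does not actually prove this lemma: it is imported as a black box from \cite[Theorem~3.15]{Yoshioka:ModuliAbelian}, so there is no internal argument to compare against. Your outline correctly reconstructs the strategy of that reference (and of Bridgeland's work on elliptic surfaces): compute the cohomological transform, prove $\mathrm{WIT}_1$ for fibre-degree-zero semistable sheaves using $t\gg0$, then show preservation of Gieseker (semi)stability and of S-equivalence, and conclude via the quasi-inverse transform. As a plan, this matches the actual proof in the literature.

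As a proof, however, it has two genuine gaps. First, the identification of the image Mukai vector is not established: your consistency check (equal Mukai squares, exchanged rank and fibre degree) cannot pin down the $\oH^4$-component of the target vector, because for a rank-zero vector $(0,\xi,a)$ the Mukai square equals $\xi^2$ and is independent of $a$; one must genuinely compute the cohomological transform $\operatorname{FM}_{\mathcal{P}}^{\oH}$ via Grothendieck--Riemann--Roch applied to the kernel $\mathcal{P}$, which you propose but do not carry out. Second, and more seriously, the central step --- that $\operatorname{FM}_{\mathcal{P}}$ takes $H$-Gieseker-(semi)stable sheaves of vector $(m,0,-mk)$ to $H$-(semi)stable sheaves of the target vector, in the delicate boundary case of fibre degree zero and for $t\gg0$ --- is introduced with ``Granting this''. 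That step is precisely the content of Yoshioka's theorem; without it, the subsequent claims about exactness on hearts, Jordan--H\"older filtrations, S-equivalence and the inverse morphism have nothing to stand on. So the proposal is a correct plan consistent with the cited source, but not a self-contained proof of the statement.
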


We conclude this section by defining the groupoid $\mathcal{G}^{m,k}_{\operatorname{FM}}$.
\begin{defn}\label{defn:G m k FM}
 Given two strictly positive integers $m$ and $k$, the groupoid $\mathcal{G}^{m,k}_{\operatorname{FM}}$ is defined as follows:
\begin{itemize}
	\item the objects of $\mathcal{G}^{m,k}_{\operatorname{FM}}$ are the $(m,k)$-triples;

 \item given two $(m,k)$-triples $(S,v_1,H)$ and $(S,v_2,H)$, an \textit{elementary morphism} between them is one of the following:
 \begin{itemize}
     \item the equivalence $\mathsf{L}$, if $(S,v_1,H)=(S,v,H)$ and $(S,v_2,H)=(S,v_L,H)$ are as in Lemma~\ref{lemma:2.20};
     \item the equivalence $\operatorname{FM}_\Delta$, if $(S,v_1,H)=(S,v,H)$ and $(S,v_2,H)=(S,\tilde{v},H)$ verify the hypotheses of  Lemma~\ref{lemma:Y-PR} (1) or (2);
     \item the equivalence $\operatorname{FM}^\vee_\Delta$, if $(S,v_1,H)=(S,v,H)$ and $(S,v_2,H)=(S,\hat{v},H)$ verify the hypotheses of  Lemma~\ref{lemma:Y-PR} (3);
     \item the equivalence $\operatorname{FM}_{\mathcal{P}}$, if $(S,v_1,H)=(S,(m,0,m-mk),H)$ and $(S,v_2,H)=(S,(0,m(\ell+kf),m),H)$ verify the hypotheses of  Lemma~\ref{prop:Poincare};
 \end{itemize}
 
	\item for every two $(m,k)$-triples $(S_{1},v_{1},H_{1}),(S_{2},v_{2},H_{2})$, a morphisms from $(S_{1},v_{1},H_{1})$ to $(S_{2},v_{2},H_{2})$ is a formal concatenation of elementary morphisms and their formal inverses, subject to the usual cancellation rules.
\end{itemize}
\end{defn}

\subsection{The groupoid $\mathcal{G}^{m,k}$ and its representations}\label{section:representation}

For every two strictly positive integers $m,k$ we now define the groupoid $\mathcal{G}^{m,k}$ as the groupoid generated by $\mathcal{G}^{m,k}_{\operatorname{def}}$ and $\mathcal{G}^{m,k}_{\operatorname{FM}}$. %More precisely, we make the following definition.
\begin{defn}\label{defn:G m k}
Given two strictly positive integers $m$ and $k$, the groupoid 
\[ \mathcal{G}^{m,k}:=\mathcal{G}^{m,k}_{\operatorname{def}}\ast \mathcal{G}^{m,k}_{\operatorname{FM}}  \]
is the free product of $\mathcal{G}^{m,k}_{\operatorname{def}}$ and $\mathcal{G}^{m,k}_{\operatorname{FM}}$ (see Definition~\ref{defn:free product of groupoids}).
\end{defn}

\begin{rmk}\label{rmk:non empty}
Let $(S_1,v_1,H_1),(S_2,v_2,H_2)\in\mathcal{G}^{m,k}$ be two objects; then  
\[ \Hom_{\mathcal{G}^{m,k}}((S_1,v_1,H_1),(S_2,v_2,H_2))\neq\emptyset. \]

More precisely, in the proof of \cite[Theorem~1.7]{PR:SingularVarieties} the authors show that one can go from $(S_1,v_1,H_1)$ to $(S_2,v_2,H_2)$ only using the following morphisms of $\mathcal{G}^{m,k}$:
\begin{itemize}
    \item equivalence classes of deformation paths of $(m,k)$-triples;
    \item the morphisms $\chi_{H,H'}$ of $\mathcal{P}^{m,k}$;
    \item derived equivalences of the form $\mathsf{L}$, for some line bundle $L$;
    \item the Fourier--Mukai transform $\operatorname{FM}_\Delta$.
\end{itemize}
\end{rmk}

\subsubsection{The $\widetilde{\cH}$-representation $\widetilde{\Phi}^{m,k}$ of $\mathcal{G}^{m,k}$}

Let $\widetilde{\Lambda}$ be an even unimodular lattice of signature $(4,20)$. Notice that the isometry class of $\widetilde{\Lambda}$ is uniquely determined, so that $\widetilde{\Lambda}$ is isometric to the lattice
\[ U^{\oplus4}\oplus E_8(-1)^{\oplus2}, \]
where $U$ is the unimodular even rank 2 lattice and $E_8(-1)$ is the negative definite Dynkin lattice of type $E_8$. 

If $\widetilde{\mathcal{C}}\subset\widetilde{\Lambda}\otimes_{\ZZ}\RR$ is the cone of (strictly) positive classes, then by \cite[Lemma~4.1]{Markman:Survey} we have that $\oH^3(\widetilde{\mathcal{C}},\ZZ)=\ZZ$. The choice of a generator of $\oH^3(\widetilde{\mathcal{C}},\ZZ)$ is an \emph{orientation} of $\widetilde{\Lambda}$. Notice that there are only two orientations, corresponding to the generators $\pm 1$ of $\ZZ$. Again by \cite[Lemma~4.1]{Markman:Survey}, if $W\subset\widetilde{\Lambda}\otimes_{\ZZ}\RR$ is a positive real subspace of dimension $4$, then the space $W\setminus\{0\}$ is a deformation retract of $\widetilde{\mathcal{C}}$, so that an orientation on $\widetilde{\Lambda}$ corresponds to an orientation on a positive real subspace of maximal dimension.

If $g\in\Or(\widetilde{\Lambda})$ is an isometry, then $g$ induces an action on $\oH^3(\widetilde{\mathcal{C}},\ZZ)$ that either preserves a generator or it maps it to its opposite. Therefore we get an orientation character
\begin{equation}\label{eqn:or}
\operatorname{or}\colon\Or(\widetilde{\Lambda})\longrightarrow\ZZ/2\ZZ.\end{equation}
The subgroup $\Or^+(\widetilde{\Lambda})=\ker(\operatorname{or})$ is the group of orientation preserving isometries.  

Similarly, if $(\widetilde{\Lambda}_1,\epsilon_1)$ and $(\widetilde{\Lambda}_2,\epsilon_2)$ are two pairs composed by a unimodular even lattice $\widetilde{\Lambda}_i$ of signature $(4,20)$ and an orientation $\epsilon_i\in\oH^3(\widetilde{\mathcal{C}}_i,\ZZ)$ on $\widetilde{\Lambda}_i$, then we get an orientation map
\begin{equation}\label{eqn:or tra due}
\operatorname{or}\colon\Or((\widetilde{\Lambda}_1,\epsilon_1),(\widetilde{\Lambda}_2,\epsilon_2))\longrightarrow\ZZ/2\ZZ. \end{equation}
Again, $\Or^+((\widetilde{\Lambda}_1,\epsilon_1),(\widetilde{\Lambda}_2,\epsilon_2))\coloneqq\operatorname{or}^{-1}(0)$ is the set of orientation preserving isometries.

\begin{defn}\label{defn:H tilde m k}
Given two strictly positive integers $m$ and $k$, the groupoid $\widetilde{\cH}^{m,k}$ is defined as follows:
\begin{itemize}
    \item the objects are triples $(\widetilde{\Lambda},v,\epsilon)$, where $\widetilde{\Lambda}$ is a unimodular even lattices of signature $(4,20)$, $v\in\widetilde{\Lambda}$ is of the form $v=mw$, where $w$ is primitive and $w^2=2k$, and where $\epsilon$ is an orientation on $\widetilde{\Lambda}$;
    \item for any two objects $(\widetilde{\Lambda}_1,v_1,\epsilon_1)$ and $(\widetilde{\Lambda}_2,v_2,\epsilon_2)$, the set $$\Hom_{\widetilde{\cH}^{m,k}}((\widetilde{\Lambda}_1,v_1,\epsilon_1),(\widetilde{\Lambda}_1,v_1,\epsilon_2))$$ is the set of isometries $g\colon\widetilde{\Lambda}_1\to\widetilde{\Lambda}_2$ such that $g(v_1)=v_2$.
\end{itemize}
\end{defn}

\begin{rmk}
    Notice that morphisms in $\widetilde{\cH}^{m,k}$ are not necessarily orientation preserving.
\end{rmk}

\begin{example}\label{example:Mukai lattice}
    If $S$ is a K3 surface, then the Mukai lattice $\widetilde{\oH}(S,\ZZ)$ is a unimodular even lattice of signature $(4,20)$.
    As it is explained in \cite[Section~4.1]{Markman:Monodromy}, $\widetilde{\oH}(S,\ZZ)$ comes with a distinguished orientation, which we denote by $\epsilon_S$. Such an orientation is associated to the distinguished positive real $4$-space $W$ with a chosen basis given by a K\"ahler class, the real and imaginary parts of a symplectic form and the vector $(1,0,-1)$.
    
    We also point out that, as we have seen in the proof of Lemma~\ref{lemma:Mon is O+}, the positive real $3$-space with basis given by a K\"ahler class and the real and imaginary part of a symplectic form determines an orientation on the lattice $\oH^2(S,\ZZ)$.
\end{example}

Before continuing, let us recall the following definition.
\begin{defn}
    Let $f\colon\mathcal{S}\to T$ be a smooth family of K3 surfaces. Take $t_1,t_2\in T$ and a path $\gamma$ from $t_1$ to $t_2$. A \emph{parallel transport operator} 
    $$g\colon\widetilde{\oH}(\mathcal{S}_{t_1},\ZZ)\to\widetilde{\oH}(\mathcal{S}_{t_2},\ZZ)$$
    is an isometry induced by parallel transport inside the local system $R^\bullet f_*\ZZ$.
\end{defn}

This allows us to define the following representation of the groupoid $\mathcal{G}^{m,k}_{\operatorname{def}}$.

\begin{defn}\label{defn:Phi tilde def m k}
Given two strictly positive integers $m$ and $k$, the representation
\[ \widetilde{\Phi}^{m,k}_{\operatorname{def}}\colon\mathcal{G}^{m,k}_{\operatorname{def}}\longrightarrow\widetilde{\cH}^{m,k} \]
is defined as follows:
\begin{itemize}
    \item if $(S,v,H)\in\mathcal{G}^{m,k}_{\operatorname{def}}$ is an object, then $\widetilde{\Phi}^{m,k}_{\operatorname{def}}(S,v,H)=(\widetilde{\oH}(S,\ZZ),v,\epsilon_S)$ (see Example~\ref{example:Mukai lattice});
    \item if $(S_1,v_1,H_1)$ and $(S_{2},v_{2},H_{2})$ are two objects and    $\alpha=(f\colon\mathcal{S}\to T,\mathcal{L},\mathcal{H},t_{1},t_{2},\gamma)$ is a deformation path from $(S_{1},v_{1},H_{1})$ to $(S_{2},v_{2},H_{2})$, then 
    \[ \widetilde{\Phi}^{m,k}_{\operatorname{def}}(\overline{\alpha}):=p_{\alpha}, \] the parallel transport operator in the local system $R^\bullet f_*\ZZ$ along the path $\gamma$;
    \item if $(S_1,v_1,H_1)$ and $(S_{2},v_{2},H_{2})$ are congruent, then  
    \[ \widetilde{\Phi}^{m,k}_{\operatorname{def}}(\chi_{H_{1},H_{2}}):=\id_{\widetilde{\oH}(S,\mathbb{Z})}, \] 
    where $\chi_{H_1,H_2}$ is the identification $M_{v_{1}}(S_{1},H_{1})=M_{v_{2}}(S_{2},H_{2})$ (see Definitions \ref{defn:congruent} and \ref{defn:P m k}).
\end{itemize}
\end{defn}

\begin{rmk}
Notice that if $\alpha=(f\colon\mathcal{S}\to T,\mathcal{L},\mathcal{H},t_{1},t_{2},\gamma)$ is a deformation path from $(S_1,v_1,H_1)$ to $(S_2,v_2,H_2)$, then by definition the Mukai vectors $v_1$ and $v_2$ belong to the same flat section of the local system $R^\bullet f_*\ZZ$, so the parallel transport operator $p_{\alpha}$ maps $v_1$ to $v_2$ and the representation $\widetilde{\Phi}^{m,k}_{\operatorname{def}}$ is well defined.
\end{rmk}

\begin{rmk}\label{rmk:pto on H tilde}
    Because of Lemma~\ref{lemma:Mon is O+} and the fact that the vector $(1,0,-1)$ is preserved, we see that a parallel transport operator $g\colon\widetilde{\oH}(S_1,\ZZ)\to\widetilde{\oH}(S_2,\ZZ)$ is orientation preserving. In particular the morphisms in the image of $\widetilde{\Phi}^{m,k}_{\operatorname{def}}$ are always orientation preserving.
\end{rmk}

Let us now define the representation
\[ \widetilde{\Phi}^{m,k}_{\operatorname{FM}}\colon\mathcal{G}^{m,k}_{\operatorname{FM}}\longrightarrow\widetilde{\cH}^{m,k}. \]
\begin{defn}\label{defn:Phi tilde FM m k}
Given two strictly positive integers $m$ and $k$, the representation $\widetilde{\Phi}^{m,k}_{\operatorname{FM}}$ is defined as follows:
\begin{itemize}
    \item if $(S,v,H)\in\mathcal{G}^{m,k}_{\operatorname{FM}}$ is an object, then $\widetilde{\Phi}^{m,k}_{\operatorname{FM}}(S,v,H)=(\widetilde{\oH}(S,\ZZ),v,\epsilon_S)$;
    \item if $\phi\in\Aut(\operatorname{D}^b(S))$ corresponds to a morphism in $\mathcal{G}^{m,k}_{\operatorname{FM}}$, then 
    $\widetilde{\Phi}^{m,k}_{\operatorname{FM}}(\phi)=\phi^{\oH}$ is the isometry induced by $\phi$ on the Mukai lattice.
\end{itemize}
\end{defn}
\begin{rmk}
    The equivalence $\phi$ is composition of the equivalences introduced in Section~\ref{section:FM} and, by definition, it induces an isomorphism between the corresponding moduli spaces. It follows in particular that the Mukai vector must be preserved, so that again $\widetilde{\Phi}^{m,k}_{\operatorname{FM}}$ is well defined.
\end{rmk}

\begin{rmk}
    Notice that morphisms in the image of $\widetilde{\Phi}^{m,k}_{\operatorname{FM}}$ are not necessarily orientation preserving.
\end{rmk}

\begin{defn}\label{defn: Phi tilde m k}
    Define the representation
    \[ \widetilde{\Phi}^{m,k}\colon\mathcal{G}^{m,k}\longrightarrow\widetilde{\cH}^{m,k} \]
    as the unique representation restricting to  $\widetilde{\Phi}^{m,k}_{\operatorname{def}}$ on $\mathcal{G}^{m,k}_{\operatorname{def}}$ and to $\widetilde{\Phi}^{m,k}_{\operatorname{FM}}$ on $\mathcal{G}^{m,k}_{\operatorname{FM}}$.
\end{defn}

\begin{rmk}
The existence and uniqueness of $\widetilde{\Phi}^{m,k}$ are a consequence of the fact that the objects of $\mathcal{G}^{m,k}$ are the same as those of $\mathcal{G}^{m,k}_{\operatorname{def}}$ and $\mathcal{G}^{m,k}_{\operatorname{FM}}$, and the representations $\widetilde{\Phi}^{m,k}_{\operatorname{def}}$ and $\widetilde{\Phi}^{m,k}_{\operatorname{FM}}$ coincide on the objects. Moreover, as morphisms in $\mathcal{G}^{m,k}$ are formal concatenations of morphisms in $\mathcal{G}^{m,k}_{\operatorname{def}}$ and $\mathcal{G}^{m,k}_{\operatorname{FM}}$, there is a unique way to define $\widetilde{\Phi}^{m,k}$ on morphisms.
\end{rmk}

%%%%%%%%%%%%%%%%%%%%%%%%%%%%%%%%%%%%%%%%%%%%%%%%%%%%%%%
\subsubsection{The $\mathcal{A}_k$-representation $\mathsf{pt}^{m,k}$ of $\mathcal{G}^{m,k}$}\label{section:Phi=mon}
\begin{defn}\label{defn: A k}
For every $k>0$ we define the groupoid $\mathcal{A}_k$ as follows: 
\begin{itemize}
	\item the objects of $\mathcal{A}_{k}$ are even lattices $\Lambda$ of signature $(3,20)$ isometric to the lattice $U^{\oplus 3}\oplus E_{8}(-1)^{\oplus 2}\oplus\langle -2k\rangle$;
	\item if $\Lambda_1$ and $\Lambda_2$ are two objects, then 
 \[ \Hom_{\mathcal{A}_k}(\Lambda_1,\Lambda_2):=\Or(\Lambda_1,\Lambda_2). \]
\end{itemize}
\end{defn}

As before we first define $\mathcal{A}_k$-representations for both $\mathcal{G}^{m,k}_{\operatorname{def}}$ and $\mathcal{G}^{m,k}_{\operatorname{FM}}$. We start by defining the representation $\mathsf{pt}^{m,k}_{\operatorname{def}}\colon\mathcal{G}^{m,k}_{\operatorname{def}}\to\mathcal{A}_{k}$.

\begin{defn}\label{defn:pt m k def}
    Given two strictly positive integers $m$ and $k$ we define the functor $\mathsf{pt}^{m,k}_{\operatorname{def}}$ as follows:
    \begin{itemize}
        \item if $(S,v,H)\in\mathcal{G}^{m,k}_{\operatorname{def}}$ is an object, then 
        \[ \mathsf{pt}^{m,k}_{\operatorname{def}}((S,v,H))=\oH^2(M_v(S,H),\ZZ); \]
        \item if $(S_1,v_1,H_1)$ and $(S_2,v_2,H_2)$ are two objects and $\alpha=(f\colon\mathcal{S}\to T,\mathcal{L},\mathcal{H},t_{1},t_{2},\gamma)$ is a deformation path from $(S_{1},v_{1},H_{1})$ to $(S_{2},v_{2},H_{2})$, then 
        \[ \mathsf{pt}^{m,k}_{\operatorname{def}}(\overline{\alpha}):=g_\alpha, \]
        the parallel transport operator in the local system $R^2p_{v,*}\ZZ$ along the path $\gamma$;
        \item if $(S_1,v_1,H_1)$ and $(S_2,v_2,H_2)$ are congruent, then $$\mathsf{pt}^{m,k}_{\operatorname{def}}(\chi_{H_{1},H_{2}}):=\id_{\oH^{2}(M_{v}(S,H_{1}),\mathbb{Z})},$$
        where $\chi_{H_1,H_2}$ is the identification $M_{v_{1}}(S_{1},H_{1})=M_{v_{2}}(S_{2},H_{2})$ (see Definitions \ref{defn:congruent} and \ref{defn:P m k}).
    \end{itemize}
\end{defn}
\begin{rmk}
    By Theorem~\ref{thm:PR v perp} there is an isometry 
    $\oH^2(M_v(S,H),\ZZ)\cong v^\perp$;
    since $v=mw$, with $w^2=2k$, it follows that $\oH^2(M_v(S,H),\ZZ)$ is isometric to the lattice $U^{\oplus 3}\oplus E_{8}(-1)^{\oplus 2}\oplus\langle -2k\rangle$.
\end{rmk}

Next, let us define the functor 
\[ \mathsf{pt}^{m,k}_{\operatorname{FM}}\colon\mathcal{G}^{m,k}_{\operatorname{FM}}\to\mathcal{A}_k. \]
\begin{defn}\label{defn:pt m k FM}
Given two strictly positive integers $m$ and $k$, we define the functor $\mathsf{pt}^{m,k}_{\operatorname{FM}}$ as follows:
\begin{itemize}
    \item if $(S,v,H)\in\mathcal{G}^{m,k}_{\operatorname{FM}}$ is an object, then \[ \mathsf{pt}^{m,k}_{\operatorname{FM}}(S,v,H)=\oH^2(M_v(S,H),\ZZ); \]
    
    \item if $\phi\in\Hom_{\mathcal{G}^{m,k}_{\operatorname{FM}}}((S,v_1,H),(S,v_2,H))$ is an elementary morphism, then by definition $\phi$ induces an isomorphism
    \[ \phi_{v_1}\colon M_{v_1}(S,H)\to M_{v_2}(S,H) \]
    of the moduli spaces. We then define 
    \[ \mathsf{pt}^{m,k}_{\operatorname{FM}}(\phi)=\phi_{v_1,*}, \]
    where the latter is the pushforward action on the second integral cohomology groups of the moduli spaces;
    
    \item if $\phi\in\Hom_{\mathcal{G}^{m,k}_{\operatorname{FM}}}((S,v_1,H),(S,v_2,H))$ is a morphism, i.e.\ a concatenation of elementary morphisms and their inverses, then we define $\mathsf{pt}^{m,k}_{\operatorname{FM}}(\phi)$ as the composition of the corresponding isometries.
\end{itemize}
\end{defn}

Similarly to the case of $\widetilde{\Phi}^{m,k}$, we now give the following definition.

\begin{defn}\label{defn:pt m k}
    Define 
    \begin{equation}\label{eqn:pt} \mathsf{pt}^{m,k}\colon\mathcal{G}^{m,k}\to\mathcal{A}_k 
    \end{equation}
    as the unique representation restricting to $\mathsf{pt}^{m,k}_{\operatorname{def}}$ on $\mathcal{G}^{m,k}_{\operatorname{def}}$ and to $\mathsf{pt}^{m,k}_{\operatorname{FM}}$ on $\mathcal{G}^{m,k}_{\operatorname{FM}}$.
\end{defn}

\begin{prop}\label{prop:Im of pt in Mon}
    Let $A_1=(S_1,v_1,H_1),A_2=(S_2,v_2,H_2)\in\mathcal{G}^{m,k}$ be two objects. Then 
    \[ \mathsf{pt}^{m,k}(\Hom_{\mathcal{G}^{m,k}}(A_1,A_2))\subset\mathsf{PT}_{\operatorname{lt}}(M_{v_1}(S_1,H_1),M_{v_2}(S_2,H_2)). \]
\end{prop}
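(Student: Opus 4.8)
The plan is to exploit the fact that $\mathcal{G}^{m,k}=\mathcal{G}^{m,k}_{\operatorname{def}}\ast\mathcal{G}^{m,k}_{\operatorname{FM}}$ is a free product, so that every element of $\Hom_{\mathcal{G}^{m,k}}(A_1,A_2)$ is a finite concatenation of elementary morphisms coming from $\mathcal{G}^{m,k}_{\operatorname{def}}$ and $\mathcal{G}^{m,k}_{\operatorname{FM}}$, together with their formal inverses. Since $\mathsf{pt}^{m,k}$ is a functor, and since the collection of all locally trivial parallel transport operators between the moduli spaces in play is itself a groupoid---closed under composition (by concatenating the underlying locally trivial families, exactly as in Remark~\ref{rmk:starok}) and under taking inverses (by reversing the path, as in the proof of Lemma~\ref{lemma:Mon is group})---it is enough to verify the asserted inclusion on each elementary generator. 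The general case then follows formally from functoriality and these closure properties.

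First I would dispose of the generators coming from $\mathcal{G}^{m,k}_{\operatorname{def}}$. If the generator is the class $\overline{\alpha}$ of a deformation path, then by Definition~\ref{defn:pt m k def} its image is $g_\alpha$, which is \emph{tautologically} a locally trivial parallel transport operator: it is by construction the parallel transport in the local system $R^2p_{v*}\ZZ$ attached to the locally trivial family $p_v\colon\mathcal{M}_v\to T$ of moduli spaces. If instead the generator is a congruence identity $\chi_{H_1,H_2}$, then its image is the identity of $\oH^2(M_v(S,H_1),\ZZ)=\oH^2(M_v(S,H_2),\ZZ)$, which is the parallel transport along the constant path in the trivial family and hence lies in $\mathsf{PT}_{\operatorname{lt}}$.

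The crux, and the only place where I expect real work, is the generators coming from $\mathcal{G}^{m,k}_{\operatorname{FM}}$. Each elementary Fourier--Mukai morphism $\phi$ induces an \emph{isomorphism} $\phi_{v_1}\colon M_{v_1}(S,H)\to M_{v_2}(S,H)$ of projective irreducible symplectic varieties, and $\mathsf{pt}^{m,k}(\phi)=\phi_{v_1,*}$. To see that $\phi_{v_1,*}$ is a locally trivial parallel transport operator I would invoke Proposition~\ref{prop:iso is mon}: an isomorphism is in particular a birational morphism defined in codimension $1$ inducing an isomorphism $\phi_{v_1}^*\colon\Pic(M_{v_2})\otimes\QQ\to\Pic(M_{v_1})\otimes\QQ$, so the pullback $\phi_{v_1}^*$ is a locally trivial parallel transport operator. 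Since $\phi_{v_1}$ is an isomorphism one has $\phi_{v_1,*}=(\phi_{v_1}^*)^{-1}$, and the inverse of a parallel transport operator is again one; thus $\phi_{v_1,*}\in\mathsf{PT}_{\operatorname{lt}}$ as desired.

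Combining the three cases with the closure of $\mathsf{PT}_{\operatorname{lt}}$ under composition and inversion yields the statement. The main obstacle is genuinely confined to the Fourier--Mukai step, where one must argue that an abstract isomorphism of moduli spaces produces an honest parallel transport operator and not merely an isometry; once Proposition~\ref{prop:iso is mon} is brought to bear, this reduces to the elementary remarks that pullback and pushforward along an isomorphism are mutually inverse and that the groupoid of locally trivial parallel transport operators is stable under inversion.
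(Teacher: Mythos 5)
Your proposal is correct and follows essentially the same route as the paper: split morphisms via the free product structure, observe that the deformation generators map to parallel transport operators by definition, and handle the Fourier--Mukai generators via Proposition~\ref{prop:iso is mon}, concluding by closure of $\mathsf{PT}_{\operatorname{lt}}$ under composition. The only cosmetic difference is that you apply Proposition~\ref{prop:iso is mon} to $\phi_{v_1}$ and then invert, whereas one can apply it directly to $\phi_{v_1}^{-1}$ to get $\phi_{v_1,*}=(\phi_{v_1}^{-1})^*$ in one step; both are valid.
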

\begin{proof}
    First of all, we notice that 
    \[ \mathsf{pt}^{m,k}_{\operatorname{def}}(\Hom_{\mathcal{G}^{m,k}_{\operatorname{def}}}(A_1,A_2))\subset\mathsf{PT}_{\operatorname{lt}}(M_{v_1}(S_1,H_1),M_{v_2}(S_2,H_2)) \]
    by definition. On the other hand, by Proposition~\ref{prop:iso is mon} we also have that
    \[ \mathsf{pt}^{m,k}_{\operatorname{FM}}(\Hom_{\mathcal{G}^{m,k}_{\operatorname{FM}}}(A_1,A_2))\subset\mathsf{PT}_{\operatorname{lt}}(M_{v_1}(S_1,H_1),M_{v_2}(S_2,H_2)) \]
    so that the claim follows.
\end{proof}

Using the notation set in Section~\ref{section:groupoids}, we state the following corollary of Proposition \ref{prop:Im of pt in Mon}.
\begin{cor}\label{cor:Im of pt in Mon}
    Let $(S,v,H)\in\mathcal{G}^{m,k}$ be an object. Then 
    \[ \Im\left( \mathsf{pt}^{m,k}_{(S,v,H)}\colon\Aut(S,v,H)\to\Aut(\oH^2(M_v(S,H),\ZZ) \right)\subset\Mon^2_{\operatorname{lt}}(M_v(S,H)). \]
\end{cor}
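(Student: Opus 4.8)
The plan is to read the statement off as the diagonal case of Proposition~\ref{prop:Im of pt in Mon}. First I would unwind the notation: by the general conventions on groupoid representations recalled in Section~\ref{section:groupoids}, for the functor $\mathsf{pt}^{m,k}\colon\mathcal{G}^{m,k}\to\mathcal{A}_k$ and the object $(S,v,H)$, the induced group homomorphism $\mathsf{pt}^{m,k}_{(S,v,H)}$ is nothing but the restriction of $\mathsf{pt}^{m,k}$ to the isotropy group $\Aut(S,v,H)=\Hom_{\mathcal{G}^{m,k}}((S,v,H),(S,v,H))$, landing in $\Aut(\oH^2(M_v(S,H),\ZZ))=\Or(\oH^2(M_v(S,H),\ZZ))$. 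Hence its image coincides with $\mathsf{pt}^{m,k}(\Hom_{\mathcal{G}^{m,k}}((S,v,H),(S,v,H)))$.

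Next I would apply Proposition~\ref{prop:Im of pt in Mon} with $A_1=A_2=(S,v,H)$, which directly gives the containment
\[ \mathsf{pt}^{m,k}(\Hom_{\mathcal{G}^{m,k}}((S,v,H),(S,v,H)))\subset\mathsf{PT}_{\operatorname{lt}}(M_v(S,H),M_v(S,H)). \]
Finally, by the definition of the locally trivial monodromy group stated immediately after Definition~\ref{defn:pto}, one has $\mathsf{PT}^2_{\operatorname{lt}}(M_v(S,H),M_v(S,H))=\Mon^2_{\operatorname{lt}}(M_v(S,H))$, so the right-hand side above is exactly $\Mon^2_{\operatorname{lt}}(M_v(S,H))$, completing the argument.

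I do not expect any genuine obstacle here: the corollary is a pure specialization of Proposition~\ref{prop:Im of pt in Mon} to endomorphisms of a single object. All the substantive content has already been absorbed into that proposition, namely that $\mathsf{pt}^{m,k}_{\operatorname{def}}$ produces parallel transport operators by construction (Definition~\ref{defn:pt m k def}) and that $\mathsf{pt}^{m,k}_{\operatorname{FM}}$ produces them via the birational/isomorphism statement of Proposition~\ref{prop:iso is mon}. The only care needed is the bookkeeping identification between the image of the isotropy homomorphism $\mathsf{pt}^{m,k}_{(S,v,H)}$ and the set $\mathsf{pt}^{m,k}(\Hom_{\mathcal{G}^{m,k}}((S,v,H),(S,v,H)))$, which is immediate from functoriality.
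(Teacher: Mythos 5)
Your proposal is correct and is exactly the paper's argument: the paper states this corollary as an immediate specialization of Proposition~\ref{prop:Im of pt in Mon} to $A_1=A_2=(S,v,H)$, with no further proof, relying precisely on the definitional identity $\Mon^2_{\operatorname{lt}}(X)=\mathsf{PT}^2_{\operatorname{lt}}(X,X)$ and the identification of the isotropy-group homomorphism with the restriction of the functor to $\Hom_{\mathcal{G}^{m,k}}((S,v,H),(S,v,H))$.
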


%%%%%%%%%%%%%%%%%%%%%%%%%%%%%%%%%%%%%%%%%%%%%%%%%%%
\subsubsection{Relation between the two representations $\widetilde{\Phi}^{m,k}$ and $\mathsf{pt}^{m,k}$}
We now connect the representation $\mathsf{pt}^{m,k}$ of $\mathcal{G}^{m,k}$ in $\mathcal{A}_{k}$ with the the representation $\widetilde{\Phi}^{m,k}$ of $\mathcal{G}^{m,k}$ in $\widetilde{\mathcal{H}}^{m,k}$ we defined before. 

\begin{defn}\label{defn:Psi}
Given two strictly positive integers $m$ and $k$, define the functor 
\[ \Psi\colon\widetilde{\cH}^{m,k}\longrightarrow\mathcal{A}_k \]
as follows:
\begin{itemize}
    \item if $(\widetilde{\Lambda},v,\epsilon)$ is an object in $\widetilde{\cH}^{m,k}$, then 
    \[ \Psi(\widetilde{\Lambda},v)=v^\perp; \]
    \item if $(\widetilde{\Lambda}_i,v_i,\epsilon_i)$ are two objects and $g\colon\widetilde{\Lambda}_1\to\widetilde{\Lambda}_2$ an isometry such that $g(v_1)=v_2$, then 
    \[ \Psi(g)=(-1)^{\operatorname{or}(g)} g|_{v_1^\perp}\colon v_1^\perp\longrightarrow v_2^\perp, \]
    where $\operatorname{or}$ is the orientation character (\ref{eqn:or tra due}).
\end{itemize}
\end{defn}

Put 
\[ \Phi^{m,k}=\Psi\circ\widetilde{\Phi}^{m,k}\colon\mathcal{G}^{m,k}\longrightarrow\mathcal{A}_k.
\] 

%The desired relation between $\Phi^{m,k}$ and $\mathsf{pt}^{m,k}$ is the following.

\begin{prop}\label{prop:iso of functors}
    There exists an isomorphism of functors
    \[ \lambda\colon\Phi^{m,k}\longrightarrow \mathsf{pt}^{m,k}. \]
\end{prop}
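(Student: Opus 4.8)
The plan is to define the component of $\lambda$ at an object $(S,v,H)$ to be precisely the Hodge isometry $\lambda_{(S,v,H)}\colon v^\perp\to\oH^2(M_v(S,H),\ZZ)$ of Theorem~\ref{thm:PR v perp}. On objects we have $\Phi^{m,k}(S,v,H)=\Psi(\widetilde{\oH}(S,\ZZ),v,\epsilon_S)=v^\perp$ and $\mathsf{pt}^{m,k}(S,v,H)=\oH^2(M_v(S,H),\ZZ)$; since $v^\perp=w^\perp$ with $w$ primitive and $w^2=2k$, both lattices are objects of $\mathcal{A}_k$ and $\lambda_{(S,v,H)}$ is an isomorphism between them there. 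So the only thing to prove is that these components assemble into a natural transformation, after which invertibility of each $\lambda_{(S,v,H)}$ gives that it is an isomorphism of functors.

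To verify naturality I would first reduce to generators. Since $\mathcal{G}^{m,k}=\mathcal{G}^{m,k}_{\operatorname{def}}\ast\mathcal{G}^{m,k}_{\operatorname{FM}}$ is a free product, every morphism is a concatenation of morphisms of the two factors together with their inverses; as the naturality squares are stable under composition and under inversion in a groupoid, it suffices to check, for each generating morphism $f\colon A_1\to A_2$, the identity
\[ \lambda_{A_2}\circ\Phi^{m,k}(f)=\mathsf{pt}^{m,k}(f)\circ\lambda_{A_1}. \]
The generators are: an equivalence class of a deformation path, a congruence $\chi_{H_1,H_2}$, and the elementary Fourier--Mukai morphisms $\mathsf{L}$, $\operatorname{FM}_\Delta$, $\operatorname{FM}^\vee_\Delta$, $\operatorname{FM}_{\mathcal{P}}$. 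For a deformation path $\alpha$ the transport $p_\alpha$ is orientation preserving by Remark~\ref{rmk:pto on H tilde}, so $\Phi^{m,k}(\overline{\alpha})=p_\alpha|_{v_1^\perp}$ while $\mathsf{pt}^{m,k}(\overline{\alpha})=g_\alpha$, and the required equality is exactly the compatibility of $\lambda$ with the local systems recorded in Remark~\ref{rmk:lambda in famiglie} (equivalently Remark~\ref{rmk:solo p alpha}), where $\lambda$ is realised as an isomorphism $\mathsf{v}^\perp\to R^2p_{v*}\ZZ$ and parallel transport commutes with it. For a congruence both functors send $\chi_{H_1,H_2}$ to the identity, so naturality reduces to $\lambda_{(S,v,H_1)}=\lambda_{(S,v,H_2)}$; this holds because congruent triples share the same moduli space and the same (quasi-)universal family, from which $\lambda$ is built in \cite{PR:v perp}.

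The Fourier--Mukai generators are the main obstacle. For an elementary $\phi\colon(S,v_1,H)\to(S,v_2,H)$ we have $\Phi^{m,k}(\phi)=(-1)^{\operatorname{or}(\phi^{\oH})}\,\phi^{\oH}|_{v_1^\perp}$ and $\mathsf{pt}^{m,k}(\phi)=\phi_{v_1,*}$, so I must establish
\[ \lambda_{(S,v_2,H)}\circ\left((-1)^{\operatorname{or}(\phi^{\oH})}\phi^{\oH}|_{v_1^\perp}\right)=\phi_{v_1,*}\circ\lambda_{(S,v_1,H)}. \]
The unsigned statement $\lambda_{(S,v_2,H)}\circ\phi^{\oH}|_{v_1^\perp}=\pm\,\phi_{v_1,*}\circ\lambda_{(S,v_1,H)}$ is the functoriality of the Mukai isometry under derived equivalences, which I would invoke from \cite{PR:v perp} for each of $\mathsf{L}$, $\operatorname{FM}_\Delta$, $\operatorname{FM}^\vee_\Delta$, $\operatorname{FM}_{\mathcal{P}}$ (it is proved there by tracking the (quasi-)universal family through the transform). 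The sign built into $\Psi$ is exactly what the delicate part requires: decomposing the positive four-space of $\widetilde{\oH}(S,\ZZ)$ as $\RR v\oplus W$ with $W$ the positive three-space of $v^\perp$, and using $\phi^{\oH}(v_1)=v_2$, the restriction $\phi^{\oH}|_{v_1^\perp}$ preserves the orientation of $W$ if and only if $\phi^{\oH}$ is orientation preserving, while $\phi_{v_1,*}$ is always orientation preserving (being induced by an isomorphism of symplectic varieties, as in the proof of Lemma~\ref{lemma:Mon is O+}) and the fixed isometries $\lambda$ intertwine the chosen orientations. Since $-\id$ reverses the orientation of the three-space $W$, an orientation count identifies the ambiguous sign $\pm$ with $(-1)^{\operatorname{or}(\phi^{\oH})}$, which is precisely what turns the unsigned identity into the displayed one.

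Having verified naturality on every generator, $\lambda$ is a natural transformation $\Phi^{m,k}\to\mathsf{pt}^{m,k}$, and as each component is an isometry, hence invertible in $\mathcal{A}_k$, it is an isomorphism of functors. I expect the formal reduction and the deformation/congruence cases to be routine; the real work, and the place where the definition of $\Psi$ earns its keep, is the Fourier--Mukai step together with the reconciliation of the orientation sign.
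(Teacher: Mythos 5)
Your overall architecture matches the paper's: the components of $\lambda$ are the isometries $\lambda_{(S,v,H)}$ of Theorem~\ref{thm:PR v perp}, naturality is checked on the generating morphisms of the free product, the deformation-path case is handled via Remark~\ref{rmk:lambda in famiglie} and Remark~\ref{rmk:solo p alpha}, and the congruence case is immediate. The problem is in the Fourier--Mukai step, exactly at the point you call ``the real work''. You propose to take the intertwining identity only up to an ambiguous sign and then pin the sign down by an orientation count. That count is logically valid only if the isometries $\lambda_{(S,v_i,H)}$ intertwine the natural orientations, i.e.\ the orientation on $v_i^\perp$ induced by $\epsilon_S$ and $v_i$ and the K\"ahler/symplectic orientation on $\oH^2(M_{v_i}(S,H),\ZZ)$. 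You assert this compatibility without proof, and it is not a formal fact: it is a statement of essentially the same depth as the signed formulas it is meant to replace. It is nowhere established in this paper --- indeed the paper's proof is organized precisely so as never to need an orientation on $v^\perp$: the character $\operatorname{or}$ appearing in the definition of $\Psi$ lives on the full Mukai lattice --- and the natural route to proving it (transporting it from a model case through the groupoid) would itself require knowing the signs in the Fourier--Mukai intertwining identities, which is circular.

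The paper instead quotes the intertwining identities with their definite signs from \cite[Propositions~2.4 and~2.5]{Yoshioka:ModuliAbelian}: no sign for $\mathsf{L}$, $\operatorname{FM}_\Delta$ and $\operatorname{FM}_{\mathcal{P}}$, a minus sign for $\operatorname{FM}^\vee_\Delta$; and it quotes the orientation characters of the cohomological actions from \cite[Remark~5.4, Proposition~5.5]{HS2005}, together with a direct computation showing that duality acts as $(r,c,s)\mapsto(r,-c,s)$ and is orientation reversing. The agreement with $(-1)^{\operatorname{or}(\phi^{\oH})}$ is then a case-by-case check. Note also that your source for the ``unsigned'' functoriality is off: these identities are not taken from \cite{PR:v perp} but from Yoshioka, and as proved there they already carry definite signs --- so if you invoke them, the orientation count is superfluous, and if you do not, you have no source even for the unsigned statement. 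To repair your argument you would either have to prove the orientation compatibility of $\lambda_{(S,v,H)}$ as a separate lemma (a genuine additional piece of work, which would in fact make your treatment more uniform, since it removes the case-by-case appeal to \cite{HS2005}), or revert to the paper's route through Yoshioka's signed statements.
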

We recall that if $\mathcal{A}$ and $\mathcal{B}$ are two categories and $F,G\colon\mathcal{A}\to\mathcal{B}$ are two functors, an isomorphism of functors $\lambda\colon F\to G$ is a natural transformation such that for each $A\in\mathcal{A}$ the morphism $\lambda(A)\colon F(A)\to G(A)$ is an isomorphism in $\mathcal{B}$.
\begin{proof}
    First of all, let us define the representations
    \[ \Phi^{m,k}_{\operatorname{def}}=\Psi\circ\widetilde{\Phi}^{m,k}_{\operatorname{def}}\colon\mathcal{G}^{m,k}_{\operatorname{def}}\longrightarrow\mathcal{A}_k 
    \]
    and
    \[ \Phi^{m,k}_{\operatorname{FM}}=\Psi\circ\widetilde{\Phi}^{m,k}_{\operatorname{FM}}\colon\mathcal{G}^{m,k}_{\operatorname{FM}}\longrightarrow\mathcal{A}_k. 
    \]
    We will prove the existence of two isomorphisms of functors 
    \[ \lambda_{\operatorname{def}}\colon \Phi^{m,k}_{\operatorname{def}}\longrightarrow \mathsf{pt}^{m,k}_{\operatorname{def}}\qquad\mbox{ and }\qquad \lambda_{\operatorname{FM}}\colon \Phi^{m,k}_{\operatorname{FM}}\longrightarrow \mathsf{pt}^{m,k}_{\operatorname{FM}}, \]
    from which the statement will follow by definition.

    Let us start with $\lambda_{\operatorname{def}}$. For any object $(S,v,H)\in\mathcal{G}^{m,k}_{\operatorname{def}}$, Theorem~\ref{thm:PR v perp} provides an isometry
    \[ \lambda_{(S,v,H)}\colon v^\perp\longrightarrow\oH^2(M_v(S,H),\ZZ). \]
    For $i=1,2$, let $A_i=(S_i,v_i,H_i)\in\mathcal{G}^{m,k}_{\operatorname{def}}$ be two objects, and let us take a morphism $h\in\Hom_{\mathcal{G}^{m,k}_{\operatorname{def}}}(A_1,A_2)$. To conclude the proof of this first step, we need to show that there is a commutative diagram
    \begin{equation}\label{eqn:nat iso def} 
    \xymatrix{
    v_1^\perp\ar@{->}[r]^-{\lambda_{A_1}}\ar@{->}[d]_{\Phi^{m,k}_{\operatorname{def}}(h)} & \oH^2(M_{v_1}(S_1,H_1),\ZZ)\ar@{->}[d]^{\mathsf{pt}^{m,k}_{\operatorname{def}}(h)} \\
    v_2^\perp\ar@{->}[r]_-{\lambda_{A_2}} & \oH^2(M_{v_2}(S_2,H_2),\ZZ)\, .
    }
    \end{equation}
    We first prove the claim when $h=\overline{\alpha}$, where $\alpha=(f\colon\mathcal{S}\to T,\mathcal{L},\mathcal{H},t_{1},t_{2},\gamma)$ is a deformation path. In this case, by Remark~\ref{rmk:pto on H tilde}, we have that $\widetilde{\Phi}^{m,k}_{\operatorname{def}}(\alpha)=p_{\alpha}$ is orientation preserving.
    
    Let $p\colon\mathcal{M}\to T$ be the family of moduli spaces induced by $(f\colon\mathcal{S}\to T,\mathcal{L},\mathcal{H})$. By definition there exists a flat section $\mathsf{v}$ of the local system $R^\bullet f_*\ZZ$ such that $\mathsf{v}_{t_1}=v_1$ and $\mathsf{v}_{t_2}=v_2$. We denote by $\mathsf{v}^\perp$ the corresponding sub-local system of $R^\bullet f_*\ZZ$. By Remark~\ref{rmk:lambda in famiglie},
    the isometries $\lambda_{A_1}$ and $\lambda_{A_2}$ fit in an isomorphism of local systems
    \[ \lambda_{\mathsf{v}}\colon \mathsf{v}^\perp\longrightarrow R^2\phi_*\ZZ. \] 
    
    The commutativity of the diagram (\ref{eqn:nat iso def}) then follows since both $\Phi^{m,k}_{\operatorname{def}}(\alpha)=p_{\alpha}$ and $\mathsf{pt}^{m,k}_{\operatorname{def}}(\alpha)=g_{\alpha}$ are parallel transport operators in the two families associated to $\alpha$ (see also Remark~\ref{rmk:solo p alpha}).

    Let us now turn to the case where $h=\chi_{H_{1},H_{2}}$, i.e.\ when $(S_{1},v_{1},H_{1})$ and $(S_{2},v_{2},H_{2})$ are congruent: then $S_{1}=S_{2}=S$, $v_{1}=v_{2}=v$ and $\chi_{H_1,H_2}$ is the identification $M_v(S,H_1)=M_v(S,H_2)$ (see Definitions \ref{defn:congruent} and \ref{defn:P m k}). In this case we see that $\Phi^{m,k}_{\operatorname{def}}(h)=\id_{v^{\perp}}$, $\mathsf{pt}^{m,k}_{\operatorname{def}}(h)=\id_{\oH^{2}(M_{v}(S,H_{1}))}$, and we have an identification $\oH^{2}(M_{v}(S,H_{1}),\mathbb{Z})=\oH^{2}(M_{v}(S,H_{2}),\mathbb{Z})$ and an identification $\lambda_{A_{1}}=\lambda_{A_{2}}$: diagram (\ref{eqn:nat iso def}) is then commutative.

    By Definition~\ref{defn:G m k def}, we then deduce the existence of the isomorphism $\lambda_{\operatorname{def}}$.

    We are now left with $\lambda_{\operatorname{FM}}$, whose construction follows in the same way. First of all, we define it on objects as in the previous case. Let $A_1=(S,v_1,H)$ and $A_2=(S,v_2,H)$ be two objects and let $\phi\in\Aut(\operatorname{D}^b(S))$ be an equivalence of categories that induces an isomorphism $\mathsf{\phi}_{v_{1}}\colon M_{v_1}(S,H)\to M_{v_2}(S,H)$. We need to prove that there exists a commutative diagram
    \begin{equation}\label{eqn:nat iso FM} 
    \xymatrix{
    v_1^\perp\ar@{->}[r]^-{\lambda_{A_1}}\ar@{->}[d]_{\Phi^{m,k}_{\operatorname{FM}}(\phi)} & \oH^2(M_{v_1}(S_1,H_1),\ZZ)\ar@{->}[d]^{\mathsf{pt}^{m,k}_{\operatorname{FM}}(\phi)} \\
    v_2^\perp\ar@{->}[r]_-{\lambda_{A_2}} & \oH^2(M_{v_2}(S_2,H_2),\ZZ)\, .
    }
    \end{equation}
    Recall that, by definition, $\mathsf{pt}^{m,k}_{\operatorname{FM}}(\phi)=\phi_{v_1,*}$ and $\Phi^{m,k}_{\operatorname{FM}}(\phi)=(-1)^{\operatorname{or}(\phi^{\oH})} \phi^{\oH}|_{v_1^\perp}$, where $\phi^{\operatorname{H}}$ is the action of $\phi$ on the Mukai lattice and $\operatorname{or}$ is the orientation character (\ref{eqn:or tra due}).
    
    First of all, we notice that it is enough to prove the result only when $\phi=\mathsf{L},\operatorname{FM}_\Delta,\operatorname{FM}^\vee_\Delta,\operatorname{FM}_{\mathcal{P}}$ are as in Definition~\ref{defn:G m k FM}.

    Now, if $\phi=\mathsf{L},\operatorname{FM}_\Delta$, then $\phi^{\oH}$ is orientation preserving (see \cite[Remark~5.4]{HS2005}) and by \cite[Proposition~2.4]{Yoshioka:ModuliAbelian} 
    \[ \lambda_{A_2}^{-1}\circ \mathsf{\phi}_{v_1,*} \circ \lambda_{A_1}=\phi^{\oH}|_{v_1^\perp}. \]

    If $\phi=\operatorname{FM}^\vee_\Delta$, then $\phi^{\oH}$ is orientation reversing. In fact the duality equivalence $(-)^\vee\in\Aut(\operatorname{D}^b(S))$ induces in cohomology the isometry 
    \[ \delta\colon (r,c,s)\mapsto (r,-c,s) \]
    which is orientation reversing: it changes the sign to a K\"ahler and a symplectic form, but it is the identity on the hyperbolic plane generated by $\oH^0(S,\ZZ)$ and $\oH^4(S,\ZZ)$. In this case, by \cite[Proposition~2.5]{Yoshioka:ModuliAbelian} we have that
    \[ \lambda_{A_2}^{-1}\circ \mathsf{\phi}_{v_1,*} \circ \lambda_{A_1}=-\phi^{\oH}|_{v_1^\perp}. \] 

    Finally , if $\phi=\operatorname{FM}_{\mathcal{P}}$, then by \cite[Remark~5.4, Proposition~5.5.]{HS2005} $\phi^{\oH}$ is orientation preserving and by \cite[Proposition~2.4]{Yoshioka:ModuliAbelian} 
    \[ \lambda_{A_2}^{-1}\circ \mathsf{\phi}_{v_1,*} \circ \lambda_{A_1}=\phi^{\oH}|_{v_1^\perp}. \]    
    
    Those are exactly the commutativity condition needed to define the isomorphism $\lambda_{\operatorname{FM}}$ and we are done.
\end{proof}

%%%%%%%%%%%%%%%%%%%%%%%%%%%%%%%%%%%
%%%%%%%%%%%%%%%%%%%%%%%%%%%%%%%%%%%
%%%%%%%%%%%%%%%%%%%%%%%%%%%%%%%%%%%

\section{Polarised monodromy of K3 surfaces and its lift to moduli spaces}\label{section:mon S pol}

The first part of this section is dedicated to show that the monodromy group of a K3 surface can be generated only by polarised monodromy operators. This result is well-known to experts but a rigorous proof is lacking in literature. 

In the second and last part of the section we will instead show how to lift polarised monodromy operators on a K3 surface to some moduli spaces of sheaves on the same K3 surface.

%%%%%%%%%%%%%%%%%%%%%%%%%%%%%%%%%%%%%%%%%%%%%%
\subsection{The monodromy group of a K3 surface}\label{section:Mon S}

The aim of this section is to prove the following result.
\begin{thm}\label{thm:mon S generated by ppto}
    Let $S$ be a projective K3 surface. Then the monodromy group $\Mon^2(S)$ is generated by polarised parallel transport operators.
\end{thm}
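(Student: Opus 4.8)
The plan is to prove the equality $\Mon^2(S)=\Or^+(\oH^2(S,\ZZ))$ of the monodromy group with the full group of orientation preserving isometries, and then to write down generators of the latter, each of which is realised by polarised families. The identification $\Mon^2(S)=\Or^+(\oH^2(S,\ZZ))$ is classical: the K3 lattice $\Lambda:=\oH^2(S,\ZZ)\cong U^{\oplus3}\oplus E_8(-1)^{\oplus2}$ is unimodular, so there is no discriminant constraint, and surjectivity of the period map together with the global Torelli theorem realise every orientation preserving isometry as a monodromy operator. Because both $\Mon^2(S)$ and the property of being generated by polarised operators depend only on the deformation class of $S$, I am free to replace $S$ by any convenient projective K3 surface during the argument; in particular I may always assume that a prescribed primitive class of positive square is ample.

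Then I would produce polarised operators geometrically by Picard--Lefschetz theory. Fix a primitive class $\delta\in\Lambda$ with $\delta^2=-2$ and, using surjectivity of the period map, choose the K3 surface so that some primitive $h$ with $(h,\delta)=0$ is ample. A generic Lefschetz pencil of $h$-polarised K3 surfaces has nodal members whose vanishing cycles are $(-2)$-classes in $h^\perp$, and the local monodromy is the reflection in such a class; since this monodromy fixes $h$ and comes from a family carrying a relative polarisation, it is a polarised parallel transport operator. To see that the class $\delta$ itself occurs, I would invoke the computation of the monodromy of the universal family of polarised K3 surfaces of degree $h^2$, whose image on $h^\perp$ is the stable orthogonal group $\Ort^+(h^\perp)$ and which in particular acts transitively on the primitive $(-2)$-classes of $h^\perp$. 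Letting $h$ and $\delta$ vary, this exhibits the reflection $R_\delta$ as a polarised monodromy operator for \emph{every} primitive $(-2)$-class $\delta\in\Lambda$.

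It then remains to prove the purely lattice-theoretic assertion that $\Or^+(\Lambda)$ is generated by the reflections $R_\delta$ in primitive $(-2)$-classes, and this is the main obstacle. The starting point is the reflection theory of indefinite lattices (Kneser, Wall): $\Or(\Lambda)$ is generated by reflections in vectors of square $\pm2$. A character computation then isolates $\Or^+(\Lambda)$: the orientation character is trivial on $(-2)$-reflections but nontrivial on $(+2)$-reflections, so the orientation preserving subgroup is generated by the $(-2)$-reflections together with products of two $(+2)$-reflections. The delicate point is to absorb these residual products into the group generated by $(-2)$-reflections; this can be done by an explicit manipulation inside a sublattice $U^{\oplus2}\subset\Lambda$, where Eichler's transvection criterion makes the orthogonal group transparent, or alternatively by realising a product of two $(+2)$-reflections directly as a composition of polarised operators obtained by interpolating between two different polarisation degrees.

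Combining the three steps, any $g\in\Or^+(\Lambda)=\Mon^2(S)$ is a product of reflections $R_\delta$, each a polarised parallel transport operator, so $\Mon^2(S)$ is generated by polarised parallel transport operators. I expect the two places needing genuine care to be, first, the transitivity statement guaranteeing that every primitive $(-2)$-class arises as a vanishing cycle of a polarised Lefschetz pencil, and second, the final lattice generation of $\Or^+(\Lambda)$ by $(-2)$-reflections; both are classical in spirit but require the Beauville-type monodromy computation and the Kneser--Wall reflection theorem to be combined with some care.
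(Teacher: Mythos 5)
Your proposal is correct in outline, but it takes a genuinely different route from the paper, and it is worth spelling out the trade. The paper never assumes Borel's theorem $\Mon^2(S)=\Or^+(\oH^2(S,\ZZ))$: it uses only the soft inclusion $\Mon^2(S)\subset\Or^+(\oH^2(S,\ZZ))$ of Lemma~\ref{lemma:Mon is O+}, Peters' identification $\Mon^2(S,H)=\Or^+(\oH^2(S,\ZZ))_h$ (Lemma~\ref{lemma:pol}), and a purely lattice-theoretic statement (Lemma~\ref{lemma:eichler}, proved with Eichler transvections following Gritsenko--Hulek--Sankaran) saying that $\Or^+$ of the K3 lattice is generated by the stabilisers of four explicit ample classes on an elliptic K3 surface of Picard rank $2$; Borel's theorem then comes out as Corollary~\ref{corollary:Borel} rather than going in as an input. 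You instead take Borel as given and generate $\Or^+(\oH^2(S,\ZZ))$ by reflections $R_\delta$ in $(-2)$-classes, each placed inside a stabiliser $\Or^+(\oH^2(S,\ZZ))_h$ with $h\perp\delta$ ample. This works, but it consumes two heavy classical inputs (Borel, and the Kneser--Wall theorem that $\Or$ of an indefinite even unimodular lattice is generated by $(\pm2)$-reflections) where the paper needs only elementary transvection algebra; what your route buys is a more geometric picture (reflections as vanishing cycles) and a generation statement by reflections rather than by stabilisers. Your final ``absorption'' step is a real gap in the write-up but is completable: the subgroup $W$ generated by $(-2)$-reflections is normal in $\Or(\oH^2(S,\ZZ))$ (a conjugate of a $(-2)$-reflection is again one); $W$ contains every Eichler transvection, since for isotropic $z$ one can write any $a\in z^\perp$ as a sum of $(-2)$-vectors $a_i\in z^\perp$ and $t(z,a_i)=R_{a_i}\circ R_{a_i-z}$; hence by Eichler's criterion $W$ acts transitively on primitive $(+2)$-vectors, and for $u^2=(u')^2=2$ with $w\in W$, $w(u')=u$, one gets $R_uR_{u'}=\bigl(R_uw^{-1}R_u^{-1}\bigr)w\in W$.

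One intermediate claim of yours is false as stated: the stable orthogonal group $\Ort^+(h^\perp)$ does \emph{not} act transitively on the primitive $(-2)$-classes of $h^\perp$, because it preserves divisibility. For instance, when $h^2=2$ one has $h^\perp\cong U^{\oplus2}\oplus E_8(-1)^{\oplus2}\oplus\langle-2\rangle$, and the generator of the $\langle-2\rangle$ summand (divisibility $2$) cannot be carried to a $(-2)$-class of the $U^{\oplus2}\oplus E_8(-1)^{\oplus2}$ part (divisibility $1$). This does not sink your argument, because the claim is unnecessary: for \emph{any} primitive $(-2)$-class $\delta\in h^\perp$ the reflection $R_\delta$ is an integral, orientation preserving isometry fixing $h$, so it lies in $\Or^+(\oH^2(S,\ZZ))_h$, which by Lemma~\ref{lemma:pol} \emph{is} the polarised monodromy group $\Mon^2(S,H)$. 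So the entire Lefschetz-pencil/vanishing-cycle layer, and with it the transitivity claim, can simply be discarded. Finally, your opening reduction (``I am free to replace $S$'') needs the same care the paper takes: the connecting parallel transport operators must themselves come from polarised families (possible since polarised K3 surfaces of fixed degree form a connected moduli space and the elliptic model carries polarisations of every even degree), so that conjugation preserves the property of being a composition of polarised parallel transport operators.
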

In the statement above we mean that there exists a set of generators of $\Mon^2(S)$ each of which is composition of polarised parallel transport operators (see Definition~\ref{defn:ppto} for the notion of polarised parallel transport operator). 

By deforming the K3 surface via polarised families, it is enough to prove the statement for a special example. We will work with a projective elliptic K3 surface $p\colon S\to\PP^1$ with a section and with Picard rank $2$. In particular, if we denote by $f$ the class of the fibre and by $\ell$ the class of the section, then
\[ \Pic(S)=\ZZ.\ell\oplus\ZZ.f=
\left(\begin{matrix}
-2 & 1 \\
1  & 0
\end{matrix}\right) \]
is the unimodular hyperbolic plane. Let us put $e=\ell+f$, so that $e$ and $f$ form the standard basis of the hyperbolic plane.

Let us recall that in this case a class $h=\alpha e + \beta f$ is ample when the ratio $\beta/\alpha>1$. Let us now choose three positive integers $r,k,p\gg0$, and consider the following ample classes,
\[ h_1=e+rf,\qquad h_2=e+(r-1)f,\qquad h_3=se+pf,\quad\mbox{and}\quad h_4=(s-1)e+pf. \]

We will prove the following result, which will imply Theorem~\ref{thm:mon S generated by ppto}.

\begin{prop}\label{prop:mon S pol}
    Let $S$ be a projective elliptic K3 surface with a section and Picard rank $2$. Let $H_1$, $H_2$, $H_3$ and $H_4$ be four polarisations whose classes are the classes $h_1$, $h_2$, $h_3$ and $h_4$ above. Then 
    \[ \Mon^2(S)=\langle \Mon^2(S,H_1),\Mon^2(S,H_2),\Mon^2(S,H_3),\Mon^{2}(S,H_{4})\rangle. \]
\end{prop}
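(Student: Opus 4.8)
The plan is to reduce the statement to a purely lattice-theoretic assertion about $\Or^+(\Lambda)$, where $\Lambda:=\oH^2(S,\ZZ)\cong U^{\oplus3}\oplus E_8(-1)^{\oplus2}$, and then to prove that assertion by producing enough Eichler transvections. I would begin by recalling the two classical inputs on monodromy of K3 surfaces. First, the unpolarised monodromy group is the full group of orientation preserving isometries, $\Mon^2(S)=\Or^+(\Lambda)$. Second, for a primitive ample class $h=c_1(H)$ the polarised monodromy representation realises the entire stable orthogonal group on the primitive lattice, i.e.\ its image on $h^\perp$ is $\Ort^+(h^\perp)$, the orientation preserving isometries acting trivially on the discriminant group $A_{h^\perp}=(h^\perp)^*/h^\perp$. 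Because $\Lambda$ is unimodular and $h$ is primitive, restriction to $h^\perp$ gives an isomorphism $\Or^+(\Lambda)_h\xrightarrow{\sim}\Ort^+(h^\perp)$ (an isometry of $h^\perp$ extends to $\Lambda$ fixing $h$ precisely when it is trivial on $A_{h^\perp}$, and the extension is then unique). Combining these, $\Mon^2(S,H_i)=\Or^+(\Lambda)_{h_i}$, so the Proposition becomes the lattice statement
\[ \Or^+(\Lambda)=\big\langle\,\Or^+(\Lambda)_{h_1},\,\Or^+(\Lambda)_{h_2},\,\Or^+(\Lambda)_{h_3},\,\Or^+(\Lambda)_{h_4}\,\big\rangle. \]

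The four classes are tailored to this. Writing $\Pic(S)=U_{\Pic}=\langle e,f\rangle$ and $T:=U_{\Pic}^\perp\cong U^{\oplus2}\oplus E_8(-1)^{\oplus2}$, one has $h_1-h_2=f$ and $h_3-h_4=e$, the two isotropic generators of $U_{\Pic}$. Since $\langle h_1,h_2\rangle_\ZZ=U_{\Pic}$, any isometry fixing $h_1$ and $h_2$ fixes $U_{\Pic}$ pointwise; as $U_{\Pic}$ is unimodular the orthogonal splitting $\Lambda=U_{\Pic}\oplus T$ then identifies $\Or^+(\Lambda)_{h_1}\cap\Or^+(\Lambda)_{h_2}$ with $\Or^+(T)$ (acting as the identity on $U_{\Pic}$). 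Hence the group $G$ on the right-hand side above contains $\Or^+(T)$. The remaining task is to produce in $G$ Eichler transvections transverse to $U_{\Pic}$ for each of the two isotropic directions $e,f$; together with $\Or^+(T)$ these generate $\Or^+(\Lambda)=\Ort^+(\Lambda)$ by Eichler's theorem, which applies since $\Lambda$ contains two copies of $U$ and is unimodular.

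To create such transvections I would use reflections lying inside the stabilisers. For $\delta\in h_1^\perp$ with $\delta^2=-2$ the reflection $R_\delta$ lies in $\Ort^+(h_1^\perp)=\Or^+(\Lambda)_{h_1}$: it fixes $h_1$, it is orientation preserving because $\delta$ has negative square (the positive three-space lies in $\delta^\perp$), and it is trivial on $A_{h_1^\perp}$ because $\delta\in h_1^\perp$; the same holds for $h_2$. Choosing $\delta=(e-rf)+t_1$ and $\delta'=(e-(r-1)f)+t_2$ with $t_1,t_2$ in a hyperbolic plane $\langle x,y\rangle\subset T$ and setting $t_1=x+(r-1)y$, $t_2=x+(r-2)y$, one computes $\delta^2=\delta'^2=-2$ and $(\delta,\delta')=-2$; thus $\langle\delta,\delta'\rangle$ is a degenerate plane with radical spanned by $\delta-\delta'=y-f$, and the product $R_\delta R_{\delta'}\in G$ is the unipotent Eichler transvection along the isotropic vector $y-f$, which is transverse to $U_{\Pic}$ since $(y-f,e)=-1$. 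Conjugating by $\Or^+(T)\subseteq G$ and using $t(w,a)\,t(w,b)=t(w,a+b)$ bootstraps from one such transvection to all transvections along the $\Or^+(T)$-orbit of these directions; the symmetric construction from the pair $(h_3,h_4)$ yields the transvections attached to $e$.

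The main obstacle is exactly this final, computational step: arranging reflections inside two distinct stabilisers whose product is a transvection transverse to the Picard plane, and then checking that the transvections so obtained, after conjugation by $\Or^+(T)$, exhaust the generating set of Eichler transvections required by Eichler's theorem (equivalently, that the transverse transvections together with $\Or^+(T)$ escape both parabolics $\Or^+(\Lambda)_e$ and $\Or^+(\Lambda)_f$ and recover the pure transvections $t(e,\cdot)$ and $t(f,\cdot)$). The hypotheses $r,s,p\gg0$ enter precisely here: they guarantee that inside each $h_i^\perp$ there exist the $(-2)$-classes with the prescribed squares and mutual pairings used in the construction.
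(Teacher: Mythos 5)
Your reduction of the statement to the purely lattice-theoretic assertion
\[ \Or^+(\Lambda)=\big\langle\,\Or^+(\Lambda)_{h_1},\Or^+(\Lambda)_{h_2},\Or^+(\Lambda)_{h_3},\Or^+(\Lambda)_{h_4}\,\big\rangle,\qquad \Lambda=\oH^2(S,\ZZ), \]
is correct and coincides with the paper's first step: Peters' theorem (Lemma~\ref{lemma:pol}) gives $\Mon^2(S,H_i)=\Or^+(\Lambda)_{h_i}$, and in fact only the inclusion $\Mon^2(S)\subset\Or^+(\Lambda)$ of Lemma~\ref{lemma:Mon is O+} is needed, so that Borel's equality comes out as a corollary (Corollary~\ref{corollary:Borel}) rather than being an input as in your argument. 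Your intermediate computations are also fine: the intersection $\Or^+(\Lambda)_{h_1}\cap\Or^+(\Lambda)_{h_2}$ is a copy of $\Or^+(T)$, where $T=\Pic(S)^\perp$, and with your choices one indeed has $R_\delta R_{\delta'}=t(\delta-\delta',\delta')=t(y-f,\delta')$, an Eichler transvection along the mixed isotropic vector $y-f$.

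The genuine gap is the final generation step, which you yourself call ``the main obstacle'': it is not a computational afterthought but the whole content of the lemma. The generation theorem you invoke (\cite[Proposition~3.3]{GHS09}, which is what the paper's Lemma~\ref{lemma:eichler} rests on) states $\Or^+(\Lambda)=\langle\Or^+(T),\,t(e,a),\,t(f,a)\ :\ a\in T\rangle$: it requires the \emph{pure} transvections, with isotropic direction in $\Pic(S)$ and translation part in $T$. Your construction only produces transvections along mixed directions $w-f$ and $w'-e$ with $w,w'\in T$ isotropic, and the two operations you propose for bootstrapping provably cannot leave this class: conjugation by $\Or^+(T)$ fixes $e$ and $f$, hence preserves the set of mixed directions, while the relation $t(z,a)t(z,b)=t(z,a+b)$ never changes the direction $z$ at all. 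So ``Eichler's theorem'' simply does not apply to your generators, and the missing idea is how to rotate the isotropic direction out of the mixed position. This can be done, but it requires conjugating one mixed transvection by \emph{another} mixed transvection with non-orthogonal direction (for instance, conjugating $t(x-f,x)$ by $t(y-f,u)$, with $u\in T$ of square $2$ orthogonal to $x$ and $y$, yields $t(x-y+u,\,f)$, whose direction lies in $T$ and whose translation part is $f$), together with a preliminary argument enlarging the set of available translation parts; this is precisely the work your write-up defers. Note how the paper sidesteps the issue entirely: it uses no reflections, but instead Eichler transvections $t(\bar e,a)$, $t(\bar f,a)$ with direction in a hyperbolic plane $\bar U\subset T$ and translation part $a\in\Pic(S)$ chosen orthogonal to the relevant polarisation (e.g.\ $t(\bar e,e-rf)$ fixes $h_1$ and $t(\bar e,e-(r-1)f)$ fixes $h_2$), so that a product of two elements from consecutive stabilisers is immediately $t(\bar e,f)^{-1}$, resp.\ $t(\bar e,e)$ from the pair $(h_3,h_4)$; these four transvections generate $\operatorname{SO}^+(U\oplus\bar U)$ by \cite[Lemma~3.2]{GHS09}, and combined with $\Or^+(T)$ and Eichler transitivity this closes the argument.
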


First of all, let us recall the following well-known result due to \cite[Theorem~3.4.1 and Corollary~3.4.2]{Peters}. The following statement can be found also in \cite[Proposition~6.8]{Markman:Monodromy}.
\begin{lemma}[\cite{Peters}]\label{lemma:pol}
Let $S$ be a projective K3 surface, $H\in \Pic(S)$ an ample line bundle and $h\in\oH^2(S,\ZZ)$ its class. 
Then 
\[ \Mon^2(S,H)=\Or^+(\oH^2(S,\ZZ))_h, \]
where $\Or^+(\oH^2(S,\ZZ))_h$ is the group of orientation preserving isometries $g$ such that $g(h)=h$.

Moreover, the whole group $\Or^+(\oH^2(S,\ZZ))_h$ arises as the monodromy group of a projective polarised family $f\colon\mathcal{S}\to T$ of K3 surface over a smooth and quasi-projective base $T$.
\end{lemma}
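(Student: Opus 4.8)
The plan is to reduce the statement to a purely lattice-theoretic assertion and then settle it by an orbit--stabiliser argument. First I would invoke the classical global Torelli theorem for K3 surfaces, which gives $\Mon^2(S)=\Or^+(\oH^2(S,\ZZ))$; set $\Lambda:=\oH^2(S,\ZZ)$, an even unimodular lattice of signature $(3,19)$. By Lemma~\ref{lemma:pol} each polarised monodromy group is the \emph{full} orientation-preserving stabiliser of the corresponding class, that is $\Mon^2(S,H_i)=\Or^+(\Lambda)_{h_i}$. The inclusion $\langle \Mon^2(S,H_1),\dots,\Mon^2(S,H_4)\rangle\subseteq\Mon^2(S)$ being automatic, everything reduces to proving that the subgroup $G:=\langle \Or^+(\Lambda)_{h_1},\dots,\Or^+(\Lambda)_{h_4}\rangle$ equals $\Or^+(\Lambda)$.

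For this I would argue by transitivity on a single orbit of vectors. Put $d:=h_1^2=2r$ and let $\mathcal{V}_d$ denote the set of primitive vectors $v\in\Lambda$ with $v^2=d$ (we may arrange, via $r,s,p\gg0$, that all four $h_i$ are primitive). The crucial reduction is the orbit--stabiliser observation that \emph{if $G$ acts transitively on $\mathcal{V}_d$, then $G=\Or^+(\Lambda)$}: indeed, given $g\in\Or^+(\Lambda)$ the vector $g(h_1)$ again lies in $\mathcal{V}_d$, so transitivity furnishes $\gamma\in G$ with $\gamma(h_1)=g(h_1)$; then $\gamma^{-1}g$ fixes $h_1$, whence $\gamma^{-1}g\in\Or^+(\Lambda)_{h_1}\subseteq G$ and therefore $g\in G$. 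Thus it suffices to prove that $G$ is transitive on $\mathcal{V}_d$, and here every isometry in play is automatically orientation preserving, since the generating stabilisers are.

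To establish transitivity I would exploit that each stabiliser is already large on a slice. As $\Lambda$ is unimodular and contains several copies of $U$, the orthogonal complement $h_i^\perp$ splits off $U^{\oplus2}$, so by the Eichler--Nikulin transitivity criterion the group $\Or^+(\Lambda)_{h_i}$, acting through $h_i^\perp$, is transitive on each fibre $\{v\in\mathcal{V}_d : (v,h_i)=c\}$ once the residue of $v$ in the discriminant group $A_{h_i^\perp}\cong\ZZ/(h_i^2)$ is fixed. Hence $\Or^+(\Lambda)_{h_1}$ moves $v$ freely while preserving $(v,h_1)$, whereas $\Or^+(\Lambda)_{h_3}$ (with $h_3$ not proportional to $h_1$) changes $(v,h_1)$ while preserving $(v,h_3)$; alternating the two, and using the second pair $(h_2,h_4)$ to correct the parities and divisibilities recorded by the discriminant residues, one adjusts the integers $(v,h_1)$ and $(v,h_3)$ to any prescribed values by a Euclidean-algorithm argument. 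The explicit pairings $(h_1,h_3)=p+rs$, $(h_1,h_2)$, $(h_3,h_4)$ and the hypotheses $r,s,p\gg0$ are precisely what guarantee that every intermediate vector stays primitive of square $d$ and that no arithmetic obstruction — a gap in attainable pairings, or a discriminant mismatch — arises. This connectivity verification, namely that the orbits of the four stabilisers overlap enough to sweep out all of $\mathcal{V}_d$, is the main obstacle of the proof, and it is exactly where the particular shape of the four polarisations and the largeness of $r,s,p$ enter decisively.
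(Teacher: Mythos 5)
Your text does not prove Lemma~\ref{lemma:pol}; it proves (a variant of) Proposition~\ref{prop:mon S pol}. Your first paragraph explicitly invokes Lemma~\ref{lemma:pol} as an ingredient (``By Lemma~\ref{lemma:pol} each polarised monodromy group is the \emph{full} orientation-preserving stabiliser''), and everything that follows is devoted to showing that $G=\langle \Or^+(\Lambda)_{h_1},\dots,\Or^+(\Lambda)_{h_4}\rangle$ equals $\Or^+(\Lambda)$ --- which is exactly the content of Lemma~\ref{lemma:eichler} and Proposition~\ref{prop:mon S pol}, results that in the paper sit \emph{downstream} of the lemma you were asked to prove. Read as a proof of Lemma~\ref{lemma:pol} the argument is therefore circular, and neither of the lemma's two assertions is ever addressed: the equality $\Mon^2(S,H)=\Or^+(\oH^2(S,\ZZ))_h$, and the stronger claim that this whole group is realised by a \emph{single} projective polarised family over a smooth quasi-projective base.

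What the lemma actually requires is of a different nature, and it is not something an orbit--stabiliser computation in the abstract lattice can supply. The inclusion $\Mon^2(S,H)\subseteq\Or^+(\oH^2(S,\ZZ))_h$ is the easy half: monodromy operators preserve the orientation (Lemma~\ref{lemma:Mon is O+}), and in a polarised family the classes $c_1(\mathcal{H}_t)$ form a flat section of $R^2f_*\ZZ$, so parallel transport fixes $h$. The substantive half is the reverse inclusion together with the ``moreover'' clause: one must realise every isometry of $\Or^+(\oH^2(S,\ZZ))_h$ as parallel transport in an actual family of K3 surfaces equipped with a relatively ample line bundle. The paper does not reprove this; it quotes \cite[Theorem~3.4.1 and Corollary~3.4.2]{Peters} (see also \cite[Proposition~6.8]{Markman:Monodromy}), where the group arises as the monodromy of the universal polarised family over the image under the period map of the moduli space of polarised K3 surfaces of degree $h^2$ --- the geometric ingredients (surjectivity of the period map, connectedness of that moduli space) are entirely absent from your proposal. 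As a side remark, even viewed as a proof of Proposition~\ref{prop:mon S pol} your Eichler-transitivity route differs from the paper's, which generates $\Or^+(L)$ by transvections following \cite{GHS09}; but that observation does not repair the mismatch with the assigned statement.
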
 
The smooth and quasi-projective base $T$ of the family above is the image under the period map of the moduli space of polarised K3 surfaces with fixed degree (see \cite[Section~6.3]{Markman:Monodromy} for more details).

The last ingredient we need is the following lattice-theoretic result, which is a straighforward application of the Eichler criterion. We remark that a very similar result already appeared in the proofs of \cite[Lemma~3.5]{MonRap} and \cite[Theorem~5.4]{Onorati:Monodromy} for similar purposes. 

Let $L$ be an even lattice and let us assume that $L$ contains at least three copies of the unimodular hyperbolic plane $U$. We denote by $U$ one distinguished such copy, with basis $\{e,f\}$, and we write $L=U\oplus L_1$. Given $s,r,p>1$, we use the following notation,
\[ h_1=e+rf,\qquad h_2=e+(r-1)f,\qquad h_3=se+pf,\quad\mbox{and}\quad h_4=(s-1)e+pf. \] 

\begin{lemma}\label{lemma:eichler}
With the notation above, we have
\[ \Or^+(L)=\langle \Or^+(L)_{h_1},\Or^+(L)_{h_2},\Or^+(L)_{h_3},\Or^+(L)_{h_4}\rangle. \]
\end{lemma}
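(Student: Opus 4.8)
The plan is to derive the equality from the Eichler criterion by an orbit--stabiliser argument. Write $L=U\oplus L_1$ with $U=\langle e,f\rangle$, let $A_L:=L^\ast/L$ be the discriminant group, and for a primitive $v\in L$ let $\Or^+(L)_v$ denote its stabiliser in $\Or^+(L)$. Denote by $\Ort^+(L)\subseteq\Or^+(L)$ the group of orientation preserving isometries acting trivially on $A_L$; recall that $\Ort^+(L)$ is generated by the Eichler transvections $E_{u,a}$ (with $u$ isotropic and $a\in u^\perp$), that each such transvection is orientation preserving since $t\mapsto E_{u,ta}$ is a path of real isometries joining it to $\id$, and that, as $L\supseteq U^{\oplus 2}$, the Eichler criterion makes $\Ort^+(L)$ act transitively on primitive vectors of fixed square and fixed image in $A_L$ (see \cite{MonRap,Onorati:Monodromy}). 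Put $G:=\langle\Or^+(L)_{h_1},\Or^+(L)_{h_2},\Or^+(L)_{h_3},\Or^+(L)_{h_4}\rangle$.

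First I would reduce to the single inclusion $\Ort^+(L)\subseteq G$. The vector $h_1=e+rf$ is primitive of square $2r$ and, lying in the unimodular summand $U$, has trivial image in $A_L$; since every isometry fixes $0\in A_L$, the orbit $\Or^+(L)\cdot h_1$ is contained in the set of primitive vectors of square $2r$ with trivial discriminant class, which by the Eichler criterion is exactly $\Ort^+(L)\cdot h_1$. Hence for every $g\in\Or^+(L)$ there is $\psi\in\Ort^+(L)$ with $\psi g\in\Or^+(L)_{h_1}$, so $\Or^+(L)=\langle\Ort^+(L),\Or^+(L)_{h_1}\rangle$. As $\Or^+(L)_{h_1}\subseteq G$, it remains only to prove $\Ort^+(L)\subseteq G$. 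For this I would use the refinement of Eichler's theorem stating that, for the fixed summand $U$, the group $\Ort^+(L)$ is generated by the transvections $E_{e,a}$ and $E_{f,a}$ with $a\in L_1$; this is precisely the form exploited in \cite[Lemma~3.5]{MonRap} and \cite[Theorem~5.4]{Onorati:Monodromy}. Thus the statement is reduced to showing that $E_{f,a}\in G$ and $E_{e,a}\in G$ for all $a\in L_1$.

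The construction of these transvections from the chosen stabilisers is the heart of the matter, and the step I expect to be the main obstacle. Here the role of the four vectors becomes transparent: one has $f=h_1-h_2$ and $e=h_3-h_4$, while the negative vectors $e-rf\in h_1^\perp$ and $e-(r-1)f\in h_2^\perp$ (spanning the lines $h_1^\perp\cap U$ and $h_2^\perp\cap U$) satisfy $f=(e-(r-1)f)-(e-rf)$, and symmetrically $se-pf\in h_3^\perp$, $(s-1)e-pf\in h_4^\perp$ satisfy $e=(se-pf)-((s-1)e-pf)$. Since each $\Or^+(L)_{h_i}$ contains all transvections supported on $L_1$ as well as the isometries of $h_i^\perp$ mixing $L_1$ with its distinguished negative line, I would assemble a transvection along $f$ out of elements of $\Or^+(L)_{h_1}$ and $\Or^+(L)_{h_2}$, and, by the evident $e\leftrightarrow f$ symmetry of the configuration, a transvection along $e$ out of $\Or^+(L)_{h_3}$ and $\Or^+(L)_{h_4}$; a single such computation therefore suffices. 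The delicate point is that no individual stabiliser contains $E_{f,a}$ --- indeed $(f,h_i)\neq 0$ for every $i$, so $f$ lies in none of the $h_i^\perp$ --- so the transvection must be produced genuinely as a product across the two stabilisers. This explicit but elementary manipulation, carried out for the analogous vectors in \cite[Lemma~3.5]{MonRap} and \cite[Theorem~5.4]{Onorati:Monodromy}, completes the argument once transcribed to the present $h_1,\dots,h_4$.
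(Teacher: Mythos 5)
Your core computation is exactly the right one: the vectors $e-rf\in h_1^\perp$ and $e-(r-1)f\in h_2^\perp$, the additivity of Eichler transvections in their argument, and the observation that the desired transvection can only be produced as a product across two different stabilisers are precisely the heart of the paper's proof, which obtains $t(\bar e,e-rf)\circ t(\bar e,-e+(r-1)f)=t(\bar e,f)^{-1}$ for $\bar e$ an isotropic vector of a hyperbolic plane $\bar U\subset L_1$ (the first factor fixes $h_1$, the second fixes $h_2$), and gets the transvections involving $e$ from $h_3,h_4$ in the same way. However, your reduction to transvections rests on a false statement: it is not true that $\Ort^+(L)$ --- the orientation preserving isometries acting trivially on $A_L$ --- is generated by the Eichler transvections $E_{e,a},E_{f,a}$ with $a\in L_1$. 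Eichler transvections are unipotent, hence of determinant $1$, so they generate a subgroup of $\operatorname{SO}(L)$; but $\Ort^+(L)$ contains determinant $-1$ elements, for instance the reflection in the $(-2)$-vector $e-f$, which acts trivially on $A_L$ (as does every reflection in a $(-2)$-vector) and preserves orientation because it is the identity on a positive definite $3$-space inside $(e-f)^\perp$. So the ``refinement of Eichler's theorem'' you invoke does not exist, and with it the implication $\langle E_{e,a},E_{f,a}\rangle\subseteq G\Rightarrow\Ort^+(L)\subseteq G$ collapses: as written, this is a genuine gap.

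The gap is local and reparable with tools you already cite: Eichler's criterion gives transitivity on primitive vectors of fixed square and fixed discriminant class for the transvection group $E_U(L_1)=\langle t(e,a),t(f,a):a\in L_1\rangle$ itself, not merely for $\Ort^+(L)$. Running your orbit--stabiliser argument with $E_U(L_1)$ in place of $\Ort^+(L)$ gives $\Or^+(L)=E_U(L_1)\cdot\Or^+(L)_{h_1}$, after which it suffices to prove $E_U(L_1)\subseteq G$, i.e.\ your third step. This repaired reduction is a genuine alternative to the paper's, which instead quotes \cite[Proposition~3.3]{GHS09} to write $\Or^+(L)=\langle\Or^+(L_1),E_U(L_1)\rangle$ and notes that $\Or^+(L_1)\subseteq\Or^+(L)_{h_i}$ because $h_i\in U$. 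Be aware, finally, that your last step needs more than ``a single such computation'': the explicit products across stabilisers only yield the four transvections $t(\bar e,e),t(\bar e,f),t(\bar f,e),t(\bar f,f)$ with argument in $U$ and direction in the fixed plane $\bar U\subset L_1$; to reach $E_{e,a},E_{f,a}$ for \emph{all} $a\in L_1$ one must combine additivity with conjugation by $\Or^+(L_1)\subseteq G$ and Eichler transitivity inside $L_1$. The paper packages this bootstrap via \cite[Lemma~3.2 and Proposition~3.3]{GHS09}, and your write-up should carry it out explicitly rather than defer it to the cited analogues in \cite{MonRap} and \cite{Onorati:Monodromy}.
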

Here $\Or^+(L)_{h_i}$ is the subgroup of $\Or^+(L)$ of the elements fixing $h_i$. 
\begin{proof}
First of all, by \cite[Proposition~3.3 (iii)]{GHS09}, we have that 
\[ \Or^+(L)=\langle \Or^+(L_1), E_{\bar{U}}(L_1)\rangle, \]
where $\Or^+(L_1)$ is embedded in $\Or^+(L)$ by extending any isometry as the identity on $U$, and where $E_{U}(L_1)$ is the group generated by transvections of the form $t(e,a)$ and $t(f,a)$ for all $a\in L_1$. 
We recall that a transvection is defined in the following way: for any $z\in L$ with $z^2=0$ and $a\in z^\perp$, then
\[ t(z,a)\colon x\mapsto x-(a,x)z+(z,x)a-\frac{1}{2}(a,a)(z,x)z. \]
We refer to \cite[Section~3]{GHS09} for backgrounds and main properties, but we point out that if $a^2\neq0$, then the extension of $t(z,a)$ over $\mathbb{Q}$ is the composition of the reflection $R_{a}$ with the reflection $R_{a+\frac{1}{2}(a,a)z}$. Any transvection $t(z,a)$ is an orientation preserving isometry with determinant $1$ (acting trivially on the discriminant group of $L$).

As $h_i\in U$, it follows that $\Or^+(L_1)\subset\Or^+(L)_{h_i}$ for $i=1,2,3,4$. Therefore, if we denote by $G$ the group generated by the $\Or^+(L)_{h_i}$'s, we only need to show that $t(e,a),t(f,a)\in G$ for all $a\in L_1$.

By assumption $L_1$ contains at least two copies of the hyperbolic plane, so that we can apply \cite[Proposition~3.3~(ii),]{GHS09} and find an isometry $g\in\Or^+(L_1)$ such that $g(a)\in \bar{U}$, where $\bar{U}$ is a distinguished copy of the hyperbolic plane in $L_1$.
By \cite[Relation~(6) in Section~3]{GHS09} we have that 
\[ g\circ t(z,a)\circ g^{-1}=t(z,g(a)) \]
for any isotropic element $z$. In particular, since we already remarked that $\Or^+(L_1)\subset G$, it is enough to prove that $t(e,a),t(f,a)\in G$ for all $a\in \bar{U}$. 

As already remarked, $t(e,a)$ and $t(f,a)$ are orientation preserving isometries with determinant $1$; moreover, since $a\in \bar{U}$, $t(e,a)$ and $t(f,a)$ both act as the identity on $(U\oplus \bar{U})^\perp$, and without loss of generality they can then be viewed as elements in $\operatorname{SO}^+(U\oplus \bar{U})$. 

If we denote by $\{\bar{e},\bar{f}\}$ a standard basis for $\bar{U}$, then by \cite[Lemma~3.2]{GHS09} we know that $\operatorname{SO}^+(U\oplus \bar{U})$ is generated by the four transvections
\[ t(\bar{e},e),\qquad t(\bar{e},f), \qquad t(\bar{f},e)\qquad\mbox{and}\qquad t(\bar{f},f). \]
So it is enough to prove that each of them is in $G$. Let us show that $t(\bar{e},f)\in G$, the others will follow similarly.

We use the following remark, which follows directly from the definition of transvection: if $a\in U^\perp$, then $t(e,a)$ and $t(f,a)$ are the identity on the sublattice $U^\perp\cap a^\perp$. In particular, for example, $t(\bar{e},e-tf)$ acts as the identity on $e+tf$.

By \cite[Relation~(5) in Section~3]{GHS09}, we have
in general that $t(z,a)\circ t(z,b)=t(z,a+b)$ and $t(z,-a)=t(z,a)^{-1}$. In our situation this says that
\[ t(\bar{e},e-rf)\circ t(\bar{e},-e+(r-1)f)=t(\bar{e},f)^{-1}, \]
but as remarked $t(\bar{e},e-rf)\in\Or^+(L)_{h_1}$ and similarly $t(\bar{e},-e+(r-1)f)=t(\bar{e},e-(r-1)f)^{-1}\in\Or^+(L)_{h_1}$, concluding the proof.
\end{proof}

\proof[Proof of Proposition~\ref{prop:mon S pol}]
Let us keep the notations as in the statement of Proposition~\ref{prop:mon S pol}. By Lemma~\ref{lemma:pol} we have that $\Mon^2(S,H_i)=\Or^+(\oH^2(S,\ZZ))_{h_i}$, for $i=1,2,3,4$. In particular, by Lemma~\ref{lemma:eichler}, we have that 
\[ \Or^+(\oH^2(S,\ZZ))=\langle \Mon^2(S,H_1),\Mon^2(S,H_2),\Mon^2(S,H_3),\Mon^{2}(S,H_{4})\rangle. \]
Since $\Mon^2(S)\subset\Or^+(\oH^2(S,\ZZ))$ (see Lemma~\ref{lemma:Mon is O+}), the claim follows.
\endproof
Let us conclude with the following corollary, which recover a well-known result of Borel.
\begin{cor}[\cite{Bor}]\label{corollary:Borel}
    If $S$ is a K3 surface, then
    \[ \Mon^2(S)=\Or^+(\oH^2(S,\ZZ)). \]
\end{cor}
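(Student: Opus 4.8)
The plan is to combine the trivial inclusion provided by Lemma~\ref{lemma:Mon is O+} with a reduction, through deformation invariance of the monodromy group, to the single elliptic surface already analysed in Proposition~\ref{prop:mon S pol}. By Lemma~\ref{lemma:Mon is O+} one always has $\Mon^2(S)\subseteq\Or^+(\oH^2(S,\ZZ))$, so the whole point is the reverse inclusion.

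First I would note that for the specific K3 surface $S_0$ appearing in Proposition~\ref{prop:mon S pol} the desired equality is in fact already contained in its proof. Each polarised monodromy group $\Mon^2(S_0,H_i)$ sits inside $\Mon^2(S_0)$, so that
\[ \Or^+(\oH^2(S_0,\ZZ))=\langle\Mon^2(S_0,H_1),\dots,\Mon^2(S_0,H_4)\rangle\subseteq\Mon^2(S_0)\subseteq\Or^+(\oH^2(S_0,\ZZ)), \]
where the first equality is the identity established inside Proposition~\ref{prop:mon S pol} by means of Lemma~\ref{lemma:pol} and Lemma~\ref{lemma:eichler}. Hence $\Mon^2(S_0)=\Or^+(\oH^2(S_0,\ZZ))$.

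Next I would transport this equality to an arbitrary K3 surface $S$ using that all K3 surfaces form a single deformation class (Kodaira), so that $S$ and $S_0$ can be joined by a smooth family of K3 surfaces. Parallel transport along a path in the base yields an isometry $g\colon\oH^2(S,\ZZ)\to\oH^2(S_0,\ZZ)$, and conjugating monodromy loops by that path gives $\Mon^2(S)=g^{-1}\,\Mon^2(S_0)\,g$, which is the deformation invariance of the monodromy group. Since $g$ maps the positive cone of $\oH^2(S,\ZZ)$ isometrically onto that of $\oH^2(S_0,\ZZ)$, the induced conjugation is compatible with the orientation characters and therefore sends $\Or^+(\oH^2(S_0,\ZZ))$ onto $\Or^+(\oH^2(S,\ZZ))$. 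Combining these facts,
\[ \Mon^2(S)=g^{-1}\,\Or^+(\oH^2(S_0,\ZZ))\,g=\Or^+(\oH^2(S,\ZZ)), \]
as required.

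The substantive inputs are all borrowed from the earlier results, so the step demanding the most care is this last reduction: one must invoke that every K3 surface is deformation equivalent to $S_0$ within the (K\"ahler) deformation class, and verify that conjugation by the parallel-transport isometry preserves orientation-preserving isometries, i.e.\ that $\Or^+$ is carried to $\Or^+$ by any isometry of the period lattices. Neither point is difficult, but both are essential to pass from the one explicit elliptic example to the general statement.
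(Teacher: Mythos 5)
Your proposal is correct and is exactly the argument the paper leaves implicit: the proof of Proposition~\ref{prop:mon S pol} already yields $\Or^+(\oH^2(S_0,\ZZ))=\langle\Mon^2(S_0,H_i)\rangle\subseteq\Mon^2(S_0)\subseteq\Or^+(\oH^2(S_0,\ZZ))$ for the special elliptic surface, and the general case follows by conjugating with a parallel transport operator, since all K3 surfaces are deformation equivalent and conjugation by any isometry carries $\Or^+$ to $\Or^+$. Your two flagged verification points (deformation equivalence and compatibility with the orientation character) are indeed the only things to check, and both hold as you say.
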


%%%%%%%%%%%%%%%%%%%%%%%%%%%%%%%%%%%%%%%%%%%%%%%%%%
\subsection{Lift of the polarised monodromy of a K3 surface to moduli spaces of sheaves}\label{section:lift}

In this section we exhibit some special cases in which we can lift polarised monodromy operators of a K3 surface $S$ to monodromy operators on a moduli space $M_v(S,H)$, where $(S,v,H)$ is a suitable $(m,k)$-triple. We will achieve this by using the groupoid representations defined in Section~\ref{section:representation}.
\medskip

Since $\oH^2(S,\ZZ)\subset\widetilde{\oH}(S,\ZZ)$, the group $\Or(\oH^2(S,\ZZ))$ is identified with the subgroup of $\Or(\widetilde{\oH}(S,\ZZ))$ of isometries acting as the identity on $\oH^0(S,\ZZ)\oplus\oH^4(S,\ZZ)$.

On the other hand, if $(S,v,H)$ is an $(m,k)$-triple, using the notation set in Section~\ref{section:groupoids}, we consider the homomorphism of groups
\[ \widetilde{\Phi}^{m,k}_{\operatorname{def},(S,v,H)}\colon\Aut_{\mathcal{G}_{\operatorname{def}}^{m,k}}(S,v,H)\longrightarrow\Aut_{\widetilde{\mathcal{H}}^{m,k}}(\tilde{\oH}(S,\ZZ),v,\epsilon_S) \]
induced by the representation $\widetilde{\Phi}^{m,k}_{\operatorname{def}}\colon\mathcal{G}_{\operatorname{def}}^{m,k}\to\widetilde{\mathcal{H}}^{m,k}$ defined in Definition~\ref{defn:Phi tilde def m k}. Since by definition
\[ \Aut_{\widetilde{\mathcal{H}}^{m,k}}(\tilde{\oH}(S,\ZZ),v,\epsilon_S)=\Or(\widetilde{\oH}(S,\ZZ))_v, \]
it follows that also
\[ \Im(\widetilde{\Phi}^{m,k}_{\operatorname{def},(S,v,H)})\subset\Or(\widetilde{\oH}(S,\ZZ)). \]
In the following lemma, we relate the subgroup $\Mon^2(S,H)\subset\Or(\oH^2(S,\ZZ))$ with $\Im(\widetilde{\Phi}^{m,k}_{\operatorname{def},(S,v,H)})$, at least when the Mukai vector is $(m,0,-mk)$. 

We will also relate $\Mon^2(S,H)$ with the image of the homomorphism of groups
\[ \mathsf{pt}^{m,k}_{\operatorname{def},(S,v,H)}\colon\Aut_{\mathcal{G}_{\operatorname{def}}^{m,k}}(S,v,H)\longrightarrow\Aut_{\mathcal{A}_k}(\oH^2(M_v(S,H),\ZZ)) \]
induced by the representation $\mathsf{pt}^{m,k}_{\operatorname{def}}\colon\mathcal{G}_{\operatorname{def}}^{m,k}\to\mathcal{A}_k$ defined in Definition~\ref{defn:pt m k def}.

\begin{lemma}\label{lemma:pto in Im(Phi tilde) when c=0}
    Let $m,k\geq1$ be two integers and $v=(m,0,-mk)$ a Mukai vector on a projective K3 surface $S$.
    If $(S,v,H)\in\mathcal{G}^{m,k}_{\operatorname{def}}$ is an object, then there is an inclusion of groups
        \[ \Mon^2(S,H)\subset\Im(\widetilde{\Phi}^{m,k}_{\operatorname{def},(S,v,H)}) \]
    and an injective morphism of groups
        \[  \Mon^2(S,H)\hookrightarrow\Im(\mathsf{pt}^{m,k}_{\operatorname{def},(S,v,H)}).
        \]
\end{lemma}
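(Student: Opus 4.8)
The plan is to realise every $g\in\Mon^2(S,H)$ as the image under $\widetilde{\Phi}^{m,k}_{\operatorname{def}}$ of an automorphism of $(S,v,H)$ coming from a deformation path, and then to transport this through the functor isomorphism of Proposition~\ref{prop:iso of functors} to obtain the statement about $\mathsf{pt}^{m,k}_{\operatorname{def}}$. What makes the vector $v=(m,0,-mk)$ special is twofold: its primitive part is $w=(1,0,-k)$, which has rank one, so that every ample class is automatically $w$-generic; and its middle component vanishes, so that the trivial line bundle $\cO_{\mathcal{S}}$ serves as the deformation of $L=\cO_S$ required by Definition~\ref{defn:def of m k triple} in any family of surfaces.

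Concretely, by Lemma~\ref{lemma:pol} we have $\Mon^2(S,H)=\Or^+(\oH^2(S,\ZZ))_h$, and the whole group is realised by a projective polarised family $f\colon\mathcal{S}\to T$ over a smooth, irreducible, quasi-projective base, with a point $t_0\in T$ whose fibre is $(S,H)$. First I would set $\mathcal{L}=\cO_{\mathcal{S}}$ and observe that $(f\colon\mathcal{S}\to T,\cO_{\mathcal{S}},\mathcal{H})$ is a deformation of the $(1,k)$-triple $(S,w,H)$: genericity for the rank-one vector $w$ holds at every $t\in T$, since a destabilising subsheaf of a rank-one torsion-free sheaf would have torsion quotient of degree zero against the ample class, forcing equal first Chern classes. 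Then Remark~\ref{rmk:defomk} supplies a Zariski-closed $Z\subset T$ such that over $T'=T\setminus Z$ the datum $(f,\cO_{\mathcal{S}},\mathcal{H})$ becomes a deformation of the $(m,k)$-triple $(S,v,H)$.

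The main obstacle is to keep the loops realising $g$ inside the locus $T'$ where $v$-genericity holds. This is handled as follows: since $H$ is $v$-generic by hypothesis, $t_0\notin Z$, so $Z$ is a proper closed subvariety of the irreducible $T$ and hence has complex codimension at least one; therefore $\pi_1(T')\to\pi_1(T)$ is surjective. Consequently any loop based at $t_0$ realising $g$ can be replaced by a homotopic loop $\gamma$ avoiding $Z$ with the same monodromy, and $\gamma$ together with the restricted family defines a deformation path $\alpha$, i.e.\ an element $\overline{\alpha}\in\Aut_{\mathcal{G}^{m,k}_{\operatorname{def}}}(S,v,H)$. By Definition~\ref{defn:Phi tilde def m k}, $\widetilde{\Phi}^{m,k}_{\operatorname{def}}(\overline{\alpha})=p_{\alpha}$ is the parallel transport operator in $R^\bullet f_*\ZZ$ along $\gamma$; since parallel transport respects the grading, acts as $g$ on $R^2f_*\ZZ$, and as the identity on $\oH^0$ and $\oH^4$, under the identification of $\Or(\oH^2(S,\ZZ))$ with the subgroup of $\Or(\widetilde{\oH}(S,\ZZ))$ acting trivially on $\oH^0(S,\ZZ)\oplus\oH^4(S,\ZZ)$ we get $p_{\alpha}=\widetilde{g}$ with $\widetilde{g}(v)=v$. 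This proves $\Mon^2(S,H)\subset\Im(\widetilde{\Phi}^{m,k}_{\operatorname{def},(S,v,H)})$.

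For the second assertion I would compute $v^\perp=\oH^2(S,\ZZ)\oplus\ZZ u$ with $u=(1,0,k)$ and $u^2=-2k$, and note that $\widetilde{g}$ fixes $u$ and restricts to $g$ on $\oH^2(S,\ZZ)$, so that $\widetilde{g}|_{v^\perp}=g\oplus\id_{\ZZ u}$. Thus $j\colon g\mapsto g\oplus\id_{\ZZ u}$ is an injective homomorphism $\Or(\oH^2(S,\ZZ))\to\Or(v^\perp)$, and composing with the isomorphism $\lambda_{(S,v,H)}^\sharp$ yields an injective homomorphism into $\Or(\oH^2(M_v(S,H),\ZZ))$. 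Finally, since $p_{\alpha}$ is orientation preserving (Remark~\ref{rmk:pto on H tilde}), Proposition~\ref{prop:iso of functors} gives $\mathsf{pt}^{m,k}_{\operatorname{def}}(\overline{\alpha})=\lambda_{(S,v,H)}\circ(p_{\alpha}|_{v^\perp})\circ\lambda_{(S,v,H)}^{-1}=\lambda_{(S,v,H)}^\sharp(g\oplus\id_{\ZZ u})$. Hence the image of this injective homomorphism lands in $\Im(\mathsf{pt}^{m,k}_{\operatorname{def},(S,v,H)})$, which is the required injective morphism $\Mon^2(S,H)\hookrightarrow\Im(\mathsf{pt}^{m,k}_{\operatorname{def},(S,v,H)})$.
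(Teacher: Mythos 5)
Your proposal is correct and follows essentially the same route as the paper's proof: the polarised family from Lemma~\ref{lemma:pol}, the observation that $\mathcal{O}_{\mathcal{S}}$ serves as the required line bundle $\mathcal{L}$ precisely because $v=(m,0,-mk)$, the homotopy argument moving the monodromy loop off the non-$v$-generic locus $Z$ (a proper closed subvariety, hence of real codimension at least $2$), and the passage to $\mathsf{pt}^{m,k}_{\operatorname{def}}$ via orientation preservation (Remark~\ref{rmk:pto on H tilde}) and the functor isomorphism of Proposition~\ref{prop:iso of functors}. Your explicit decomposition $v^\perp=\oH^2(S,\ZZ)\oplus\ZZ u$ with $u=(1,0,k)$, showing the restriction is $g\oplus\id_{\ZZ u}$, merely makes explicit the injectivity that the paper leaves implicit.
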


\begin{proof}
    Let us start with the inclusion of $\Mon^2(S,H)$ in $\Im(\widetilde{\Phi}^{m,k}_{\operatorname{def},(S,v,H)})$.
    Recall from Lemma~\ref{lemma:pol} that there exists a projective polarised family $f\colon\mathcal{S}\to T$ of projective K3 surfaces such that any isometry in $\Mon^2(S,H)$ arises as a parallel transport operator in the local system $R^2 f_*\ZZ$, and such that the base $T$ is smooth and quasi-projective. More precisely, the family $f$ comes with the following properties: 
    \begin{itemize}
       \item there exists $\bar{t}\in T$ such that $\mathcal{S}_{\bar{t}}=S$;
       \item there exists a relatively ample line bundle $\mathcal{H}$ on $\mathcal{S}$ such that $\mathcal{H}_{\bar{t}}=H$.
    \end{itemize}    
    Any $g\in\Mon^2(S,H)$, is then associated to a loop $\gamma$ in $T$ centred in $\bar{t}$.
    
    As in the discussion before the lemma, we can view $g$ as an element of $\Aut_{\widetilde{\mathcal{H}}^{m,k}}(\widetilde{\oH}(S,\ZZ),v,\epsilon_S)$, and we have to prove that there is a deformation path $\alpha$ from $(S,v,H)$ to itself such that $g=\widetilde{\Phi}_{\operatorname{def}}^{m,k}(\bar{\alpha})$.
    
    To do so, we first notice that the subset $Z\subset T$ of the points $t\in T$ such that $\mathcal{H}_t$ is not $v_{t}$-generic is a closed subset of $T$ (see Remark~\ref{rmk:defomk}). Moreover, since by assumptions $\mathcal{H}_{\bar{t}}$ is $v$-generic, $Z$ is strictly contained in $T$ and hence it must have real codimension at least $2$. In particular, since the base $T$ is smooth, there exists a loop $\gamma'$ in $T':=T\setminus Z$ centred in $\bar{t}$ that is homotopic to $\gamma$ (see for example \cite[Théorème~2.3 in Chapter~X]{Godbillon}). 

    Since parallel transport operators do not depend on the homotopy class of the path, we may suppose without loss of generality that $\mathcal{H}_t$ is $v$-generic for every $t\in T$.
    
    By assumption the Mukai vector is of the form $(m,0,-mk)$, therefore it follows that 
    \[ \alpha=(f\colon\mathcal{S}\to T, \mathcal{O}_{\mathcal{S}},\mathcal{H},\bar{t},\bar{t},\gamma) \]
    defines a morphism $\overline{\alpha}$ in $\Aut_{\mathcal{G}^{m,k}_{\operatorname{def}}}(S,v,H)$. By definition, we get that $$g=\widetilde{\Phi}^{m,k}_{\operatorname{def}}(\overline{\alpha}),$$
    concluding the first part of the proof.

    Now, since by Remark~\ref{rmk:pto on H tilde} the isometries of $\Mon^2(S,H)$ preserve the orientation of $\widetilde{\oH}(S,\ZZ)$, it follows that the inclusion just proved gives an injective map
    \[ \Mon^2(S,H)\hookrightarrow\Im(\Phi^{m,k}_{\operatorname{def},(S,v,H)}). \]
    Combining this with the isomorphism of functors $\lambda_{\operatorname{def}}\colon\Phi^{m,k}_{\operatorname{def}}\to\mathsf{pt}^{m,k}_{\operatorname{def}}$ constructed in the proof of Proposition~\ref{prop:iso of functors} finishes the proof.
    \end{proof}

\begin{rmk}
    The claim of Lemma~\ref{lemma:pto in Im(Phi tilde) when c=0} holds more generally when the Mukai vector has the form $(r,0,s)$, but we will only use it in the special case of the statement.
\end{rmk}

\begin{rmk}
    If $v=(m,0,-mk)$, then $\oH^2(S,\ZZ)\subset v^\perp$ and again the group $\Or(\oH^2(S,\ZZ))$ is naturally embedded in $\Or(v^\perp)$ by extending with the identity. Using the isometry 
    $\lambda_{(S,v,H)}\colon v^\perp\longrightarrow\oH^2(M_v(S,H),\ZZ)$,
    we have a natural injective morphism
    \[ \Or(\oH^2(S,\ZZ))\hookrightarrow\Or(\oH^2(M_v(S,H),\ZZ) \]
    of which 
    \[  \Mon^2(S,H)\hookrightarrow\Im(\mathsf{pt}^{m,k}_{\operatorname{def},(S,v,H)})
    \]
    is the restriction.     
\end{rmk}

The following is the main result of this section. In what follows by a \emph{very general projective elliptic K3 surface with a section} we mean a projective elliptic K3 surface having a section and having Picard rank $2$. As in Section~\ref{section:Mon S}, we denote by $f$ the class of a fibre and by $\ell$ the class of the section. Then $\Pic(S)$ is isometric to the unimodular hyperbolic plane with standard basis $e=\ell+f$ and $f$.

\begin{prop}\label{prop:Mon S in m k}
Assume that $v=(m,0,-mk)$ and $S$ is a very general projective elliptic K3 surface with a section. There exists an integer $t\gg0$ and a $v$-generic polarisation $H=e+tf$ such that 
\[ \Mon^2(S)\subset\Im(\widetilde{\Phi}^{m,k}_{\operatorname{def},(S,v,H)}). \]
\end{prop}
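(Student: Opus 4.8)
The plan is to deduce the inclusion from the single-polarisation lift of Lemma~\ref{lemma:pto in Im(Phi tilde) when c=0} together with the generation statement of Proposition~\ref{prop:mon S pol}, using the congruence morphisms $\chi_{H,H'}$ of $\mathcal{P}^{m,k}$ to move the base point freely inside one $v$-chamber. The key geometric input is that all polarisations $\alpha e+\beta f$ with $\beta/\alpha$ large enough lie in a single $v$-chamber; once the four Eichler generators of Proposition~\ref{prop:mon S pol} are placed together with $H$ in that chamber, the rest is a formal groupoid manipulation.

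First I would produce the chamber. Since $v=(m,0,-mk)$ has rank $m>0$ and $c_1=0$, for a sheaf $E$ with $v(E)=v$ one has $\mu_H(E)=0$, so a $\mu_H$-destabilising subsheaf $F\subset E$ cuts out a $v$-wall $\{H : c_1(F)\cdot H=0\}$. Applying the Bogomolov inequality to $F$ and to $E/F$ bounds $-c_1(F)^2$ from above by a constant $C$ depending only on $v^2=2m^2k$. Writing $D=c_1(F)=pe+qf$, we have $D^2=2pq$ and the wall is the ray of slope $-q/p$; the bound $-D^2\le C$ forces $|pq|\le C/2$, hence (for $p\neq 0$) the slope $-q/p$ is at most $C/2$. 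Therefore every $v$-wall in the ample cone $\{\beta/\alpha>1\}$ has slope $\le C/2$, and the region $C_\infty=\{\alpha e+\beta f:\beta/\alpha>C/2\}$ is an open $v$-chamber in which, by Remark~\ref{rmk:wall and chamber}, every primitive class is $v$-generic.

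Next I would fix the data. Choose $t>C/2$ and set $H=e+tf$, which is $v$-generic and lies in $C_\infty$. Following Proposition~\ref{prop:mon S pol}, take $h_1=e+rf$, $h_2=e+(r-1)f$, $h_3=se+pf$, $h_4=(s-1)e+pf$, with $r>C/2+1$, with $p$ a large prime and $0<s<p$ satisfying $p>\tfrac{C}{2}s$. Then the four slopes $r,\;r-1,\;p/s,\;p/(s-1)$ all exceed $C/2$, so the four classes lie in $C_\infty$; primality of $p$ gives $\gcd(s,p)=\gcd(s-1,p)=1$, so all four are primitive, hence $v$-generic. Letting $H_1,\dots,H_4$ be corresponding polarisations, Remark~\ref{rmk:triple congruenti} shows that $(S,v,H)$ and each $(S,v,H_i)$ are congruent, so $\chi_i:=\chi_{H,H_i}\colon(S,v,H)\to(S,v,H_i)$ is a morphism in $\mathcal{P}^{m,k}\subset\mathcal{G}^{m,k}_{\operatorname{def}}$ with $\widetilde{\Phi}^{m,k}_{\operatorname{def}}(\chi_i)=\id$ by Definition~\ref{defn:Phi tilde def m k}.

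Finally I would assemble the inclusion. For $\phi\in\Aut_{\mathcal{G}^{m,k}_{\operatorname{def}}}(S,v,H_i)$ the conjugate $\chi_i^{-1}\circ\phi\circ\chi_i$ lies in $\Aut_{\mathcal{G}^{m,k}_{\operatorname{def}}}(S,v,H)$, and functoriality together with $\widetilde{\Phi}^{m,k}_{\operatorname{def}}(\chi_i)=\id$ gives $\widetilde{\Phi}^{m,k}_{\operatorname{def}}(\chi_i^{-1}\circ\phi\circ\chi_i)=\widetilde{\Phi}^{m,k}_{\operatorname{def}}(\phi)$; hence $\Im(\widetilde{\Phi}^{m,k}_{\operatorname{def},(S,v,H_i)})\subseteq\Im(\widetilde{\Phi}^{m,k}_{\operatorname{def},(S,v,H)})$. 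Combining this with Lemma~\ref{lemma:pto in Im(Phi tilde) when c=0}, which yields $\Mon^2(S,H_i)\subseteq\Im(\widetilde{\Phi}^{m,k}_{\operatorname{def},(S,v,H_i)})$, I get $\Mon^2(S,H_i)\subseteq\Im(\widetilde{\Phi}^{m,k}_{\operatorname{def},(S,v,H)})$ for every $i$, and Proposition~\ref{prop:mon S pol} then gives $\Mon^2(S)=\langle\Mon^2(S,H_1),\dots,\Mon^2(S,H_4)\rangle\subseteq\Im(\widetilde{\Phi}^{m,k}_{\operatorname{def},(S,v,H)})$. The main obstacle is the second paragraph: confirming that the $v$-wall slopes are genuinely bounded so that a single chamber $C_\infty$ can contain all four Eichler generators alongside $H$, while simultaneously arranging primitivity to guarantee $v$-genericity; everything afterwards is formal.
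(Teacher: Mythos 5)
Your proposal is correct and follows essentially the same route as the paper's proof: lift each polarised group $\Mon^2(S,H_i)$ into $\Im(\widetilde{\Phi}^{m,k}_{\operatorname{def},(S,v,H_i)})$ via Lemma~\ref{lemma:pto in Im(Phi tilde) when c=0}, identify all these images with $\Im(\widetilde{\Phi}^{m,k}_{\operatorname{def},(S,v,H)})$ through the congruence morphisms $\chi_{H,H_i}$ (which $\widetilde{\Phi}^{m,k}_{\operatorname{def}}$ sends to the identity), and conclude by the generation statement of Proposition~\ref{prop:mon S pol}. The only deviation is that where the paper cites \cite[Lemma~I.0.3]{OGrady:WeightTwo} and \cite[Lemma~2.38]{PR:SingularVarieties} for the existence of a single $v$-chamber (the one whose closure contains $f$) accommodating $H$ and all four classes $h_i$, you re-derive this by bounding wall slopes with a Bogomolov-type inequality — which is fine in substance, provided the inequality is applied to the Harder--Narasimhan factors of $F$ and $E/F$ rather than to those sheaves themselves (this is exactly what the cited lemma packages) — and your choice of a prime $p$ to force primitivity of $h_3,h_4$, hence $v$-genericity via Remark~\ref{rmk:wall and chamber}, is a clean way to handle a detail the paper leaves implicit.
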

\begin{proof}
    By Proposition~\ref{prop:mon S pol} we know that there are four polarisations $H_1$, $H_2$, $H_3$ and $H_4$ on $S$ such that $\Mon^2(S)$ is generated by the $\Mon^2(S,H_i)$'s.

    Now, we claim that the four polarisations $H_1$, $H_2$, $H_3$ and $H_4$ can be chosen to belong to the same $v$-chamber (see Remark~\ref{rmk:wall and chamber}), so that they are $v$-generic. 

    In particular, by Lemma~\ref{lemma:pto in Im(Phi tilde) when c=0} we will have that 
    \[ \Mon^2(S,H_i)\subset\Im(\widetilde{\Phi}^{m,k}_{\operatorname{def},(S,v,H_i)}), \]
    and by Remark~\ref{rmk:triple congruenti} the groups $\Im(\widetilde{\Phi}^{m,k}_{\operatorname{def},(S,v,H_i)})$ can all be identified: the proposition will then follow from Proposition~\ref{prop:mon S pol}.

    Indeed, let us recall how the polarisations $H_i$ are chosen. If $f$ is the class of the fibration of the elliptic surface $S$ and $\ell$ is the class of the section, then we put $e=\ell+f$ and we choose
    \[ H_1=e+rf,\quad H_2=e+(r-1)f,\quad H_3=se+pf,\quad\mbox{and}\quad H_4=(s-1)e+pf \]
    for $r,k,p\gg0$. Now, by \cite[Lemma~I.0.3]{OGrady:WeightTwo} (see also \cite[Definition~2.37 and Lemma~2.38]{PR:SingularVarieties}), if $r$ is big enough and $s$ and $p$ are chosen such that the quotient $p/s$ is big enough, then all the $H_i$'s are contained in the unique $v$-chamber whose closure contains the class $f$. 

    We may now conclude the proof: choose a polarisation $H$ on $S$ that lies in the same $v$-chamber of $H_{1},\cdots,H_{4}$, i.e.\ $H$ belongs to the unique $v$-chamber whose closure contains $f$. By definition of $\mathcal{G}^{m,k}_{\operatorname{def}}$, as $H$ and $H_{i}$ lie in the closure of the same $v$-chamber for $i=1,2,3,4$, we have an isomorphism $\chi_{H,H_{i}}\colon(S,v,H)\to(S,v,H_{i})$ in $\mathcal{G}^{m,k}$ and hence an isomorphism of groups $$\chi_{H,H_{i}}^{\sharp}\colon\Aut_{\mathcal{G}^{m,k}_{\operatorname{def}}}(S,v,H)\longrightarrow \Aut_{\mathcal{G}^{m,k}_{\operatorname{def}}}(S,v,H_{i})$$
    given by conjugation with $\chi_{H,H_{i}}$. Since $\widetilde{\Phi}^{m,k}_{\operatorname{def}}(\chi_{H,H_{i}})$ is the identity, we get that $\Im(\widetilde{\Phi}^{m,k}_{\operatorname{def},(S,v,H_i)})=\Im(\widetilde{\Phi}^{m,k}_{\operatorname{def},(S,v,H)})$ for every $i=1,2,3,4$, and this concludes the proof.
    \end{proof}

\begin{rmk}
    If $v$ is primitive, then the same result was proved in \cite[Corollary~6.7]{Markman:Monodromy}.
    We wish to point out that even though the idea of our proof and Markman's one is the same, they differ inasmuch as Markman uses non-polarised deformations of K3 surfaces (compare \cite[Definition~6.2]{Markman:Monodromy} with our Definition~\ref{defn:G m k def}). If $v$ is primitive this is justified by the fact that $M_{(1,0,-k)}(S,H)\cong\Hilb^{k+1}(S)$. On the other hand, if $v$ is not primitive we are forced to use polarised families.
\end{rmk}

\begin{cor}
    Assume that $v=(m,0,-mk)$ and $S$ is a very general projective elliptic K3 surface with a section. If $H$ is a $v$-generic polarisation contained in the unique $v$-chamber whose closure contains the nef class $f$, then there is an injective morphism
    \[ \Mon^2(S)\hookrightarrow\Mon^2_{\operatorname{lt}}(M_v(S,H)). \]
\end{cor}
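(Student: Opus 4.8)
The plan is to prove the corollary by showing that the natural injective morphism from the remark preceding the statement — call it $\iota\colon\Or(\oH^2(S,\ZZ))\hookrightarrow\Or(\oH^2(M_v(S,H),\ZZ))$, which sends $h$ first to its extension by the identity on $\oH^0(S,\ZZ)\oplus\oH^4(S,\ZZ)$, then restricts it to $v^\perp$, and finally conjugates by $\lambda_{(S,v,H)}$ — carries the subgroup $\Mon^2(S)$ into $\Mon^2_{\operatorname{lt}}(M_v(S,H))$. Since $\iota$ is injective by construction, the restriction $\iota|_{\Mon^2(S)}$ will then be the desired injective morphism, and the entire content of the corollary reduces to the containment $\iota(\Mon^2(S))\subset\Mon^2_{\operatorname{lt}}(M_v(S,H))$. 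Note that the morphism $\iota$ extends the already-established embedding $\Mon^2(S,H)\hookrightarrow\Im(\mathsf{pt}^{m,k}_{\operatorname{def},(S,v,H)})$ from $\Mon^2(S,H)$ to the whole of $\Mon^2(S)$; what makes this possible is precisely Proposition~\ref{prop:Mon S in m k}.

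First I would reduce to the specific polarisation furnished by Proposition~\ref{prop:Mon S in m k}, which gives a $v$-generic $H_0=e+tf$ with $t\gg0$ in the unique $v$-chamber whose closure contains $f$ and satisfies $\Mon^2(S)\subset\Im(\widetilde{\Phi}^{m,k}_{\operatorname{def},(S,v,H)})$. By hypothesis the polarisation $H$ of the corollary lies in that same chamber, so $(S,v,H)$ and $(S,v,H_0)$ are congruent (Remark~\ref{rmk:triple congruenti}): the moduli spaces are identified and the two locally trivial monodromy groups coincide, so it is harmless to assume $H=H_0$. The heart of the argument is then to transport the containment $\Mon^2(S)\subset\Im(\widetilde{\Phi}^{m,k}_{\operatorname{def},(S,v,H)})$, which lives on the Mukai lattice, across the functorial machinery of Section~\ref{section:representation}. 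Fix $h\in\Mon^2(S)$, viewed inside $\Or(\widetilde{\oH}(S,\ZZ))_v$ as the isometry acting as $h$ on $\oH^2(S,\ZZ)$ and as the identity on $\oH^0(S,\ZZ)\oplus\oH^4(S,\ZZ)$; this extension preserves $v=(m,0,-mk)$ and is orientation preserving, so $\Psi(h)=h|_{v^\perp}$ with trivial sign. By Proposition~\ref{prop:Mon S in m k} choose $\psi\in\Aut_{\mathcal{G}^{m,k}_{\operatorname{def}}}(S,v,H)$ with $\widetilde{\Phi}^{m,k}_{\operatorname{def}}(\psi)=h$, and apply the isomorphism of functors $\lambda_{\operatorname{def}}\colon\Phi^{m,k}_{\operatorname{def}}\to\mathsf{pt}^{m,k}_{\operatorname{def}}$ of Proposition~\ref{prop:iso of functors} at the object $(S,v,H)$ to obtain
\[ \mathsf{pt}^{m,k}_{\operatorname{def}}(\psi)=\lambda_{(S,v,H)}^\sharp\bigl(\Phi^{m,k}_{\operatorname{def}}(\psi)\bigr)=\lambda_{(S,v,H)}^\sharp\bigl(h|_{v^\perp}\bigr)=\iota(h). \]
By Corollary~\ref{cor:Im of pt in Mon} the left-hand side lies in $\Mon^2_{\operatorname{lt}}(M_v(S,H))$, hence so does $\iota(h)$, which is exactly the containment sought.

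The one point requiring care, and which I expect to be the main obstacle in a fully rigorous write-up, is the bookkeeping of orientations: one must confirm that the identity extension of an element of $\Mon^2(S)$ is genuinely orientation preserving on the Mukai lattice, so that the twist $(-1)^{\operatorname{or}(g)}$ in the definition of $\Psi$ (Definition~\ref{defn:Psi}) is trivial and $\Psi(h)$ equals $h|_{v^\perp}$ rather than $-h|_{v^\perp}$. This holds because the extension fixes the vector $(1,0,-1)$, which together with a K\"ahler class and the real and imaginary parts of a symplectic form spans the distinguished positive four-space defining $\epsilon_S$ (Example~\ref{example:Mukai lattice}), while $h$ preserves the orientation of the associated positive three-space by Lemma~\ref{lemma:Mon is O+}; alternatively, orientation preservation is automatic from Remark~\ref{rmk:pto on H tilde} once $h$ is realised inside $\Im(\widetilde{\Phi}^{m,k}_{\operatorname{def},(S,v,H)})$, since every morphism there is a parallel transport operator on $\widetilde{\oH}(S,\ZZ)$.
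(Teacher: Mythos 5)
Your proposal is correct and follows essentially the same route as the paper: the paper's proof likewise combines Proposition~\ref{prop:Mon S in m k} with Remark~\ref{rmk:pto on H tilde} (to kill the sign in $\Psi$), the functor isomorphism of Proposition~\ref{prop:iso of functors}, and Corollary~\ref{cor:Im of pt in Mon}, exactly as you do. Your extra bookkeeping (the congruence reduction to $H_0=e+tf$ and the explicit orientation check on the extension fixing $(1,0,-1)$) only spells out details the paper leaves implicit.
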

\begin{proof}
     As in the last part of the proof of Lemma~\ref{lemma:pto in Im(Phi tilde) when c=0}, by Remark~\ref{rmk:pto on H tilde} and Proposition~\ref{prop:iso of functors} the inclusion of Proposition~\ref{prop:Mon S in m k} gives an injective morphism
     \[ \Mon^2(S)\hookrightarrow\Im(\mathsf{pt}^{m,k}_{\operatorname{def},(S,v,H)}). \]
     Then the claim follows from Corollary~\ref{cor:Im of pt in Mon}.
\end{proof}

Finally, we want to exhibit an analogous version of Lemma~\ref{lemma:pto in Im(Phi tilde) when c=0} when the K3 surface is general enough, which will be useful later in Section~\ref{section:W in m k}.

\begin{lemma}\label{lemma:Mon S pol con H}
    Let $S$ be a projective K3 surface with $\Pic(S)=\ZZ.H$ and $v=(r,mH,s)$ a Mukai vector. If $(S,v,H)$ is an $(m,k)$-triple, then there exists an inclusion of groups
        \[ \Mon^2(S,H)\subset\Im(\widetilde{\Phi}^{m,k}_{\operatorname{def},(S,v,H)}) \]
    and an injective morphism of groups
        \[  \Mon^2(S,H)\hookrightarrow\Im(\mathsf{pt}^{m,k}_{\operatorname{def},(S,v,H)}).
        \]
\end{lemma}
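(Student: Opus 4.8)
The plan is to mirror the proof of Lemma~\ref{lemma:pto in Im(Phi tilde) when c=0} almost verbatim, the only genuine change being the choice of the relative line bundle $\cL$ entering the deformation path. First I would invoke Lemma~\ref{lemma:pol} to produce a projective polarised family $f\colon\cS\to T$ of K3 surfaces over a smooth quasi-projective base $T$, together with a point $\bar t\in T$ with $\cS_{\bar t}=S$ and a relatively ample line bundle $\cH$ with $\cH_{\bar t}=H$, such that every $g\in\Mon^2(S,H)$ is realised as the parallel transport operator in the local system $R^2f_*\ZZ$ along some loop $\gamma$ based at $\bar t$. As in the discussion preceding Lemma~\ref{lemma:pto in Im(Phi tilde) when c=0}, such a $g$ is viewed inside $\Or(\widetilde{\oH}(S,\ZZ))_v$ by extending it as the identity on $\oH^0(S,\ZZ)\oplus\oH^4(S,\ZZ)$, and one checks $g(v)=v$ since $g$ fixes $c_1(H)$ and $\xi=mH$ is a multiple of it.

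The one new point is the construction of the line bundle $\cL$ required by Definition~\ref{defn:def of m k triple}. In Lemma~\ref{lemma:pto in Im(Phi tilde) when c=0} the middle component of the Mukai vector vanished and one could take $\cL=\cO_{\cS}$; here the middle component of $v=(r,mH,s)$ is $\xi=mH$, so I would set $\cL:=\cH^{\otimes m}$. This is a line bundle on the total space $\cS$ with $\cL_{\bar t}=H^{\otimes m}$ (so $c_1(\cL_{\bar t})=\xi$) and $c_1(\cL_t)=m\,c_1(\cH_t)$ for every $t$, so that the associated relative Mukai vector is $v_t=(r,m\,c_1(\cH_t),s)$ with $v_{\bar t}=v$. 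Because $c_1(\cH_t)$ stays primitive along the family, each $v_t$ is again of the form $m w_t$ with $w_t$ primitive and $w_t^2=w^2=2k$ constant, so the fibres remain $(m,k)$-triples. This is precisely where the hypotheses $\Pic(S)=\ZZ.H$ and $\xi=mH$ are used: because $\xi$ is proportional to the polarisation, $\cL$ extends over the entire polarised family, whereas a general class of $\NS(S)$ need not extend, the generic fibre of $f$ having Picard rank one generated by $\cH_t$.

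To turn $(f,\cH^{\otimes m},\cH)$ into an actual deformation of the $(m,k)$-triple $(S,v,H)$ I must still ensure $\cH_t$ is $v_t$-generic along the path. By Remark~\ref{rmk:defomk} the locus $Z\subset T$ where $\cH_t$ fails to be $v_t$-generic is closed, and it is a proper subset since $\cH_{\bar t}=H$ is $v$-generic by hypothesis; as $T$ is a smooth variety, $Z$ has real codimension at least $2$, so $\gamma$ can be homotoped into $T\setminus Z$ without changing the induced parallel transport operator. Then $\alpha=(f\colon\cS\to T,\cH^{\otimes m},\cH,\bar t,\bar t,\gamma)$ is a deformation path defining a class $\overline{\alpha}\in\Aut_{\cG^{m,k}_{\operatorname{def}}}(S,v,H)$ with $g=\widetilde{\Phi}^{m,k}_{\operatorname{def}}(\overline{\alpha})$, yielding the inclusion $\Mon^2(S,H)\subset\Im(\widetilde{\Phi}^{m,k}_{\operatorname{def},(S,v,H)})$. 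For the injective morphism I would argue exactly as in the final part of the proof of Lemma~\ref{lemma:pto in Im(Phi tilde) when c=0}: the operators in $\Mon^2(S,H)$ are orientation preserving (Remark~\ref{rmk:pto on H tilde}), so the functor $\Psi$ introduces no sign and $\Phi^{m,k}_{\operatorname{def}}(\overline{\alpha})=g|_{v^\perp}$, giving an injective map $\Mon^2(S,H)\hookrightarrow\Im(\Phi^{m,k}_{\operatorname{def},(S,v,H)})$; composing with the isomorphism of functors $\lambda_{\operatorname{def}}\colon\Phi^{m,k}_{\operatorname{def}}\to\mathsf{pt}^{m,k}_{\operatorname{def}}$ from Proposition~\ref{prop:iso of functors} lands this injectively in $\Im(\mathsf{pt}^{m,k}_{\operatorname{def},(S,v,H)})$. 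The only real subtlety, and the step I would be most careful about, is the extendability of $\cL$ over the polarised family, which as explained hinges entirely on $\xi$ being a multiple of $H$.
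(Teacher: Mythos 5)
Your proposal is correct and coincides with the paper's own proof: the paper proves Lemma~\ref{lemma:Mon S pol con H} by declaring that the argument of Lemma~\ref{lemma:pto in Im(Phi tilde) when c=0} runs verbatim with the deformation path $\alpha=(f\colon\mathcal{S}\to T,\mathcal{H}^{\otimes m},\mathcal{H},\bar{t},\bar{t},\gamma)$, which is exactly the modification you make. Your additional remarks (that $g(v)=v$ because $g$ fixes $c_1(H)$, and that the hypothesis $\xi=mH$ is what allows $\mathcal{L}=\mathcal{H}^{\otimes m}$ to extend over the polarised family) are accurate elaborations of why this substitution works.
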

\begin{proof}
    The proof runs verbatim as the proof of Lemma~\ref{lemma:pto in Im(Phi tilde) when c=0}, the only difference being now that, with the same notation, the deformation path is
    \[ \alpha=(f\colon\mathcal{S}\to T,\mathcal{H}^{\otimes m},\mathcal{H},\bar{t},\bar{t},\gamma). \]
\end{proof}

%%%%%%%%%%%%%%%%%%%%%%%%%%%%%%%%%
%%%%%%%%%%%%%%%%%%%%%%%%%%%%%%%%%
%%%%%%%%%%%%%%%%%%%%%%%%%%%%%%%%%
\section{The locally trivial monodromy group}\label{section:main}
Let $(S,v,H)$ be an $(m,k)$-triple. As usual we write $v=mw$, with $w$ primitive, and we consider the $(1,k)$-triple $(S,w,H)$ (cf.\ Remark~\ref{rmk:gen}).

By Remark~\ref{rmk:strat M} we can identify the (smooth) moduli space $M_w(S,H)$ with the most singular locus of the (singular) moduli space $M_v(S,H)$. In particular we consider the closed embedding 
\[ i_{w,m}\colon M_w(S,H)\longrightarrow M_v(S,H). \]

The aim of this section is to relate the locally trivial monodromy group of $M_v(S,H)$ to the monodromy group of $M_w(S,H)$ by mean of the morphism $i_{w,m}^\sharp$ defined in Lemma~\ref{lemma:i sharp}. 

In Section~\ref{section:W in m k} we construct locally trivial monodromy operators and show that those operators form a distinguished group.

In Section~\ref{subsection:i sharp} we prove that $i_{w,m}^\sharp$ is injective.

In Section~\ref{section:main result} we show that the monodromy operators constructed before are all and only by using the morphism $i_{w,m}^\sharp$ as a constraint; as a corollary we get the fact that $i_{w,m}^\sharp$ is an isomorphism.

%%%%%%%%%%%%%%%%%%%%%%%%%%%%%%%%%%%%%%%%%%%%%%%%%%
%%%%%%%%%%%%%%%%%%%%%%%%%%%%%%%%%%%%%%%%%%%%%%%%%%
\subsection{The group $\mathsf{W}(v^\perp)$ as a subgroup of $\Mon^2_{\operatorname{lt}}(M_{v}(S,H))$}\label{section:W in m k}

Let us recall the following notation. If $L$ is an abstract lattice, the nondegenerate bilinear form embeds $L$ into its dual $L^*=\Hom(L,\ZZ)$: the group $L^*/L$ is denoted by $A_L$ and called \emph{discriminant group} of $L$. Any isometry $g\in\Or(L)$ extends to an isometry on $L^*$ and hence it induces an automorphism of $A_L$. We denote by $\sfW(L)\subset\Or^+(L)$ the subgroup of orientation preserving isometries acting as $\pm\id$ on $A_{L}$. 

When $L=\oH^2(X,\ZZ)$ for an irreducible symplectic variety $X$, then we simply write $\mathsf{W}(X)$ instead of $\mathsf{W}(\oH^2(X,\ZZ))$.

\begin{rmk}\label{rmk:W as ext}
Let $S$ be a projective K3 surface and $v\in\widetilde{\oH}(S,\ZZ)$ an element. The group $\sfW(v^\perp)$ consists of orientation preserving isometries of $v^{\perp}$ that extend to isometries of $\widetilde{\oH}(S,\mathbb{Z})$. More precisely, if $g\in\sfW(v^\perp)$, then there exists $\tilde{g}\in\Or^+(\widetilde{\oH}(S,\ZZ))$ such that $\tilde{g}(v)=\pm v$ and $\tilde{g}|_{v^\perp}=g$ (see \cite[Lemma~4.10]{Markman:Monodromy}).
\end{rmk}

The main result of this section is the following.
\begin{thm}\label{thm:W in m k}
    Let $(S,v,H)$ be an $(m,k)$-triple. Then 
    \[ \mathsf{W}(M_v(S,H))\subset\Mon^2_{\operatorname{lt}}(M_{v}(S,H)). \]
\end{thm}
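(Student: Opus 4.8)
The plan is to transport the whole question into the groupoid picture of Section~\ref{section:representation} and then realise a generating set of $\mathsf{W}(v^\perp)$ by monodromy operators. First I would observe that, since $\lambda_{(S,v,H)}$ is an isometry, the isomorphism $\lambda_{(S,v,H)}^\sharp$ of (\ref{eqn:lambda sharp}) restricts to an isomorphism $\mathsf{W}(v^\perp)\xrightarrow{\sim}\mathsf{W}(M_v(S,H))$; so the statement is equivalent to $\mathsf{W}(v^\perp)\subseteq(\lambda_{(S,v,H)}^\sharp)^{-1}\bigl(\Mon^2_{\operatorname{lt}}(M_v(S,H))\bigr)$. Writing $A=(S,v,H)$ and using the functor isomorphism $\lambda\colon\Phi^{m,k}\to\mathsf{pt}^{m,k}$ of Proposition~\ref{prop:iso of functors}, for every $\phi\in\Aut_{\mathcal{G}^{m,k}}(A)$ one has $\mathsf{pt}^{m,k}_A(\phi)=\lambda_{A}^\sharp\bigl(\Phi^{m,k}_A(\phi)\bigr)$, and the left-hand side lies in $\Mon^2_{\operatorname{lt}}(M_v(S,H))$ by Corollary~\ref{cor:Im of pt in Mon}. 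Since $\Phi^{m,k}_A(\Aut_{\mathcal{G}^{m,k}}(A))=\Psi\bigl(\Im\widetilde{\Phi}^{m,k}_A\bigr)$, it therefore suffices to prove the lattice-theoretic inclusion
\[ \mathsf{W}(v^\perp)\subseteq\Psi\bigl(\Im\widetilde{\Phi}^{m,k}_{(S,v,H)}\bigr)\subseteq\Or(v^\perp). \tag{$\star$}\]

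Next I would reduce $(\star)$ to a single convenient triple. By Remark~\ref{rmk:non empty} any two objects of $\mathcal{G}^{m,k}$ are joined by a morphism, whose image under $\mathsf{pt}^{m,k}$ is a locally trivial parallel transport operator $T$ by Proposition~\ref{prop:Im of pt in Mon}. Conjugation by such a $T$ carries $\Mon^2_{\operatorname{lt}}$ of one moduli space onto that of the other, and since $T$ is an isometry and the orientation character is a homomorphism to the abelian group $\ZZ/2\ZZ$, conjugation by $T$ also carries $\mathsf{W}$ onto $\mathsf{W}$; hence the inclusion for one $(m,k)$-triple implies it for all of them. I would thus fix $S$ a very general projective elliptic K3 surface with a section, $v=(m,0,-mk)$, and $H=e+tf$ a $v$-generic polarisation in the chamber whose closure contains $f$, exactly as in Proposition~\ref{prop:Mon S in m k}. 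Here $v^\perp=\oH^2(S,\ZZ)\oplus\ZZ\delta_0$ with $\delta_0=(1,0,k)$ and $\delta_0^2=-2k$, so $A_{v^\perp}\cong\ZZ/2k\ZZ$ is carried by the $\langle-2k\rangle$ summand.

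To establish $(\star)$ I would use the structural description of $\mathsf{W}(v^\perp)$: since $v^\perp\supseteq U^{\oplus3}$, the image of $\Or^+(v^\perp)$ in $\Or(A_{v^\perp})$ is the full discriminant orthogonal group, so $\mathsf{W}(v^\perp)$ is generated by the stable subgroup $\widetilde{\Or}^+(v^\perp)$ (orientation preserving, acting trivially on $A_{v^\perp}$) together with one orientation-preserving involution acting as $-\id$ on $A_{v^\perp}$. I would then realise these generators inside $\Psi(\Im\widetilde{\Phi}^{m,k}_{(S,v,H)})$ as follows. The $-\id$ element comes from the duality transform $\operatorname{FM}^\vee_\Delta$ of Lemma~\ref{lemma:Y-PR}(3): its Mukai action $\delta\colon(r,c,s)\mapsto(r,-c,s)$ is orientation reversing, so $\Psi$ contributes the sign twist of Definition~\ref{defn:Psi} and yields $-\delta|_{v^\perp}$, which is $+\id$ on $\oH^2(S,\ZZ)$ and $-\id$ on $\ZZ\delta_0$, hence $-\id$ on the discriminant and orientation preserving. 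The stable subgroup $\widetilde{\Or}^+(v^\perp)$ I would generate, in the spirit of Lemma~\ref{lemma:eichler} and \cite{GHS09}, by Eichler transvections: those supported inside $\oH^2(S,\ZZ)$ come from K3 monodromy, since $\Mon^2(S)=\Or^+(\oH^2(S,\ZZ))$ (Corollary~\ref{corollary:Borel}) lifts into $\Im\widetilde{\Phi}^{m,k}_{(S,v,H)}$ by Proposition~\ref{prop:Mon S in m k} and maps under $\Psi$ (these being orientation preserving and fixing $v$) to the identity on $\ZZ\delta_0$; the transvections coupling $\oH^2(S,\ZZ)$ to $\delta_0$ I would produce from the line-bundle tensorisations $\mathsf{L}$ of Lemma~\ref{lemma:2.20} (whose Mukai action is the transvection $e^{c_1(L)}$) together with the transforms of Lemma~\ref{lemma:Y-PR} and Lemma~\ref{prop:Poincare}, after moving across the groupoid to objects whose surfaces carry the required Picard classes. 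By Remark~\ref{rmk:W as ext} every element of $\mathsf{W}(v^\perp)$ is the restriction of an orientation-preserving isometry of $\widetilde{\oH}(S,\ZZ)$ fixing $v$ up to sign, which guides both the choice of lifts and the treatment of the sign $v\mapsto -v$ (absorbed by composing with $-\id_{\widetilde{\oH}(S,\ZZ)}$, itself orientation preserving of signature $(4,20)$).

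The hard part will be twofold. The first, genuinely lattice-theoretic, difficulty is the generation statement: verifying that the transvections available through $\mathsf{L}$, $\operatorname{FM}_\Delta$, $\operatorname{FM}^\vee_\Delta$ and $\operatorname{FM}_{\mathcal{P}}$, once restricted to $v^\perp$ and combined with $\Or^+(\oH^2(S,\ZZ))$, already exhaust $\widetilde{\Or}^+(v^\perp)$ — this requires an Eichler-criterion argument analogous to Lemma~\ref{lemma:eichler} but carried out for the mixed summand $\ZZ\delta_0$ rather than purely inside $\oH^2(S,\ZZ)$, and it is where the freedom to roam the whole groupoid $\mathcal{G}^{m,k}$ is essential. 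The second difficulty is the careful bookkeeping of the orientation signs $(-1)^{\operatorname{or}}$ introduced by $\Psi$ in Definition~\ref{defn:Psi}: one must check that each chosen generator lands in $\Or^+$ after the twist and acts as the intended $\pm\id$ on $A_{v^\perp}$, so that the assembled group is exactly $\mathsf{W}(v^\perp)$ and neither larger nor smaller. Once both points are settled, $(\star)$ follows and the theorem is proved.
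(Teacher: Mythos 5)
Your overall skeleton is the same as the paper's: reduce via the functor isomorphism $\lambda\colon\Phi^{m,k}\to\mathsf{pt}^{m,k}$ and Corollary~\ref{cor:Im of pt in Mon} to a statement about $\Psi\bigl(\Im\widetilde{\Phi}^{m,k}_{(S,v,H)}\bigr)$, and reduce to the single triple $v=(m,0,-mk)$ on a very general elliptic K3 surface via Remark~\ref{rmk:non empty}. But the proposal has a genuine gap exactly where you yourself locate "the hard part": the claim that the transvections coupling $\oH^2(S,\ZZ)$ to $\ZZ\delta_0$, produced from $\mathsf{L}$, $\operatorname{FM}_\Delta$, $\operatorname{FM}^\vee_\Delta$ and $\operatorname{FM}_{\mathcal{P}}$ "after moving across the groupoid", together with $\Or^+(\oH^2(S,\ZZ))$, exhaust the stable subgroup of $\Or^+(v^\perp)$. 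This is never argued; it is precisely the mathematical core of the theorem, and it is not a routine application of the Eichler criterion, because the isometries you can actually realise are constrained by which triples admit which Fourier--Mukai morphisms (e.g.\ $\operatorname{FM}^\vee_\Delta$ and $\mathsf{L}$-twists require hypotheses as in Lemma~\ref{lemma:Y-PR} and Lemma~\ref{lemma:2.20} that fail at your chosen triple, where $\Pic(S)$ has rank $2$ and the $\oH^2$-component of $v$ vanishes; even your $-\id$-generator $-\delta|_{v^\perp}$ cannot be produced at $(S,v,H)$ directly but only after conjugating by a deformation path to a triple with $\Pic(S')=\ZZ H'$ and twisting by $nH'$). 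As written, the proposal is a plan whose decisive step is deferred, not a proof.

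The paper closes this gap by working upstairs rather than downstairs: instead of generating a subgroup of $\Or(v^\perp)$, it proves (Proposition~\ref{prop:Im of Phi tilde = O}) that $\Im(\widetilde{\Phi}^{m,k}_{(S,v,H)})$ is the \emph{full} stabiliser $\Or(\widetilde{\oH}(S,\ZZ))_v$ in the Mukai lattice. There the generation problem does not have to be solved from scratch: Markman's result (quoted as Proposition~\ref{prop:boh}) says $\Or^+(\widetilde{\oH}(S,\ZZ))_v$ is generated by $\Or^+(\oH^2(S,\ZZ))$ together with a \emph{single} reflection $R_u$, $u=(1,\xi,k)$, $u^2=-2$. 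The first factor is realised by polarised K3 monodromy (Proposition~\ref{prop:Mon S in m k} plus Corollary~\ref{corollary:Borel}), the single reflection by $\operatorname{FM}_{\mathcal{P}}$ (Lemma~\ref{lemma:FM P in Im Phi tilde}), and one orientation-reversing element by $\operatorname{FM}^\vee_\Delta\circ\mathsf{L}\circ\eta$ after deforming to a Picard-rank-one triple; then $\Psi$ carries $\Or(\widetilde{\oH}(S,\ZZ))_v$ onto exactly $\mathsf{W}(v^\perp)$ by Markman's Lemma~4.10 (cf.\ Remark~\ref{rmk:W as ext}). If you want to salvage your downstairs strategy, you would in effect have to reprove an analogue of Markman's generation lemma for $v^\perp$, which is harder than invoking it; note also that for the inclusion $\mathsf{W}\subset\Mon^2_{\operatorname{lt}}$ only the lower bound on the realised group matters, so your second worry (that the group be "neither larger nor smaller") is not needed.
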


\begin{rmk}
    When $v$ is primitive, this statement is the main result in \cite{Markman:Monodromy}. In fact our proof parallels Markman's one and only differs from his in some technical details arising from the non-primitivity of the Mukai vector.
\end{rmk}

Our tool to produce locally trivial monodromy operators is via the representation $\mathsf{pt}^{m,k}\colon\mathcal{G}^{m,k}\to\mathcal{A}_k$ defined in Definition~\ref{defn:pt m k} (cf.\ Corollary~\ref{cor:Im of pt in Mon}); on the other end, because of the isomorphism $\lambda\colon\mathsf{\Phi}^{m,k}\to\mathsf{pt}^{m,k}$ (cf.\ Proposition~\ref{prop:iso of functors}), we will instead look at the representation $\Phi^{m,k}\colon\mathcal{G}^{m,k}\to\mathcal{A}_k$ defined in Definition~\ref{defn: Phi tilde m k}. 

Since $\Phi^{m,k}=\Psi\circ\widetilde{\Phi}^{m,k}$, our most technical result involve the representation $\widetilde{\Phi}^{m,k}\colon\mathcal{G}^{m,k}\to\widetilde{\mathcal{H}}^{m,k}$ defined in Definition~\ref{defn: Phi tilde m k}. 

If $(S,v,H)$ is an $(m,k)$-triple, then we have the induced homomorphism
\[ \widetilde{\Phi}^{m,k}_{(S,v,H)}\colon\Aut_{\mathcal{G}^{m,k}}(S,v,H)\longrightarrow\Aut_{\widetilde{\mathcal{H}}^{m,k}}(\widetilde{\oH}(S,\ZZ),v) \]
(cf.\ Section~\ref{section:groupoids} for the notation used).

\begin{prop}\label{prop:Im of Phi tilde = O}
    If $(S,v,H)$ is an $(m,k)$-triple, then 
    \[ \Im(\widetilde{\Phi}^{m,k}_{(S,v,H)})=\Or(\widetilde{\oH}(S,\ZZ))_v. \]
\end{prop}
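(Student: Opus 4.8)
The inclusion $\Im(\widetilde{\Phi}^{m,k}_{(S,v,H)})\subseteq\Or(\widetilde{\oH}(S,\ZZ))_v$ is immediate from the definitions: each elementary morphism of $\mathcal{G}^{m,k}$—a deformation path, a congruence $\chi_{H,H'}$, or one of the Fourier--Mukai transforms $\mathsf{L},\operatorname{FM}_\Delta,\operatorname{FM}^\vee_\Delta,\operatorname{FM}_{\mathcal{P}}$—is sent by $\widetilde{\Phi}^{m,k}$ to an isometry of the Mukai lattice fixing the Mukai vector, so the same holds for any automorphism of $(S,v,H)$. The whole content is the reverse inclusion, and the plan is to realise a generating set of the stabiliser $\Or(\widetilde{\oH}(S,\ZZ))_v$ by loops in $\mathcal{G}^{m,k}$ based at $(S,v,H)$.

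First I would reduce to one convenient triple. Since any two objects of $\mathcal{G}^{m,k}$ are joined by a morphism (Remark~\ref{rmk:non empty}) and $\widetilde{\Phi}^{m,k}$ is a functor, conjugation by $\widetilde{\Phi}^{m,k}(\phi)$ simultaneously identifies $\Im(\widetilde{\Phi}^{m,k}_{A_1})$ with $\Im(\widetilde{\Phi}^{m,k}_{A_2})$ and the stabiliser $\Or(\widetilde{\oH}(S_1,\ZZ))_{v_1}$ with $\Or(\widetilde{\oH}(S_2,\ZZ))_{v_2}$; hence the assertion is independent of the chosen triple. I would take the elliptic model of Section~\ref{section:mon S pol}: $S$ a very general projective elliptic K3 surface with a section, $v=(m,0,-mk)$ and $H=e+tf$ as in Proposition~\ref{prop:Mon S in m k}. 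Writing $\widetilde{\oH}(S,\ZZ)=U_0\oplus\oH^2(S,\ZZ)$ with $U_0=\oH^0\oplus\oH^4$ spanned by the isotropic classes $u=(1,0,0)$ and $u'=(0,0,1)$, one has $v=mu-mk\,u'\in U_0$ and $v^\perp=\oH^2(S,\ZZ)\oplus\langle u+ku'\rangle$.

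The realisable isometries then split into three families. Deformations give, via Proposition~\ref{prop:Mon S in m k} together with Corollary~\ref{corollary:Borel}, the whole group $\Or^+(\oH^2(S,\ZZ))$ acting on the $\oH^2$-summand and trivially on $U_0$, and these fix $v\in U_0$. The tensorisation equivalences $\mathsf{L}$ of Lemma~\ref{lemma:2.20} induce the exponential isometries $\exp(c_1(L))$, which fix $u'$, send $u\mapsto u+c_1(L)+\tfrac{1}{2}c_1(L)^2\,u'$ and $\xi\mapsto\xi+(\xi\cdot c_1(L))u'$; these are exactly the Eichler transvections attached to the isotropic vector $u'$. Finally $\operatorname{FM}_\Delta$ induces $(r,\xi,a)\mapsto(a,-\xi,r)$, which swaps $u\leftrightarrow u'$ and negates $\oH^2(S,\ZZ)$, while $\operatorname{FM}^\vee_\Delta\circ\operatorname{FM}_\Delta^{-1}$ induces the orientation-reversing duality $(r,\xi,a)\mapsto(r,-\xi,a)$, which also fixes $v$. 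Conjugating the transvections $\exp(c_1(L))$ by the already-realised $\Or^+(\oH^2(S,\ZZ))$ (which fixes $u'$) produces $\exp$ of an entire $\Or^+(\oH^2(S,\ZZ))$-orbit of classes in $\oH^2(S,\ZZ)$, not merely of the rank-two Picard lattice.

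It then remains to prove the purely lattice-theoretic statement that these isometries generate $\Or(\widetilde{\oH}(S,\ZZ))_v=\Or(\widetilde{\oH}(S,\ZZ))_w$. Here I would argue with the Eichler criterion, exactly in the spirit of Lemma~\ref{lemma:eichler}: the Mukai lattice contains four copies of $U$, so both its orthogonal group and the stabiliser of the primitive vector $w$ are generated by $\Or^+(\oH^2(S,\ZZ))$, the transvections attached to $u$ and $u'$, and an involution exchanging them, each realised above, while the duality supplies the orientation-reversing component. The main obstacle is carrying out this generation step under the non-primitivity constraint: unlike Markman's primitive case, every intermediate object must remain a genuine $(m,k)$-triple, so each elementary isometry must appear in a loop passing only through Mukai vectors of the form $mw'$ with $w'$ primitive and through $v$-generic polarisations, and the transforms of Lemmas~\ref{lemma:Y-PR} and~\ref{prop:Poincare} are available only under their positivity and genericity hypotheses. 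Keeping the loops inside the admissible locus—using congruences $\chi_{H,H'}$ and deformation paths to adjust the polarisation and the surface whenever a transform requires $\Pic(S)=\ZZ H$ or an elliptic structure—is the delicate bookkeeping where the argument genuinely diverges from the primitive setting.
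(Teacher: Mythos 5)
Your easy inclusion, the reduction to a single convenient triple via Remark~\ref{rmk:non empty}, and the realisation of $\Or^+(\oH^2(S,\ZZ))$ through Proposition~\ref{prop:Mon S in m k} and Corollary~\ref{corollary:Borel} all agree with the paper. The gap is in the second half: the isometries you list as the remaining generators are not elements of the stabiliser at all. The exponential $\exp(c_1(L))$ sends $v=(m,0,-mk)$ to $v\cdot\ch(L)=(m,mc_1(L),\ldots)\neq v$, and $(r,\xi,a)\mapsto(a,-\xi,r)$ sends $v$ to $(-mk,0,m)\neq v$; correspondingly $\mathsf{L}$ and $\operatorname{FM}_\Delta$ are morphisms of $\mathcal{G}^{m,k}$ with target a \emph{different} object, not loops at $(S,v,H)$, so their images under $\widetilde{\Phi}^{m,k}$ do not lie in $\Im(\widetilde{\Phi}^{m,k}_{(S,v,H)})$ by themselves. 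Worse, at the chosen triple neither diagonal transform is even available as a single morphism: Lemma~\ref{lemma:Y-PR} requires $v=(r,nH,a)$ with $n\gg0$, or $r=0$ and $a\gg0$, and $(m,0,-mk)$ satisfies neither, so your duality $\operatorname{FM}^\vee_\Delta\circ\operatorname{FM}_\Delta^{-1}$ also cannot be invoked at $(S,v,H)$ without first deforming and tensoring --- exactly the loops you defer as ``bookkeeping''. Finally, the lattice-theoretic statement you rely on is false as stated: since no nontrivial transvection $t(u,a)$ or $t(u',a)$, and no involution exchanging $u$ and $u'$, fixes $w=u-ku'$, the stabiliser $\Or(\widetilde{\oH}(S,\ZZ))_w$ is not ``generated by'' such elements together with $\Or^+(\oH^2(S,\ZZ))$; what you would actually need is a proof that every element of the stabiliser is a product of isometries realised by honest loops, and that is precisely the unproved core of the proposition.

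The paper closes this gap with two ingredients your sketch does not use. First, Markman's generation result (Proposition~\ref{prop:boh}): $\Or^+(\widetilde{\oH}(S,\ZZ))_v$ is generated by $\Or^+(\oH^2(S,\ZZ))$ together with a \emph{single reflection} $R_u$ in a $(-2)$-class $u=(1,\xi,k)\in v^\perp$ --- these reflections, unlike your transvections, do fix $v$. Second, the elliptic transform $\operatorname{FM}_{\mathcal{P}}$ of Lemma~\ref{prop:Poincare}, which is the one Fourier--Mukai equivalence genuinely applicable at $v=(m,0,-mk)$: it carries $u=(1,\beta-f,k)$ to a class $(0,a,0)$, so that $R_u=\operatorname{FM}^{\oH}_{\mathcal{P}}\circ R_{(0,a,0)}\circ(\operatorname{FM}^{\oH}_{\mathcal{P}})^{-1}$, and $R_{(0,a,0)}$ is realised by a deformation path to a triple with $\Pic(S')=\ZZ H'$ followed by polarised monodromy (Lemma~\ref{lemma:Mon S pol con H}). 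The orientation-reversing element is then produced by $\operatorname{FM}^\vee_\Delta\circ\mathsf{L}\circ\eta$, closed up into a loop by an orientation-preserving return morphism supplied by Remark~\ref{rmk:non empty}. Without Proposition~\ref{prop:boh} (or a proved substitute for your Eichler-type generation claim) and without $\operatorname{FM}_{\mathcal{P}}$, the outline cannot be completed as it stands.
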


We will now show how the theorem follows from the proposition and then we will spend the rest of the section proving the proposition.

\proof[Proof of Theorem~\ref{thm:W in m k} assuming Proposition~\ref{prop:Im of Phi tilde = O}]
By definition, if $(\Lambda,v,\epsilon)$ is an object of $\widetilde{\mathcal{H}}^{m,k}$, then we have an homomorphism of groups 
\[ \Psi_{(\Lambda,v,\epsilon)}\colon\Or(\Lambda)_v\longrightarrow\Or^+(v^\perp),\qquad g\mapsto(-1)^{\operatorname{or}(g)}g|_{v^\perp} \]
induced by the representation $\Psi\colon\widetilde{\mathcal{H}}^{m,k}\to\mathcal{A}_k$ defined in Definition~\ref{defn:Psi}.
Moreover, by \cite[Lemma~4.10]{Markman:Monodromy} it follows that 
\[ \Im(\Psi_{(\Lambda,v,\epsilon)})=\mathsf{W}(v^\perp). \]

On the other hand, if $(S,v,H)$ is an $(m,k)$-triple, by Proposition~\ref{prop:Im of Phi tilde = O} the homomorphism
\[ \widetilde{\Phi}^{m,k}\colon\Aut_{\mathcal{G}^{m,k}}(S,v,H)\longrightarrow\Aut_{\widetilde{\mathcal{H}}^{m,k}}(\widetilde{\oH}(S,\ZZ),v,\epsilon_S) \]
is surjective. Since by definition $\Phi^{m,k}=\Psi\circ\widetilde{\Phi}^{m,k}$, it follows that
\begin{equation}\label{eqn:Im of Phi = W}
\Im(\Phi^{m,k}_{(S,v,H)})=\mathsf{W}(v^\perp). 
\end{equation}

Now, let us consider the isometry
\[ \lambda_{(S,v,H)}\colon v^\perp\longrightarrow\oH^2(M_v(S,H),\ZZ) \]
in Theorem~\ref{thm:PR v perp} and the induced isomorphism
\[ \lambda_{(S,v,H)}^\sharp\colon \Or(v^\perp)\longrightarrow\Or(\oH^2(M_v(S,H),\ZZ)) \]
defined in equation (\ref{eqn:lambda sharp}). Then we have a chain of equalities:
\begin{align*}
\Im(\mathsf{pt}^{m,k}_{(S,v,H)}) & =\lambda_{(S,v,H)}^\sharp(\Im(\Phi^{m,k}_{(S,v,H)})) & \mbox{(by Proposition~\ref{prop:iso of functors})}\\
 & =\lambda_{(S,v,H)}^\sharp(\mathsf{W}(v^\perp)) & \mbox{(by equality~(\ref{eqn:Im of Phi = W}))} \\
 & =\mathsf{W}(M_v(S,H)). &  
 \end{align*}

Therefore the claim follows at once by Corollary~\ref{cor:Im of pt in Mon}.
\endproof

Let us now prove Proposition~\ref{prop:Im of Phi tilde = O}.

\proof[Proof of Proposition~\ref{prop:Im of Phi tilde = O}]
Notice that 
\[ \Im(\widetilde{\Phi}^{m,k}_{(S,v,H)})\subset\Aut_{\widetilde{\cH}^{m,k}}(\widetilde{\oH}(S,\ZZ),v)=\Or(\widetilde{\oH}(S,\ZZ))_v, \]
where the last equality follows by definition.

To prove the opposite inclusion,  we first notice that it is sufficient to prove it for a special choice of the $(m,k)$-triple $(S,v,H)$. 
Indeed, if $(S',v',H')$ is any other $(m,k)$-triple, then by Remark~\ref{rmk:non empty} there exists a non-zero element 
\[ \eta\in\Hom_{\mathcal{G}^{m,k}}((S,v,H),(S',v',H')) \] 
and 
\[ \Im(\widetilde{\Phi}^{m,k}_{(S',v',H')})=\widetilde{\Phi}^{m,k}(\eta)\circ \Im(\widetilde{\Phi}^{m,k}_{(S,v,H)} )\circ\widetilde{\Phi}^{m,k}(\eta)^{-1}. \]
Therefore if $\Im(\widetilde{\Phi}^{m,k}_{(S,v,H)})=\Or(\widetilde{\oH}(S,\ZZ))_v$, as $\widetilde{\Phi}^{m,k}(\eta)$  maps $v$ to $v'$, it follows that $\Im(\widetilde{\Phi}^{m,k}_{(S',v',H')})=\Or(\widetilde{\oH}(S',\ZZ))_{v'}$.

Because of this, we will prove the proposition in the case when $v=(m,0,-mk)$ and $S$ is a projective elliptic K3 surface with a section and having Picard rank $2$. For short, in the following we will refer to such a K3 surface as a \emph{very general projective elliptic K3 surface with a section}. If we denote by $p\colon S\to\PP^1$ the elliptic structure of $S$ and by $s\colon\PP^1\to S$ the section, then we put $f=p^*\cO_{\PP^1}(1)$ and $\ell=[s(\PP^1)]$.
In this case 
\[ \Pic(S)=\ZZ.\ell\oplus\ZZ.f=
\left(\begin{matrix}
-2 & 1 \\
1  & 0
\end{matrix}\right),\]  
and we will always denote by $e=\ell+f$ the other isotropic generator.

Finally, the polarisation $H$ will be chosen in the unique $v$-chamber (see Remark~\ref{rmk:wall and chamber}) whose closure contains the nef class $f$. 
\medskip

As a first step, let us recall the following lattice-theoretic result due to Markman, which exhibits a set of generators for the group $\Or^+(\widetilde{\oH}(S,\ZZ))_v$, when $S$ is now any K3 surface.

\begin{prop}[\protect{\cite[Lemma~8.1, Proposition~8.6]{Markman:Monodromy}}]\label{prop:boh}
Let $S$ be a K3 surface, $v=(m,0,-mk)\in\widetilde{\oH}(S,\mathbb{Z})$ and $\xi\in\oH^2(S,\ZZ)$ any primitive class such that $\xi^2=2k-2$. The group $\Or^+(\widetilde{\oH}(S,\ZZ))_v$ is generated by $\Or^+(\oH^2(S,\ZZ))$ and the reflection $R_u$ around the class $u=(1,\xi,k)\in v^\perp$. 
\end{prop}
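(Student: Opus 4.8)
The plan is to translate the statement into a lattice-theoretic assertion about the stable orthogonal group of $w^\perp$ and to settle it by Eichler's criterion together with an explicit analysis of the reflection $R_u$. Write $w=(1,0,-k)$, so that $v=mw$; since an isometry is determined by its $\QQ$-linear extension and $g(v)=v\iff g(w)=w$, one has $\Or^+(\widetilde{\oH}(S,\ZZ))_v=\Or^+(\widetilde{\oH}(S,\ZZ))_w$, and I may work with the primitive vector $w$. First I record that both proposed generators lie in this stabiliser. The subgroup $\Or^+(\oH^2(S,\ZZ))$ is by definition the pointwise stabiliser of the hyperbolic plane $U_0:=\oH^0(S,\ZZ)\oplus\oH^4(S,\ZZ)$, hence it fixes $w\in U_0$; and $u=(1,\xi,k)$ satisfies $u^2=\xi^2-2k=-2$ and $(u,w)=0$, so $R_u$ is an integral isometry fixing $w$, and being a reflection in a negative class it is orientation preserving. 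Let $G:=\langle\Or^+(\oH^2(S,\ZZ)),R_u\rangle\subseteq\Or^+(\widetilde{\oH}(S,\ZZ))_w$ be the group they generate.

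Since $\widetilde{\oH}(S,\ZZ)$ is unimodular and $w$ is primitive, restriction to $w^\perp$ identifies $\Or^+(\widetilde{\oH}(S,\ZZ))_w$ with the stable orthogonal group $\Ort^+(w^\perp)$ of orientation-preserving isometries acting trivially on the discriminant group $A_{w^\perp}$; here the orientation bookkeeping is clean because $w^2=2k>0$, so a positive four-space of the Mukai lattice is $\langle w\rangle$ together with a positive three-space of $w^\perp$. Concretely $w^\perp=\langle\delta\rangle\oplus N$, where $N:=\oH^2(S,\ZZ)$ is unimodular and $\delta:=(1,0,k)$ has $\delta^2=-2k$, so that $A_{w^\perp}\cong\ZZ/2k\ZZ$ is carried entirely by $\delta$. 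Under this identification $\Or^+(\oH^2(S,\ZZ))$ becomes $\{\id_{\langle\delta\rangle}\}\oplus\Or^+(N)$, and $R_u$ becomes the reflection in the $(-2)$-class $u=\delta+\xi\in w^\perp$ (note $\xi\in N$). Thus the statement is equivalent to $\langle\Or^+(N),R_u\rangle=\Ort^+(w^\perp)$.

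Because $N$ is unimodular, an isometry of $w^\perp$ fixing $\delta$ acts trivially on $A_{w^\perp}$, so $\mathrm{Stab}_{\Ort^+(w^\perp)}(\delta)=\Or^+(N)\subseteq G$. It therefore suffices to show that $G$ acts transitively on the orbit $\Ort^+(w^\perp)\cdot\delta$: for any $g\in\Ort^+(w^\perp)$ one then finds $g_0\in G$ with $g_0(\delta)=g(\delta)$, whence $g_0^{-1}g\in\mathrm{Stab}(\delta)\subseteq G$ and $g\in G$. By Eichler's criterion (applicable since $w^\perp\supseteq U^{\oplus2}$) the $\Ort(w^\perp)$-orbit of $\delta$ is the set $\mathcal{O}$ of primitive $x\in w^\perp$ with $x^2=-2k$ and with the same Eichler invariant as $\delta$ in $A_{w^\perp}$ (same divisor and same image of $x/\mathrm{div}(x)$); moreover $\mathcal{O}$ is a single $\Ort^+$-orbit, since $\mathrm{Stab}_{\Ort(w^\perp)}(\delta)=\Or(N)$ contains orientation-reversing isometries. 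So I am reduced to proving that $G$ acts transitively on $\mathcal{O}$, and for this it is enough, by the transvection form of Eichler's criterion, to show that $G$ contains all Eichler transvections of $w^\perp$. Fixing a hyperbolic plane $U=\langle e,f\rangle\subset N$, the transvections $t(e,m),t(f,m)$ with $m\in N$ already lie in $\Or^+(N)\subseteq G$, and by the additivity relation $t(z,m)t(z,m')=t(z,m+m')$ only the two ``mixing'' transvections $t(e,\delta)$ and $t(f,\delta)$ remain to be produced inside $G$.

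The production of these mixing transvections is the step I expect to be the main obstacle. The tools available are $\Or^+(N)$ and the conjugates $hR_uh^{-1}=R_{\delta+h(\xi)}$ for $h\in\Or^+(N)$, which (by transitivity of $\Or^+(N)$ on primitive norm-$(2k-2)$ vectors of the unimodular lattice $N$) supply the reflections $R_{\delta+\eta}$ for every primitive $\eta\in N$ with $\eta^2=2k-2$. The key computation is that if $\eta_1,\eta_2$ both have norm $2k-2$ and $(\eta_1,\eta_2)=2k-2$, so that $z:=\eta_1-\eta_2\in N$ is isotropic, then the two $(-2)$-vectors $\delta+\eta_1,\delta+\eta_2$ span a degenerate plane with radical $\langle z\rangle$, and the product $R_{\delta+\eta_1}R_{\delta+\eta_2}\in G$ is unipotent and mixes the $\delta$-direction with $N$. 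The plan is to choose the $\eta_i$ so that this product is precisely a transvection of the form $t(z,\delta)$ (up to an element of $\Or^+(N)$) and then, using $\Or^+(N)$-conjugation and the transvection relations, to assemble $t(e,\delta)$ and $t(f,\delta)$. The delicate points requiring genuine care are the Diophantine realisability of pairs $\eta_1,\eta_2$ of norm $2k-2$ with the prescribed inner products inside the specific lattice $U^{\oplus3}\oplus E_8(-1)^{\oplus2}$, and the control of the divisor-$2k$ subtleties in $A_{w^\perp}$ (for instance that classes such as $\delta+2k n_0$ with $n_0$ isotropic are correctly handled); verifying these relations explicitly is where the real work lies.
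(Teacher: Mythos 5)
Your route is genuinely different from the paper's, which does not reprove the generation statement at all: the paper quotes Markman's Proposition~8.6, asserting that $\Or^+(\widetilde{\oH}(S,\ZZ))_v$ is generated by $\Or^+(\oH^2(S,\ZZ))$ together with \emph{all} reflections $R_{u'}$, $u'=(1,\xi',k)$, $(\xi')^2=2k-2$, and then only proves the reduction to a single reflection: by Nikulin/Eichler transitivity of $\Or^+(\oH^2(S,\ZZ))$ on primitive classes of norm $2k-2$ in the unimodular K3 lattice, any $R_{u'}$ is conjugate to $R_u$ by an element of $\Or^+(\oH^2(S,\ZZ))$ extended by the identity. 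You instead set out to re-derive Markman's statement: identifying $\Or^+(\widetilde{\oH}(S,\ZZ))_v=\Or^+(\widetilde{\oH}(S,\ZZ))_w$ with the stable orthogonal group of $w^\perp=\langle\delta\rangle\oplus N$ (with $\delta=(1,0,k)$, $N=\oH^2(S,\ZZ)$), and running an orbit--stabiliser argument plus the Eichler criterion. Your skeleton is correct throughout: the stabiliser of $\delta$ is exactly $\{\id\}\oplus\Or^+(N)$, transitivity of $G$ on the orbit of $\delta$ suffices, and by additivity of transvections only $t(e,\delta)$ and $t(f,\delta)$ are missing. What your route buys is self-containedness; what it costs is redoing the hardest part of Markman's work. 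Note also that the transitivity fact you invoke to obtain the conjugates $R_{\delta+h(\xi)}$ is precisely the ingredient the paper uses for its (much shorter) conjugacy step.

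The step you flag as the main obstacle is the one place where your proposal is a plan rather than a proof, and it is exactly the content that the paper obtains by citation; so, as written, the argument is incomplete. However, it does close, and more easily than you fear. If $\eta_1,\eta_2\in N$ have norm $2k-2$ with $(\eta_1,\eta_2)=2k-2$, set $u_i=\delta+\eta_i$ and $z=\eta_1-\eta_2$; then $u_i^2=-2$, $(u_1,u_2)=-2$, $z^2=0$, $(z,u_2)=(z,\delta)=(z,\eta_2)=0$, and a direct computation with $R_{u}(x)=x+(x,u)u$ gives
\[ R_{u_1}\circ R_{u_2}\;=\;t(z,u_2)\;=\;t(z,\delta)\circ t(z,\eta_2), \]
the last equality by the additivity relation. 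Since $t(z,\eta_2)$ fixes $\delta$ and lies in $\Or^+(N)$, this puts $t(z,\delta)$ in $G$. Realisability is immediate: choose a hyperbolic plane $\langle e',f'\rangle\subset N$ orthogonal to $U=\langle e,f\rangle$ and take $\eta_2=e'+(k-1)f'$, $\eta_1=\eta_2+e$; both are primitive of norm $2k-2$ with $(\eta_1,\eta_2)=2k-2$ and $z=e$, and both reflections $R_{\delta+\eta_i}$ lie in $G$ by the transitivity of $\Or^+(N)$ on primitive classes of norm $2k-2$. Replacing $e$ by $f$ gives $t(f,\delta)$, and the divisor-$2k$ bookkeeping you worry about (e.g.\ classes $\delta+2kn_0$) is then absorbed by the Eichler criterion itself, since you only use it through the transvection subgroup. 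So your approach is viable, but to count as a proof it needs this computation carried out; the paper's proof avoids it entirely by resting on Markman's Proposition~8.6.
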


Here, as usual, we consider   the group $\Or^+(\oH^2(S,\ZZ))$  as a subgroup of $\Or^+(\widetilde{\oH}(S,\ZZ))$ by extending by the identity on $\oH^0(S,\ZZ))\oplus \oH^4(S,\ZZ)$, and since $v\in \oH^0(S,\ZZ))\oplus \oH^4(S,\ZZ)$,  we actually view the group  $\Or^+(\oH^2(S,\ZZ))$ as  a subgroup of $
\Or^+(\widetilde{\oH}(S,\ZZ))_v$.

\begin{proof}
By \cite[Proposition~8.6]{Markman:Monodromy}, $\Or^+(\widetilde{\oH}(S,\ZZ))_v$ is generated by the group $\Or^+(\oH^2(S,\ZZ))$ and all the reflections $R_u$ as in the statement. The reason why only one is in fact needed is a consequence of \cite[Theorem~1.14.4, Theorem~1.17.1]{Nikulin} as it is explained in the proof of \cite[Lemma~8.1]{Markman:Monodromy}: if $u'=(1,\xi',k)$ is another $(-2)$-class, then there exists an isometry $g\in\Or^+(\oH^2(S,\ZZ))$ such that $g(\xi')=\xi$ and by extending $g$ to $\Or^+(\widetilde{\oH}(S,\ZZ))$ by the identity, it follows that $g\circ R_{u'}\circ g^{-1}=R_u$. In other words, any two such reflections are conjugated by elements of $\Or^+(\oH^2(S,\ZZ))$.
\end{proof}

Therefore, in order to prove Proposition~\ref{prop:Im of Phi tilde = O}, we need to exhibit examples $(m,k)$-triples  such that the generators in the Proposition \ref{prop:boh} arise as morphisms in $\Im(\widetilde{\Phi}^{m,k}_{(S,v,H))})$. 

By Proposition~\ref{prop:Mon S in m k}, there exists $t\gg0$ and a polarisation $v$-generic polarisation $H=e+tf$ on the very generic elliptic surface $S$ such that 
\[ \Mon^2(S)\subset\Im(\widetilde{\Phi}^{m,k}_{(S,v,H))}). \]

Since by Corollary~\ref{corollary:Borel} we have that $\Mon^2(S)=\Or^+(\oH^2(S,\ZZ))$, it follows that 
\begin{equation}\label{eqn:O + in Phi tilde}
\Or^+(\oH^2(S,\ZZ))\subset \Im(\widetilde{\Phi}^{m,k}_{(S,v,H))}). 
\end{equation}

Now, let $\beta\in\oH^2(S,\ZZ)$ be a class such that $\beta^2=2k-2$ and such that $\beta$ is orthogonal to both $f$ and $\ell$. Let us consider the class $u=(1,\beta-f,k)\in v^\perp$. Notice that $u^2=-2$.

\begin{lemma}[\protect{\cite[Proposition~7.1]{Markman:Monodromy}}]\label{lemma:FM P in Im Phi tilde}
If $H=\ell+tf$ with $t\gg0$, then 
\[ R_u\in\Im(\widetilde{\Phi}^{m,k}_{(S,v,H)}). \]
\end{lemma}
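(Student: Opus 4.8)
The plan is to produce an automorphism $\eta\in\Aut_{\mathcal{G}^{m,k}}(S,v,H)$ with $\widetilde{\Phi}^{m,k}(\eta)=R_u$, following the scheme of \cite[Proposition~7.1]{Markman:Monodromy} and isolating the points where the non-primitivity of $v$ matters. The first observation is that $\Im(\widetilde{\Phi}^{m,k}_{(S,v,H)})$ is a subgroup of $\Or(\widetilde{\oH}(S,\ZZ))_v$ which, by the inclusion (\ref{eqn:O + in Phi tilde}), already contains $\Or^+(\oH^2(S,\ZZ))$; hence it suffices to realise a single reflection $R_{u_0}$ in a conveniently placed spherical class and then conjugate. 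I would take $u_0=(1,e+(k-1)f,k)$, for which $u_0^2=-2$ and $(u_0,v)=0$. Since $e+(k-1)f$ and $\beta-f$ are primitive of the same square $2k-2$ in the even unimodular lattice $\oH^2(S,\ZZ)$, the transitivity of $\Or^+(\oH^2(S,\ZZ))$ on such classes (\cite[Theorem~1.14.4, Theorem~1.17.1]{Nikulin}, exactly as in the proof of Proposition~\ref{prop:boh}) yields $g\in\Or^+(\oH^2(S,\ZZ))$ with $g(e+(k-1)f)=\beta-f$; extending $g$ by the identity on $\oH^0(S,\ZZ)\oplus\oH^4(S,\ZZ)$ gives $g(u_0)=u$, whence $R_u=g\circ R_{u_0}\circ g^{-1}$. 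As $g$ lies in the image, the problem reduces to showing $R_{u_0}\in\Im(\widetilde{\Phi}^{m,k}_{(S,v,H)})$.

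Next I would record the cohomological actions of the elementary morphisms at our disposal. A tensorisation $\mathsf{L}$ acts as $\exp(c_1(L))$; the dual diagonal transform satisfies $(\operatorname{FM}^\vee_\Delta)^{\oH}\colon(r,\xi,a)\mapsto(a,\xi,r)$, which is the reflection in the $(+2)$-class $(1,0,-1)$; and the Poincaré transform $\operatorname{FM}_{\mathcal{P}}^{\oH}$ is pinned down by \cite[Theorem~3.15]{Yoshioka:ModuliAbelian} together with the normalisation $\operatorname{FM}_{\mathcal{P}}^{\oH}(1,0,-k)=(0,\ell+(k+1)f,1)$ coming from Lemma~\ref{prop:Poincare}. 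For $S$ sufficiently general the only Hodge isometries of the transcendental lattice $T(S)=\Pic(S)^\perp\cap\oH^2(S,\ZZ)$ are $\pm\id$, so each of these isometries acts as $\pm\id$ on $T(S)$, as does $R_{u_0}$ (indeed it fixes $T(S)$ pointwise); the entire computation therefore takes place in the rank-four lattice $(\oH^0\oplus\oH^4)\oplus\Pic(S)\cong U\oplus U$.

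Then I would assemble these morphisms into a loop based at $(S,v,H)$: starting from $v=(m,0,-mk)$, apply $\operatorname{FM}_{\mathcal{P}}$ and then a sequence of tensorisations (Lemma~\ref{lemma:2.20}) and diagonal transforms (Lemma~\ref{lemma:Y-PR}) bringing the Mukai vector back to $v$, arranged so that the net action on $T(S)$ is $+\id$ and the action on $(\oH^0\oplus\oH^4)\oplus\Pic(S)$ equals $R_{u_0}$. That some genuine cancellation is needed is already clear because $\operatorname{FM}_{\mathcal{P}}^{\oH}$ is not itself a reflection: the primitive vector spanning $\operatorname{FM}_{\mathcal{P}}^{\oH}(1,0,-k)-(1,0,-k)$ has square $4k+2$, which is not $\pm2$, so no single reflection realises $\operatorname{FM}_{\mathcal{P}}^{\oH}$. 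Matching the composite with $R_{u_0}$ is then the finite linear-algebra verification in $U\oplus U$ carried out in \cite[Section~7]{Markman:Monodromy}, and by construction the resulting loop $\eta$ satisfies $\widetilde{\Phi}^{m,k}(\eta)=R_{u_0}$.

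The main obstacle is that every arrow of this loop must be an honest elementary morphism of $\mathcal{G}^{m,k}$: each intermediate Mukai vector has to be $m$ times a primitive class of square $2k$, and each intermediate polarisation has to meet the genericity hypotheses of Lemmas~\ref{lemma:2.20},~\ref{lemma:Y-PR} and~\ref{prop:Poincare}. This bookkeeping, rather than any new cohomological input, is exactly what separates the non-primitive case from Markman's: applying Lemma~\ref{lemma:Y-PR}(1) may force passing through K3 surfaces with $\Pic=\ZZ H$, which requires the insertion of deformation paths from $\mathcal{G}^{m,k}_{\operatorname{def}}$ and is the reason one works with $H=\ell+tf$, $t\gg0$, on a very general elliptic surface. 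Once the intermediate triples are verified to be valid $(m,k)$-triples with generic polarisations, the loop $\eta$ together with the conjugation of the first paragraph gives $R_u\in\Im(\widetilde{\Phi}^{m,k}_{(S,v,H)})$, as required.
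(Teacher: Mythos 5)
There is a genuine gap at the heart of your argument. Your first reduction is fine: since $\Or^+(\oH^2(S,\ZZ))\subset\Im(\widetilde{\Phi}^{m,k}_{(S,v,H)})$ by (\ref{eqn:O + in Phi tilde}), and Eichler-type transitivity supplies $g\in\Or^+(\oH^2(S,\ZZ))$ with $g(u_0)=u$, it suffices to realise a single reflection. But the step that is supposed to realise $R_{u_0}$ --- ``apply $\operatorname{FM}_{\mathcal{P}}$ and then a sequence of tensorisations and diagonal transforms arranged so that the net action equals $R_{u_0}$'' --- is never carried out: you exhibit no such sequence, compute no composite cohomological action, and verify none of the intermediate genericity hypotheses. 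The appeal to ``the finite linear-algebra verification in $U\oplus U$ carried out in \cite[Section~7]{Markman:Monodromy}'' does not fill this hole, because that is not what Markman's Section~7 contains: neither Markman nor the present paper realises the reflection as a composite of Fourier--Mukai transforms, and it is far from clear that such a composite exists given that the only actions at your disposal are $\exp(c_1(L))$, the duality reflection in $(1,0,-1)$, and $\operatorname{FM}_{\mathcal{P}}^{\oH}$. Separately, your claim that $\operatorname{FM}_{\mathcal{P}}^{\oH}$ is ``pinned down'' by Yoshioka's theorem together with the image of the single vector $(1,0,-k)$ is unfounded --- an isometry is not determined by its value on one vector --- so even the input data for your computation is not in place; one needs the full computation of \cite[Lemma~7.2]{Markman:Monodromy}.

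The missing idea is that the reflection is ultimately produced by \emph{monodromy of the K3 surface} through the deformation groupoid, not by Fourier--Mukai transforms. The paper's proof (following Markman) uses $\operatorname{FM}_{\mathcal{P}}$ exactly once, as a conjugation: by \cite[Lemma~7.2]{Markman:Monodromy} one has $\operatorname{FM}_{\mathcal{P}}^{\oH}(u)=(0,a,0)$ with $a=\ell-(k-1)f-\beta$ of square $-2$, so $R_u$ is conjugate, via the morphism $\operatorname{FM}_{\mathcal{P}}$ of $\mathcal{G}^{m,k}_{\operatorname{FM}}$ from $(S,v,H)$ to $(S,\bar v,H)$ with $\bar v=(0,m(\ell+(k+1)f),m)$, to the reflection $R_{(0,a,0)}$, whose reflection class now lies purely in $\oH^2(S,\ZZ)$. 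That reflection is then shown to lie in $\Im(\widetilde{\Phi}^{m,k}_{\operatorname{def},(S,\bar v,H)})$ by taking a deformation path of $(m,k)$-triples from $(S,\bar v,H)$ to a triple $(S',v',H')$ with $\Pic(S')=\ZZ H'$, along which $(0,a,0)$ transports to a class orthogonal to $H'$ (this uses $(a,\ell+(k+1)f)=0$), and then invoking Lemma~\ref{lemma:Mon S pol con H}, i.e.\ the lift of the \emph{polarised} K3 monodromy $\Or^+(\oH^2(S',\ZZ))_{H'}$. Note also that once deformation paths to other surfaces are inserted (which you yourself acknowledge is necessary), parallel transport does not respect the splitting into $(\oH^0\oplus\oH^4)\oplus\Pic(S)$ and $T(S)$, so your framing of the entire verification as taking place in a fixed rank-four lattice $U\oplus U$ is inconsistent with your own construction. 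Without the conjugation-plus-K3-monodromy mechanism, or a genuine substitute for it, your proposal does not prove the lemma.
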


\begin{proof} 
Since $H=\ell+tf$ with $t\gg0$, by Proposition~\ref{prop:Poincare} the derived equivalence $\operatorname{FM}_{\mathcal{P}}$ induces an isomorphism
\[ \operatorname{FM}_{\mathcal{P}}\colon M_{(m,0,-mk)}(S,H)\longrightarrow M_{(0,m(\ell+(k+1)f),m)}(S,H). \]
For simplicity, let us put 
\[ v=(m,0,-mk)\quad\mbox{ and }\quad \bar{v}=(0,m(\ell+(k+1)f),m). \]
Then 
\[ \operatorname{FM}_{\mathcal{P}}\in\Hom_{\mathcal{G}^{m,k}_{\operatorname{FM}}}((S,v,H),(S,\bar{v},H)). \]
The cohomological action $\operatorname{FM}_{\mathcal{P}}^{\oH}$ has been studied in \cite[Lemma~7.2]{Markman:Monodromy} and 
    \[ \operatorname{FM}_{\mathcal{P}}^{\oH}(1,\beta-f,k)=(0,\ell-(k-1)f-\beta,0). \]
    Put $a=\ell-(k-1)f-\beta\in\oH^2(S,\ZZ)$. Then $a^2=-2$ and $R_{a}\in\Mon^2(S)=\Or^+(\oH^2(S,\ZZ))$.
    Notice also that $(0,a,0)\in\bar{v}^\perp$.

    Since 
    \[ R_u=\operatorname{FM}^{\operatorname{H}}_{\mathcal{P}}\circ R_{(0,a,0)}\circ(\operatorname{FM}^{\operatorname{H}}_{\mathcal{P}})^{-1}\]
    the proof will be concluded as soon as we prove the following claim.
    
    \begin{claim}    $R_{(0,a,0)}\in\Im(\widetilde{\Phi}^{m,k}_{\operatorname{def},(S,\bar{v},H)})$.
    \end{claim}
    
    First of all, let us consider a deformation path \[ \alpha=(f\colon\mathcal{S}\to T,\mathcal{L},\mathcal{H},t_1,t_2,\gamma) \]
    where 
    \[ (\mathcal{S}_{t_1},\mathcal{L}_{t_1},\mathcal{H}_{t_1})=(S,m(\ell+(k+1)f),H)\quad\mbox{and}\quad(\mathcal{S}_{t_2},\mathcal{L}_{t_2},\mathcal{H}_{t_2})=(S',L',H')
    \]
    with $\Pic(S')=\ZZ H'$ and $L'=m H'$. Put $v'=v_{t_2}$.
    
    Up to conjugating with $\widetilde{\Phi}(\bar{\alpha})$, it is enough to prove that 
    $R_{b}$ is an element of $\Im(\widetilde{\Phi}^{m,k}_{\operatorname{def},(S',v',H')})$, where $b=\widetilde{\Phi}(\bar{\alpha})(0,a,0)$. On the other hand the last claim follows from Lemma~\ref{lemma:Mon S pol con H}: in fact the triple $(S',L',H')$ satisfies its hypothesis and $R_b\in\Or^+(\oH^2(S,\ZZ))_{H'}$ by construction.
\end{proof}

    Combining equality~(\ref{eqn:O + in Phi tilde}) and Lemma~\ref{lemma:FM P in Im Phi tilde}, by Proposition~\ref{prop:boh} we have that 
    \[ \Or^+(\widetilde{\oH}(S,\ZZ))_v\subset\Im(\widetilde{\Phi}^{m,k}_{(S,v,H)}), \]
    where $H$ is a $v$-generic polarisation inside the $v$-chamber whose closure contains the class $f$.
    
    To conclude the proof of Proposition~\ref{prop:Im of Phi tilde = O} it is then enough to prove that $\Im(\widetilde{\Phi}^{m,k}_{(S,v,H)})$ contains an orientation reversing isometry.
    
     First of all, let $(S',v',H')$ be an $(m,k)$-triple such that $\Pic(S')=\ZZ.H'$ and let us pick a non-trivial element
     \[ \eta\in\Hom_{\mathcal{G}_{\operatorname{def}}^{m,k}}((S,v,H),(S',v',H')). \]
     
     By Lemma~\ref{lemma:2.20} we know that the derived autoequivalence $\mathsf{L}$ is a morphism in $\Hom_{\mathcal{G}^{m,k}_{\operatorname{FM}}}((S',v',H'),(S',v'_L,H'))$. If we take $L=nH'$ for $n\gg0$, then by Lemma~\ref{lemma:Y-PR} we also know that 
     \[ \operatorname{FM}^\vee_{\Delta}\in\Hom_{\mathcal{G}^{m,k}_{\operatorname{FM}}}((S',v'_L,H'),(S',\hat{v}'_L,H')), \] 
     so that 
    \[ \phi=\operatorname{FM}^\vee_{\Delta}\circ\mathsf{L}\circ\eta\in\Hom_{\mathcal{G}^{m,k}}((S,v,H),(S',\hat{v}'_L,H')). \]
    Notice that $\phi^{\oH}$ is orientation reversing (cf.\ proof of Proposition~\ref{prop:iso of functors}). 

    By Remark~\ref{rmk:non empty} there is a morphism $\psi\in\Hom_{\mathcal{G}^{m,k}}((S',\hat{v}'_L,H'),(S,v,H))$ such that:
    \begin{itemize}
        \item $\psi$ is obtained by concatenating morphisms in $\mathcal{G}^{m,k}_{\operatorname{def}}$ with morphisms in $\mathcal{G}^{m,k}_{\operatorname{FM}}$ of the form $\mathsf{L}$ and $\operatorname{FM}_{\Delta}$;
        \item $\psi^{\oH}$ is orientation preserving (cf.\ proof of Proposition~\ref{prop:iso of functors} again).
    \end{itemize}
    Then 
    \[ (\psi\circ\phi)^{\oH}\in\Im(\widetilde{\Phi}^{m,k}_{(S,v,H)}) \]
    is orientation reversing and we are done.
\endproof

%%%%%%%%%%%%%%%%%%%%%%%%%%%%%%%%%%%%%%%%%%%%%%%
%%%%%%%%%%%%%%%%%%%%%%%%%%%%%%%%%%%%%%%%%%%%%%%
\subsection{An injective morphism from $\Mon^2_{\operatorname{lt}}(M_{v}(S,H))$ to $\Mon^2(M_{w}(S,H))$}\label{subsection:i sharp}

Let $(S,v,H)$ be an $(m,k)$-triple. Write $v=mw$, with $w$ primitive, and consider the $(1,k)$-triple $(S,w,H)$.

The closed embedding 
\[ i_{w,m}\colon M_w(S,H)\longrightarrow M_v(S,H) \] is defined in Remark~\ref{rmk:strat M} and by Corollary~\ref{cor:w and mw} the pullback morphism
\[ i_{w,m}^*\colon \oH^2(M_v(S,H),\ZZ)\longrightarrow\oH^2(M_w(S,H),\ZZ) \]
is a similitude of lattices. Moreover, by Lemma~\ref{lemma:i sharp}, $i^*_{w,m}$ induces an isomorphism of orthogonal groups

\begin{equation}\label{eqn:i sharp}
i_{w,m}^{\sharp}\colon\Or(\oH^2(M_v(S,H),\ZZ))\longrightarrow\Or(\oH^2(M_w(S,H),\ZZ)).
\end{equation}

More generally, if $(S_1,v_1,H_1)$ and $(S_2,v_2,H_2)$ are two $(m,k)$-triples, and if we write $v_1=mw_1$ and $v_2=mw_2$, then we consider the bijection (see Lemma~\ref{lemma:i sharp gen})

\begin{equation}\label{eqn:i sharpgen}
    i^{\sharp}_{w_1,w_2,m}\colon \Or(\oH^{2}(M_{v_1},\mathbb{Z}),\oH^{2}(M_{v_2},\mathbb{Z}))\longrightarrow \Or(\oH^{2}(M_{w_1},\mathbb{Z}),\oH^{2}(M_{w_2},\mathbb{Z})).
\end{equation}

The next proposition shows that $i_{w_1,w_2,m}^{\sharp}$ sends locally trivial parallel transport operators to parallel transport operators. In particular it follows that the restriction of $i_{w_1,w_2,m}^{\sharp}$ to the subset of locally trivial parallel transport operators is an injection.

\begin{prop}\label{prop:first inclusiongen}
Let $(S_1,v_1,H_1)$ and $(S_2,v_2,H_2)$ be two $(m,k)$-triples with $m>1$, and write $v_1=mw_1$ and $v_2=mw_2$. Then the bijection (\ref{eqn:i sharpgen}) maps $\mathsf{PT}_{\operatorname{lt}}^2(M_{v_1}(S_1,H_1),M_{v_2}(S_2,H_2))$ to $\mathsf{PT}^2(M_{w_1}(S_1,H_1),M_{w_2}(S_2,H_2))$, i.e.\ it  restricts to an injective 
function
\[ 
i_{w_1,w_2,m}^\sharp\colon\mathsf{PT}_{\operatorname{lt}}^2(M_{v_1}(S_1,H_1),M_{v_2}(S_2,H_2))\rightarrow\mathsf{PT}^2(M_{w_1}(S_1,H_1),M_{w_2}(S_2,H_2)). 
\]
\end{prop}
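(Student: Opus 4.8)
The plan is to realise the algebraically defined bijection $i^\sharp_{w_1,w_2,m}$ geometrically, by restricting a locally trivial family to its smallest stratum. I would start with an operator $g\in\mathsf{PT}^2_{\operatorname{lt}}(M_{v_1}(S_1,H_1),M_{v_2}(S_2,H_2))$. By Definition~\ref{defn:pto} there is a locally trivial family $p\colon\mathcal{X}\to T$ of primitive symplectic varieties, points $t_1,t_2\in T$ with $\mathcal{X}_{t_i}\cong M_{v_i}(S_i,H_i)$, and a path $\gamma$ from $t_1$ to $t_2$ such that $g$ is the parallel transport along $\gamma$ in the local system $R^2p_*\ZZ$. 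By Remark~\ref{rmk:strat} this family carries a relative stratification $\mathcal{X}=\mathcal{X}_0\supset\cdots\supset\mathcal{X}_m$ whose smallest stratum $p_m\colon\mathcal{X}_m\to T$ is a locally trivial family; since $T$ is connected its fibres are all deformation equivalent, and by Remark~\ref{rmk:strat M} the fibre over $t_i$ is the most singular locus of $M_{v_i}(S_i,H_i)$, namely $i_{w_i,m}(M_{w_i}(S_i,H_i))\cong M_{w_i}(S_i,H_i)$. As $m>1$ this stratum is a proper subvariety, and its fibre over $t_1$ is irreducible and of strictly positive dimension $2k+2$; hence all fibres are, so by Remark~\ref{rmk:strat} the family $p_m$ is a smooth family of irreducible holomorphic symplectic manifolds.

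Next I would let $\tilde g\in\mathsf{PT}^2(M_{w_1}(S_1,H_1),M_{w_2}(S_2,H_2))$ be the parallel transport along the same path $\gamma$ in the local system $R^2p_{m,*}\ZZ$; this is by construction an honest (smooth) parallel transport operator, precisely because $p_m$ is a smooth family of irreducible holomorphic symplectic manifolds. The heart of the argument is to show that $\tilde g=i^\sharp_{w_1,w_2,m}(g)$. For this I would consider the relative closed embedding $j\colon\mathcal{X}_m\hookrightarrow\mathcal{X}$ over $T$: restriction of cohomology classes gives a morphism of local systems $j^*\colon R^2p_*\ZZ\to R^2p_{m,*}\ZZ$ whose fibre over $t_i$ is precisely the similitude $i^*_{w_i,m}$ of Corollary~\ref{cor:w and mw}. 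Because $j^*$ is a morphism of local systems, hence flat, parallel transport intertwines it: along $\gamma$ one obtains $i^*_{w_2,m}\circ g=\tilde g\circ i^*_{w_1,m}$, that is $\tilde g=i^*_{w_2,m}\circ g\circ(i^*_{w_1,m})^{-1}$, which is exactly the definition of $i^\sharp_{w_1,w_2,m}(g)$. Therefore $i^\sharp_{w_1,w_2,m}(g)=\tilde g$ lies in $\mathsf{PT}^2(M_{w_1}(S_1,H_1),M_{w_2}(S_2,H_2))$, which is the asserted containment of images. The injectivity of the restricted function is then immediate, since $i^\sharp_{w_1,w_2,m}$ is already a bijection on the full isometry sets by Lemma~\ref{lemma:i sharp gen}.

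I expect the main obstacle to be the identification $\tilde g=i^\sharp_{w_1,w_2,m}(g)$, and more precisely the verification that the fibrewise restriction maps along $j$ assemble into a genuine morphism of local systems whose value at the endpoints is the similitude $i^*_{w_i,m}$ used to define $i^\sharp$. One must check that the abstract similitude $i^*_{w_i,m}$, defined on the moduli spaces through the embedding $i_{w_i,m}$, coincides with the topological restriction $j^*_{t_i}$ read off from the family; this is exactly where Remark~\ref{rmk:strat M}, which identifies the most singular locus of $M_{v_i}(S_i,H_i)$ with $M_{w_i}(S_i,H_i)$ compatibly with $i_{w_i,m}$, is essential. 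Once $j^*$ is recognised as a morphism of local systems, the compatibility of parallel transport with flat morphisms is formal, and the conclusion follows.
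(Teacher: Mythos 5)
Your proposal is correct and follows essentially the same route as the paper's proof: restrict the locally trivial family to its smallest stratum (Remarks~\ref{rmk:strat} and~\ref{rmk:strat M}), observe that the fibrewise restriction maps assemble into a morphism of local systems intertwining the two parallel transports, and identify the resulting smooth parallel transport operator with $i^\sharp_{w_1,w_2,m}(g)$. The only slip is that $(i^*_{w_1,m})^{-1}$ does not exist over $\ZZ$ (by Proposition~\ref{prop:i come lambda} the image of $i^*_{w,m}$ is $m\,\oH^2(M_w(S,H),\ZZ)$), so, exactly as the paper does, the intertwining relation must be inverted after passing to $\QQ$-coefficients; this is harmless because $i^\sharp_{w_1,w_2,m}$ is defined through its $\QQ$-linear extension in Lemma~\ref{lemma:i sharp gen}.
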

\proof
Let $p\colon \mathcal{X}\to T$ be a locally trivial family of primitive symplectic varieties with fibres $\cX_{t_{1}}\cong M_{v_1}(S_1,H_1)$ and $\cX_{t_{2}}\cong M_{v_2}(S_2,H_2)$, then by Remark~\ref{rmk:strat} we have a relative stratification $\cX\supset\cX_1\supset\cdots\supset\cX_{s}=:\cY$
that restricted to each fiber $X_{t}$ of $\mathcal{X}$ is the stratification of the singularities given by Proposition~\ref{prop:stratification}. By the local triviality of the family and the fact that a subspace of a K\"ahler space is again K\"ahler (see \cite[II,1.3.1(i)~Proposition]{Varouchas}), the smallest stratum $q\colon\mathcal{Y}\to T$ of this relative stratification is a smooth family of irreducible holomorphic symplectic manifolds; by Remark \ref{rmk:strat M} we have $\mathcal{Y}_{t_{1}}\cong M_{w_1}(S_1,H_1)$ and $\mathcal{Y}_{t_{2}}\cong M_{w_2}(S_2,H_2)$. 

Let $\gamma$ be a continuous path in $T$ with initial point in $t_1$ and final point in $t_2$ and let $g\colon \oH^2(M_{v_1}(S_1,H_1),\ZZ)\to\oH^2(M_{v_2}(S_2,H_2),\ZZ)$ be the corresponding parallel transport operator in the family $p\colon\mathcal{X}\to T$. The following claim concludes the proof of the proposition by interpreting $i_{w_1,w_2,m}^\sharp(g)$ as a parallel transport operator from $M_{w_1}(S_1,H_1)$ to $M_{w_2}(S_2,H_2)$.

\begin{claim}
The isometry $i_{w_1,w_2,m}^\sharp(g)$ is the parallel transport operator in the family $q\colon\mathcal{Y}\to T$ along the path $\gamma$.
\end{claim}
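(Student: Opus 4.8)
The plan is to realise $i^\sharp_{w_1,w_2,m}(g)$ as the parallel transport in $\mathcal{Y}$ by exhibiting the fibrewise restriction maps $i^*_{w_i,m}$ as the specialisations at $t_1,t_2$ of a single morphism of local systems over $T$, and then invoking the naturality of parallel transport with respect to morphisms of local systems. First I would record that, since $p\colon\mathcal{X}\to T$ is locally trivial and $q\colon\mathcal{Y}\to T$ is smooth, both $R^2p_*\ZZ$ and $R^2q_*\ZZ$ are local systems on $T$. The relative closed embedding $i\colon\mathcal{Y}\hookrightarrow\mathcal{X}$ furnished by Remark~\ref{rmk:strat} commutes with the projections to $T$, so by functoriality of cohomology the fibrewise restriction of classes assembles into a morphism of local systems
\[ i^*\colon R^2p_*\ZZ\longrightarrow R^2q_*\ZZ. \]
By Remark~\ref{rmk:strat M}, the fibre of $i$ over $t_i$ is the closed embedding $i_{w_i,m}\colon M_{w_i}(S_i,H_i)\to M_{v_i}(S_i,H_i)$, so the stalk of $i^*$ at $t_i$ is exactly the similitude $i^*_{w_i,m}$.

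Next I would use that a morphism of local systems intertwines the monodromy representations (equivalently, is flat), so that parallel transport along $\gamma$ commutes with $i^*$. Writing $h$ for the parallel transport operator in $q\colon\mathcal{Y}\to T$ along $\gamma$, this yields a commutative square
\[
\begin{CD}
\oH^2(M_{v_1}(S_1,H_1),\ZZ) @>{g}>> \oH^2(M_{v_2}(S_2,H_2),\ZZ) \\
@V{i^*_{w_1,m}}VV @VV{i^*_{w_2,m}}V \\
\oH^2(M_{w_1}(S_1,H_1),\ZZ) @>{h}>> \oH^2(M_{w_2}(S_2,H_2),\ZZ)
\end{CD}
\]
that is, $h\circ i^*_{w_1,m}=i^*_{w_2,m}\circ g$.

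Finally, since $i^*_{w_1,m}$ is a similitude by Corollary~\ref{cor:w and mw}, it is invertible after tensoring with $\QQ$, so the previous identity rearranges to $h=i^*_{w_2,m}\circ g\circ(i^*_{w_1,m})^{-1}$, which is by definition $i^\sharp_{w_1,w_2,m}(g)$ (Lemma~\ref{lemma:i sharp gen}). As both $h$ and $i^\sharp_{w_1,w_2,m}(g)$ are integral isometries agreeing over $\QQ$, they coincide, proving the claim and hence the proposition.

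The main obstacle I anticipate is the verification that the fibrewise restriction genuinely defines a morphism of local systems whose stalks at $t_1,t_2$ are the $i^*_{w_i,m}$: this rests on the local triviality of $p$ together with the fact, from Remark~\ref{rmk:strat}, that the smallest stratum forms a \emph{smooth} subfamily $\mathcal{Y}\subset\mathcal{X}$ over $T$, so that both relevant sheaves are local systems and the inclusion is a genuine relative closed embedding. Once the morphism of local systems is in place, the naturality of parallel transport is formal.
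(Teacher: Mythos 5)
Your proposal is correct and follows essentially the same route as the paper's proof: both realise the fibrewise restrictions $i^*_{w_i,m}$ as the specialisations at $t_1,t_2$ of the morphism of local systems $R^2p_*\to R^2q_*$ induced by the relative inclusion $\mathcal{Y}\subset\mathcal{X}$, and then use that such a morphism intertwines parallel transport, inverting over $\QQ$ via Corollary~\ref{cor:w and mw} and Lemma~\ref{lemma:i sharp gen}. The only cosmetic difference is that the paper passes to rational coefficients at the outset, whereas you keep integral coefficients and tensor with $\QQ$ only at the final rearrangement.
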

Let $g_{\QQ}$ be the $\QQ$-linear extension of $g$, i.e. let $g_{\QQ}$ be the parallel transport operator 
$\mathsf{PT}_p(\gamma)$  along $\gamma$ in the local system $R^2q_*\QQ$.
By definition of $i_{w_1,w_2,m}^\sharp$ (see Lemma \ref{lemma:i sharp gen}), it is enough to show that the $i_{w_1,w_2,m,\QQ}^\sharp(g_{\QQ})$ is the parallel transport operator $\mathsf{PT}_q(\gamma)$ along $\gamma$ in the local system $R^2q_*\QQ$.

By local triviality, the inclusion $i\colon\mathcal{Y}\to\mathcal{X}$ induces a morphism of local systems
\[ i_{\QQ}^*\colon R^2p_{*}\QQ\longrightarrow R^2q_{*}\QQ \]
such that $i^*_{t_{1},\QQ}=i_{w_1,m,\QQ}^{*}$ and $i^{*}_{t_{2},\QQ}=i_{w_2,m,\QQ}^{*}$.
Notice that since $i^*_{t_{1},\QQ}$ (and $i^*_{t_{2},\QQ}$) is an isometry (cf.\ Corollary~\ref{cor:w and mw}), it follows that $i_{\QQ}^*$ is an isomorphism of local systems.

The claim follows  since
%\begin{align*}
$$i_{w_1,w_2,m,\QQ}^\sharp(g_\QQ)  = i_{w_1,w_2,m,\QQ}^\sharp(\mathsf{PT}_{p}(\gamma))
  =i^{*}_{w_2,m,\QQ}\circ \mathsf{PT}_{p}(\gamma)\circ(i^{*}_{w_1,m,\QQ})^{-1}, $$ by definition of $i_{w_1,w_2,m,\QQ}^\sharp$, and 
 %Lemma~\ref{lemma:i sharp gen})}
 $$
  i^{*}_{w_2,m,\QQ}\circ \mathsf{PT}_{p}(\gamma)\circ(i^{*}_{w_1,m,\QQ})^{-1}= \mathsf{PT}_{q}(\gamma)$$ because $i_{\QQ}^*\colon R^2p_{*}\QQ\rightarrow R^2q_{*}\QQ$ is an isomorphism of local systems restricting to $i_{w_1,m,\QQ}^*$ at $t_1$ and to $i_{w_2,m,\QQ}^*$ at $t_2$.
 %\end{align*}
\endproof

\begin{rmk}
Let $\Omega(T,t_{1},t_{2})$ be the set of homotopy classes of paths in $T$ from $t_{1}$ to $t_{2}$, and $\mathsf{PT}_{p}$ (resp.\ $\mathsf{PT}_{q}$) be the morphism mapping the class of a path $[\gamma]$ to the parallel transport operator associated to $\gamma$. With this notation the claim in the proof of Proposition \ref{prop:first inclusiongen} states that  the diagram
\[ 
\xymatrix{
	\Omega(T,t_{1},t_{2})\ar@{->}[r]^-{\mathsf{PT}_{p}}\ar@{->}[dr]_{\mathsf{PT}_{q}} & \Or(\oH^2(M_v,\QQ),\oH^{2}(M_{v'},\QQ))\ar@{->}[d]^{i_{w,w',m}^\sharp}_{\cong} \\
	&  \Or(\oH^2(M_w,\QQ),\oH^{2}(M_{w'},\QQ))
}
\]
is commutative.
\end{rmk}

In the particular case of $(S,v,H)=(S',v',H')$ we get the following.

\begin{cor}\label{prop:first inclusion}
Let $(S,v,H)$ be an $(m,k)$-triple with $m>1$, and write $v=mw$. Then the isomorphism (\ref{eqn:i sharp}) restricts to an injective group morphism
\[ i_{w,m}^\sharp\colon\Mon^2_{\operatorname{lt}}(M_{v}(S,H))\longrightarrow\Mon^2(M_w(S,H)). \]
\end{cor}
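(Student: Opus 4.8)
The plan is to obtain this as a formal specialisation of Proposition~\ref{prop:first inclusiongen}, combined with the purely group-theoretic content of Lemma~\ref{lemma:i sharp}.

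First I would take $(S_1,v_1,H_1)=(S_2,v_2,H_2)=(S,v,H)$ in Proposition~\ref{prop:first inclusiongen}, so that $w_1=w_2=w$. Under this specialisation the bijection $i^\sharp_{w_1,w_2,m}$ of (\ref{eqn:i sharpgen}) becomes exactly the conjugation map $i_{w,m}^\sharp$ of (\ref{eqn:i sharp}); this identification is immediate on comparing the explicit formulas in Lemma~\ref{lemma:i sharp gen} and Lemma~\ref{lemma:i sharp}. Recalling that, by definition, $\mathsf{PT}^2_{\operatorname{lt}}(X,X)=\Mon^2_{\operatorname{lt}}(X)$ for any primitive symplectic variety $X$, and that $\mathsf{PT}^2(M_w(S,H),M_w(S,H))=\Mon^2(M_w(S,H))$ since $M_w(S,H)$ is smooth, Proposition~\ref{prop:first inclusiongen} tells me that $i_{w,m}^\sharp$ carries $\Mon^2_{\operatorname{lt}}(M_v(S,H))$ into $\Mon^2(M_w(S,H))$ and that this restricted map is injective.

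It then only remains to verify that the restriction is a homomorphism of groups, and this is automatic. Indeed, by Lemma~\ref{lemma:i sharp}(1) the map $i_{w,m}^\sharp$ is a group isomorphism of the full orthogonal groups $\Or(\oH^2(M_v(S,H),\ZZ))\to\Or(\oH^2(M_w(S,H),\ZZ))$; since $\Mon^2_{\operatorname{lt}}(M_v(S,H))$ is a subgroup of the source (Lemma~\ref{lemma:Mon is group}), the restriction of $i_{w,m}^\sharp$ to it is a group homomorphism onto its image, and by the previous step this image lies in $\Mon^2(M_w(S,H))$. Putting the two steps together yields the injective group morphism in the statement.

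As all the geometric substance — identifying the smallest relative stratum of a locally trivial family of the $M_v$'s with the corresponding smooth family of the $M_w$'s, and checking that $i^*_{w,m}$ intertwines the parallel transport in the two local systems — is already discharged in the proof of Proposition~\ref{prop:first inclusiongen}, there is no real obstacle here. The only subtlety worth flagging is the bookkeeping identifying $i^\sharp_{w,w,m}$ with $i_{w,m}^\sharp$ and matching the sets of (locally trivial) parallel transport operators with the corresponding monodromy groups; once this is in place the corollary follows by pure formality.
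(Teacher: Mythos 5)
Your proof is correct and is essentially the paper's own argument: the paper obtains this corollary precisely by specialising Proposition~\ref{prop:first inclusiongen} to the case $(S_1,v_1,H_1)=(S_2,v_2,H_2)=(S,v,H)$, with the identifications $\mathsf{PT}^2_{\operatorname{lt}}(X,X)=\Mon^2_{\operatorname{lt}}(X)$, $\mathsf{PT}^2(M_w(S,H),M_w(S,H))=\Mon^2(M_w(S,H))$ and $i^{\sharp}_{w,w,m}=i^{\sharp}_{w,m}$ left implicit. Your extra bookkeeping --- matching the two sharp maps via their common description as conjugation by $i^*_{w,m,\QQ}$, and deducing the homomorphism property from Lemma~\ref{lemma:i sharp}(1) together with Lemma~\ref{lemma:Mon is group} --- only spells out what the paper treats as immediate.
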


%%%%%%%%%%%%%%%%%%%%%%%%%%%%%%%%%%%%%%%%%
%%%%%%%%%%%%%%%%%%%%%%%%%%%%%%%%%%%%%%%%%
\subsection{The main results}\label{section:main result}

\begin{thm}\label{thm:Mon 2 lt}
    Let $(S,v,H)$ be an $(m,k)$-triple and suppose that $m>1$. Then 
    \[ \Mon^2_{\operatorname{lt}}(M_{v}(S,H))=\mathsf{W}(M_v(S,H)). \]
\end{thm}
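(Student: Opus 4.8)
The plan is to prove the two inclusions separately, one of which is essentially free. The inclusion $\mathsf{W}(M_v(S,H))\subset\Mon^2_{\operatorname{lt}}(M_v(S,H))$ is precisely the content of Theorem~\ref{thm:W in m k}, so nothing further is needed in that direction; the whole point is to establish the reverse inclusion $\Mon^2_{\operatorname{lt}}(M_v(S,H))\subset\mathsf{W}(M_v(S,H))$, and here the idea is to transport the question to the smooth most-singular locus $M_w(S,H)$, where Markman's theorem is available.

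First I would record the preparatory facts. Since $w$ is primitive and $H$ is $w$-generic by Remark~\ref{rmk:gen}, the moduli space $M_w(S,H)$ is a smooth irreducible holomorphic symplectic manifold of moduli-space type, so \cite[Theorem~1.1]{Markman:IntegralConstraints} applies and yields $\Mon^2(M_w(S,H))=\mathsf{W}(M_w(S,H))$. Next I would note that conjugation by a Hodge isometry preserves the defining conditions of $\mathsf{W}$: the orientation character and the $\pm\id$ action on the discriminant group are both conjugation-invariant. Hence $\lambda_{(S,v,H)}^\sharp$ restricts to an isomorphism $\mathsf{W}(v^\perp)\stackrel{\sim}{\longrightarrow}\mathsf{W}(M_v(S,H))$, and likewise $\lambda_{(S,w,H)}^\sharp$ restricts to $\mathsf{W}(w^\perp)\stackrel{\sim}{\longrightarrow}\mathsf{W}(M_w(S,H))$.

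I would then combine these with Lemma~\ref{lemma:i sharp}(2), which identifies $i_{w,m}^\sharp=\lambda_{(S,w,H)}^\sharp\circ(\lambda_{(S,v,H)}^\sharp)^{-1}$. Using the crucial equality $v^\perp=w^\perp$ of sublattices of the Mukai lattice, and therefore $\mathsf{W}(v^\perp)=\mathsf{W}(w^\perp)$, this shows that the group isomorphism $i_{w,m}^\sharp$ of Lemma~\ref{lemma:i sharp}(1) carries $\mathsf{W}(M_v(S,H))$ bijectively onto $\mathsf{W}(M_w(S,H))$; in particular $(i_{w,m}^\sharp)^{-1}(\mathsf{W}(M_w(S,H)))=\mathsf{W}(M_v(S,H))$. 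To conclude I would take any $g\in\Mon^2_{\operatorname{lt}}(M_v(S,H))$: by Corollary~\ref{prop:first inclusion} (which is where the hypothesis $m>1$ enters) its image $i_{w,m}^\sharp(g)$ lies in $\Mon^2(M_w(S,H))=\mathsf{W}(M_w(S,H))$, whence $g$ lies in the preimage $\mathsf{W}(M_v(S,H))$, giving the reverse inclusion.

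As for the main obstacle, the genuinely hard inputs have already been discharged in the earlier sections: producing enough monodromy operators via the groupoid representation (Theorem~\ref{thm:W in m k}), and proving that $i_{w,m}^\sharp$ sends locally trivial parallel transport operators on $M_v$ to honest parallel transport operators on the smooth stratum $M_w$ (Corollary~\ref{prop:first inclusion}). At the level of this statement the only point demanding care is the lattice-theoretic bookkeeping — checking that $i_{w,m}^\sharp$ matches the two $\mathsf{W}$-groups \emph{exactly}, which rests entirely on the compatibility $i_{w,m}^\sharp=\lambda_{(S,w,H)}^\sharp\circ(\lambda_{(S,v,H)}^\sharp)^{-1}$ together with $v^\perp=w^\perp$. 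Once this identification is in place the argument is a clean diagram chase, and I do not expect any further difficulty.
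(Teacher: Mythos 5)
Your proposal is correct, and it relies on exactly the same key inputs as the paper (Theorem~\ref{thm:W in m k}, Corollary~\ref{prop:first inclusion}, and Markman's Theorem~\ref{thm:Markman}), but the final assembly differs. The paper finishes by an index comparison: from $\mathsf{W}(M_v)\subset\Mon^2_{\operatorname{lt}}(M_v)$ and $i_{w,m}^\sharp(\Mon^2_{\operatorname{lt}}(M_v))\subset\Mon^2(M_w)=\mathsf{W}(M_w)$ it deduces inequalities of indices in the respective orthogonal groups, and then uses only the fact that $\oH^2(M_v,\ZZ)$ and $\oH^2(M_w,\ZZ)$ are \emph{abstractly} isometric and that $\mathsf{W}$ is lattice-theoretically defined to conclude $[\Or(\oH^2(M_v,\ZZ)):\mathsf{W}(M_v)]=[\Or(\oH^2(M_w,\ZZ)):\mathsf{W}(M_w)]$, whence equality of the sandwiched subgroups (this tacitly uses that $\mathsf{W}$ has finite index). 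You instead invoke the precise compatibility $i_{w,m}^\sharp=\lambda_{(S,w,H)}^\sharp\circ(\lambda_{(S,v,H)}^\sharp)^{-1}$ of Lemma~\ref{lemma:i sharp}(2) together with $v^\perp=w^\perp$ to show that $i_{w,m}^\sharp$ carries $\mathsf{W}(M_v)$ \emph{exactly} onto $\mathsf{W}(M_w)$, so the reverse inclusion follows by pulling back; this avoids any counting or finiteness considerations. Your route is in fact the one the paper itself follows in proving the companion statement, Theorem~\ref{thm:i sharp is iso}: the chain $\mathsf{W}(M_w)=i_{w,m}^\sharp(\mathsf{W}(M_v))\subseteq i_{w,m}^\sharp(\Mon^2_{\operatorname{lt}}(M_v))\subseteq\Mon^2(M_w)=\mathsf{W}(M_w)$ there yields both results at once, so your argument is a legitimate (and slightly more economical) rearrangement that obtains Theorem~\ref{thm:Mon 2 lt} and Theorem~\ref{thm:i sharp is iso} simultaneously, at the price of needing the finer input of Lemma~\ref{lemma:i sharp}(2) rather than just part (1).
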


The case when $v$ is primitive, i.e.\ $m=1$, has been proved by Markman.
\begin{thm}[\protect{\cite[Theorem~1.1]{Markman:IntegralConstraints}}]\label{thm:Markman}
    Let $(S,w,H)$ be an $(1,k)$-triple. Then 
    \[ \Mon^2(M_w(S,H))=\mathsf{W}(M_w(S,H)). \]
\end{thm}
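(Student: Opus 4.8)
The plan is to establish the two inclusions separately, after recording that for a smooth moduli space $\Mon^2(M_w(S,H))=\Mon^2_{\operatorname{lt}}(M_w(S,H))$, and that $\mathsf{W}(M_w(S,H))\subset\Or^+(\oH^2(M_w(S,H),\ZZ))$ consists by definition of the orientation preserving isometries acting as $\pm\id$ on the discriminant group $A_{\oH^2}\cong\ZZ/2k\ZZ$ (recall $w^\perp\cong U^{\oplus3}\oplus E_8(-1)^{\oplus2}\oplus\langle-2k\rangle$).

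\emph{The inclusion $\mathsf{W}\subseteq\Mon^2$.} Since a $(1,k)$-triple is in particular an $(m,k)$-triple with $m=1$, this inclusion is Theorem~\ref{thm:W in m k} specialised to $m=1$; the construction of monodromy operators in Section~\ref{section:W in m k} never uses $m>1$. Concretely I would run Proposition~\ref{prop:Im of Phi tilde = O} for $v=w=(1,0,-k)$ on a very general elliptic K3 surface, producing $\Or^+(\widetilde{\oH}(S,\ZZ))_w$ inside the image of $\widetilde{\Phi}^{1,k}_{(S,w,H)}$ (via Proposition~\ref{prop:Mon S in m k}, Proposition~\ref{prop:boh} and the Poincaré bundle operator of Proposition~\ref{prop:Poincare}, together with one orientation reversing Fourier--Mukai operator), and then transport through $\mathsf{pt}^{1,k}$ and the isometry $\lambda_{(S,w,H)}$ of Theorem~\ref{thm:PR v perp}. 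By Corollary~\ref{cor:Im of pt in Mon} these are genuine monodromy operators, and their image is $\lambda^\sharp_{(S,w,H)}(\mathsf{W}(w^\perp))=\mathsf{W}(M_w(S,H))$.

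\emph{The inclusion $\Mon^2\subseteq\mathsf{W}$ (the integral constraint).} Orientation preservation is Lemma~\ref{lemma:Mon is O+}, so $\Mon^2(M_w(S,H))\subseteq\Or^+$; the substantive point is the action on the discriminant. I would show that every monodromy operator is the restriction of an isometry of the Mukai lattice fixing the Mukai vector up to sign. The geometric input is a quasi-universal sheaf on $M_w(S,H)\times S$, whose Mukai vector defines not only $\lambda_{(S,w,H)}$ but, through its full Künneth decomposition, a canonical embedding of local systems over any locally trivial deformation family $f\colon\mathcal{M}\to T$ of $M_w(S,H)$,
\[ R^2f_*\ZZ\;\hookrightarrow\;\widetilde{\mathcal{R}}, \]
with fibre the Mukai lattice $\widetilde{\oH}(S,\ZZ)$ and a flat section $\mathbf{w}$ satisfying $\mathbf{w}^\perp=R^2f_*\ZZ$. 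Granting this, each $g\in\Mon^2(M_w(S,H))$ extends to $\tilde g\in\Or(\widetilde{\oH}(S,\ZZ))$ with $\tilde g(w)=\pm w$; by the lattice-theoretic statement of Remark~\ref{rmk:W as ext}, that is \cite[Lemma~4.10]{Markman:Monodromy}, the restriction $g=\tilde g|_{w^\perp}$ lies in $\mathsf{W}(w^\perp)$, and transporting through $\lambda_{(S,w,H)}$ yields $g\in\mathsf{W}(M_w(S,H))$.

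\emph{The main obstacle.} The delicate step is producing the monodromy equivariant Mukai-lattice extension $\widetilde{\mathcal{R}}$ over an \emph{arbitrary} deformation of $M_w(S,H)$, not merely over the moduli-of-sheaves families used in the construction direction: a priori a monodromy loop need not be realised by a family of moduli spaces, so the extension and the flat section $\mathbf{w}$ must be built canonically and shown to be preserved, up to sign, by all of $\Mon^2$. Following Markman, I would deform the universal sheaf along the $\mathrm{K3}^{[n]}$ deformation class and invoke Verbitsky's global Torelli theorem to transport the associated characteristic class across the whole connected component of the moduli of marked pairs; this is the genuinely new input beyond the groupoid machinery of Sections~\ref{section:groupoid}--\ref{section:mon S pol}, and it is the part that does not formally reduce to the explicit operators constructed above.
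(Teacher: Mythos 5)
You should first note that the paper contains no proof of this statement at all: it is imported verbatim from \cite[Theorem~1.1]{Markman:IntegralConstraints}, and the authors say explicitly that they use Markman's result as external input (``We will use Markman's result in our proof, this is the reason why we have omitted this case from our statement''). Measured against that, the first half of your proposal is correct and non-circular: Theorem~\ref{thm:W in m k} is indeed stated and proved for all $(m,k)$-triples including $m=1$, and its proof (Proposition~\ref{prop:Im of Phi tilde = O} via Proposition~\ref{prop:Mon S in m k}, Proposition~\ref{prop:boh}, Lemma~\ref{lemma:FM P in Im Phi tilde}, together with the functor isomorphism of Proposition~\ref{prop:iso of functors} and Corollary~\ref{cor:Im of pt in Mon}) depends on \cite{Markman:Monodromy}, Yoshioka's results and Borel's theorem, but not on the integral-constraints theorem you are trying to prove. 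So $\mathsf{W}(M_w(S,H))\subseteq\Mon^2(M_w(S,H))$ is legitimately obtained from the paper's own machinery.

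The genuine gap is in the reverse inclusion, which is the entire content of the cited theorem. Your argument there is conditional: the canonical monodromy-equivariant embedding $R^2f_*\ZZ\hookrightarrow\widetilde{\mathcal{R}}$ with flat section $\mathbf{w}$ over an \emph{arbitrary} smooth family $f\colon\mathcal{M}\to T$ is introduced with ``Granting this,'' and you yourself identify its construction as the main obstacle; everything after ``Following Markman'' is a research program, not an argument, so the central step is never carried out. (Given the extension, the conclusion via Lemma~\ref{lemma:Mon is O+} and Remark~\ref{rmk:W as ext}, i.e.\ \cite[Lemma~4.10]{Markman:Monodromy}, is correct.) Moreover, the route you sketch is not quite Markman's and is not obviously available: since a monodromy loop need not be realised by a family of moduli spaces, Markman does not transport the universal sheaf around loops via a Torelli theorem; instead he builds the Mukai-lattice extension \emph{intrinsically} in cohomology --- roughly, a distinguished rank-$24$ lattice extracted from $\oH^4(X,\ZZ)$ and the Chern character of a (quasi-)universal sheaf, whose monodromy-equivariance is automatic because the construction is cohomologically canonical, with deformations of moduli spaces and density arguments used only to propagate it across the deformation class. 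So as a self-contained proof your attempt fails at its key step, and as a reduction it reduces the statement to exactly the citation the paper already makes; within the logic of this paper the honest ``proof'' of Theorem~\ref{thm:Markman} is the reference itself.
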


We will use Markman's result in our proof, this is the reason why we have omitted this case from our statement.

In the proof we use the following notation: if $H$ is a subgroup of a group $G$, then $[G:H]$ stands for its index. 

\begin{proof}
    We will use the simplified notation $M_{w}$ (resp.\ $M_{v}$) instead of $M_{w}(S,H)$ (resp.\ $M_{v}(S,H)$). 
    
    By Theorem~\ref{thm:W in m k} we have that 
    \[  \mathsf{W}(M_v)\subset\Mon^2_{\operatorname{lt}}(M_v)\subset \Or(\oH^2(M_v,\ZZ)) \] 
    so that 
    \[ [\Or(\oH^2(M_v,\ZZ)):\mathsf{W}(M_v)]\geq[\Or(\oH^2(M_v,\ZZ)):\Mon^2_{\operatorname{lt}}(M_v)]. \]
    On the other hand since $m>1$, by Lemma \ref{lemma:i sharp} and  Corollary~\ref{prop:first inclusion} there is an isomorphism
    \[ i_{w,m}^\sharp\colon
    \Or(\oH^2(M_v,\ZZ))\longrightarrow\Or(\oH^2(M_w,\ZZ))
\]
 such that $$i_{w,m}^\sharp (\Mon^2_{\operatorname{lt}}(M_{v}))\subset \Mon^2(M_{w})$$ and, by Theorem~\ref{thm:Markman}, $\Mon^2(M_w)=\mathsf{W}(\oH^2(M_w,\ZZ))$. It follows that 
    \[ [\Or(\oH^2(M_v,\ZZ)):\Mon^2_{\operatorname{lt}}(M_v)]\geq[\Or(\oH^2(M_w,\ZZ)):\mathsf{W}(M_w)]. \]
    Finally, since $\oH^2(M_v,\ZZ)$ and $\oH^2(M_w,\ZZ)$ are abstractly isometric as lattices, and since the groups $\mathsf{W}(M_v)$ and $\mathsf{W}(M_w)$ are defined only in lattice-theoretic terms, we get the equality
    \begin{equation*}%\label{eqn: = indici}
        [\Or(\oH^2(M_v,\ZZ)):\mathsf{W}(M_v)]=[\Or(\oH^2(M_w,\ZZ)):\mathsf{W}(M_w)],
    \end{equation*}
    which allows us to deduce that
    \[ [\Or(\oH^2(M_v,\ZZ)):\Mon^2_{\operatorname{lt}}(M_v)]=[\Or(\oH^2(M_v,\ZZ)):\mathsf{W}(M_v)]. \]
    Since by Theorem~\ref{thm:W in m k} we have that 
    \[  \mathsf{W}(M_v)\subset\Mon^2_{\operatorname{lt}}(M_v),\] we conclude  that $\Mon^2_{\operatorname{lt}}(M_v)=\mathsf{W}(M_v)$.
\end{proof}

\begin{cor}[\protect{\cite[Lemma~4.2]{Markman:IntegralConstraints}}]\label{cor:index}
    Let $(S,v,H)$ be a $(m,k)$-triple. Then 
    \[ \Mon^2_{\operatorname{lt}}(M_v(S,H))\subset\Or^+(\oH^2(M_v(S,H),\ZZ)) \] 
    has index $2^{\rho(k)-1}$, where $\rho(k)$ is the number of distinct primes in the factorisation of $k$.
\end{cor}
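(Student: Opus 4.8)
The plan is to reduce the whole statement to the index computation already contained in \cite[Lemma~4.2]{Markman:IntegralConstraints}. First I would invoke Theorem~\ref{thm:Mon 2 lt} when $m>1$ (and Theorem~\ref{thm:Markman} when $m=1$) to rewrite the locally trivial monodromy group as $\Mon^2_{\operatorname{lt}}(M_v(S,H))=\mathsf{W}(M_v(S,H))=\mathsf{W}(\oH^2(M_v(S,H),\ZZ))$. The crucial observation is then that both $\Or^+(\oH^2(M_v(S,H),\ZZ))$ and its subgroup $\mathsf{W}(\oH^2(M_v(S,H),\ZZ))$ are defined in purely lattice-theoretic terms, so their index depends only on the abstract isometry class of $\oH^2(M_v(S,H),\ZZ)$, which by Theorem~\ref{thm:PR v perp} is $U^{\oplus3}\oplus E_8(-1)^{\oplus2}\oplus\langle-2k\rangle$. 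This is exactly the lattice appearing in \cite[Lemma~4.2]{Markman:IntegralConstraints}, whence the index equals $2^{\rho(k)-1}$.

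For completeness I would also indicate how the index is computed directly, since the content is entirely lattice-theoretic. Writing $L=U^{\oplus3}\oplus E_8(-1)^{\oplus2}\oplus\langle-2k\rangle$, the discriminant group is $A_L\cong\ZZ/2k\ZZ$, generated by the class of a dual vector $e^*$ carrying discriminant quadratic form $q(e^*)=-\frac{1}{2k}\bmod 2\ZZ$. By definition $\mathsf{W}(L)$ is the preimage in $\Or^+(L)$ of the subgroup $\{\pm\id\}\subset\Or(A_L)$ under the natural homomorphism $\Or^+(L)\to\Or(A_L)$. Since $L$ is indefinite and contains three orthogonal copies of $U$, this homomorphism is surjective by Nikulin's theory (\cite[Theorem~1.14.2]{Nikulin}); one only has to check that the orientation constraint can be absorbed, using the reflection in a norm-$2$ vector of $U^{\oplus3}$, which reverses the orientation of the positive cone while acting trivially on $A_L$. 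Consequently the index equals $[\Or(A_L):\{\pm\id\}]$.

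It then remains to count $\Or(A_L)$. An isometry of $A_L$ is multiplication by a unit $a\in(\ZZ/2k\ZZ)^*$, and preservation of $q$ amounts to the congruence $a^2\equiv1\pmod{4k}$; a direct count via the Chinese Remainder Theorem gives $|\Or(A_L)|=2^{\rho(k)}$. For $k>1$ the involution $-\id$ is nontrivial in $\Or(A_L)$, so $\{\pm\id\}$ has order $2$ and the index is $2^{\rho(k)}/2=2^{\rho(k)-1}$, as claimed; for $k=1$ one has $-\id=\id$ on $A_L\cong\ZZ/2\ZZ$ and the index is $1$.

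I expect the only genuinely delicate point to be the surjectivity of $\Or^+(L)\to\Or(A_L)$ together with the bookkeeping of the orientation character; everything else is the routine arithmetic of counting the square roots of $1$ modulo $4k$. Because the statement coincides with \cite[Lemma~4.2]{Markman:IntegralConstraints} for a lattice of the same isometry type, in the final write-up I would keep the first paragraph as the actual proof and leave the explicit computation to that citation.
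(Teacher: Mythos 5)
Your proposal is correct and follows exactly the route the paper takes: the corollary is obtained by combining Theorem~\ref{thm:Mon 2 lt} (and Theorem~\ref{thm:Markman} for $m=1$) with the purely lattice-theoretic index computation of \cite[Lemma~4.2]{Markman:IntegralConstraints}, valid since $\oH^2(M_v(S,H),\ZZ)\cong U^{\oplus3}\oplus E_8(-1)^{\oplus2}\oplus\langle-2k\rangle$ by Theorem~\ref{thm:PR v perp} and both $\mathsf{W}$ and $\Or^+$ depend only on the abstract lattice. Your supplementary verification (surjectivity of $\Or^+(L)\to\Or(A_L)$ via Nikulin plus a reflection in a norm-$2$ vector, and the count $|\Or(A_L)|=2^{\rho(k)}$ from $a^2\equiv1\pmod{4k}$) is also accurate, so it is a sound, if optional, addition.
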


\begin{rmk}
    Let $(S,v,H)$ be an $(m,k)$-triple.
    It is interesting to point out that the index of $\Mon^2_{\operatorname{lt}}(M_v(S,H))$ in $\Or^+(\oH^2(M_v(S,H),\ZZ))$ does not depend on $m$. In particular we have that if  $k$ is a power of a prime, then 
    \[ \Mon^2_{\operatorname{lt}}(M_v(S,H))=\Or^+(\oH^2(M_v(S,H),\ZZ)). \]
\end{rmk}

%As a straightforward corollary of Theorem~\ref{thm:Mon 2 lt}  we  get that the inclusion $i_{w,m}\colon M_w(S,H)\rightarrow M_v(S,H)$ induces an  isomorhism between the  monodromy groups.

Theorem~\ref{thm:Mon 2 lt} can be analogously formulated in terms of the inclusion $i_{w,m}\colon M_w(S,H)\rightarrow M_v(S,H)$. 

\begin{thm}\label{thm:i sharp is iso}
  Let $(S,v,H)$ be an $(m,k)$-triple, $v=mw$ with $m>1$ and $(S,w,H)$ the corresponding $(1,k)$-triple. Then the isomorphism
  \[ i_{w,m}^\sharp\colon\Or(\oH^2(M_{v}(S,H),\ZZ))\longrightarrow\Or(\oH^2(M_{w}(S,H),\ZZ)) \]
 induces by restriction an isomorphism 
 \[ i_{w,m}^\sharp\colon\Mon^2_{\operatorname{lt}}(M_{v}(S,H))\longrightarrow\Mon^2(M_w(S,H)). \]  
 \end{thm}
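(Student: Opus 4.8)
The plan is to combine the identification of both monodromy groups with the lattice-theoretic group $\mathsf{W}$ with the explicit description of $i_{w,m}^\sharp$ obtained in Lemma~\ref{lemma:i sharp}. Since $i_{w,m}^\sharp$ is already known to be an isomorphism of the full orthogonal groups by Lemma~\ref{lemma:i sharp}(1), its restriction to any subgroup is automatically injective; the only thing left to prove is that it carries $\Mon^2_{\operatorname{lt}}(M_v(S,H))$ \emph{onto} $\Mon^2(M_w(S,H))$.

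First I would invoke Theorem~\ref{thm:Mon 2 lt} (using the hypothesis $m>1$) to write $\Mon^2_{\operatorname{lt}}(M_v(S,H))=\mathsf{W}(M_v(S,H))$, and Markman's Theorem~\ref{thm:Markman}, applied to the $(1,k)$-triple $(S,w,H)$, to write $\Mon^2(M_w(S,H))=\mathsf{W}(M_w(S,H))$. Thus the statement reduces to the purely group-theoretic assertion that $i_{w,m}^\sharp$ maps $\mathsf{W}(M_v(S,H))$ bijectively onto $\mathsf{W}(M_w(S,H))$.

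Next I would use part~(2) of Lemma~\ref{lemma:i sharp}, which provides the factorisation
\[ i_{w,m}^\sharp=\lambda_{(S,w,H)}^\sharp\circ(\lambda_{(S,v,H)}^\sharp)^{-1}. \]
Because $\lambda_{(S,v,H)}$ and $\lambda_{(S,w,H)}$ are genuine isometries of lattices, conjugation by them preserves all the data entering the definition of $\mathsf{W}$: it induces isomorphisms of the positive cones, hence preserves orientation and maps $\Or^+$ to $\Or^+$, and it induces isomorphisms of the discriminant groups, hence preserves the condition of acting as $\pm\id$ (this property being stable under conjugation by any lattice isomorphism). Consequently $\lambda_{(S,v,H)}^\sharp(\mathsf{W}(v^\perp))=\mathsf{W}(M_v(S,H))$ and $\lambda_{(S,w,H)}^\sharp(\mathsf{W}(w^\perp))=\mathsf{W}(M_w(S,H))$. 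The final ingredient is the elementary equality $v^\perp=w^\perp$ of sublattices of the Mukai lattice, valid since $v=mw$ with $m\neq 0$; it yields $\mathsf{W}(v^\perp)=\mathsf{W}(w^\perp)$. Chaining these identities,
\[ i_{w,m}^\sharp(\mathsf{W}(M_v(S,H)))=\lambda_{(S,w,H)}^\sharp(\mathsf{W}(v^\perp))=\lambda_{(S,w,H)}^\sharp(\mathsf{W}(w^\perp))=\mathsf{W}(M_w(S,H)), \]
which is exactly the surjectivity sought; combined with the injectivity already noted, this shows the restriction is an isomorphism.

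Since the substantive work is contained in Theorem~\ref{thm:Mon 2 lt}, in Markman's Theorem~\ref{thm:Markman}, and in Lemma~\ref{lemma:i sharp}, there is no serious obstacle at this stage. The one point demanding a little care is the claim that conjugation respects the defining conditions of $\mathsf{W}$; this is precisely why I would argue through the factorisation into the \emph{isometries} $\lambda_{(S,v,H)}$ and $\lambda_{(S,w,H)}$, rather than directly with the similitude $i_{w,m}^*$, which by Proposition~\ref{prop:i come lambda} is only injective with image $m\cdot\oH^2(M_w(S,H),\ZZ)$ and therefore does not manifestly interact well with the discriminant group.
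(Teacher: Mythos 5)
Your proof is correct, and it shares its key computation with the paper's: both arguments rest on Lemma~\ref{lemma:i sharp}(2), the factorisation $i_{w,m}^\sharp=\lambda_{(S,w,H)}^\sharp\circ(\lambda_{(S,v,H)}^\sharp)^{-1}$, and the observation that conjugation by the isometries $\lambda$ together with $v^\perp=w^\perp$ gives $i_{w,m}^\sharp(\mathsf{W}(M_v(S,H)))=\mathsf{W}(M_w(S,H))$ (equality~(\ref{eqn:qui}) in the paper). Where you diverge is in how you finish: you quote Theorem~\ref{thm:Mon 2 lt} to replace $\Mon^2_{\operatorname{lt}}(M_v(S,H))$ by $\mathsf{W}(M_v(S,H))$ outright, so the theorem collapses to the lattice-theoretic identity above and you never need the geometric input of Corollary~\ref{prop:first inclusion}. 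The paper instead avoids citing Theorem~\ref{thm:Mon 2 lt}: it uses only the one-sided inclusions $\mathsf{W}(M_v)\subset\Mon^2_{\operatorname{lt}}(M_v)$ (Theorem~\ref{thm:W in m k}) and $i_{w,m}^\sharp(\Mon^2_{\operatorname{lt}}(M_v))\subset\Mon^2(M_w)$ (Corollary~\ref{prop:first inclusion}), and sandwiches these between the two copies of $\mathsf{W}$ via Markman's Theorem~\ref{thm:Markman}; as a by-product this chain re-derives the equality $\Mon^2_{\operatorname{lt}}(M_v)=\mathsf{W}(M_v)$ and exhibits the isomorphism as genuinely induced by restriction of parallel transport along the singular stratum. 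Your route is shorter and purely formal once the two monodromy groups are known; the paper's is logically self-contained relative to Theorem~\ref{thm:Mon 2 lt} and keeps the geometric mechanism visible. Since Theorem~\ref{thm:Mon 2 lt} is proved before and independently of this statement, there is no circularity in your use of it. Your closing caution about working with the isometries $\lambda$ rather than the similitude $i_{w,m}^*$ (whose image is $m\cdot\oH^2(M_w(S,H),\ZZ)$ by Proposition~\ref{prop:i come lambda}) is also well taken and matches why the paper routes the argument through Lemma~\ref{lemma:i sharp}.
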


\begin{proof}
    First of all recall that $i_{w,m}^\sharp =\lambda_{(S,w,H)}^{\sharp}\circ(
    \lambda_{(S,v,H)}^\sharp)^{-1}$ by Lemma~\ref{lemma:i sharp}.(2).

   Since the groups $\mathsf{W}(M_{v}(S,H))$ and $\mathsf{W}(M_w(S,H))$ are defined in lattice-theoretic terms, and $\lambda_{(S,w,H)}$ and $\lambda_{(S,v,H)}$ are isometries, it follows that 
    \begin{equation}\label{eqn:qui} i_{w,m}^\sharp(\mathsf{W}(M_{v}(S,H)))=\mathsf{W}(M_w(S,H)). 
    \end{equation}

    On the other hand, by Corollary~\ref{prop:first inclusion} the isometry $i_{w,m}^\sharp$ sends monodromy operators in monodromy operators, i.e.\ 
    \begin{equation}\label{eqn:quo} i_{w,m}^\sharp(\Mon^2_{\operatorname{lt}}(M_{v}(S,H)))\subset\Mon^2(M_w(S,H)). 
    \end{equation}

    Combining this with Theorem~\ref{thm:W in m k} and Theorem~\ref{thm:Markman}, we eventually get
    \begin{align*}
    \mathsf{W}(M_w(S,H)) & =i_{w,m}^\sharp(\mathsf{W}(M_{v}(S,H))) & \mbox{(by equality (\ref{eqn:qui}))} \\
     & \subseteq i_{w,m}^\sharp(\Mon^2_{\operatorname{lt}}(M_{v}(S,H))) & \mbox{(by Theorem~\ref{thm:W in m k})} \\
      & \subseteq\Mon^2(M_w(S,H)) & \mbox{(by inclusion (\ref{eqn:quo}))} \\
       & =\mathsf{W}(M_w(S,H)) & \mbox{(by Theorem~\ref{thm:Markman})}
      \end{align*}
      from which it follows that 
      \[ i_{w,m}^\sharp(\Mon^2_{\operatorname{lt}}(M_{v}(S,H)))=\Mon^2(M_w(S,H)) \]
      and therefore the claim.
    \end{proof}

 \begin{rmk}
    Suppose that $m'\neq m$ are two strictly positive integers and put $v'=m'w$ and $v=mw$. There is then a natural isomorphism
    \[ (i^\sharp_{w,m'})^{-1}\circ i^\sharp_{w,m}\colon\Mon^2_{\operatorname{lt}}(M_v(S,H))\stackrel{\sim}{\longrightarrow}\Mon^2_{\operatorname{lt}}(M_{v'}(S,H)).  \]
    It follows that the isomorphism class of $\Mon^2_{\operatorname{lt}}(M_v(S,H))$ does not depend on $m$, but only on $w$.
\end{rmk}

Since the locally trivial monodromy group is invariant along locally trivial families of primitive symplectic varieties (see Definition~\ref{def:lt}), we get the following corollary.
\begin{cor}\label{cor:ogni X}
    Let $X$ be an irreducible symplectic variety that is locally trivially deformation equivalent to a moduli space $M_v(S,H)$, where $(S,v,H)$ is an $(m,k)$-triple.

    Then $\Mon^2_{\operatorname{lt}}(X)$ is the subgroup $\mathsf{W}(X)\subset\Or(\oH^2(X,\ZZ))$ of the orientation preserving isometries acting as $\pm\id$ on the discriminant group.
\end{cor}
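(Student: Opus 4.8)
The plan is to deduce the statement directly from Theorem~\ref{thm:Mon 2 lt}, supplemented by Markman's Theorem~\ref{thm:Markman} to cover the primitive case $m=1$, by invoking the locally trivial deformation invariance of both sides of the claimed equality. First I would record the source of the comparison isometry: since $X$ is locally trivially deformation equivalent to $M:=M_v(S,H)$, there is a locally trivial family $p\colon\mathcal{X}\to T$ of primitive symplectic varieties and points $t_0,t_1\in T$ with $\mathcal{X}_{t_0}\cong X$ and $\mathcal{X}_{t_1}\cong M$, and parallel transport along a path from $t_1$ to $t_0$ inside $R^2p_*\ZZ$ produces an isometry
\[ g\in\mathsf{PT}^2_{\operatorname{lt}}(M,X)\subset\Or(\oH^2(M,\ZZ),\oH^2(X,\ZZ)). \]

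Next I would use the standard conjugation argument underlying the group property of Lemma~\ref{lemma:Mon is group} (cf.\ \cite[footnote~3]{Markman:Survey}) to obtain $\Mon^2_{\operatorname{lt}}(X)=g\circ\Mon^2_{\operatorname{lt}}(M)\circ g^{-1}$: concatenating $g^{-1}$, a loop at $M$, and $g$ yields a loop at $X$, and conversely, so conjugation by $g$ is a bijection between the two monodromy groups. I then apply the main result: for $m>1$, Theorem~\ref{thm:Mon 2 lt} gives $\Mon^2_{\operatorname{lt}}(M)=\mathsf{W}(M)$, while for $m=1$ the same equality is exactly Theorem~\ref{thm:Markman}; in either case $\Mon^2_{\operatorname{lt}}(M)=\mathsf{W}(\oH^2(M,\ZZ))$.

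Finally I would check that conjugation by $g$ carries $\mathsf{W}(M)$ onto $\mathsf{W}(X)$. Since $g$ is an isometry of lattices, $g\,h\,g^{-1}$ acts as $\pm\id$ on the discriminant group $A_{\oH^2(X,\ZZ)}$ if and only if $h$ acts as $\pm\id$ on $A_{\oH^2(M,\ZZ)}$. Moreover, because the orientation character is additive under composition and satisfies $\operatorname{or}(g^{-1})=\operatorname{or}(g)$ in $\ZZ/2\ZZ$, we get $\operatorname{or}(g\,h\,g^{-1})=\operatorname{or}(h)$, so the orientation-preserving condition is also stable under conjugation by $g$ (independently of whether $g$ itself preserves the orientation, though in fact it does by Lemma~\ref{lemma:Mon is O+}). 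Hence $g\,\mathsf{W}(M)\,g^{-1}=\mathsf{W}(X)$, and combining the three identities yields $\Mon^2_{\operatorname{lt}}(X)=g\,\mathsf{W}(M)\,g^{-1}=\mathsf{W}(X)$.

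The only genuinely delicate point—and the step I would treat most carefully—is this last one: verifying that both defining conditions of $\mathsf{W}$, namely the $\pm\id$-action on the discriminant and orientation-preservation, are simultaneously stable under conjugation by the parallel transport isometry $g$. Everything else is a formal assembly of results already established, so no new geometric input is required beyond the invariance of the Beauville--Bogomolov--Fujiki lattice along locally trivial families.
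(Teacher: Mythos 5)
Your proposal is correct and follows essentially the same route as the paper: transport via a locally trivial parallel transport operator $g\in\mathsf{PT}^2_{\operatorname{lt}}(M_v(S,H),X)$, conjugate the monodromy group, apply Theorem~\ref{thm:Mon 2 lt}, and observe that $\mathsf{W}$ is stable under conjugation by an isometry. Your explicit invocation of Theorem~\ref{thm:Markman} for the $m=1$ case and your careful check of the orientation character under conjugation are slightly more detailed than the paper's proof (which simply notes that $g^\sharp$ maps $\mathsf{W}(M_v(S,H))$ to $\mathsf{W}(X)$ because it is induced by an isometry), but the argument is the same.
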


\proof 
Since $X$ is locally trivially deformation equivalent to a moduli space $M_v(S,H)$ as in the statement, there exists a locally trivial parallel transport operator 
\[ g\colon\oH^{2}(M_{v}(S,H),\mathbb{Z})\longrightarrow \oH^{2}(X,\mathbb{Z}). \] 
Conjugation with $g$ gives an isomorphism of orthogonal groups 
\[ g^\sharp\colon\Or(\oH^{2}(M_{v}(S,H),\mathbb{Z}))\longrightarrow \Or(\oH^{2}(X,\mathbb{Z})) \]
which maps $\Mon^{2}_{\operatorname{lt}}(M_{v}(S,H))$ to $\Mon^{2}_{\operatorname{lt}}(X)$ by definition.

Moreover, since $g^\sharp$ is induced by an isometry, it also has to map $\mathsf{W}(M_{v}(S,H))$ to $\mathsf{W}(X)$.

Since $\mathsf{W}(M_{v}(S,H))=\Mon^{2}_{\operatorname{lt}}(M_{v}(S,H))$ by Theorem~\ref{thm:Mon 2 lt}, it follows that 
\[ \mathsf{W}(X)=\Mon^{2}_{\operatorname{lt}}(X,\ZZ). \]
\endproof

Finally, let us observe that the analogue of Theorem~\ref{thm:i sharp is iso} holds for any symplectic variety $X$ locally trivial deformation equivalent to $M_v(S,H)$. 

In order to state the result, we recall that the most singular locus $Y$ of $X$ is an irreducible symplectic manifold (see Proposition~\ref{prop:stratification}). Let us formally define the morphism 
\[ i_{Y,X}^\sharp\colon\Mon^2_{\operatorname{lt}}(X) \longrightarrow \Mon^2(Y) \] induced by the inclusion $i_{Y,X}\colon Y\to X$, as follows. For every monodromy operator $g\in \Mon^2_{\operatorname{lt}}(X)$, there exists a locally trivial family of irreducible symplectic varieties $p\colon \mathcal{X}\to T$, a point $\bar{t}\in T$ and loop $\gamma$ centered at $\bar{t}$ such that $\mathcal{X}_{\bar{t}}=X$ and $g$ is the parallel transport operator $\mathsf{PT}_p(\gamma)$ associated with the family $p$ and the loop $\gamma$. By local triviality of $p$, the restriction of  $p$ to the  most singular locus  $\mathcal{Y}$ of $\mathcal{X}$ gives a smooth family $q\colon\mathcal{Y}\to T$ of irreducible symplectic manifolds and we define $i_{Y,X}^{\sharp}(g)\colon \oH^{2}(Y,\mathbb{Z})\to \oH^{2}(Y,\mathbb{Z})$ as the parallel transport operator $\mathsf{PT}_q(\gamma)$ associated with  the family $q$ and the loop $\gamma$. We remark that  $i_{Y,X}^{\sharp}(g)$ is well defined and its definition immediately implies 
\begin{equation}\label{penultima} i_{Y,X}^{\sharp}(g) \circ i_{Y,X}^{*}= i_{Y,X}^{*}\circ g\colon\oH^2(X,\mathbb{Z})
\longrightarrow \oH^2(Y,\mathbb{Z})
\end{equation}

\begin{cor}\label{cor:i sharp is iso gen}
    Let $X$ be an irreducible symplectic variety that is locally trivially deformation equivalent to a moduli space $M_v(S,H)$, where $(S,v,H)$ is an $(m,k)$-triple. Let $Y\subset X$ be the most singular locus 
    and $i_{Y,X}\colon Y\to X$ be the closed embedding. Then the morphism 
    \[ i_{Y,X}^\sharp\colon\Mon^2_{\operatorname{lt}}(X)\stackrel{\sim}{\longrightarrow}\Mon^2(Y) \] is an isomorphism.
\end{cor}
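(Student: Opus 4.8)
The plan is to reduce the statement to the already-established Theorem~\ref{thm:i sharp is iso} by transporting the whole picture along a locally trivial deformation connecting $X$ to $M_v(S,H)$. Since $X$ is locally trivially deformation equivalent to $M_v(S,H)$, I would fix a locally trivial family $p\colon\cX\to T$ of irreducible symplectic varieties together with points $t_0,t_1\in T$ and a path $\gamma$ from $t_0$ to $t_1$ such that $\cX_{t_0}=M_v(S,H)$ and $\cX_{t_1}=X$. By Remark~\ref{rmk:strat} the relative most singular locus $q\colon\cY\to T$ is a smooth family of irreducible holomorphic symplectic manifolds with $\cY_{t_0}=M_w(S,H)$ and $\cY_{t_1}=Y$. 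Parallel transport along $\gamma$ then produces a locally trivial parallel transport operator $g\colon\oH^2(M_v(S,H),\ZZ)\to\oH^2(X,\ZZ)$ in $R^2p_*\ZZ$ and a parallel transport operator $g_Y\colon\oH^2(M_w(S,H),\ZZ)\to\oH^2(Y,\ZZ)$ in $R^2q_*\ZZ$.

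The key compatibility is that these two transports intertwine the restriction maps. As in the proof of Proposition~\ref{prop:first inclusiongen}, the closed embedding $\cY\hookrightarrow\cX$ induces a morphism of local systems $i_\QQ^*\colon R^2p_*\QQ\to R^2q_*\QQ$ which at the endpoints specialises to $i_{w,m,\QQ}^*$ over $t_0$ and to $i_{Y,X,\QQ}^*$ over $t_1$; since a morphism of local systems commutes with parallel transport, I obtain the relation
\[ g_Y\circ i_{w,m,\QQ}^* = i_{Y,X,\QQ}^*\circ g. \]
Writing $g^\sharp$ and $g_Y^\sharp$ for the conjugation isomorphisms $h\mapsto g\circ h\circ g^{-1}$ and $h\mapsto g_Y\circ h\circ g_Y^{-1}$, a direct manipulation using this relation together with the defining identities of $i_{w,m}^\sharp$ (Lemma~\ref{lemma:i sharp}) and of $i_{Y,X}^\sharp$ (equation~(\ref{penultima})) yields the factorisation
\[ i_{Y,X}^\sharp = g_Y^\sharp\circ i_{w,m}^\sharp\circ (g^\sharp)^{-1}. \]

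I would then conclude as follows. Conjugation by a parallel transport operator carries monodromy groups isomorphically onto monodromy groups, so $g^\sharp$ restricts to an isomorphism $\Mon^2_{\operatorname{lt}}(M_v(S,H))\stackrel{\sim}{\longrightarrow}\Mon^2_{\operatorname{lt}}(X)$ and $g_Y^\sharp$ restricts to an isomorphism $\Mon^2(M_w(S,H))\stackrel{\sim}{\longrightarrow}\Mon^2(Y)$. Since $i_{w,m}^\sharp\colon\Mon^2_{\operatorname{lt}}(M_v(S,H))\stackrel{\sim}{\longrightarrow}\Mon^2(M_w(S,H))$ is an isomorphism by Theorem~\ref{thm:i sharp is iso}, the displayed factorisation exhibits $i_{Y,X}^\sharp$ as a composition of three isomorphisms, hence an isomorphism $\Mon^2_{\operatorname{lt}}(X)\stackrel{\sim}{\longrightarrow}\Mon^2(Y)$.

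The main obstacle I anticipate is the bookkeeping over $\QQ$: the restriction maps $i_{w,m}^*$ and $i_{Y,X}^*$ are only similitudes (scaling the Beauville--Bogomolov--Fujiki form by $m^2$, by Corollary~\ref{cor:w and mw}) and become invertible only after tensoring with $\QQ$, so all the conjugations must be carried out at the level of $\QQ$-coefficients before checking that the resulting isometries are integral. This is exactly the mechanism already used in Lemma~\ref{lemma:i sharp gen} and Proposition~\ref{prop:first inclusiongen}, so no genuinely new difficulty arises; one only has to make sure the parallel transport of the relative stratum is taken in the correct smooth subfamily $q\colon\cY\to T$, which is guaranteed by the local triviality of the stratification in Remark~\ref{rmk:strat}.
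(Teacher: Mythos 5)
Your proposal is correct and follows essentially the same route as the paper's proof: both transport the problem along the locally trivial family and its relative most singular locus (Remark~\ref{rmk:strat}), use the compatibility of parallel transport with the restriction maps to obtain the factorisation $i_{Y,X}^\sharp = g_Y^\sharp\circ i_{w,m}^\sharp\circ (g^\sharp)^{-1}$, and then invoke Theorem~\ref{thm:i sharp is iso}. The only cosmetic difference is the direction of the path (the paper transports from $X$ to $M_v(S,H)$ rather than the reverse), which is immaterial.
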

\begin{proof}
    Let $p\colon\mathcal{X}\to T$ be a locally trivial family of irreducible symplectic varieties such that there exists two points $t_1,t_2\in T$ with $\mathcal{X}_{t_1}=X$ and $\mathcal{X}_{t_2}=M_v(S,H)$, respectively. Here $M_v(S,H)$ is the irreducible symplectic variety associated to an $(m,k)$-triple $(S,v,H)$. Let $\mathcal{Y}$ be the most singualar locus of $\mathcal{X}$ and let $q\colon\mathcal{Y}\to T$ be the restriction of $p$; by Remark~\ref{rmk:strat} the family $q$ is a family  of irreducible holomorphic symplectic manifolds. Let $\gamma$ be a path contained in  $T$ from $t_1$ to $t_2$ and denote by $\mathsf{PT}_p(\gamma)\colon\oH^{2}(X,\mathbb{Z})\to \oH^{2}(M_{v}(S,H),\mathbb{Z})$ and $\mathsf{PT}_q(\gamma )\colon\oH^{2}(Y,\mathbb{Z})\to\oH^{2}(M_{w}(S,H),\mathbb{Z})$ the parallel transport operators associated with the path $\gamma$ and the families $p$ and $q$, respectively. By construction we have \begin{equation}\label{ultima}
    i_{w,m}^{*}\circ \mathsf{PT}_p(\gamma)= \mathsf{PT}_q(\gamma)\circ i_{Y,X}^*\colon \oH^{2}(X,\mathbb{Z})\to \oH^{2}(M_{w}(S,H),\mathbb{Z}).
    \end{equation} 
    
    Finally we define the group isomorphisms 
    \begin{align}
    %\label{eqn:i sharpgen}
\mathsf{PT}_p^{\sharp}(\gamma)\colon \Mon^2_{\operatorname{lt}}(M_{v}(S,H)) & \longrightarrow \Mon^2_{\operatorname{lt}}(X) \nonumber\\
	g & \longmapsto (\mathsf{PT}_p (\gamma))^{-1}\circ g \circ \mathsf{PT}_p(\gamma)\nonumber 
\end{align}
and
 \begin{align}
    %\label{eqn:i sharpgen}
\mathsf{PT}_q^{\sharp}(\gamma)\colon \Mon^2(M_{w}(S,H)) & \longrightarrow \Mon^2(Y) \nonumber\\
	g & \longmapsto (\mathsf{PT}_q (\gamma))^{-1}\circ g \circ \mathsf{PT}_q(\gamma).\nonumber 
\end{align}

By formulae (\ref{penultima}) and (\ref{ultima}) we deduce 
that 
$$i_{Y,X}^{\sharp}= \mathsf{PT}_{q}^{\sharp}(\gamma)\circ i_{w,m}^{\sharp}\circ (\mathsf{PT}_{p}^{\sharp}(\gamma))^{-1} $$ and, since
$\mathsf{PT}_{q}^{\sharp}(\gamma)$, $i_{w,m}^{\sharp}$ and $ (\mathsf{PT}_{p}^{\sharp}(\gamma))^{-1}$ are isomorphisms, the morphism  $i_{Y,X}^{\sharp}$ is an isomorphism too.
\end{proof}

%%%%%%%%%%%%%%%%%%%%%%%%%%%%%%%%%%
%%%%%%%%%%%%%%%%%%%%%%%%%%%%%%%%%%
%%%%%%%%%%%%%%%%%%%%%%%%%%%%%%%%%%

\end{document}